\documentclass[11pt,reqno]{amsart}

\numberwithin{equation}{section}
\setcounter{secnumdepth}{3}
\setcounter{tocdepth}{2}

\setlength{\parskip}{1.5ex}

\usepackage{times}
\usepackage{amsmath,amsfonts,amstext,amssymb,amsbsy,amsopn,amsthm,eucal}
\usepackage{txfonts}
\usepackage{dsfont}
\usepackage{graphicx}   
\usepackage{hyperref}
\usepackage{accents}
\usepackage{enumerate}
\usepackage{xcolor}
\usepackage{verbatim}
\usepackage{esint}

\usepackage[normalem]{ulem}
\usepackage{cancel}


\setlength{\textheight}{8.50in} \setlength{\textwidth}{6.5in}
\setlength{\columnsep}{0.5in} \setlength{\topmargin}{0.0in}
\setlength{\headheight}{0in} \setlength{\headsep}{0.5in}
\setlength{\parindent}{1pc}
\setlength{\oddsidemargin}{0in}  
\setlength{\evensidemargin}{0in}

\newcommand{\RR}{\mathds{R}}

\newcommand{\Vol}{{\rm Vol}}

\newcommand{\cA}{\mathcal{A}}

\newcommand{\cC}{\mathcal{C}}

\newcommand{\cH}{\mathcal{H}}

\newcommand{\cL}{\mathcal{L}}

\newcommand{\cN}{\mathcal{N}}

\newcommand{\cP}{\mathcal{P}}

\newcommand{\cS}{\mathcal{S}}

\newcommand{\cV}{\mathcal{V}}

\newcommand{\cZ}{\mathcal{Z}}

\newtheorem{theorem}[equation]{Theorem}

\newtheorem{proposition}[equation]{Proposition}
\newtheorem{lemma}[equation]{Lemma}
\newtheorem{corollary}[equation]{Corollary}
\theoremstyle{definition}
\newtheorem{definition}[equation]{Definition}

\theoremstyle{remark}
\newtheorem{remark}[equation]{Remark}
\theoremstyle{remark}

\theoremstyle{remark}

\theoremstyle{remark}
\theoremstyle{remark}

\begin{document}

\thanks{}
\thanks{}

\title[Volume Estimates for Singular sets and Critical Sets of Elliptic Equations with Hölder Coefficients]{Volume Estimates for Singular sets and Critical Sets of Elliptic Equations with Hölder Coefficients}

\author{Yiqi Huang}
\address[Yiqi Huang]{Department of Mathematics, MIT, 77 Massachusetts Avenue, Cambridge, MA 02139-4307, USA}
 \email{yiqih777@mit.edu}

\author{Wenshuai Jiang}
\address[Wenshuai Jiang]{School of Mathematical Sciences, Zhejiang University, Hangzhou 310058, China}
 \email{wsjiang@zju.edu}

\maketitle

\begin{abstract}
Consider the solutions $u$ to the elliptic equation $\cL(u) = \partial_i(a^{ij}(x) \partial_j u) + b^i(x) \partial_i u + c(x) u= 0$ with $a^{ij}$ assumed only to be H\"older continuous. In this paper we prove an explicit bound for $(n-2)$-dimensional Minkowski estimates of singular set $\cS(u) = \{ x \in B_1 : u(x) = |\nabla u(x)| = 0\}$ and critical set $\cC(u) \equiv \{ x\in B_{1} : |\nabla u(x)| = 0 \}$ in terms of the bound on doubling index, depending on $c \equiv 0$ or not. This generalizes the Lipschitz assumption in \cite{NV} to H\"older. It is sharp as it is the weakest condition in order to define the critical set of $u$ according to elliptic estimates. We can also obtain an optimal improvement on Cheeger-Naber-Valtorta's volume estimates on each quantitative stratum $\cS^k_{\eta, r}$ as in \cite{CNV}. The main difficulty in this situation is the lack of monotonicity formula which is essential to the quantitative stratification. In our proof, one key ingredient is a new almost monotonicity formula for doubling index under the H\"older assumption.  Another key ingredient is the quantitative uniqueness of tangent maps. It deserves to note that our almost monotonicity is sufficient to address all the difficulties arising from the absence of monotonicity in the analysis of differential equations. We believe the idea could be applied to other relevant study. 
\end{abstract}

\tableofcontents

\section{Introduction}\label{s:intro}

In this paper we consider the solutions to the following elliptic equation in divergence form on $B_2 \subset \RR^n$
\begin{equation}\label{e:general_elliptic_equ}
    \cL(u) = \partial_i(a^{ij}(x) \partial_j u) + b^i(x) \partial_i u + c(x) u= 0,
\end{equation}
where the coefficients $a^{ij}$ are elliptic and merely assumed to be in $C^{\alpha}({B_2})$ with $\alpha \in (0, 1)$, and the coefficients $b, c$ are bounded,
\begin{equation}\label{e:assumption_with_c}
    (1 + \lambda)^{-1} \delta^{ij} \leq a^{ij} \leq (1 + \lambda) \delta^{ij}, \quad |a^{ij}(y) -a^{ij}(z)| \leq \lambda|y-z|^{\alpha}, \quad |b^i|, |c| \leq \lambda.
\end{equation}

The nodal set is defined to be the zero set of $u$, i.e. $\cZ(u) \equiv \{ x \in B_1: u(x) = 0\}$. The critical set and singular set of $u$ are defined as
\begin{equation}
    \cC(u) \equiv \{ x\in B_{1} : |\nabla u(x)| = 0 \}, \quad \text{ and } \quad \cS(u) = \{ x \in B_1 : u(x) = |\nabla u(x)| = 0\}.
\end{equation}

The main goal of this paper is to prove the volume estimates on the critical sets when $c\equiv 0$ and the singular set for general equations.

Over the past several decades, there has been extensive literature about the nodal sets and the singular sets of solutions to elliptic equations. In \cite{DF1}, Donnelly and Fefferman proved the optimal upper and lower bounds for Hausdorff measure of nodal sets of Laplacian eigenfunctions on compact manifolds under the assumption that the manifold is analytic. This confirmed Yau's conjecture \cite{Yau} in the analytic setting.  There are many interesting discussions towards Yau's conjecture by Br\"uning\cite{Bru}, Colding-Minicozzi \cite{CM}, Dong \cite{Dong}, Donnelly-Fefferman \cite{DF2} \cite{DF3}, Nadirashvili \cite{N88} \cite{N}.  More recently, a major breakthrough was made by Logunov \cite{Loupper} \cite{Lolower} who proved the sharp lower bound and obtained polynomial upper bound without analyticity assumption.
 
Concerning the nodal sets of solutions to general elliptic equations, in \cite{Linconj}, Lin proved the optimal upper bound for Hausdorff measure of nodal sets of solutions to second order elliptic equations with analytic coefficients. For the nonanalytic case, Hardt and Simon \cite{HS} obtained an explicit (though not optimal) estimate on the Hausdorff measure of nodal sets in terms of the growth of the solutions with the coefficients being only H\"older continuous (See also \cite{HLJPDE}). See also \cite{BeLin} \cite{Hanhamornic} \cite{HLparobolic}  \cite{HHLhigher}  \cite{HSo} \cite{Linconj} \cite{LLbetti} \cite{LS1} \cite{LS2} \cite{LZhu} \cite{Man} \cite{Mil} \cite{SWZ} \cite{SZ} \cite{Ste} \cite{TZ} \cite{Zhu2} for more relevant study.

According to the implicit function theorem, the nodal set of a smooth function is a smooth hypersurface away from the singular set. Hence it is crucial to study the singular set in order to understand the structure of nodal sets. There is rich study on singular sets of solutions to elliptic equations. In \cite{HLbook} \cite{HHL}  \cite{HHHN} \cite{HHN}, they obtained a bound of the $(n-2)$-dimensional Hausdorff measure under the assumption of smooth coefficients in terms of the frequency. The same result was proved by Cheeger, Naber and Valtorta in \cite{CNV} where they also improved the Hausdorff estimates to Minkowski estimates. In \cite{NV}, Naber and Valtorta proved the bound only assuming the leading coefficients are Lipschitz. 

By elliptic estimates, if $a^{ij} $ is assumed $C^{\alpha}$, the weak solution to \ref{e:general_elliptic_equ} is $C^{1,\alpha}$ and the singular set is still well-defined. Hence it is a natural question to ask if one could obtain the estimates on the singular set with $a^{ij}$ being only H\"older. Because of the failure of unique continuation under H\"older assumption \cite{P}, we need to exclude the enemies which vanish in some open ball as it makes no sense to control the nodal sets or singular sets in these cases. It turns out that being constant or close to constant are all bad cases. In this paper, we will give an affirmative answer to this question. This is optimal as under the $C^{0}$ assumption on $a^{ij}$, the weak solution $u$ is only $C^{\alpha}$ and thus the gradient is not even pointwisely well-defined. 

We will use the doubling index, see Definition \ref{d:doubling_index}, to measure the how far the solutions are  away from being constant. Since the frequency is no more monotone when $a^{ij}$ is only H\"older, the following doubling assumption on elliptic solutions $u$ is the correct growth assumption one should make:
there exists some positive constant $\Lambda$ such that 
\begin{equation}\label{e:Singular_DI_bound}
    \sup_{B_{2r}(x) \subset B_2} \text{log}_4 \frac{\fint_{ B_{2r}(x)} u^2}{\fint_{B_{r}(x)} u^2}  \leq \Lambda.    
\end{equation}

Note that this assumption is automatically satisfied if $a^{ij}$ is Lipschitz. See Theorem 2.28 in \cite{HLbook}, Theorem 3.14 and Theorem 4.6 in \cite{NV}. In this case, the bound in \ref{e:Singular_DI_bound} linearly depends on the frequency in $B_1$. 

\subsection{Results for Singular sets and Critical sets}

Our first main theorem is the following Minkowski estimates for singular sets.

\begin{theorem}\label{t:singular_set}
Let $u: B_2 \to \RR$ be a nonzero solution to (\ref{e:general_elliptic_equ}) (\ref{e:assumption_with_c}) with doubling assumption \ref{e:Singular_DI_bound}. Then for any $0<r\leq 1$ we have
\begin{equation}
    \Vol (B_r(\cS(u)) \cap B_{1}) \leq C(n, \lambda, \alpha)^{\Lambda^2} r^2.
\end{equation}
In particular, we have the estimate for Minkowski dimension $\dim_{\text{Min}} \cS(u) \leq n-2$.
\end{theorem}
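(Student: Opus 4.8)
The plan is to follow the Naber--Valtorta scheme of quantitative stratification plus covering arguments, but with the monotone frequency replaced by the almost-monotone doubling index promised in the introduction. First I would set up the quantitative stratification: define the quantitative singular strata $\cS^k_{\eta,r}(u)$ measuring at how many scales and in how many independent directions $u$ fails to be $\eta$-close to a homogeneous harmonic polynomial solving a frozen-coefficient equation, and observe that the genuine singular set $\cS(u)$ is contained in $\cS^{n-3}_{\eta,0}(u)$ for suitable $\eta$, essentially because a blow-up of $u$ at a singular point is a homogeneous harmonic polynomial of degree $\ge 2$, which can be translation-invariant in at most $n-2$ directions. The content of the theorem is then the Minkowski bound $\Vol(B_r(\cS^{n-3}_{\eta,r}(u))\cap B_1)\le C^{\Lambda^2}r^{3}$ — in fact one wants the stronger $r^2$ bound stated, so one actually works with the top stratum $\cS^{n-2}_{\eta,r}$ or argues directly that the $(n-2)$-dimensional part is rectifiable with bounded measure. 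The exponent $\Lambda^2$ (rather than $\Lambda$) is the signature of the Hölder setting: each ``rescale'' step loses an additive error controlled by the Hölder modulus, and iterating over $\sim\Lambda$ scales while also summing over the $\sim\Lambda$ drops of the doubling index produces the quadratic dependence.

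The engine driving everything is the covering/energy argument. I would prove, by induction on scale, a packing estimate: the number of $r$-balls needed to cover $\cS^{n-2}_{\eta,r}(u)\cap B_1$ is at most $C(n,\lambda,\alpha)^{\Lambda^2}r^{-(n-2)}$. The inductive step requires two ingredients stated as available earlier. (1) An \emph{almost-monotonicity formula} for the doubling index $N(x,r)$: although $N$ is not monotone, $N(x,r)+\Psi(r)$ is almost monotone where $\Psi(r)\to0$ is a Hölder-type error, so that the ``energy'' $W_r(x)=N(x,\theta r)-N(x,r)$ is essentially nonnegative and summable, with total variation over all dyadic scales bounded by $\sim\Lambda$ (up to the Hölder error). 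This replaces the exact telescoping of $\int f^2$ used in \cite{NV}. (2) \emph{Quantitative rigidity / quantitative uniqueness of tangent maps}: if the energy $W_r(x)$ is small on a ball then on a slightly smaller ball $u$ is, in $C^{1,\alpha}$ after rescaling, close to a homogeneous harmonic polynomial $P$ solving the constant-coefficient equation $a^{ij}(x_0)\partial_i\partial_j P=0$, and if this holds at two nearby scales with the polynomials nearly agreeing then $u$ is close to being translation-invariant along the directions in which $P$ is. With these, the standard Naber--Valtorta $L^2$-best-approximation / Reifenberg-type argument applies: points where $u$ is $(n-1)$-symmetric but not $n$-symmetric at scale $r$ lie $\epsilon r$-close to an $(n-2)$-plane, and the energy drop pays for the refinement of the cover, giving the packing bound by a tree/stopping-time decomposition.

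The main obstacle, and the place I would spend the most care, is controlling the accumulated Hölder error through the iteration so that it does not swamp the energy. In the Lipschitz case the frozen-coefficient approximation error at scale $r$ is $O(r)$, summable geometrically; in the Hölder case the natural error per scale is $O(r^\alpha)$, still summable, but the subtlety is that the \emph{comparison between two consecutive scales} — which is what the cone-splitting and Reifenberg arguments actually use — involves the \emph{difference} of doubling indices, and one must show this difference is bounded below by $-\Psi(r)$ with $\sum_r\Psi(r)<\infty$ and, crucially, that choosing the number of effective scales $\sim\Lambda$ forces the total Hölder error to be absorbed at the cost of only worsening the base constant to $C^{\Lambda^2}$. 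Concretely: one fixes a small $\theta=\theta(n,\lambda,\alpha)$, runs the argument only down to scale $\theta^{C\Lambda}$ at a time, where the Hölder error is $\le$ half the minimal energy gap, and then iterates this $\sim\Lambda$ times to reach scale $r$; each block costs a factor $C(n,\lambda,\alpha)$ in the constant and there are $\sim\Lambda^2/\log$ blocks. The rest — passing from the packing/covering estimate to the Minkowski volume bound, and from $\cS^{n-3}$ down to the clean $r^2$ statement using that the degree-$\ge2$ homogeneous harmonic polynomials are invariant in at most $n-2$ directions — is routine and parallels \cite{CNV, NV} once the almost-monotonicity and quantitative-uniqueness inputs are in hand.
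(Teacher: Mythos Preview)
Your high-level plan---almost-monotone doubling index, quantitative uniqueness of tangent maps, cone-splitting, then an iterated covering---matches the paper's architecture. Two substantive differences deserve comment.

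First, the paper does \emph{not} run the $L^2$-best-approximation/Reifenberg machinery you invoke; that is a different Naber--Valtorta technology. It follows instead the more elementary scheme of \cite{NV}: call $B_r(x)$ a $(d,t)$-\emph{good ball} if the doubling index is $\le d+\epsilon$ uniformly inside, prove a covering lemma that refines any $(d,C\Lambda)$-good ball into a controlled collection of $(d-1,C\Lambda)$-good balls (or balls of the target radius), and iterate until $d=1$, where an $\epsilon$-regularity statement empties the ball of singular points. The $(n-2)$-content control at each step comes not from Reifenberg but from a direct Lipschitz-graph argument: cone-splitting forces the set of index-pinched points $\cV_{\epsilon,d,r}$ to lie in $B_{\tau r}$ of an $(n-2)$-plane (independent of scale, by the uniqueness of tangent maps), and a Lipschitz extension packs them. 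This is shorter than the Reifenberg route and yields the same estimate.

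Second, your accounting for the exponent $\Lambda^2$ is not how the paper obtains it. The source is clean: the cone-splitting and no-critical-point lemmas require $\epsilon\le c(n,\alpha,\tau)^{\Lambda}$ (because the approximating polynomial can have degree up to $C\Lambda$), so each pass through the covering lemma costs a factor $C(n,\lambda,\alpha)^{\Lambda}$; iterating $\sim\Lambda$ times (the index drops from $C\Lambda$ to $1$) gives $C^{\Lambda^2}$. The ``H\"older error over $\Lambda^2$ blocks'' picture you sketch is not the mechanism.

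Finally, for Theorem~\ref{t:singular_set} as opposed to the critical-set theorem, the paper's only additional move is to use the \emph{un-normalized} doubling index
\[
D(x,r)=\log_4\frac{\fint_{\partial B_2}u(x+rA_x y)^2\,dy}{\fint_{\partial B_1}u(x+rA_x y)^2\,dy}
\]
(no subtraction of $u(x)$). Then $D(x,r)\to 0$ when $u(x)\neq 0$, while $\liminf_{r\to 0}D(x,r)\ge 1$ on $\cZ(u)$, so the $\epsilon$-regularity (index close to $1$ $\Rightarrow$ nonvanishing gradient) applies precisely on the singular set. Your proposal does not isolate this step, and without it the iteration does not terminate correctly on $\cS(u)$.
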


Note that the Minkowski estimates are much stronger than Hausdorff estimates. For instance, the set of rational numbers in $\RR^n$ are of 0-Hausdorff dimension but of $n$-Minkowski dimension.

Next we study the estimates on critical sets of solutions with $c \equiv 0$. 
\begin{equation}\label{e:elliptic_equ}
    \cL(u) = \partial_i(a^{ij}(x) \partial_j u) + b^i(x) \partial_i u= 0,
\end{equation}
where the coefficients $a^{ij}$ are elliptic and in $C^{\alpha}({B_2})$, and the coefficients $b$ is bounded,
\begin{equation}\label{e:assumption}
    (1 + \lambda)^{-1} \delta^{ij} \leq a^{ij} \leq (1 + \lambda) \delta^{ij}, \quad |a^{ij}(y) -a^{ij}(z)| \leq \lambda|y-z|^{\alpha}, \quad |b^i| \leq \lambda.
\end{equation}

In this case we impose the following normalized doubling assumption for any 
\begin{equation}\label{e:DI_bound_assumption}
    \sup_{B_{2r}(x)\subset B_2}\text{log}_4 \frac{\fint_{ B_{2r}(x)} (u - u(x))^2}{\fint_{B_{r}(x)} (u - u(x))^2}  \leq \Lambda.    
\end{equation}

\begin{theorem}\label{t:critical_set}
Let $u: B_2 \to \RR$ be a nonconstant solution to (\ref{e:elliptic_equ}) (\ref{e:assumption}) and doubling assumption \ref{e:DI_bound_assumption}. Then for any $0<r\leq 1$ we have
\begin{equation}
    \Vol (B_r(\cC(u)) \cap B_{1}) \leq C(n, \lambda, \alpha)^{\Lambda^2} r^2.
\end{equation}
In particular, we have the estimate for Minkowski dimension $\dim_{\text{Min}} \cC(u) \leq n-2$.
\end{theorem}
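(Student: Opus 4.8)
The plan is to run the same quantitative-stratification and Naber--Valtorta covering scheme that underlies Theorem~\ref{t:singular_set}, but carried out for the family of recentered solutions $v_{x_0} := u - u(x_0)$ indexed by $x_0 \in \cC(u)$. When $c \equiv 0$ the constant function $u(x_0)$ lies in $\ker \cL$, so each $v_{x_0}$ again solves (\ref{e:elliptic_equ})--(\ref{e:assumption}); and $x_0 \in \cC(u)$ forces $v_{x_0}(x_0) = |\nabla v_{x_0}(x_0)| = 0$, i.e. $x_0 \in \cS(v_{x_0})$. Thus every critical point of $u$ is a singular point of a member of this family, and the normalized doubling hypothesis (\ref{e:DI_bound_assumption}) is precisely what controls, uniformly in $x_0$, the center-based doubling index of $v_{x_0}$. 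This is not a black-box application of Theorem~\ref{t:singular_set}: hypothesis (\ref{e:Singular_DI_bound}) for $v_{x_0}$ requires a doubling bound at centers $x$ where (\ref{e:DI_bound_assumption}) only controls $u - u(x)$ rather than $u - u(x_0)$, and reconciling the two recenterings is one of the technical points below.

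First I would record the paper's two structural inputs in the form needed here. (i) \emph{Almost-monotonicity of the doubling index.} For a solution $v$ of the $c \equiv 0$ equation there is a regularized index $N_v(x,r)$ that is almost monotone, $N_v(x,r_1) \le (1 + C r_2^{\alpha}) N_v(x,r_2) + C r_2^{\alpha}$ for $0 < r_1 \le r_2 \le 1$ and $C = C(n,\lambda,\alpha)$. Applied to $v = v_{x_0}$ this gives an almost-monotone $N_{x_0}(r)$, bounded above by $C\Lambda$ on admissible scales by (\ref{e:DI_bound_assumption}) together with the recentering comparison, and bounded below by $2 - \epsilon$ for $r \le r_0(\epsilon)$ because $v_{x_0}$ vanishes to order $> 1$ at $x_0$ (the frozen-coefficient comparison upgrades $\nabla v_{x_0}(x_0) = 0$ into a lower bound on the growth rate of $\fint_{B_r(x_0)} v_{x_0}^2$). (ii) \emph{Quantitative uniqueness of tangent maps.} If $N_{x_0}$ is almost constant on a range $[r,R]$, then the rescalings of $v_{x_0}$ are $\delta$-close, throughout that range, to a single homogeneous solution $P$ of the constant-coefficient operator $a^{ij}(x_0)\partial_i\partial_j$, with $\delta \to 0$ as the pinching and the $\alpha$-dependent error $\to 0$. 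This replaces the exact tangent-cone analysis available when $a^{ij}$ is constant or Lipschitz.

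With these in hand I would run the quantitative stratification. \emph{Cone-splitting:} if $v_{x_0}$ were $(n-1,\epsilon)$-symmetric at $x_0$ at some small scale, its tangent $P$ would depend, in the $a^{ij}(x_0)$-metric, on a single variable $t$ and be homogeneous; the frozen equation then forces $P'' \equiv 0$, so $\deg P \le 1$, contradicting the lower bound $N_{x_0} \ge 2 - \epsilon$ once $\epsilon$ is small. Hence every $x_0 \in \cC(u)$ lies in the $(n-2)$-stratum, and in fact $\cC(u) \cap B_1 \subset \bigcup_j \cS^{n-2}_{\eta_0, r_j}$ for an appropriate $\eta_0 = \eta_0(n,\lambda,\alpha)$. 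One then runs the Naber--Valtorta covering: a maximal $r$-separated subset of $\cC(u) \cap B_1$ is organized by the dyadic values of the (almost-monotone, $\le C\Lambda$) index $N_{x_0}$; the $L^2$ best-$(n-2)$-plane estimate bounds the energy of the packing measure by the total drop of $N_{x_0}$ across scales --- this is where the per-scale errors $Cr^\alpha$ and the recentering factors enter and produce the exponent $\Lambda^2$ rather than $\Lambda$; a Reifenberg-type packing bound turns the plane-closeness into $\#\{x_i\} \le C(n,\lambda,\alpha)^{\Lambda^2} r^{-(n-2)}$, and covering by $r$-balls yields $\Vol(B_r(\cC(u)) \cap B_1) \le C(n,\lambda,\alpha)^{\Lambda^2} r^2$. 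Letting $r \to 0$ gives $\dim_{\text{Min}} \cC(u) \le n-2$.

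The main obstacle is the absence of a genuine monotonicity formula, which must be routed around at two separate points. First, the per-scale error $C r^{\alpha}$ in the almost-monotonicity has to be summable against the dyadic covering, and the recentering discrepancy $|u(x) - u(x_0)|$ at nearby centers must be absorbed; using $\nabla u(x_0) = 0$ and interior $C^{1,\alpha}$ estimates one gets $|u(x) - u(x_0)|^2 \lesssim C^{\Lambda}\fint_{B_r(x_0)} v_{x_0}^2$ for $|x - x_0| \lesssim r$, comparable but only up to $C^{\Lambda}$ factors, and keeping every constant in the form $C^{\Lambda^2}$ through this step is delicate. Second, the covering argument fundamentally needs a consistent ``best approximating $(n-2)$-plane at scale $r$'' with quantitative rigidity, and supplying this without exact uniqueness of tangent maps is exactly the role of input (ii); its quantitative form --- how the closeness degrades with the pinching and with $\alpha$ --- is what decides whether the whole scheme closes. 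Once these are in place, the cone-splitting dichotomy and the Reifenberg bookkeeping run in parallel with the singular-set case, with $v_{x_0}$ in place of $u$.
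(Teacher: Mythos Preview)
Your structural inputs (almost monotonicity, quantitative uniqueness of tangent maps, cone-splitting, and the $\epsilon$-regularity that $D \le 1+\epsilon \Rightarrow \nabla u \ne 0$) are exactly the ones the paper isolates, so the skeleton is right. But the way you package and deploy them diverges from the paper in two places, and one of them is a genuine gap.

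\textbf{The recentering framing is an artificial difficulty.} You set up a family $v_{x_0}=u-u(x_0)$ indexed by critical points and then worry about comparing doubling of $u-u(x_0)$ at a nearby center $x$ with doubling of $u-u(x)$. The paper bypasses this entirely by building the subtraction into the doubling index itself: $D^u(x,r)$ is defined using $u(x+rA_x(y))-u(x)$, so at every center one is always looking at the ``own-center'' recentering. The assumption (\ref{e:DI_bound_assumption}) is then just a uniform bound on this single object, and the almost-monotonicity, harmonic approximation, and cone-splitting are all proved for $D^u(x,r)$ directly. No cross-comparison between $u-u(x)$ and $u-u(x_0)$ is ever needed; the ``recentering discrepancy'' paragraph in your proposal is solving a problem that does not arise.

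\textbf{The covering mechanism you invoke is not the one that closes here.} You describe a Reifenberg/$\beta$-number scheme: an $L^2$ best-$(n-2)$-plane estimate controlled by the drop of the index, fed into a discrete Reifenberg packing bound. The paper does \emph{not} establish any $\beta$-number inequality for H\"older coefficients, and it is not clear that the almost-monotonicity (which only says $D(x,s)\le D(x,r)+\epsilon$, with no quantitative relation between the drop and the distance to a plane) is strong enough to produce one. What the paper actually runs is the more elementary inductive covering of Naber--Valtorta \cite{NV}: at each stage one has a collection of $(d,C_2\Lambda)$-good balls; inside each, the pinched set $\cV^u_{\epsilon,d,r}$ lies on a Lipschitz graph over an $(n-2)$-plane (this is the cone-splitting Corollary~\ref{c:Elliptic_Pinch_pt_near_n-2}, used only at the fixed scale $\tau$, not as a $\beta$-number), giving $\sum r_i^{n-2}\le C(n)$; off the pinched set the index drops below $d-\epsilon$ and then Lemma~\ref{l:DI_Drop_Elliptic} forces $D\le d-1+\epsilon$ at scale $\epsilon^{C(\alpha)}r$, so after a controlled subdivision each sub-ball is $(d-1)$-good. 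One iterates until $d=1$, where Proposition~\ref{p:Elliptic_gradient_lower_bound} says the ball contains no critical points. The exponent $\Lambda^2$ arises because each iteration costs a factor $C(n,\lambda,\alpha)^{\Lambda}$ (coming from $\epsilon\sim c^{\Lambda}$ and the $\epsilon^{-C(\alpha)}$ in the subdivision) and there are $\sim\Lambda$ iterations; it has nothing to do with summing $r^\alpha$ errors against a Reifenberg packing as you suggest.

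In short: keep your inputs (i)--(ii) and the cone-splitting/$\epsilon$-regularity dichotomy, drop the $v_{x_0}$-family and the Reifenberg/$\beta$-number machinery, and replace the latter with the Lipschitz-graph covering plus the integer-drop iteration of Lemma~\ref{l:DI_Drop_Elliptic}. That is the argument that actually closes under only H\"older regularity.
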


Note that Theorem \ref{t:critical_set} does not hold
for general ellipitc equations \ref{e:general_elliptic_equ} with $c \not\equiv 0$. For example, consider $u = v^2 + 1$ with some $v \in C^{\infty}(B_2)$ and $|v| < 1$. Then $u$ satisfies $\Delta u + cu = 0$ with $c = - \Delta v^2 (v^2 + 1)^{-1}$. Note that $\cZ(v) \subset \cC(u)$ while for any closed subset $K$ in $\RR^n$ there exists some smooth function $v$ vanishing on $K$. See \cite{HHHN}.

The proof of Theorem \ref{t:singular_set} and Theorem \ref{t:critical_set} are almost the same up to very little modification. Throughout this paper, we will focus on the equation (\ref{e:elliptic_equ}) (\ref{e:assumption}) and assume the doubling assumption \ref{e:DI_bound_assumption}.

 \subsection{Results for Quantitative Stratification}   

Next we will introduce the quantitative stratification which was first proposed in \cite{CN} \cite{CNV}. The stratification separates points based on the tangential behavior of the functions, more precisely the number of symmetry of the leading homogeneous polynomial of the Taylor expansion of the function. In \cite{CNV}, the stratification was refined in a more quantitative way and they obtain an effective Minknowski estimate on the quantitative strata. Our next result is the optimal improvement on these Minkowski estimates on each quantitative stratum. 

First we define the scaling map and tangent map. For any $x$, we define the linear transformation $A_x$ by 
\begin{equation*}
    A_x(y) = (\sqrt{a})^{ij} y_i e_j
\end{equation*}
where $(\sqrt{a})^{ij}$ is the square root of the leading coefficients matrix $a^{ij}(x)$. Note that for any $x$, $A_x$ is a Lipschitz equivalence with Lipschitz constant $(1 + \lambda)^{1/2}$. 
With this we can define the rescaling map of the solution $u$ to \ref{e:elliptic_equ}, \ref{e:assumption}. 

\begin{definition}
Let $u: B_2 \to \RR$ be a solution to \ref{e:elliptic_equ} \ref{e:assumption} with doubling assumption \ref{e:DI_bound_assumption}. Let $r \leq \frac{1}{(1+  \lambda)^{1/2}}$.
\begin{enumerate}
    \item For $x \in B_{1}$, we define
    \begin{equation*}
       u_{x, r}(y) := \frac{u(x +rA_x(y)) - u(x)}{\Big( \fint_{\partial B_1} (u(x +rA_x(z)) - u(x))^2 dz\Big)^{1/2}}.
    \end{equation*}
    \item For $x\in B_{1}$, we define
    \begin{equation*}
        u_{x}(y) = \lim_{r \to 0} u_{x,r}(y). 
    \end{equation*}
\end{enumerate}
\end{definition}

\begin{remark}\label{r:Rescaled_u_equation}
Denote $\Tilde{u} = u_{x, r}$. Then by change of variables, $\Tilde{u}$ satisfies the equation
\begin{equation}\label{e:Rescaled_u_equation}
    \Tilde{\cL}(\Tilde{u}) = \partial_i(\Tilde{a}^{ij} \partial_j \Tilde{u}) + \Tilde{b}^i \partial_i \Tilde{u} = 0,
\end{equation}
where $\Tilde{a}^{ij}(0) = \delta^{ij}$ and $\tilde{a}(y)=a(x+rA_x(y)) a(x)^{-1} $ and $\tilde{b}(y)= r b(x+rA_x(y))\sqrt{a}(x)^{-1}$ with 

\begin{equation}
    (1 + C(\lambda,\alpha) r^{\alpha})^{-1} \delta^{ij} \leq \Tilde{a}^{ij} \leq (1 + C(\lambda,\alpha)) r^{\alpha}) \delta^{ij}, \quad |\Tilde{a}^{ij}(y) -\Tilde{a}^{ij}(z) | \leq C(\lambda,\alpha) r^{\alpha} |y -z|^{\alpha}, \quad |\tilde{b}^i| \leq r (1+\lambda)^{-1/2}.
\end{equation}

\end{remark}
In fact, this is a natural generalization of rescaling maps on Euclidean balls to the geodesic balls on general Riemannian manifolds.

Next we define the symmetry of functions.
\begin{definition}
Consider the continuous function $u: \RR^n \to \RR$.
\begin{enumerate}
    \item $u$ is called $0$-symmetric if $u$ is a homogeneous polynomial.
    \item $u$ is called $k$-symmetric if $u$ is $0$-symmetric and further symmetic with respect to some $k$-dimensional subspace $V$, i.e. $u(x+y) = u(x)$ for any $x \in \RR^n$ and $y \in V$.
\end{enumerate}
\end{definition}

\begin{definition}
Given a continuous function $u: B_1 \to \RR$. The singular $k$-stratum of $u$ is defined by
\begin{equation}
    \cS^k(u) \equiv \{x \in B_1 : T_x u \text{ is not } (k+1)\text{-symmetric } \}. 
\end{equation}
\end{definition}

Next we define the quantitative symmetry for $u$.
\begin{definition}\label{d:quan_sym}
Let $u: B_1 \to \RR$ be an $L^2$ function. We define $u$ is $(k, \eta, r, x)$-symmetric if there exists a $k$-symmetric polynomial $P_{x,r}$ with $\fint_{ \partial B_1} |P_{x,r}|^2 = 1$ such that 
\begin{equation}
    \fint_{\partial B_1} |u_{x, r} - P_{x,r}|^2 \leq \eta.
\end{equation}
\end{definition}

With this we can now give the definition of quantitative stratification
\begin{definition}
    Let $u: B_1 \to \RR$ be an $L^2$ function. The $(k,\eta,r)$-singular stratum is defined by 
    \begin{equation}
        \cS^k_{\eta,r} \equiv \{ x\in B_1 : u \text{ is not } (k+1, \eta, s, x)\text{-symmetric for any } s \geq r \}.
    \end{equation}
and we let
    \begin{equation}
        \cS^k_{\eta} \equiv \bigcap_{r}  \cS^k_{\eta, r}. 
    \end{equation}
\end{definition}
\begin{remark}
    The following observations are direct. 
    \begin{equation}
        \cS^k_{\eta, r} \subset \cS^{k'}_{\eta', r'} \text{ provided } k \leq k', \eta' \leq \eta, r \leq r'.
    \end{equation}
    \begin{equation}
        \cS^k = \bigcup_{\eta}\cS^k_{\eta} = \bigcup_{\eta} \bigcap_{r}  \cS^k_{\eta, r}. 
    \end{equation}
\end{remark}

The following Minkowski type estimate is an optimal improvement of Theorem 1.10 in \cite{CNV}. 

\begin{theorem}\label{t:Estimate_on_Strata}
Let $u: B_2 \to \RR$ be a solution to (\ref{e:elliptic_equ}) (\ref{e:assumption}) with doubling assumption \ref{e:DI_bound_assumption}. Then for any $k \leq n-2$ and $r \leq r_0(n,k,\lambda, \alpha, \Lambda)$, we have
\begin{equation}
    \Vol (B_r(\cS^k_{\eta, r}) \cap B_{1} ) \leq C(n, \lambda, \alpha, \Lambda, \eta) r^{n - k }.
\end{equation}
\end{theorem}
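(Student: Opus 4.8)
The plan is to follow the now-standard covering/quantitative-stratification machinery of Cheeger–Naber–Valtorta \cite{CNV} and Naber–Valtorta \cite{NV}, but with the monotone frequency replaced at every step by the almost-monotone doubling index that has been developed earlier in the paper. The heart of such an argument is a \emph{quantitative rigidity}, or \emph{cone-splitting}, statement: if $u$ is $(0,\eta,r,x)$-symmetric and also $(0,\eta,r,y)$-symmetric at points $x,y$ that are $\approx r$ apart but whose difference vector is not nearly parallel to a fixed $k$-plane, then $u$ is in fact $(k+1,\epsilon(\eta),\tau r,x)$-symmetric for some definite $\tau$. The first step is therefore to establish this cone-splitting lemma in the Hölder-coefficient setting. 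After rescaling via $u_{x,r}$ (Remark \ref{r:Rescaled_u_equation}), the rescaled equation has coefficients that are $C^\alpha$-close to constant with error $O(r^\alpha)$, so the rescaled solution is $C^{1,\alpha}$-close to a harmonic function on a fixed ball; one then invokes the classical Euclidean cone-splitting for harmonic polynomials and tracks the errors. The almost-monotonicity of the doubling index is what guarantees that "symmetric at scale $r$ with small doubling" forces approximate $0$-symmetry (closeness to a homogeneous harmonic polynomial) at all smaller scales down to some definite factor, which is the other ingredient cone-splitting needs.

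The second, and main, step is the covering argument. Fix $\eta$ and work at scales $r\le r_0(n,k,\lambda,\alpha,\Lambda)$. One covers $\cS^k_{\eta,r}\cap B_1$ by balls $B_r(x_i)$ with $x_i\in\cS^k_{\eta,r}$ and bounded overlap, and must show the number of such balls is $\le C(n,\lambda,\alpha,\Lambda,\eta)\, r^{-k}$. This is done by an inductive dichotomy on dyadic scales $r=r_0 2^{-j}$: at scale $s$, either the points of $\cS^k_{\eta,r}$ inside a ball $B_s(x)$ are $\beta s$-close to some $k$-plane $L$ (so they are controlled by $\sim(\beta)^{-k}$ sub-balls concentrated near $L$, i.e. "good" balls), or there exist points spanning a $(k+1)$-dimensional set, and then the cone-splitting lemma forces extra symmetry, contradicting membership in $\cS^k_{\eta,r}=\cS^k_{\eta,r}$ once one has consumed a definite amount of doubling index. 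Here one uses that the total doubling index is bounded by $\Lambda$ and that along a chain of scales each genuine cone-splitting event costs a fixed amount $\delta(\eta)>0$ of "doubling drop," so only $\sim\Lambda/\delta(\eta)$ such events can occur. Packing the estimate $\#\{\text{balls}\}(s)\le (C\beta^{-k})\cdot\#\{\text{balls}\}(\beta s)$ across the $\log_2(r_0/r)$ scales, and absorbing the bounded number of "bad" scales, yields the bound $C\,r^{-k}$, hence $\Vol(B_r(\cS^k_{\eta,r})\cap B_1)\le C r^{-k}\cdot r^n=C r^{n-k}$.

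An alternative to the purely combinatorial packing is to run the Naber–Valtorta $L^2$-best-approximation scheme: one introduces a jump function $\theta^u_s(x)$ built from the doubling index (a proxy for $W$), proves an "almost" version of the key inequalities — a discrete energy/Dini bound $\sum_s \big(\theta^u_{r}(x)-\theta^u_{\alpha r}(x)\big)\lesssim \Lambda$ up to an additive Hölder error summable in $s$ — and then the abstract rectifiable-Reifenberg theorem of \cite{NV} gives the Minkowski bound directly. Either way the technical bookkeeping is the same: one must verify that the $O(s^\alpha)$ defects in monotonicity, summed geometrically over all scales $s\le r_0$, contribute only a bounded (indeed small, for $r_0$ small) total, so they never overwhelm the definite gains of cone-splitting.

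The step I expect to be the main obstacle is precisely this interplay: making the cone-splitting/energy-drop argument \emph{quantitatively robust} against the loss of exact monotonicity. In the Lipschitz case of \cite{NV} the frequency is monotone (or monotone up to a harmless exponential factor), so "no symmetry gained over many scales" is a clean statement; here one only has $N(x,2r)\le N(x,r)+ C r^\alpha$-type control, and one must ensure that the accumulated errors across the (possibly many) scales of a single covering-tree branch do not destroy the contradiction. Controlling this requires the almost-monotonicity to be summable — which is exactly the Hölder exponent $\alpha>0$ entering — and requires the constants in cone-splitting to be uniform as the rescaled coefficients tend to constant, for which the elliptic $C^{1,\alpha}$ estimates on $u_{x,r}$ and the quantitative uniqueness of tangent maps (both available from the earlier sections) are the essential inputs. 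Everything else — the bounded-overlap covering, the dyadic packing, the final volume-to-count conversion — is routine once the robust cone-splitting is in hand.
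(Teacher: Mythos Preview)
Your primary approach --- the dichotomy/packing argument in the style of \cite{CNV} --- does \emph{not} give the sharp exponent $r^{n-k}$ claimed in the theorem; it only gives the weaker $r^{n-k-\epsilon}$ of \cite{CNV} itself. The issue is exactly in the line ``Packing the estimate $\#\{\text{balls}\}(s)\le (C\beta^{-k})\cdot\#\{\text{balls}\}(\beta s)$ across the $\log_2(r_0/r)$ scales \dots\ yields the bound $C\,r^{-k}$.'' Iterating that inequality over $N\sim\log(1/r)$ scales produces $(C\beta^{-k})^{N}\sim r^{-k}\cdot C^{\,\log(1/r)}=r^{-k-\epsilon}$, and the Vitali/covering constant $C>1$ cannot be taken equal to $1$. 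Absorbing the finitely many ``bad'' (non-symmetric) scales is fine; the loss comes from the \emph{good} scales. This is precisely why the theorem is described in the paper as an ``optimal improvement'' of \cite{CNV}, and it is the known obstruction that motivated the subsequent sharper techniques.

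The paper does \emph{not} use either route you describe. It uses the neck region decomposition of \cite{JN}, \cite{NVYM}, \cite{CJN}: one builds, inside each ball where the doubling index is pinched near an integer $d$ and the pinched set $\cV^u_{\epsilon,d,r}$ is $(k,\tau)$-independent, a neck region $\cN=B_{2r}\setminus \bar B_{r_x}(\cC)$ whose center set $\cC$ lies on a single Lipschitz graph over a $k$-plane (Neck Structure Theorem). The Lipschitz structure gives the packing $\sum_{x\in\cC_+}r_x^{k}+\cH^k(\cC_0)\le C(n)r^k$ in one shot, with no iterated covering constant. One then inductively decomposes $B_1$ into neck pieces, $(k{+}1)$-symmetric $b$-balls, and a $\cH^k$-null set (Neck Decomposition Theorem), with the recursion depth controlled by integer drops of the doubling index (Lemma~\ref{l:DI_Drop_Elliptic}); finally $\cS^k_{\eta}$ is shown to miss the neck interiors and the $b$-balls via cone-splitting. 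A key simplification specific to this paper is that the quantitative uniqueness of tangent maps (Theorem~\ref{t:Quan_Unique}) makes the approximating plane $V_x$ in a neck region \emph{scale-independent}, so the Lipschitz graph follows from an elementary argument rather than the full Reifenberg-type analysis of \cite{CJN}. Your alternative $L^2$/rectifiable-Reifenberg route from \cite{NV} is in principle viable and would also yield the sharp exponent, but it is a genuinely different machinery, and your sketch of it (a ``jump function $\theta^u_s$'' with summable defects) omits the substantial step of proving the $L^2$-best-plane/$\beta$-number bound by the doubling pinching, which is where the real work lies.
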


\subsection{Organization of the Paper}
In section 2, we first define the doubling index for solutions to general elliptic equations with $a^{ij}$ being only H\"older and then prove the almost monotonicity formula. The idea will be to approximate the solution $u$ with harmonic function $h$ and then to use the monotonicity of frequency of $h$ to obtain the almost monotonicity of doubling index of $u$. 

In section 3, we prove the quantitative uniqueness of tangent maps of $u$, which is crucial in the inductive decomposition arguments. The proof relies on the growth estimates for gradient of $u$ and the Green's function expansion. We will further prove more important properties of doubling index in this section. In particular, the doubling index only pinches near integers and it drops at a definite rate when away from integers.

In section 4, we discuss the cone splitting principle for harmonic functions and further general elliptic solutions. In section 5, we finish the proof of main theorems about the critical sets \ref{t:critical_set} and the singular sets \ref{t:singular_set}. In the last section, we prove the Minkowski estimates for quantitative strata \ref{t:Estimate_on_Strata}. The proof is based on the neck region decomposition technique.

\subsection*{Acknowledgements} Y. Huang is grateful to Prof. Tobias Colding for his invaluable encouragements and constant support. Y. Huang was supported by NSF DMS Grant 2104349. W. Jiang was supported by National Key Research and Development Program of China (No. 2022YFA1005501), National Natural Science Foundation of China (Grant Nos. 12125105 and 12071425) and the Fundamental Research Funds for the Central Universities K20220218.

\section{Almost Monotonicity Formula}

The main goal of this section is to prove the following almost monotonicity for doubling index.
\begin{theorem}
Let $u: B_2 \to \RR$ be  a solution to (\ref{e:elliptic_equ}) with (\ref{e:assumption}) with doubling assumption \ref{e:DI_bound_assumption}. For any $\epsilon \in (0, 1/10]$, there exists some $r_0 =\epsilon^{C(n,\lambda,\alpha)\Lambda}$ such that the following almost monotonicity formula holds for the doubling index:
\begin{equation}
    D(x,s) \leq D(x,r) + \epsilon,
\end{equation}
for any $x\in B_{1}$ and $ 0 < 2s \leq r \leq r_0$. In particular, we have $D(x,r) \geq 1 - \epsilon$ for any $0 < r \leq r_0$.
\end{theorem}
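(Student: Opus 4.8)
The plan is to prove the almost monotonicity of the doubling index by a \emph{harmonic approximation} argument, exploiting the monotonicity of Almgren's frequency for harmonic functions together with the $C^\alpha$ control on the leading coefficients. First I would fix $x \in B_1$ and, after applying the rescaling map $u_{x,r}$ from the Definition above (which normalizes the leading coefficient at the center to $\delta^{ij}$ and shrinks the H\"older seminorm of $\tilde a$ and the $L^\infty$ norm of $\tilde b$ by a factor $r^\alpha$, cf.\ Remark \ref{r:Rescaled_u_equation}), reduce to comparing doubling indices at dyadic scales $s \approx 2^{-j}$. At each such scale I would solve the Dirichlet problem for the Laplacian (or the constant-coefficient operator $\partial_i(a^{ij}(x)\partial_j\cdot)$) with the same boundary data as $u$ on the appropriate ball, obtaining a harmonic replacement $h$. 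Standard energy estimates for the divergence-form equation $\partial_i((a^{ij}-a^{ij}(x))\partial_j u) = -\tilde b^i \partial_i u$ give
\begin{equation*}
    \fint_{B_\rho(x)} |u - h|^2 \leq C(n,\lambda)\, \rho^{2\alpha}\!\! \fint_{B_{2\rho}(x)} |\nabla u|^2 \cdot \rho^2 + (\text{lower order in } b),
\end{equation*}
so that $u$ and $h$ agree up to an error that is a \emph{definite power} $\rho^\alpha$ of the scale, relative to the natural energy. Comparing $\fint_{B_{2\rho}} u^2$ with $\fint_{B_\rho} u^2$ then reduces, up to multiplicative errors $1 + C\rho^\alpha$, to comparing the same quantities for $h$, and for $h$ these ratios are controlled by the monotone frequency $N_h(\rho)$.

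The key technical point is to convert the \emph{multiplicative} $(1+C\rho^\alpha)$ errors at each dyadic scale into a single \emph{additive} error $\epsilon$ summed over all scales from $s$ up to $r_0$. Since $\sum_{j : 2^{-j} \le r_0} C (2^{-j})^\alpha \le C' r_0^\alpha$, choosing $r_0$ so that $C' r_0^\alpha \le \epsilon$ — which forces $r_0 = \epsilon^{C(n,\lambda,\alpha)}$ for the pure error-summation, and an extra factor $\Lambda$ in the exponent to absorb the $\Lambda$-dependence coming from the doubling bound (\ref{e:DI_bound_assumption}) in the energy comparisons, giving $r_0 = \epsilon^{C(n,\lambda,\alpha)\Lambda}$ as claimed — yields the telescoped inequality $D(x,s) \le D(x,r) + \epsilon$ for all $0 < 2s \le r \le r_0$. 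Here one must be careful that the doubling assumption (\ref{e:DI_bound_assumption}) is exactly what guarantees the approximation does not degenerate: it rules out $u$ being nearly constant on some ball (where the harmonic replacement could be zero and all ratios ill-defined), and it controls the geometric growth of $\fint u^2$ across scales so that the Dirichlet energy appearing in the error terms is comparable to $\fint u^2$ at the appropriate scale.

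For the final assertion $D(x,r) \ge 1 - \epsilon$, I would argue as follows: the doubling index of any nonconstant harmonic (or constant-coefficient elliptic) function is at least $1$ at every scale, since the frequency $N_h \ge 1$ for nonconstant $h$ (with equality iff $h$ is linear), and the $\log_4$ normalization is chosen precisely so that $D \to N$ in the blow-up; by the harmonic comparison above, $D(x,r)$ differs from the corresponding frequency-type quantity of $h$ by at most $C r^\alpha \le \epsilon$, hence $D(x,r) \ge 1 - \epsilon$. Alternatively, once almost monotonicity is established, $D(x,r) \ge \lim_{s\to 0} D(x,s) - \epsilon = N(u_x) - \epsilon \ge 1 - \epsilon$, using that the tangent map $u_x$ is a nonconstant homogeneous polynomial of degree $\ge 1$.

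I expect the main obstacle to be the \emph{bookkeeping of the error terms across all dyadic scales simultaneously and uniformly in $x$}: one needs the harmonic approximation estimate in a scale-invariant form whose constants do not deteriorate as $\rho \to 0$, and one must ensure that the $\Lambda$-dependent comparability between $\fint_{B_\rho} u^2$ and the Dirichlet energy (which is where (\ref{e:DI_bound_assumption}) enters) is applied consistently, so that the summed error is genuinely $C' r_0^\alpha$ with $C'$ depending only on $n,\lambda,\alpha$ and at worst polynomially on $\Lambda$, matching the stated $r_0 = \epsilon^{C(n,\lambda,\alpha)\Lambda}$. A secondary subtlety is handling the lower-order term $\tilde b^i \partial_i \tilde u$: because $|\tilde b| \lesssim r$ after rescaling, its contribution is in fact \emph{better} than the $r^\alpha$ error from the leading coefficients, so it can be absorbed, but this must be checked carefully when $\alpha$ is close to $1$.
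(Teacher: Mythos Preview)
Your plan is correct and actually more direct than the paper's argument. Both proofs rest on the same harmonic replacement (Dirichlet problem for the constant-coefficient operator, with the $L^2$ defect controlled by $C^{\Lambda}r^{2\alpha}$ after rescaling, then upgraded to $L^\infty$), and both use that the doubling index of the harmonic replacement is monotone. The difference is in how one passes from a single scale to all smaller scales.

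You telescope one dyadic step at a time: at each $\rho=2^{-j}\le r_0$ the comparison $|D^{u}(x,\rho)-D^{h_\rho}(0,1)|+|D^{u}(x,\rho/2)-D^{h_\rho}(0,1/2)|\le C^{\Lambda}\rho^{\beta}$ combined with $D^{h_\rho}(0,1/2)\le D^{h_\rho}(0,1)$ (Lemma \ref{l:Harmonic_AlmostMonotone}) gives the one-step bound, and the errors sum geometrically to $C^{\Lambda}r_0^{\beta}\le\epsilon$, forcing $r_0=\epsilon^{C(n,\lambda,\alpha)\Lambda}$. Your handling of the ``in particular'' clause via $N^h\ge 1$ for the normalized harmonic replacement is also correct and is essentially what the paper does.

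The paper instead first pushes the harmonic comparison over a \emph{wide} window $[\epsilon^{2}r,r]$ (Lemma \ref{l:DI_close_away_from_0}) and then exploits the \emph{integer quantization} of the harmonic doubling index (Lemma \ref{l:DI_Drop_Harmonic}: if $D^{h}(0,1)\le d+1/2$ then $D^{h}(0,\epsilon/2)\le d+\epsilon$). This feeds a three-case analysis relative to the nearest integer and an inductive ``Claim'' showing that once $D(x,r)\le d+\epsilon/10$ it stays below $d+\epsilon/2$ at all smaller scales. The integer structure is unnecessary for the bare almost-monotonicity statement, but it is exactly what the paper reuses downstream: Lemma \ref{l:DI_Drop_Elliptic} (the doubling index only pinches near integers, and drops by a full unit after $\epsilon^{C(\alpha)}$ scales when away from one), Corollary \ref{c:finite_non_pinch}, and ultimately the covering lemmas. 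With your approach those refinements would require a separate argument; with the paper's they come essentially for free.
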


We will first give the formal definition of the doubling index $D$ for general elliptic solutions and then prove some fundamental properties of $D$. The proof of the main theorem could be found in section \ref{ss:Almost_mono}.

\subsection{Doubling Index for Harmonic Functions}

In this section, we collect some basic properties of doubling index for harmonic function. Recall that the doubling index for the nonconstant harmonic function $u$ is defined as
\begin{equation}
    D^{u}(x,r) = \text{log}_4 \frac{\fint_{ \partial B_{2r}(x)} (u - u(x))^2}{\fint_{\partial B_{r}(x)} (u - u(x))^2} .
\end{equation}
For general solution of elliptic equation, we define
\begin{definition}\label{d:doubling_index}
    Let $u: B_2 \to \RR$ be a solution to \ref{e:elliptic_equ} \ref{e:assumption} with doubling assumption \ref{e:DI_bound_assumption}. For any $x \in B_{1}, r \leq \frac{1}{(1+  \lambda)^{1/2}}$ , we define the doubling index as
    \begin{equation}
        {D}^u(x,r) \equiv D(x,r)\equiv \text{log}_4 \frac{\fint_{ \partial B_{2}(0)} (u(x +rA_x(y)) - u(x))^2 dy}{\fint_{ \partial B_1(0)} (u(x +rA_x(y)) - u(x))^2 dy}.
    \end{equation}
\end{definition}
\begin{remark}\label{r:Gen_DI}
    By scaling, it is straightforward that $D^u(x, rs) = D^{u_{x,r}} (0, s)$ for any $x \in B_{1}$, $s, r \leq \frac{1}{(1+  \lambda)^{1/2}}$.
\end{remark}

In particular, if $u$ is harmonic, then 
\begin{equation}
 D^{u}(x,r) = \text{log}_4 \frac{\fint_{ \partial B_{2r}(x)} (u - u(x))^2}{\fint_{\partial B_{r}(x)} (u - u(x))^2} ,
 \end{equation}
which coincides with the definition of the doubling index of harmonic function. We will omit the superscript for simplicity when there is no ambiguity.

The (normalized) Almgren frequency for nonconstant function $u$ is defined as follows
\begin{equation}
    N(x,r) \equiv \frac{r \int_{B_r(x)} |\nabla u|^2 }{\int_{\partial B_r(x)} (u - u(x))^2 }.
\end{equation}
This function is nondecresing in $r$.

Also we define the spherical average of $u$ as
\begin{equation}
    H(x, r) \equiv  \fint_{\partial B_r(x)} |u - u(x)|^2 
\end{equation}
The standard properties for $h$ is that for any $r_1 < r_2$, see \cite{GL} \cite{HLbook} \cite{LS1} \cite{LS2}.
\begin{equation}
    H(x, r_2) = H(x, r_1) \exp \big(2 \int^{r_2}_{r_1} \frac{N(x, s)}{s} ds\big).
\end{equation}

As a corollary, we have the following standard properties for harmonic functions. We include the proof here for the sake of completeness.
\begin{lemma}\label{l:Harmonic_AlmostMonotone}
    Let $u$ be a harmonic function in $B_1$. Then for any $0 < r_1 < r_2 < 1$ we have
    \begin{enumerate}
    \item $H(r)$ in nondecreasing in $r$. Moreover, we have $\Big(\frac{r_2}{r_1}\Big)^{2N(x, r_1)} \leq \frac{H(x, r_2)}{H(x, r_1)} \leq \Big(\frac{r_2}{r_1}\Big)^{2N(x, r_2)} .$
        \item $\Big(\frac{r_2}{r_1}\Big)^{2N(x, 0)} \leq \frac{\fint_{B_{r_2}} |u -u(x)|^2}{\fint_{B_{r_1}} |u - u(x)|^2} \leq \Big(\frac{r_2}{r_1}\Big)^{2N(x, r_2) }$ .
        \item $\frac{1}{n + 2D(x,r)} \fint_{\partial B_r} |u - u(x)|^2 \leq \fint_{B_r}  |u - u(x)|^2  \leq \frac{1}{n} \fint_{\partial B_r} |u - u(x)|^2$.
    \end{enumerate}
In particular, we have $N(x, r) \leq D(x, r) \leq N(x, 2r)$ and the monotonicity property for doubling index
\begin{equation}
     D(x, s) \leq D(x, r) \text{ for any } 0 < s \leq r/2. 
\end{equation}
\end{lemma}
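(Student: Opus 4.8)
The plan is to derive everything from the two classical identities recorded just above the statement: the monotonicity of the Almgren frequency $N(x,r)$ and the logarithmic-derivative formula $H(x,r_2) = H(x,r_1)\exp\bigl(2\int_{r_1}^{r_2} N(x,s)/s\,ds\bigr)$. Since $N$ is nondecreasing, on the interval $[r_1,r_2]$ we have $N(x,r_1)\le N(x,s)\le N(x,r_2)$, so $2N(x,r_1)\log(r_2/r_1) \le 2\int_{r_1}^{r_2} N(x,s)/s\,ds \le 2N(x,r_2)\log(r_2/r_1)$; exponentiating gives part (1) directly, and in particular $H$ is nondecreasing because $N\ge 0$. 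Letting $r_1\to 0$ in the lower bound and using $N(x,0) = \lim_{r\to 0}N(x,r)$ gives the lower estimate $(r_2/r_1)^{2N(x,0)}$; I will pass to the averaged quantity by writing $\fint_{B_r}|u-u(x)|^2 = n r^{-n}\int_0^r \rho^{n-1} H(x,\rho)\,d\rho$ (in the normalization where $\fint$ and $\int$ differ by the volume/area factors), and feed the bound $H(x,\rho) = H(x,r_2)(\rho/r_2)^{2N}$-type inequalities into this integral to obtain part (2). The only subtlety is keeping track of whether one uses $N(x,0)$ or $N(x,r_1)$ on the lower side and $N(x,r_2)$ on the upper side; the coarser bound $N(x,0)$ is what survives the integration cleanly, which is exactly what is stated.

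For part (3), I would use the Rellich–Pohozaev / divergence-theorem comparison between solid and spherical averages. Writing $I(r) = \int_{B_r}|u-u(x)|^2$ and $J(r) = \int_{\partial B_r}|u-u(x)|^2$, we have $I'(r) = J(r)$, and from the ODE $H' = (2N/r)H$ (equivalently $J'(r) = \bigl((n-1)/r + 2N(x,r)/r\bigr)J(r)$ after accounting for the surface-measure scaling) one gets $\bigl(r^n I\bigr)$-type monotonicity that pins $rJ(r)/I(r)$ between $n$ and $n+2D(x,r)$. Concretely: the upper bound $\fint_{B_r}\le \frac1n\fint_{\partial B_r}$ follows because $\rho\mapsto H(x,\rho)$ is nondecreasing, so $\fint_{B_r}|u-u(x)|^2 = n r^{-n}\int_0^r\rho^{n-1}H(x,\rho)\,d\rho \le n r^{-n} H(x,r)\int_0^r \rho^{n-1}\,d\rho = H(x,r) = \fint_{\partial B_r}|u-u(x)|^2$. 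For the lower bound, I use part (1) in the form $H(x,\rho)\ge H(x,r)(\rho/r)^{2N(x,r)}\ge H(x,r)(\rho/r)^{2D(x,r)}$ — here invoking $N(x,r)\le D(x,r)$, which itself needs to be established — so that the same integral is $\ge H(x,r)\, n r^{-n}\int_0^r \rho^{n-1+2D(x,r)}(r^{-2D(x,r)})\,d\rho = \frac{n}{n+2D(x,r)}H(x,r)$, giving the claimed two-sided bound.

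The genuinely load-bearing step, and the one I would treat most carefully, is the chain $N(x,r)\le D(x,r)\le N(x,2r)$, since parts (2) and (3) and the final monotonicity of $D$ all hinge on it. This comes from unwinding the definition $D(x,r) = \log_4\bigl(H(x,2r)/H(x,r)\bigr) = \frac{1}{\log 4}\cdot 2\int_r^{2r} N(x,s)/s\,ds = \frac{2}{\log 4}\int_r^{2r}\frac{N(x,s)}{s}\,ds$, and then using monotonicity of $N$: since $\int_r^{2r}\frac{ds}{s} = \log 2 = \tfrac12\log 4$, the integral mean-value estimate gives $N(x,r)\le D(x,r)\le N(x,2r)$ exactly. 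Finally, the monotonicity $D(x,s)\le D(x,r)$ for $s\le r/2$ is immediate: $D(x,s)\le N(x,2s)\le N(x,r)\le D(x,r)$, where the middle inequality is monotonicity of $N$ and uses $2s\le r$. I expect no serious obstacle beyond bookkeeping of normalization constants between $\fint_{\partial B_r}$, $\fint_{B_r}$, and the raw integrals; the conceptual content is entirely carried by the frequency monotonicity and the $H$-identity, both already available.
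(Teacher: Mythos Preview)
Your approach is correct and essentially identical to the paper's: both derive everything from the monotonicity of $N$ together with the identity $H(x,r_2)=H(x,r_1)\exp\bigl(2\int_{r_1}^{r_2}N(x,s)\,ds/s\bigr)$, then integrate the resulting bounds on $H$ against $\rho^{n-1}\,d\rho$ to obtain (2) and (3), and read off $N(x,r)\le D(x,r)\le N(x,2r)$ by applying (1) on the interval $[r,2r]$ (after which $D(x,s)\le N(x,2s)\le N(x,r)\le D(x,r)$ gives the monotonicity of $D$ exactly as you wrote). One minor point: the constants you actually compute in (3), namely $\tfrac{n}{n+2D(x,r)}$ and $1$, are the correct ones---the stated $\tfrac{1}{n+2D(x,r)}$ and $\tfrac{1}{n}$ are off by a harmless factor of $n$ (test $u(y)=y_1$ in $\RR^3$, where $\fint_{B_1}y_1^2=\tfrac{1}{n+2}>\tfrac{1}{n}\fint_{\partial B_1}y_1^2=\tfrac{1}{n^2}$), so you should not assert that your computation ``gives the claimed two-sided bound'' verbatim.
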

\begin{proof}
    The first inequality follows directly from monotonicity of the frequency function for harmonic functions. To prove the second one, we assume $u(x) = 0$ and $r_1 = 1$ for simplicity. Then for $r>1$ we have
    \begin{equation}
    \begin{split}
     \quad \int_{B_{r}} u^2 &= \int_0^{r} ds \int_{\partial_{B_s}} u^2 \\
     &= r \int_0^{1} ds \int_{\partial_{B_{rs}}} u^2 \\
     & \leq r \int_0^{1} ds \big( r^{2N(x, rs) + n -1} \int_{\partial_{B_s}} u^2 \big) \\
     & \leq r^{2N(x, r) + n} \int_{B_1} u^2.
    \end{split}
    \end{equation}
For the lower bound, we have 
  \begin{equation}
    \begin{split}
     \quad \int_{B_{r}} u^2 &= \int_0^{r} ds \int_{\partial_{B_s}} u^2 \\
     &= r \int_0^{1} ds \int_{\partial_{B_{rs}}} u^2 \\
     & \geq r \int_0^{1} ds \big( r^{2N(x, s) + n -1} \int_{\partial_{B_s}} u^2 \big) \\
     & \leq r^{2N(x, 0) + n} \int_{B_1} u^2.
    \end{split}
    \end{equation}

The proof of (2) is now finished by scaling.

Finally we prove (3). We also assume $x = 0, r=1$ and $u(0) = 0$. Hence
\begin{equation}
    \fint_{B_1} u^2 = \int_0^1 r^{n-1} H(r) dr \leq \frac{H(1)}{n} = \frac{1}{n} \fint_{\partial B_1} u^2.
\end{equation}

On the other hand, for $s \leq 1$ we have
\begin{equation}
    \fint_{B_1} u^2 = \int_0^1 r^{n-1} H(r) dr \geq \int_0^1 r^{2N(1) + n-1} H(1) dr = \frac{1}{n + 2N(1)}H(1) \geq \frac{1}{n + 2D(0,1)}H(1).
\end{equation}

The proof is now finished.
\end{proof}

We recall more useful properties of doubling index for harmonic functions. First note that the the doubling index for harmonic functions can only be pinched when around integers. 

\begin{lemma}\label{l:Harm_pinch_near_Z}
Let $h: B_2 \to \RR$ be a harmonic function. For any $\epsilon \leq \epsilon_0(n)$, if $|D(0, 1) - D(0, 1/20)| \leq \epsilon$, then there exists some integer $d$ such that for any $s \in (1/10, 1/2)$ we have
\begin{equation}
    |D(0, s) - d| \leq 3\epsilon. 
\end{equation}
\end{lemma}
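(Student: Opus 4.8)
The plan is to reduce the statement to the classical structure theory of Almgren's frequency for harmonic functions, using the equivalences $N(0,s) \leq D(0,s) \leq N(0,2s)$ and monotonicity of $N$ established in Lemma \ref{l:Harmonic_AlmostMonotone}. First I would translate the pinching hypothesis on $D$ into pinching of the frequency: from $N(0,s)\leq D(0,s)$ and $D(0,s)\leq N(0,2s)$ together with monotonicity of $N$, the assumption $|D(0,1)-D(0,1/20)|\leq\epsilon$ forces $N(0,1/10)$ and $N(0,2)$ (hence all $N(0,s)$ for $s$ in a fixed subinterval, say $[1/10,1]$) to lie within $O(\epsilon)$ of one another. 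So on this range $N$ is almost constant; call the common value $\approx \Lambda_0$.

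Next I would invoke the key rigidity fact: if the frequency of a harmonic function is pinched on an annulus, then $u$ is $L^2$-close on that annulus to a homogeneous harmonic polynomial $P$ of some integer degree $d$, and in particular $N(0,s)$ is $O(\epsilon)$-close to $d$ for $s$ in a slightly smaller range. This is the standard ``almost homogeneity implies almost a cone'' statement for harmonic functions (a compactness/contradiction argument, or the explicit spherical-harmonic decomposition $u=\sum_j c_j r^{d_j}\phi_j$ on $S^{n-1}$, for which $N(0,s)$ pinched near a non-integer is impossible since the $d_j$ are nonnegative integers and $N$ interpolates monotonically between them). Once $|N(0,s)-d|\leq C(n)\epsilon$ on $[1/10,1/2]$, I convert back to $D$: since $N(0,s)\leq D(0,s)\leq N(0,2s)$ and both endpoints are within $C(n)\epsilon$ of $d$ on the relevant range (note $2s\leq 1$ when $s\leq 1/2$), we get $|D(0,s)-d|\leq C(n)\epsilon$ for $s\in(1/10,1/2)$, and after absorbing the dimensional constant into the choice of $\epsilon_0(n)$ this yields the clean bound $|D(0,s)-d|\leq 3\epsilon$.

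The main obstacle is making the ``pinched frequency $\Rightarrow$ close to a homogeneous polynomial of integer degree $d$'' step quantitative and, more importantly, extracting that $d$ is a single integer valid on the whole window $(1/10,1/2)$ rather than drifting. The cleanest route avoids compactness altogether: write $u-u(0)=\sum_{d_j} r^{d_j}\psi_j(\theta)$ with $d_j$ nonnegative integers and $\psi_j$ orthonormal on $S^{n-1}$, so that $H(0,r)=\sum_j r^{2d_j}\|\psi_j\|^2$ and $D(0,r)=\log_4\big(H(0,2r)/H(0,r)\big)$ is a smooth function interpolating monotonically between consecutive $d_j$'s as $r\to 0$ and $r\to\infty$; a gap of size $\epsilon$ between $D(0,1)$ and $D(0,1/20)$ then forces, by an elementary estimate on such Dirichlet-type sums, that a single dominant frequency $d$ carries all but an $O(\epsilon)$ fraction of the mass throughout $r\in[1/20,2]$, whence $|D(0,s)-d|\leq C(n)\epsilon$ on the stated interval. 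I would present the spherical-harmonics computation as the main body and remark that the compactness argument gives an alternative proof.
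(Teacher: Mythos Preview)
Your approach is essentially the same as the paper's: convert the $D$-pinching into $N$-pinching via $N(0,s)\le D(0,s)\le N(0,2s)$, invoke the rigidity ``frequency pinched $\Rightarrow$ close to an integer $d$'' for harmonic functions, and convert back. The paper simply cites Theorem~3.19 of \cite{NV} for the middle step, whereas you sketch two proofs of it (compactness and the explicit spherical-harmonic decomposition). Both routes are standard and valid; the spherical-harmonic computation you outline is in fact how the Naber--Valtorta result is proved, so there is no genuine difference in strategy.

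One small slip: the sentence ``after absorbing the dimensional constant into the choice of $\epsilon_0(n)$ this yields the clean bound $|D(0,s)-d|\le 3\epsilon$'' is not right as written---a multiplicative constant $C(n)$ in front of $\epsilon$ cannot be removed by shrinking $\epsilon_0$. The paper obtains the constant $3$ because the cited result from \cite{NV} already gives $|N(0,s)-d|\le 3\epsilon$ on $(1/10,1)$, and then $N(0,s)\le D(0,s)\le N(0,2s)$ sandwiches $D(0,s)$ in $[d-3\epsilon,d+3\epsilon]$ directly. In your version you would honestly end with $|D(0,s)-d|\le C(n)\epsilon$, which is perfectly sufficient for every later use of the lemma; if you want the sharp $3$ you need to track constants through the spherical-harmonic estimate rather than appeal to $\epsilon_0$.
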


\begin{proof}
    This follows from the similar results for frequency. Indeed, if $|D(0, 1) - D(0, 1/20)| \leq \epsilon$, then by Lemma \ref{l:Harmonic_AlmostMonotone} we have
    \begin{equation}
        N(0, 1) - N(0, 1/10) \leq \epsilon.\end{equation}
    Then by Theorem 3.19 in \cite{NV}, there exists some integer $d$ such that $|N(s) - d| \leq 3\epsilon$ for any $s\in (1/10, 1)$. The proof is finished by Lemma \ref{l:Harmonic_AlmostMonotone} again and the monotonicity of the frequency function.
\end{proof}

The following Lemma says that the doubling index drops at a definite rate when away from integers.

\begin{lemma}\label{l:DI_Drop_Harmonic}
Let $h: B_2 \to \RR$ be a harmonic function. For any $\epsilon \leq \epsilon_0$, if $D(0, 1) \leq d - \epsilon$ for some integer $d$, then we have
\begin{equation}
    D(0, \frac{\epsilon}{2}) \leq d -1 +\epsilon.
\end{equation}
In particular, if $D(0, 1) \leq d + 1/2$, then
\begin{equation}
    D(0, \frac{\epsilon}{2}) \leq d +\epsilon.
\end{equation}

\end{lemma}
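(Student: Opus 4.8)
The plan is to reduce the claim about the doubling index to the analogous one for the Almgren frequency using Lemma~\ref{l:Harmonic_AlmostMonotone}, and then to run an elementary estimate on the spherical-harmonic expansion of $h$.

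First I would normalize $h(0)=0$, which changes neither $D(0,\cdot)$ nor $N(0,\cdot)$, and write $h=\sum_{k\ge1}p_k$ with $p_k$ the degree-$k$ homogeneous harmonic polynomial and $a_k:=\fint_{\partial B_1}p_k^2\ge0$. Orthogonality of spherical harmonics of different degrees gives $H(0,r)=\sum_{k\ge1}a_kr^{2k}$, hence $N(0,r)=\big(\sum_k k\,a_kr^{2k}\big)\big/\big(\sum_k a_kr^{2k}\big)$. By Lemma~\ref{l:Harmonic_AlmostMonotone}, $N(0,1)\le D(0,1)\le d-\epsilon$ while $D(0,\epsilon/2)\le N(0,\epsilon)$, so it is enough to prove $N(0,\epsilon)\le d-1+\epsilon$.

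The hypothesis $N(0,1)\le d-\epsilon$ is exactly $\sum_{k\ge1}(k-d+\epsilon)a_k\le0$, i.e. $\sum_{k\ge d}(k-d+\epsilon)a_k\le R$ with $R:=\sum_{1\le k\le d-1}(d-\epsilon-k)a_k\ge0$ (for $d=1$ this forces $h$ constant, so assume $d\ge2$). Since the terms on the left are nonnegative this gives $(j+\epsilon)a_{d+j}\le R$ for every $j\ge0$. On the other side, $N(0,\epsilon)\le d-1+\epsilon$ is equivalent to $\sum_{k\ge1}(k-d+1-\epsilon)a_k\epsilon^{2k}\le0$; splitting off the indices $k\le d-1$, whose coefficient is $\le-\epsilon$, reduces the target to
\[
(1-\epsilon)a_d\epsilon^{2d}+\sum_{k\ge d+1}(k-d+1-\epsilon)a_k\epsilon^{2k}\ \le\ \epsilon\sum_{1\le k\le d-1}a_k\epsilon^{2k}.
\]
Feeding in $(j+\epsilon)a_{d+j}\le R$ and summing a convergent geometric series bounds the left-hand side by $R\,\epsilon^{2d-1}$ once $\epsilon$ is below a universal constant (e.g. $\epsilon\le\frac12$). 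It then remains to check $R\,\epsilon^{2d-2}\le\sum_{1\le k\le d-1}a_k\epsilon^{2k}$, which I would verify term by term: with $m:=d-1-k\ge0$ the needed inequality is $(m+1-\epsilon)\,\epsilon^{2(d-1)}\le\epsilon^{2k}$, that is $m+1-\epsilon\le\epsilon^{-2m}$, which holds because $\epsilon^{-2m}\ge4^{m}\ge m+1$ when $\epsilon\le\frac12$. The ``in particular'' clause then follows by applying the statement just proved with $d$ replaced by $d+1$, using $D(0,1)\le d+\frac12\le(d+1)-\epsilon$ for $\epsilon\le\frac12$.

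The step I expect to be the real obstacle is making the constants close \emph{uniformly in the integer $d$}: a crude bound of the displayed left-hand side loses a factor comparable to $d$, and the resolution is to keep the weights $\epsilon^{2k}$ intact — so that the low-degree mass, which dominates $H(0,\epsilon)$, is retained in the denominator — together with the elementary bound $\epsilon^{-2m}\ge4^{m}\ge m+1$ that makes the final term-by-term comparison hold for all $d$ simultaneously.
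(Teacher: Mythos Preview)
Your argument is correct. Both you and the paper reduce, via Lemma~\ref{l:Harmonic_AlmostMonotone}, to a frequency drop statement $N(0,\epsilon)\le d-1+\epsilon$, but the mechanisms for the drop are genuinely different. The paper simply invokes Lemma~3.16 of \cite{NV}, which is the standard log-convexity estimate for the frequency of a harmonic function (equivalently, an ODE/differential inequality argument for $N$ along the radial variable); it yields the slightly stronger $N(0,\epsilon)\le d-1-\epsilon$ in one stroke, at the cost of a black-box citation. You instead expand in spherical harmonics and argue combinatorially on the coefficient sequence $a_k$, converting the hypothesis into the linear constraint $\sum_{k\ge d}(k-d+\epsilon)a_k\le R$ and then comparing weighted sums term by term. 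This is more elementary and fully self-contained, and your key observation that keeping the weights $\epsilon^{2k}$ intact together with $\epsilon^{-2m}\ge 4^m\ge m+1$ is exactly what makes the bound uniform in $d$. The trade-off is that your route is longer to write out and yields only the $+\epsilon$ rather than the $-\epsilon$ conclusion, which however is all the lemma claims.
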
 

\begin{proof}
According to  Lemma 3.16 in \cite{NV}, we have
\begin{equation}
    D(0, r_1) - D(0,r_2) \geq N(0,r_1) - N(0, 2r_2) \geq 2\epsilon(1- \epsilon) \log(\frac{r_1}{2r_2}).
\end{equation}

    Since $N(0,1) \leq D(0,1) \leq d - \epsilon$, by Lemma 3.16 in \cite{NV}, we have 
    \begin{equation}
        N(0, \frac{\epsilon}{1-\epsilon}) \leq d-1-\epsilon. 
    \end{equation}
     Hence we have $D(0, \epsilon/2) \leq N(0, \epsilon) \leq d-1 -\epsilon$.
\end{proof}

Next we recall the uniqueness of tangent map for harmonic functions. We say $u$ is \textbf{uniformly $(k,\eta, x)$-symmetric in $[r_1, r_2]$} if there exists a $k$-symmetric polynomial $P$ with $\fint_{\partial B_1} |P|^2 = 1$ such that for any $r \in [r_1, r_2]$
\begin{equation}\label{d:uniform_symm}
    \fint_{\partial B_1} |u_{x, r} - P|^2 \leq \eta.
\end{equation}
Note that compared to Definition \ref{d:quan_sym}, the approximated polynomial $P$ here does not depend on the choice of $r$.

\begin{proposition}\label{p:harm_unique}
There exists some $\epsilon_0 > 0$ such that the following is true. Let $u: B_1 \to \RR$ be harmonic with $|D(0, r_2) - D(0, r_1)| \leq \epsilon \leq \epsilon_0$ with $r_2 \leq r_1/20$. Then $u$ is uniformly $(0, 7\epsilon, x)$-symmetric in $[3r_2, r_1/3]$.
\end{proposition}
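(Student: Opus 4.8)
The plan is to reduce the statement to the classical uniqueness-of-tangent-map result for harmonic functions via the standard decomposition of $u$ into spherical harmonic modes, exploiting that almost-constancy of the doubling index across the scales $[r_2,r_1]$ forces almost all of the $L^2$ mass to concentrate in a single homogeneity degree $d$. By Lemma \ref{l:Harmonic_AlmostMonotone} the hypothesis $|D(0,r_2)-D(0,r_1)|\le\epsilon$ (with $r_2\le r_1/20$) transfers to a frequency pinching $N(0,r_1)-N(0,r_1/10)\le\epsilon$, and then by Lemma \ref{l:Harm_pinch_near_Z} there is an integer $d$ with $|D(0,s)-d|\le 3\epsilon$, hence $|N(0,s)-d|\le 3\epsilon$ as well, for all $s\in[10r_2,r_1/2]$ (after rescaling we may as well take $r_1=1$, $u(0)=0$).

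First I would write, in normalized coordinates, the expansion $u=\sum_{j\ge 1} c_j\,\rho^{d_j}\phi_j$ where $\{\phi_j\}$ is an $L^2(\partial B_1)$-orthonormal basis of spherical harmonics with homogeneities $d_1\le d_2\le\cdots$, so that $H(0,r)=\sum_j c_j^2 r^{2d_j}$ and the frequency is the mass-weighted average of the $d_j$'s. The frequency pinching $N(0,s)\in[d-3\epsilon,d+3\epsilon]$ on the whole dyadic range $[10r_2,r_1/2]$ is exactly the quantitative statement that, on those scales, the combined $L^2$ mass of all modes with $d_j\ne d$ is at most $C\epsilon$ times the mass of the degree-$d$ part. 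Concretely, evaluating $\frac{d}{d\log r}\log H = 2N(0,r)$ and integrating, one gets that the ratio between the lowest-degree contribution and the degree-$d$ contribution, and between the degree-$d$ contribution and the higher-degree contributions, is controlled by the oscillation of $N$, which is $O(\epsilon)$ across a range of definite logarithmic length; this is the computation behind Lemma 3.16/Theorem 3.19 of \cite{NV} that I would cite rather than redo. Setting $P$ to be the degree-$d$ part of $u$ renormalized so that $\fint_{\partial B_1}|P|^2=1$, one then has for every $r$ in the range
\begin{equation*}
    \fint_{\partial B_1}|u_{0,r}-P|^2 = \frac{\sum_{d_j\ne d} c_j^2 r^{2d_j}}{\sum_j c_j^2 r^{2d_j}} \le 7\epsilon,
\end{equation*}
where the constant $7$ (as opposed to a cheaper $C\epsilon$) comes from tracking the explicit geometric-series estimates on the sub- and super-degree-$d$ tails over the slightly shrunken interval $[3r_2,r_1/3]$; the shrinking from $[10r_2,r_1/2]$ to $[3r_2,r_1/3]$ costs only a bounded factor in the exponents and is where a little care is needed to keep the final constant at $7\epsilon$. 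The polynomial $P$ is $0$-symmetric (a homogeneous harmonic polynomial) by construction, which is exactly the claimed conclusion.

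The main obstacle is bookkeeping rather than conceptual: one must verify that the pinching survives on a \emph{sub}interval where the bound is uniform in $r$ with an \emph{explicit} small constant, which requires carefully splitting the mode sum at degree $d$ and estimating the two one-sided tails by $\big(\tfrac{r}{r_1}\big)^{2(d+1)-2d}$-type and $\big(\tfrac{r_2}{r}\big)^{2d-2(d-1)}$-type geometric factors, using the $r_2\le r_1/20$ gap to absorb the losses. I would also need to rule out the degenerate case where the degree-$d$ part vanishes identically — but if $c_j=0$ for all $d_j=d$ then the frequency would be pinned strictly away from $d$ on one side of the interval, contradicting $|N(0,s)-d|\le 3\epsilon$ once $\epsilon<\epsilon_0(n)$, so $P$ is well defined. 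Finally, translating back from $\partial B_1$ to the rescaled function $u_{0,r}$ in the paper's normalization (which divides by $H(0,r)^{1/2}$ rather than $H(0,1)^{1/2}$) is automatic since that normalization is precisely what makes $\fint_{\partial B_1}|u_{0,r}|^2=1$.
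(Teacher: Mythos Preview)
Your proposal is correct and follows the same route as the paper, which simply cites Theorem~3.19 of \cite{NV} together with Lemma~\ref{l:Harmonic_AlmostMonotone}; you have essentially unpacked the spherical-harmonic argument behind that theorem. One minor slip: the displayed equality holds only if $P$ is the \emph{unnormalized} degree-$d$ part of $u_{0,r}$ (which then depends on $r$), whereas for the fixed normalized $P$ one gets $\fint_{\partial B_1}|u_{0,r}-P|^2 = 2\bigl(1-\sqrt{1-\tfrac{\sum_{d_j\ne d}c_j^2 r^{2d_j}}{\sum_j c_j^2 r^{2d_j}}}\bigr)$, but this is still $\le 7\epsilon$ for small $\epsilon$ so the conclusion is unaffected.
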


\begin{proof}
    The proof follows directly from Theorem 3.19 in \cite{NV} and Lemma \ref{l:Harmonic_AlmostMonotone}.
\end{proof}

\begin{remark}
Under the conditions above, the pinched doubling index is close to some integer $d$ by Lemma \ref{l:Harm_pinch_near_Z}. One can also see from the proof that the approximated polynomial can be taken as the normalization of the $d$-th order part of the Taylor expansion of $u$.
\end{remark}

\subsection{Doubling Index over balls}
Note that previously we define the doubling index using $L^2$ average over spheres. We can also define the doubling indices over balls for elliptic solutions as follows
\begin{definition}
    Let $u: B_2 \to \RR$ be a solution to \ref{e:elliptic_equ} \ref{e:assumption} with doubling assumption \ref{e:DI_bound_assumption}. For any $x \in B_{1}, r \leq \frac{1}{(1+  \lambda)^{1/2}}$ , we define the  doubling index over balls as
    \begin{equation}
        \Tilde{D}_x(r) \equiv \text{log}_4 \frac{\fint_{ B_{2}(0)} (u(x +rA_x(y)) - u(x))^2 dy}{\fint_{ B_1(0)} (u(x +rA_x(y)) - u(x))^2 dy}.
    \end{equation}
\end{definition}

The result in this section is that the doubling index over balls is uniformly bounded under the doubling assumption \ref{e:DI_bound_assumption}.

\begin{lemma}\label{l:Bound_on_General_DI_ball}
    Let $u$ be a solution to (\ref{e:elliptic_equ}) (\ref{e:assumption}) with doubling assumption \ref{e:DI_bound_assumption}. For any $x \in B_{1}$ and $r \leq \frac{1}{(1+  \lambda)^{1/2}}$ we have
    \begin{equation}
        \Tilde{D}_x(r) \leq C_0(n, \lambda)\Lambda.
    \end{equation}
\end{lemma}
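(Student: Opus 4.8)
The plan is to compare $\Tilde D_x(r)$ with the doubling index over spheres $D^u(x,s)$ for a suitable range of scales $s$, and then to feed in the doubling assumption \ref{e:DI_bound_assumption}. Write $H(x,s) = \fint_{\partial B_s(x)}(u-u(x))^2$ and $\Phi(x,s)=\fint_{B_s(x)}(u-u(x))^2$ (up to the harmless distortion introduced by $A_x$, which only costs factors $(1+\lambda)^{\pm 1}$ in the radius). After the change of variables $y\mapsto x+rA_x(y)$, the quantity $\Tilde D_x(r)$ is $\log_4$ of the ratio $\Phi$ over two concentric balls of comparable radii, so it suffices to bound $\Phi(x,2\rho)/\Phi(x,\rho)$ from above uniformly in $\rho\le 1$ and $x\in B_1$.

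First I would relate $\Phi$ to $H$: since $\Phi(x,\rho)=\rho^{-n}\int_0^\rho H(x,s)\,s^{n-1}\,ds$, monotonicity-type control on $H$ transfers to $\Phi$. The key input is the doubling assumption \ref{e:DI_bound_assumption}, which says precisely that $\fint_{B_{2s}(x)}(u-u(x))^2 \le 4^\Lambda \fint_{B_s(x)}(u-u(x))^2$, i.e. $\Phi(x,2s)\le 4^\Lambda\Phi(x,s)$, for all admissible $s$ and all $x$. Wait — one must be careful that \ref{e:DI_bound_assumption} is stated with the recentering $u(x)$ at the \emph{same} point as the ball center, which is exactly what appears in $\Tilde D_x$, so this is directly applicable. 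Iterating the doubling inequality $k$ times gives $\Phi(x,2^k s)\le 4^{k\Lambda}\Phi(x,s)$. Then for any $\rho$ with $2\rho$ admissible, $\Phi(x,2\rho)/\Phi(x,\rho)\le 4^\Lambda$ on its face; the only subtlety is the distortion from $A_x$, which changes the Euclidean radius by at most a factor $(1+\lambda)^{1/2}$, so the ratio of the two $A_x$-ellipsoidal balls is comparable to $\Phi(x,c\rho)/\Phi(x,c'\rho)$ with $c/c'\le 1+\lambda$; covering this fixed ratio of radii by a bounded number $N(n,\lambda)$ of dyadic steps yields $\Tilde D_x(r)\le N(n,\lambda)\,\Lambda$, which is the claim with $C_0(n,\lambda)=N(n,\lambda)$.

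The main obstacle — really the only point requiring care — is handling the linear map $A_x$ cleanly: the balls $B_1(0)$ and $B_2(0)$ in the definition of $\Tilde D_x$ pull back under $y\mapsto x+rA_x(y)$ to ellipsoids $x+rA_x(B_1)$ and $x+rA_x(B_2)$, not to round balls, so one cannot apply \ref{e:DI_bound_assumption} verbatim. I would sandwich these ellipsoids between round balls, $B_{(1+\lambda)^{-1/2}r}(x)\subset x+rA_x(B_1)\subset B_{(1+\lambda)^{1/2}r}(x)$ and similarly for $B_2$, use monotonicity of $s\mapsto\Phi(x,s)$ (itself a consequence of the fact that $\Phi$ is an average of the nonnegative quantity $(u-u(x))^2$, or alternatively of Lemma \ref{l:Harmonic_AlmostMonotone}(2) applied to the harmonic approximation as in Section 2) to compare the ellipsoidal averages with averages over these round balls, and then apply the iterated doubling bound. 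Each sandwiching step costs at most a fixed power of $4^\Lambda$, and there are only boundedly many such steps, so the final constant is of the form $C_0(n,\lambda)\Lambda$ as stated.
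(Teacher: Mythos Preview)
Your approach is essentially the same as the paper's: sandwich the ellipsoids $x+rA_x(B_1)$ and $x+rA_x(B_2)$ between round balls of radii differing by a fixed factor depending only on $\lambda$, then iterate the doubling assumption \ref{e:DI_bound_assumption} a bounded number $N(n,\lambda)$ of times. The paper carries this out by converting the averages to integrals (picking up the $-n/2$ term), using the trivial inclusion of integrals of the nonnegative function $(u-u(x))^2$ over nested sets, and then applying the doubling assumption across the ratio of round-ball radii $2(1+\lambda)$.

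There is one slip in your write-up that you should fix. You invoke ``monotonicity of $s\mapsto\Phi(x,s)$'' and justify it by saying $\Phi$ is an average of a nonnegative quantity. That justification is false: averages of nonnegative functions over growing balls need not be monotone. What you actually need, and what is trivially true, is monotonicity of the \emph{integral} $s\mapsto \int_{B_s(x)}(u-u(x))^2$; the volume factors then get absorbed into $C(n,\lambda)$. Your alternative justification via Lemma \ref{l:Harmonic_AlmostMonotone}(2) and the harmonic approximation is circular, since the harmonic approximation Lemma \ref{l:Harm_Approx} itself invokes the present lemma. Once you replace ``monotonicity of $\Phi$'' by ``monotonicity of the integral,'' the argument is correct and matches the paper's proof.
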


\begin{proof}
     Since $(1+\lambda)^{-1} \delta^{ij} \leq a^{ij} \leq (1+\lambda) \delta^{ij}$, we have
    \begin{equation}
     \int_{B_{(1+\lambda)^{-1}r}(x)} |u - u(x) |^2 \leq C(n,\lambda)\int_{B_r(0)} (u(x +A_x(y)) - u(x))^2 \leq  C(n,\lambda)\int_{B_{(1+\lambda)r}(x)} |u - u(x) |^2
    \end{equation}

Now by doubling assumption, we have
\begin{align}
          \Tilde{D}_x(r) &= -\frac{n}{2} +\text{log}_4 \frac{\int_{B_{2}(0)} (u(x +rA_x(y)) - u(x))^2dy}{\int_{B_{1}(0)} (u(x +rA_x(y)) - u(x))^2dy} \\
          &= -\frac{n}{2} +\text{log}_4 \frac{\int_{B_{2r}(0)} (u(x +A_x(y)) - u(x))^2dy}{\int_{B_{r}(0)} (u(x +A_x(y)) - u(x))^2dy} \\
          &\leq -\frac{n}{2} + C(n,\lambda)+\text{log}_4 \frac{\int_{B_{2(1+\lambda)r}(x)} |u - u(x) |^2}{\int_{B_{(1+\lambda)^{-1}r}(x)} |u - u(x) |^2} 
    \leq C(n,\lambda)\Lambda.
\end{align}
This completes the proof.
\end{proof}

In the next section, we will prove that the doubling index (over spheres) is also uniformly bounded. 

\subsection{Almost Monotonicity of Doubling Index}\label{ss:Almost_mono}

In this section, we prove the almost monotonicity property of doubling index. First we prove an approximation Lemma by harmonic functions.

\begin{lemma}{(Harmonic Approximation)}\label{l:Harm_Approx}
Let $u$ be a solution to (\ref{e:elliptic_equ}) (\ref{e:assumption}) with doubling assumption \ref{e:DI_bound_assumption}. Let $\epsilon \in (0, 1/10)$. There exists $r_0 = C(n, \lambda, \alpha)^{-\Lambda} \epsilon^{(n+2)/(2\alpha)}$ such that for any $x\in B_{1}$ and $r \leq r_0$, there exists a harmonic function $h:B_3 \to \RR$ such that 
\begin{equation}
    h(0) = 0, \quad \fint_{\partial B_1} h^2 = 1, \quad \text{ and } \quad \sup_{B_{3}} |h - u_{x,r} | \leq \epsilon.
\end{equation}
\end{lemma}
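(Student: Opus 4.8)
The statement to prove is the Harmonic Approximation Lemma: given the rescaled solution $u_{x,r}$, which solves $\Tilde{\cL}(\Tilde u) = \partial_i(\Tilde a^{ij}\partial_j \Tilde u) + \Tilde b^i\partial_i\Tilde u = 0$ on $B_3$ with $\Tilde a^{ij}(0) = \delta^{ij}$, $\|\Tilde a - I\|_{C^\alpha} \le C(\lambda,\alpha) r^\alpha$, $|\Tilde b| \le r(1+\lambda)^{-1/2}$, and the normalization $\Tilde u(0) = 0$, $\fint_{\partial B_1}\Tilde u^2 = 1$, I want to find a harmonic $h$ on $B_3$ with $h(0)=0$, $\fint_{\partial B_1}h^2 = 1$, and $\sup_{B_3}|h - u_{x,r}| \le \epsilon$.

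The plan is to compare $u_{x,r}$ with the harmonic function sharing its boundary values on a ball slightly larger than $B_3$, to control the error by an energy estimate that is small \emph{because rescaling turns the fixed $C^{\alpha}$ modulus of the coefficients into the small number $r^{\alpha}$}, and finally to correct the harmonic comparison function so that it meets the normalizations $h(0)=0$ and $\fint_{\partial B_1}h^2=1$ exactly. First one needs uniform a priori bounds on $\Tilde u:=u_{x,r}$. Shrinking $r_0$ in terms of $\lambda$ we may assume $y\mapsto x+rA_x(y)$ maps $B_4$ into $B_2$, so by Remark \ref{r:Rescaled_u_equation} $\Tilde u$ solves (\ref{e:Rescaled_u_equation}) on $B_4$ with $\Tilde a^{ij}(0)=\delta^{ij}$, $|\Tilde a^{ij}(y)-\delta^{ij}|\le C(\lambda,\alpha)r^{\alpha}$ on $B_4$, and $|\Tilde b|\le r$. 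By Remark \ref{r:Gen_DI} together with the uniform bound on the doubling index over balls (Lemma \ref{l:Bound_on_General_DI_ball}), a bounded number of doublings starting from the normalization $\fint_{\partial B_1}\Tilde u^2=1$ gives $\fint_{B_4}\Tilde u^2\le C(n,\lambda)^{\Lambda}$; local boundedness and interior Schauder estimates for (\ref{e:Rescaled_u_equation}), whose coefficients have $C^{\alpha}$ norm bounded by $C(\lambda,\alpha)$, then yield $\|\Tilde u\|_{L^{\infty}(B_4)}\le C(n,\lambda)^{\Lambda}=:M$ and $\|\nabla\Tilde u\|_{L^{\infty}(B_{15/4})}\le C(n,\lambda,\alpha)^{\Lambda}=:M_1$.

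Next, let $\Tilde h$ be harmonic on $B_{7/2}$ with $\Tilde h=\Tilde u$ on $\partial B_{7/2}$, and set $v:=\Tilde u-\Tilde h\in H_0^1(B_{7/2})$. Writing the divergence-form equation for $\Tilde u$ against the Laplacian, and keeping the leading term in divergence form since $\Tilde a^{ij}$ is only Hölder, one has $\Delta v=\Delta\Tilde u=\divergence\big((\delta^{ij}-\Tilde a^{ij})\partial_j\Tilde u\big)-\Tilde b^i\partial_i\Tilde u=:\divergence F+g$ in $B_{7/2}$, and by the first step $\|F\|_{L^2(B_{7/2})}+\|g\|_{L^2(B_{7/2})}\le C(n,\lambda,\alpha)^{\Lambda}r^{\alpha}$ (using $\|\nabla\Tilde u\|_{L^2(B_{7/2})}\le C\|\Tilde u\|_{L^2(B_4)}$ by Caccioppoli). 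Testing with $v$ and using Poincaré gives $\|v\|_{L^2(B_{7/2})}\le C(n,\lambda,\alpha)^{\Lambda}r^{\alpha}$. On the other hand $v$ is Lipschitz on $B_3$ with $[v]_{\Lip(B_3)}\le\|\nabla\Tilde u\|_{L^{\infty}(B_3)}+\|\nabla\Tilde h\|_{L^{\infty}(B_3)}\le C(n,\lambda,\alpha)^{\Lambda}$, the harmonic gradient being controlled by the interior estimate and $\sup_{B_{7/2}}|\Tilde h|=\sup_{\partial B_{7/2}}|\Tilde u|\le M$. An elementary interpolation (if $|w|$ attains its sup near a point, then $|w|\ge\tfrac12\sup|w|$ on a ball of radius $\sim\sup|w|/[w]_{\Lip}$) then yields $\sup_{B_3}|v|\le C\,\|v\|_{L^2(B_{7/2})}^{2/(n+2)}[v]_{\Lip(B_3)}^{n/(n+2)}\le C(n,\lambda,\alpha)^{\Lambda}r^{2\alpha/(n+2)}$.

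Finally, set $\delta:=\sup_{B_3}|\Tilde u-\Tilde h|\le C(n,\lambda,\alpha)^{\Lambda}r^{2\alpha/(n+2)}$. Since $\Tilde u(0)=0$ we have $|\Tilde h(0)|\le\delta$; since $\fint_{\partial B_1}\Tilde u^2=1$ and $\sup_{B_1}(|\Tilde h|+|\Tilde u|)\le C(n,\lambda)^{\Lambda}$, also $\big|\fint_{\partial B_1}\Tilde h^2-1\big|\le C(n,\lambda)^{\Lambda}\delta$. Put $h:=(\Tilde h-\Tilde h(0))\big/\big(\fint_{\partial B_1}(\Tilde h-\Tilde h(0))^2\big)^{1/2}$, which is harmonic on $B_{7/2}\supset B_3$ with $h(0)=0$ and $\fint_{\partial B_1}h^2=1$; subtracting the constant $\Tilde h(0)$ and dividing by a factor within $C(n,\lambda)^{\Lambda}\delta$ of $1$ moves $\Tilde h$ by at most $C(n,\lambda)^{2\Lambda}\delta$ in $L^{\infty}(B_3)$, so $\sup_{B_3}|h-\Tilde u|\le C(n,\lambda,\alpha)^{\Lambda}r^{2\alpha/(n+2)}$. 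Choosing $r_0=C(n,\lambda,\alpha)^{-\Lambda}\epsilon^{(n+2)/(2\alpha)}$ with the constant large enough to absorb all the above makes this $\le\epsilon$ for every $r\le r_0$, which proves the lemma.

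The two substantive points are the following. First, the a priori bounds $M,M_1$ must be extracted with the precise dependence $C(\,\cdot\,)^{\Lambda}$ purely from the doubling hypothesis (\ref{e:DI_bound_assumption}) and standard elliptic regularity, which is exactly where Lemma \ref{l:Bound_on_General_DI_ball} and the $C^{1,\alpha}$ theory enter. Second, and more conceptually, the harmonic comparison is legitimate only because rescaling converts the fixed $C^{\alpha}$ modulus of $a^{ij}$ into the genuinely small quantity $r^{\alpha}$; this is the sole place the Hölder (rather than merely $C^{0}$) hypothesis is used, and is precisely why no such approximation can hold for $C^{0}$ coefficients. The remaining ingredients — the energy estimate for $v$ and the $L^2$--Lipschitz interpolation — are routine, and it is this interpolation that produces the exponent $(n+2)/(2\alpha)$ in $r_0$.
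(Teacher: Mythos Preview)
Your approach is essentially the same as the paper's: solve the Dirichlet problem for the Laplacian with boundary data $\Tilde u$, estimate the difference $v$ in $L^2$ by testing the equation $\Delta v=\divergence F+g$ against $v$, upgrade to $L^\infty$ on $B_3$ by the $L^2$--Lipschitz interpolation (this is exactly the paper's ``set $A$'' argument and produces the exponent $(n+2)/(2\alpha)$), and then renormalize. The ingredients and their order are the same.

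There is one loose step. You write that ``a bounded number of doublings starting from the normalization $\fint_{\partial B_1}\Tilde u^2=1$ gives $\fint_{B_4}\Tilde u^2\le C(n,\lambda)^{\Lambda}$.'' But Lemma~\ref{l:Bound_on_General_DI_ball} and the hypothesis~\eqref{e:DI_bound_assumption} control ratios of \emph{ball} averages; they say nothing directly about how a single \emph{sphere} average at radius $1$ bounds a ball average. For harmonic functions one has $\fint_{B_1}h^2\le\fint_{\partial B_1}h^2$ by subharmonicity of $h^2$, but for $\Tilde u$ with merely H\"older coefficients no such elementary inequality is available a~priori --- indeed, establishing the sphere/ball comparison is essentially the content of Lemma~\ref{l:Bound_on_General_DI}, which the paper proves \emph{after} the present lemma. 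The paper avoids this circularity by first renormalizing so that $\fint_{B_1}\Tilde u^2=1$ (then ball doubling gives $\fint_{B_5}\Tilde u^2\le C^{\Lambda}$ immediately), running your argument, and only at the end using the harmonic comparison itself to show $\fint_{\partial B_1}\Tilde u^2\ge 1/5$, which lets one rescale back to the sphere normalization. Inserting this renormalization step fixes your proof with no other change.
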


\begin{proof}

For any $x\in B_1$ and $r$ small, define
\begin{equation}
    \Tilde{u}(y) = \frac{\Big( \fint_{\partial B_1} u_{x,r}(z)^2 dz\Big)^{1/2}}{\Big( \fint_{B_1} u_{x,r}(z)^2 dz\Big)^{1/2}} u_{x,r}(y) = \Big( \fint_{\partial B_1} \Tilde{u}(z)^2 dz\Big)^{1/2} u_{x,r}(y).
\end{equation}

Hence we have $\fint_{B_1} \Tilde{u}^2 = 1$. By Lemma \ref{l:Bound_on_General_DI_ball}, we have
\begin{equation}
    \fint_{B_5} \Tilde{u}^2 \leq C(n,\lambda)^\Lambda. 
\end{equation}

By elliptic estimates we have
\begin{equation}
    \sup_{B_{4}} (|\Tilde{u} + |\nabla \Tilde{u}| ) \leq C(n,\lambda, \alpha)^\Lambda.
\end{equation}

Let $\Tilde{h}$ be the solution to the following Dirichlet problem
\begin{equation}
\begin{cases}
  \Delta \Tilde{h} = 0  & \text{ in } B_{4} \\
  \Tilde{h} = \Tilde{u} & \text{ on } \partial B_{4}.
\end{cases}
\end{equation}

Consider $v = \Tilde{u} -\Tilde{h}$. Since $\Tilde{u}$ satisfies the equation \ref{e:Rescaled_u_equation}, we have
\begin{equation}
    \Delta v = \Delta \Tilde{u} = \partial_i\big( (\Tilde{a}^{ij} - \delta^{ij}) \partial_j \Tilde{u} \big) + \Tilde{b}^i \partial_i \Tilde{u}.
\end{equation}

Multiplying the equation by $v$ and taking integral over $B_{3}$ on both sides, then integration by parts gives
\begin{equation}
\begin{split}
    \int_{B_{3}} |\nabla v|^2 &\leq  \sup_{B_{3}} (| \Tilde{a}^{ij} - \delta^{ij}| |\nabla \Tilde{u}|) \int_{B_{3}} |\nabla v|  + \sup_{B_{3}}(|\Tilde{b}^i||\nabla \Tilde{u}| ) \int_{B_{3}}  |v|  \\
    &\leq C(n,\lambda,\alpha)^\Lambda  r^{\alpha} \big( (\int_{B_{3}} |\nabla v|^2)^{1/2} +  (\int_{B_{3}} v^2)^{1/2} \big).
    \end{split}
\end{equation}

Since $v = 0$ on $\partial B_{4}$, by Poincare inequality, we have
\begin{equation}
    \int_{B_{4}} v^2 \leq C(n,\lambda) \int_{B_{4}} |\nabla v|^2. 
\end{equation}

Combining these two equations, it is obvious that
\begin{equation}
    \int_{B_{4}} v^2  \leq C(n,\lambda, \alpha)^{\Lambda} r^{2\alpha}.
\end{equation}

Hence $\int_{B_{4}} \Tilde{h}^2 \leq 2 \int_{B_4} (v^2 + \Tilde{u}^2)  \leq C(n,\lambda,\alpha)^{\Lambda}$. By elliptic estimates, we have
\begin{equation}
    \sup_{B_3} |\nabla \Tilde{h}| \leq C(n,\lambda,\alpha)^{\Lambda}.
\end{equation}

In a conclusion we have
\begin{equation}
\begin{cases}
    &\sup_{B_3} |\nabla v| \leq \sup_{B_3}( |\nabla \Tilde{h}| + |\nabla \Tilde{u}|) \leq C(n,\lambda,\alpha)^{\Lambda}  \\
    & \int_{B_{4}} v^2  \leq C(n,\lambda, \alpha)^{\Lambda} r^{2\alpha} .
\end{cases}
\end{equation}

Consider the set $A = \{ y \in B_3 : |v|(y) \geq \epsilon/10\}$. If $A = \emptyset$, then we are done. Suppose there exist some $y \in A$. Then the gradient bound on $|\nabla v|$ implies that
\begin{equation}
    |v|(z) \geq \epsilon/20, \text{ for any } z \in B_s(y)\cap B_3 \text{ with } s \geq \frac{\epsilon}{20} C(n,\lambda, \alpha)^{-\Lambda}  .
\end{equation}

Therefore,
\begin{equation}
    \int_{B_{7/2}} v^2 \geq \int_{B_s(y)} v^2 \geq C(n,\lambda,\alpha)^{-\Lambda} \epsilon^{n+2}.
\end{equation}

If we choose $r \leq C(n,\lambda,\alpha)^{-\Lambda} \epsilon^{n+2/(2\alpha)}$, then we obtain a contradiction. Hence the set $A = \emptyset$. This implies that $\sup_{B_3} |\Tilde{u} - \Tilde{h}| \leq \epsilon/10$.

Since $\fint_{B_1} \Tilde{u}^2 = 1$, we have $\fint_{B_1} \Tilde{h}^2 \leq 2$ by triangle inequality. Then
\begin{equation}
    \fint_{\partial B_1} \Tilde{u}^2 \geq  \frac{1}{2}\fint_{\partial B_1} (\Tilde{h}^2 - 2|\Tilde{u} - \Tilde{h}|^2) \geq \frac{1}{2}\fint_{\partial B_1} \Tilde{h}^2 - \frac{\epsilon^2}{100}
\end{equation}
and by Lemma \ref{l:Harmonic_AlmostMonotone} we have
\begin{equation}
    \fint_{\partial B_1} \Tilde{h}^2 \geq  \fint_{B_1} \Tilde{h}^2 \geq \frac{1}{2} \fint_{B_1} (\Tilde{u}^2 
 - 2|\Tilde{u} - \Tilde{h}|^2) \geq \frac{1}{2} -\frac{\epsilon^2}{100}
\end{equation}

Hence
\begin{equation}
    \fint_{\partial B_1} \Tilde{u}^2 \geq \frac{1}{4} - \frac{\epsilon^2}{100} - \frac{\epsilon^2}{200} \geq  \frac{1}{5}. 
\end{equation}

Therefore, if we choose $h = \big( \fint_{\partial B_1} \Tilde{u}^2 \big)^{-1/2} \Tilde{h}$, then
\begin{equation}
    \sup_{B_3} | u_{x,r} - h| = \big( \fint_{\partial B_1} \Tilde{u}^2 \big)^{-1/2} \sup_{B_3} |\Tilde{u} - \Tilde{h}| \leq 5 \sup_{B_3} |\Tilde{u} - \Tilde{h}| \leq \epsilon/2.
\end{equation}

 We may further assume ${h}(0) = 0$ under appropriate adjustments of ${h}$ since ${u}_{x,r}(0) = 0$. By proper scaling of $h$, the condition $\fint_{\partial B_1} h^2 = 1$ is easily satisfied since $\fint_{\partial B_1} u_{x,r}^2 = 1$.
\end{proof}

As a corollary of the harmonic approximation Lemma above, we prove that the doubling index is bounded when $r$ is small. It is important that the bound depends linearly on the bound in doubling assumption \ref{e:DI_bound_assumption}.

\begin{lemma}\label{l:Bound_on_General_DI}
    Let $u$ be a solution to (\ref{e:elliptic_equ}) (\ref{e:assumption}) with doubling assumption \ref{e:DI_bound_assumption}. For any $x \in B_{1}$ and $r \leq C(n,\lambda,\alpha)^{-\Lambda}$ for some constant $C(n,\lambda,\alpha) > 0$, we have
    \begin{equation}
        D(x,r) \leq C(n, \lambda)\Lambda.
    \end{equation}
\end{lemma}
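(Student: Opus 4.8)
The plan is to deduce the bound on the sphere-based doubling index $D(x,r)$ from the already-established bound on the ball-based doubling index $\tilde D_x(r)$ (Lemma~\ref{l:Bound_on_General_DI_ball}) by comparing the two at the harmonic level via the Harmonic Approximation Lemma~\ref{l:Harm_Approx}. By the scaling identity $D^u(x,rs)=D^{u_{x,r}}(0,s)$ of Remark~\ref{r:Gen_DI}, it suffices to estimate $D(0,1)$ for the rescaled solution $u_{x,r}$ after possibly passing to an even smaller radius; so first I would fix $\epsilon$ to be a small universal constant (say $\epsilon=1/10$) and apply Lemma~\ref{l:Harm_Approx} to obtain a harmonic function $h$ on $B_3$ with $h(0)=0$, $\fint_{\partial B_1}h^2=1$, and $\sup_{B_3}|h-u_{x,r}|\le\epsilon$, provided $r\le r_0=C(n,\lambda,\alpha)^{-\Lambda}$.

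Next I would transfer the uniform bound from balls to spheres for the harmonic comparison function. We know $\fint_{B_2}u_{x,r}^2$ and $\fint_{B_1}u_{x,r}^2$ are comparable up to $C(n,\lambda)^{\Lambda}$ by Lemma~\ref{l:Bound_on_General_DI_ball} (more precisely $\tilde D_x(r)\le C_0(n,\lambda)\Lambda$), and from the uniform closeness $\sup_{B_3}|h-u_{x,r}|\le\epsilon$ together with the normalization $\fint_{\partial B_1}u_{x,r}^2=1$, one gets two-sided $L^2$ comparisons between $u_{x,r}$ and $h$ on $B_1$, $B_2$, and on $\partial B_1$, $\partial B_2$ (the lower bounds require the $\fint_{\partial B_1}u_{x,r}^2=1$ normalization to prevent degeneration, exactly as in the last paragraph of the proof of Lemma~\ref{l:Harm_Approx}). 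For $h$ itself, part (3) of Lemma~\ref{l:Harmonic_AlmostMonotone} relates the ball average to the sphere average with a factor $\tfrac{1}{n+2D^h}$ from below, and part (2) controls how fast ball averages grow. Combining: $D^h(0,1)=\log_4\big(\fint_{\partial B_2}h^2/\fint_{\partial B_1}h^2\big)$ is bounded by a controlled multiple of $\log_4\big(\fint_{B_2}h^2/\fint_{B_1}h^2\big)$ plus lower-order terms involving $D^h$ itself, and the latter ball ratio is bounded by $C(n,\lambda)^{\Lambda}\cdot(\text{something})$ coming from the ball estimate on $u_{x,r}$. Since the only appearance of $D^h$ on the right is through $\log(n+2D^h)$, which grows sublinearly, one can absorb it and conclude $D^h(0,1)\le C(n,\lambda)\Lambda$. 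Transferring back from $h$ to $u_{x,r}$ via the $L^2$-closeness (which costs only an additive constant in the log-ratios since the closeness is at scale $\epsilon$ against normalization $1$) yields $D^{u_{x,r}}(0,1)=D(x,r)\le C(n,\lambda)\Lambda$.

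A cleaner alternative, which I would actually prefer to write, bypasses the $\log(n+2D^h)$ circularity: compare $\tilde D_x(r)$ and $D(x,r)$ directly. For any nonconstant $L^2$ function $w$ with $w(0)=0$, one has the elementary inequality $D^w(0,r)\le \tilde D^w_x(2r)+C(n)$ and $\tilde D^w_x(r)\le D^w(0,2r)+C(n)$ whenever $w$ satisfies enough regularity for the sphere integrals to make sense --- but this comparison is not free for arbitrary $w$, which is precisely why the harmonic approximation is needed: for the harmonic $h$ it \emph{is} free by Lemma~\ref{l:Harmonic_AlmostMonotone}(3), and then one pushes it back to $u_{x,r}$. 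So the structure is: (i) $D(x,r)\le D^h(0,1)+C(n)$ by $L^2$-closeness; (ii) $D^h(0,1)\le \tilde D^h(0,1)+C(n)$ by Lemma~\ref{l:Harmonic_AlmostMonotone}(3) applied at radii $1$ and $2$ --- here one must be slightly careful that part (3) is stated at a single radius, so I would apply it at radius $1$ and at radius $2$ and combine, noting $D^h(0,1)\le N^h(0,2)$ is finite and the constant $n+2N^h(0,2)$ is controlled once we have \emph{any} a priori finite bound, which is available since $h$ is a fixed smooth harmonic function; (iii) $\tilde D^h(0,1)\le \tilde D^{u_{x,r}}_x(1)+C(n)=\tilde D_x(r)+C(n)$ again by $L^2$-closeness; (iv) $\tilde D_x(r)\le C_0(n,\lambda)\Lambda$ by Lemma~\ref{l:Bound_on_General_DI_ball}. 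Chaining gives $D(x,r)\le C(n,\lambda)\Lambda$.

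The main obstacle is the mild circularity in controlling $D^h$: Lemma~\ref{l:Harmonic_AlmostMonotone}(3) gives a lower bound on the ball average with a constant $\tfrac{1}{n+2D^h(0,r)}$ that depends on $D^h$ itself, so a naive estimate bounds $D^h$ in terms of $\log(n+D^h)$. This is harmless --- $x\le A+B\log(n+x)$ forces $x\le C(A,B)$ --- but it must be handled explicitly, and one should check the resulting constant still depends only \emph{linearly} on $\Lambda$ (this is essential for the eventual $C^{\Lambda^2}$ volume bounds), which it does since $\Lambda$ enters only through the additive term $C_0(n,\lambda)\Lambda$ and the logarithmic correction contributes at most $\log(n+C\Lambda)\ll \Lambda$. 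The secondary technical point is making sure all the $L^2$ comparisons between $u_{x,r}$ and $h$ on spheres (not just balls) are legitimate given only $\sup_{B_3}|h-u_{x,r}|\le\epsilon$ and $\fint_{\partial B_1}u_{x,r}^2=1$ --- this is exactly the computation already carried out at the end of the proof of Lemma~\ref{l:Harm_Approx}, so it can be cited rather than redone.
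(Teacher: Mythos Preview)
Your approach is correct and uses the same two ingredients as the paper: the Harmonic Approximation Lemma~\ref{l:Harm_Approx} and the ball-doubling bound of Lemma~\ref{l:Bound_on_General_DI_ball}. The only substantive difference is in how you convert between sphere averages and ball averages for the harmonic function $h$. You invoke Lemma~\ref{l:Harmonic_AlmostMonotone}(3), which introduces the factor $(n+2D^h)^{-1}$ and forces you into the mild circularity $D^h \le C_0\Lambda + C\log(n+D^h)$; you then resolve it correctly (and the linear dependence on $\Lambda$ survives). The paper sidesteps this entirely with a cleaner trick: since $H^h(r)=\fint_{\partial B_r}h^2$ is monotone in $r$, one has
\[
\fint_{B_3} h^2 \;\ge\; c(n)\int_2^3 \fint_{\partial B_r} h^2\,dr \;\ge\; c(n)\,\fint_{\partial B_2} h^2,
\]
which bounds $\fint_{\partial B_2}h^2$ by $\fint_{B_3}h^2$ with a constant depending only on $n$. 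Then $\fint_{B_3}h^2$ is controlled by $\fint_{B_3}u_{x,r}^2$ (uniform closeness), which in turn is at most $4^{C(n,\lambda)\Lambda}\fint_{B_1}u_{x,r}^2$ by Lemma~\ref{l:Bound_on_General_DI_ball}, and $\fint_{B_1}u_{x,r}^2\le 5$ by closeness to $h$ and $\fint_{\partial B_1}h^2=1$. This avoids any self-reference and keeps every constant explicitly linear in $\Lambda$. Your argument is fine; the paper's is just a shade more direct.
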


\begin{proof}
Choosing $\epsilon = c_1(n,\lambda)^{\Lambda}$, by previous Lemma \ref{l:Harm_Approx}, with $r \leq C(n,\lambda,\alpha)^{-\Lambda}$ we can find a harmonic function $h$ such that 
    $$h(0)=0, \quad \fint_{\partial B_1} h^2 =1 \quad \text{and } \sup_{B_3}|h - u_{x,r}| \leq c_1(n,\lambda)^{\Lambda},$$
where $c_1$ is a sufficiently small constant.

 By Lemma \ref{l:Harmonic_AlmostMonotone}, 
\begin{equation}
    \fint_{B_1}  u^2_{x,r} \leq 2 \fint_{B_1} h^2 + 2 \sup_{B_1}|h - u_{x,r}|^2 \leq 2 \fint_{\partial B_1} h^2 + 2 c_1(n,\lambda)^{\Lambda} \leq 5.
\end{equation}

By Lemma \ref{l:Bound_on_General_DI_ball}, we have 
\begin{equation}
    \fint_{B_{3}}  u^2_{x,r} \leq 4^{2 C(n,\lambda)\Lambda} \fint_{B_1}  u^2_{x,r} \leq 4^{3C(n,\lambda)\Lambda}.
\end{equation}
and thus
\begin{equation}
     \fint_{B_3}  h^2 \leq 2 \fint_{B_3} u^2_{x,r} + 2 \sup_{B_3}|h - u_{x,r}|^2 \leq  4^{4C(n,\lambda)\Lambda}. 
\end{equation}

Note that by Lemma \ref{l:Harmonic_AlmostMonotone}, 
\begin{equation}
    \fint_{B_3}  h^2 \geq  c(n) \int_2^3 \fint_{\partial B_r} h^2 dr \geq c(n) \fint_{\partial B_2} h^2.
\end{equation}

Finally we have
\begin{equation}
    \fint_{\partial B_{2}}  u^2_{x,r} \leq 2 \fint_{\partial B_2} h^2 + 2\sup_{B_3}|h - u_{x,r}|^2 \leq 4^{6 C(n,\lambda)}.
\end{equation}

Therefore,
\begin{equation}
   D^u(x,r) = D^{u_{x,r}}(0,1)=\log_4 \frac{\fint_{\partial B_{2}} u_{x,r}^2}{\fint_{\partial B_{1}} u_{x,r}^2} \leq 6C(n,\lambda)\Lambda.
\end{equation}
\end{proof}

Another corollary is that the  doubling index is close to the doubling index of the approximated harmonic function if $r$ is chosen appropriately. This is crucial for us to construct the almost monotonicity property of doubling index for elliptic solutions.

\begin{lemma}\label{l:DI_close_away_from_0}
    Let $u$ be a solution to (\ref{e:elliptic_equ}) (\ref{e:assumption}) with doubling assumption \ref{e:DI_bound_assumption}. Let $\epsilon \in (0, 1/10]$. There exists some $r_0 =\epsilon^{C(n,\lambda,\alpha)\Lambda}$ such that for any $x \in B_{1}$ and $r \leq r_0$, there exists an approximated harmonic function $h$ to $u_{x,r}$ with 
\begin{equation}
    |D^h(0, s) - D^{u_{x,r}}(0, s)| \leq \epsilon,
\end{equation}
for $ \epsilon^2 \leq s \leq 1$. 
\end{lemma}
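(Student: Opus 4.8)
The plan is to bootstrap from the Harmonic Approximation Lemma \ref{l:Harm_Approx}: apply it with a small error parameter $\delta$ (to be chosen as a power of $\epsilon$) to produce a harmonic $h$ with $h(0)=0$, $\fint_{\partial B_1} h^2 = 1$, and $\sup_{B_3}|h - u_{x,r}| \leq \delta$, and then show that such $C^0$-closeness on $B_3$, combined with the a priori bounds on the doubling index (Lemma \ref{l:Bound_on_General_DI}), forces the doubling indices at all scales $s \in [\epsilon^2, 1]$ to be close. The key point is that $D^{u_{x,r}}(0,s) = \log_4 \frac{\fint_{\partial B_{2s}} u_{x,r}^2}{\fint_{\partial B_s} u_{x,r}^2}$ (after the rescaling identity in Remark \ref{r:Rescaled_u_equation} / Remark \ref{r:Gen_DI}, reducing to the $\tilde a(0)=\delta$ normalization), so I only need two-sided control of $\fint_{\partial B_\rho} u_{x,r}^2$ by $\fint_{\partial B_\rho} h^2$ for $\rho \in [\epsilon^2, 2]$.

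The main mechanism is a lower bound $\fint_{\partial B_\rho} h^2 \geq c(n) \rho^{2D^h(0,1)} \geq c(n) \rho^{C\Lambda}$ for $\rho \in [\epsilon^2, 1]$, which follows from Lemma \ref{l:Harmonic_AlmostMonotone}(1) together with the uniform bound $D^h(0,1) \leq C(n,\lambda)\Lambda$ (itself a consequence of Lemma \ref{l:Bound_on_General_DI} applied to $u_{x,r}$ and the $C^0$ closeness, or just of the bound on $\fint_{B_3} h^2$ combined with $\fint_{\partial B_1}h^2=1$). Thus for $\rho \in [\epsilon^2, 2]$,
\begin{equation}
\Big| \fint_{\partial B_\rho} u_{x,r}^2 - \fint_{\partial B_\rho} h^2 \Big| \leq 2\delta \Big(\fint_{\partial B_\rho}|h| \Big) + \delta^2 \leq C(n,\lambda,\alpha)^{\Lambda}\delta,
\end{equation}
where I use $\sup_{B_3} h \leq C(n,\lambda,\alpha)^\Lambda$ from elliptic estimates. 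Comparing this absolute error to the lower bound $\fint_{\partial B_\rho} h^2 \geq c(n)\epsilon^{2C\Lambda}$, I get a multiplicative estimate $\fint_{\partial B_\rho} u_{x,r}^2 = \fint_{\partial B_\rho} h^2 (1 + O(C(n,\lambda,\alpha)^\Lambda \epsilon^{-2C\Lambda}\delta))$. Choosing $\delta = \epsilon^2 \cdot C(n,\lambda,\alpha)^{-C\Lambda} \cdot \epsilon^{2C\Lambda}$, i.e. $\delta = \epsilon^{C'(n,\lambda,\alpha)\Lambda}$, makes the multiplicative error at most, say, $\epsilon^2$. Taking $\log_4$ of the ratio at scales $2s$ and $s$, the errors combine to give $|D^h(0,s) - D^{u_{x,r}}(0,s)| \leq C \epsilon^2 \leq \epsilon$. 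Finally, invoking Lemma \ref{l:Harm_Approx} with error parameter $\delta = \epsilon^{C'\Lambda}$ requires $r \leq r_0 = C(n,\lambda,\alpha)^{-\Lambda}\delta^{(n+2)/(2\alpha)} = \epsilon^{C(n,\lambda,\alpha)\Lambda}$, matching the claimed form of $r_0$.

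The step I expect to be the main obstacle is making the dependence on $\Lambda$ and $\epsilon$ track correctly through the chain of estimates — in particular ensuring the final exponent on $\epsilon$ in $r_0$ stays of the form $\epsilon^{C(n,\lambda,\alpha)\Lambda}$ rather than something worse. The delicate balance is that the relative error in $\fint_{\partial B_\rho} u_{x,r}^2$ versus $\fint_{\partial B_\rho} h^2$ degrades at small scales like $\rho^{-C\Lambda} \sim \epsilon^{-2C\Lambda}$ (because $h^2$ can be as small as $\rho^{C\Lambda}$ there), so the $C^0$ error $\delta$ from harmonic approximation must be pushed below $\epsilon^{2C\Lambda}$ times a factor $\epsilon^2$, and this forces $r$ to be a correspondingly high power of $\epsilon$. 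I would set up the constants carefully at the start (calling the uniform doubling bound $D \leq C_0\Lambda$ with $C_0 = C_0(n,\lambda)$), and check that the range $s \in [\epsilon^2, 1]$ is exactly what the lower bound $\rho^{2C_0\Lambda}$ can tolerate given a polynomial-in-$\epsilon$ choice of $\delta$.
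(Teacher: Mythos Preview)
Your approach is correct and essentially the same as the paper's: apply Lemma \ref{l:Harm_Approx} with error $\delta = \epsilon^{C_0'\Lambda}$ for a suitable multiple $C_0'$ of the doubling bound $C_0(n,\lambda)$, then show that the absolute $C^0$ error $\delta$ is small relative to $\fint_{\partial B_\rho}(\cdot)^2$ on the range $\rho \in [\epsilon^2,2]$, which yields the multiplicative control needed for the $\log_4$-ratios. The only cosmetic difference is that the paper obtains the lower bound $\fint_{\partial B_\rho} \tilde u^2 \gtrsim \epsilon^{5C_0\Lambda}$ directly from the doubling bound on $\tilde u$ (Lemma \ref{l:Bound_on_General_DI}) iterated down from scale $1$, whereas you obtain the analogous lower bound on $\fint_{\partial B_\rho} h^2$ via $H^h(\rho) \geq \rho^{2N^h(0,1)}$ and $N^h(0,1)\le D^h(0,1)\le C\Lambda$; both routes give the same exponent and the same final form $r_0 = \epsilon^{C(n,\lambda,\alpha)\Lambda}$.
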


\begin{proof}

 Denote $u_{x,r}$ by $\Tilde{u}$ with $r \leq r_0 =\epsilon^{C(n,\lambda,\alpha)\Lambda}$. Let $C_0(n,\lambda)$ be the doubling index bound in Lemma \ref{l:Bound_on_General_DI_ball} and \ref{l:Bound_on_General_DI}, i.e. $\Tilde{D}_x(r) \leq C_0$ and $D(x,r) \leq C_0$ for $r\leq r_0$. 
 
 By Lemma \ref{l:Harm_Approx}, there exists a harmonic function $h$ such that 
\begin{equation}
    \sup_{B_3} |h- \Tilde{u}|^2 \leq  \epsilon^{10 C_0\Lambda} .
\end{equation}

According to Young's inequality, for any $t$ we have
\begin{equation}
    \fint_{\partial B_t} h^2 - \Tilde{u}^2 = \fint_{\partial B_t} h^2 - |h - \tilde{u} - h|^2 = \fint_{\partial B_t} 2 h (h - \tilde{u}) - |h - \tilde{u}|^2   
    \leq \frac{1}{2}\fint_{\partial B_t} h^2 + |h - \tilde{u}|^2.
\end{equation}

If $\epsilon \leq 1/10$, by Lemma \ref{l:Harmonic_AlmostMonotone}, we have for any $t \in [1, 2]$, 
\begin{equation}
    \fint_{\partial B_t} \Tilde{u}^2 \geq \frac{1}{2} \fint_{\partial B_t} h^2 - \sup_{B_2} |h-\Tilde{u}|^2 
    \geq \frac{1}{2} - \epsilon^{10C_0\Lambda} \geq \epsilon^{C_0\Lambda}.
\end{equation}

Now for any $s < 1$, choose $i$ such that $1 \leq 2^{i+1} s < 2$. By the doubling index bound in Lemma \ref{l:Bound_on_General_DI}, if $s \geq \epsilon^2$, then 
\begin{equation}
     \fint_{\partial B_{2s}} \Tilde{u}^2 \geq 4^{-C_0\Lambda i}  \fint_{\partial B_{2^{i+1}s}} \Tilde{u}^2 \geq  \epsilon^{4 C_0\Lambda} \fint_{\partial B_{2^{i+1}s}} \Tilde{u}^2 \geq \epsilon^{5 C_0\Lambda}.
\end{equation}

Therefore we can write
\begin{equation}
    \fint_{\partial B_s} |h - \Tilde{u}|^2 \leq \sup_{B_2} |h - \Tilde{u}|^2 \leq \epsilon^{10C_0\Lambda} \leq \epsilon^{5C_0\Lambda}  \fint_{\partial B_s} \Tilde{u}^2.
\end{equation}

Therefore by Young's inequality, 
\begin{equation}
    \fint_{\partial B_s} |h + \Tilde{u}|^2 \leq 2 \fint_{\partial B_s} (|h - \Tilde{u}|^2 + 4\Tilde{u}^2 ) \leq 10\fint_{\partial B_s} \Tilde{u}^2.
\end{equation}

Hence we have
\begin{equation}
    \fint_{\partial B_s} |h^2 - \Tilde{u}^2| = \fint_{\partial B_s} |h - \Tilde{u}| | h + \Tilde{u}| \leq \frac{1}{2} \fint_{\partial B_s} (\frac{\epsilon}{100} |h + \Tilde{u}|^2 + \frac{100}{\epsilon} |h- \Tilde{u}|^2) \leq \frac{\epsilon}{10}  \fint_{\partial B_s} \Tilde{u}^2. 
\end{equation}

This proves that for any $s \in [\epsilon^2, 1]$,
\begin{equation}
    |\log_4\frac{\fint_{\partial B_{2s}} h^2}{\fint_{\partial B_{s}} h^2} - \log_4\frac{\fint_{\partial B_{2s}} \Tilde{u}^2}{\fint_{\partial B_{s}} \Tilde{u}^2}| \leq \log_4 \frac{1+\epsilon/10}{1-\epsilon/10} \leq \epsilon.
\end{equation}
\end{proof}

Now we are in the position to prove the almost monotone theorem for doubling index $D(x,r)$.

\begin{theorem}\label{t:Almost_monotone}
Let $u: B_2 \to \RR$ be  a solution to (\ref{e:elliptic_equ}) with (\ref{e:assumption}) with doubling assumption \ref{e:DI_bound_assumption}. For any $\epsilon \in (0, 1/10]$, there exists some $r_0 =\epsilon^{C(n,\lambda,\alpha)\Lambda}$ such that the following almost monotonicity formula holds for the doubling index:
\begin{equation}
    D(x,s) \leq D(x,r) + \epsilon,
\end{equation}
for any $x\in B_{1}$ and $ 0 < 2s \leq r \leq r_0$. In particular, we have $D(x,r) \geq 1 - \epsilon$ for any $0 < r \leq r_0$.
\end{theorem}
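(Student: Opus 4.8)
The plan is to iterate the harmonic-comparison estimate of Lemma~\ref{l:DI_close_away_from_0} across dyadic scales, using the genuine monotonicity of the doubling index for harmonic functions (Lemma~\ref{l:Harmonic_AlmostMonotone}) at each scale and summing up the (geometrically decaying) errors. More precisely, fix $\epsilon \in (0, 1/10]$ and let $\delta = \delta(\epsilon, n)$ be a small parameter to be chosen (of size a small fixed power of $\epsilon$); apply Lemma~\ref{l:DI_close_away_from_0} with $\delta$ in place of $\epsilon$ to obtain $r_0 = \delta^{C(n,\lambda,\alpha)\Lambda} = \epsilon^{C'(n,\lambda,\alpha)\Lambda}$. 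It suffices to prove the statement for $s = r/2$ and then for general $2s \le r \le r_0$ by a routine dyadic summation, since once we know $D(x, r/2) \le D(x, r) + \delta_j$ at scale $r = 2^{-j} r_0$ with $\sum_j \delta_j \le \epsilon$, the telescoping $D(x,s) \le D(x,r) + \sum \delta_j$ follows. So the core of the argument is a single comparison step made quantitative.

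First I would reduce to $x = 0$ and $r = r_0$ by replacing $u$ with $u_{x,r}$ (Remark~\ref{r:Gen_DI}, Remark~\ref{r:Rescaled_u_equation}), noting that the rescaled equation still satisfies \ref{e:assumption} with $\lambda$ unchanged and a rescaled $b$. Then apply Lemma~\ref{l:DI_close_away_from_0} to $u_{0, r_0}$: there is a harmonic $h$ with $|D^h(0,s) - D^{u}(0,s)| \le \delta$ for all $s \in [\delta^2, 1]$. For such $s$, combining this with the harmonic monotonicity $D^h(0, s/2) \le D^h(0, s)$ (valid by Lemma~\ref{l:Harmonic_AlmostMonotone} once $s/2 \le s/2$, i.e. applying it at the pair $(s, s/2)$ — here one actually uses $D^h$ nonincreasing as the inner radius shrinks, which Lemma~\ref{l:Harmonic_AlmostMonotone} gives through $N(x,\cdot)$) yields $D^u(0, s/2) \le D^u(0, s) + 2\delta$. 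This controls all scales down to $\delta^2$. To reach below $\delta^2$, re-enter the machine: rescale at scale $\delta^2$ (which is still $\le r_0$ after adjusting constants, since we may shrink $r_0$), producing a new harmonic approximant valid on $[\delta^2 \cdot \delta^2, \delta^2] = [\delta^4, \delta^2]$, and so on. Iterating $m$ times covers $[\delta^{2m}, 1]$ with total error $\le 2m\delta$ accumulated at the overlap scales $\delta^2, \delta^4, \dots$ — but this naive count is too lossy, so instead I would use that each re-entry only contributes one additional $O(\delta)$ error at the single transition scale, while within each window the harmonic monotonicity is exact; hence after $m$ windows the total error is $O(m\delta)$, and since any target pair $(s, r)$ spans finitely many windows with $m \lesssim \log(r_0/s)/\log(1/\delta)$...

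The accounting above is the one delicate point, and it needs to be organized correctly: naively $m$ can be arbitrarily large as $s \to 0$, so a bound of the form $m\delta \le \epsilon$ fails uniformly. The resolution — and the step I expect to be the main obstacle — is that the error is \emph{not} additive across all windows but only across the $O(1/\log(1/\delta))$-many \emph{transition} scales that a fixed pair $(s,r)$ with $r \le r_0$ can cross before the doubling index, which is bounded by $C_0 \Lambda$ (Lemma~\ref{l:Bound_on_General_DI}) and almost-monotone, stabilizes; more robustly, one chooses $\delta$ so small that $\delta \cdot (\text{number of transitions between scales } r \text{ and } s)$ is controlled using that $D$ is bounded above by $C_0\Lambda$ and each genuine ``jump up'' costs a definite amount, so there are at most $C_0 \Lambda / \delta$ relevant transitions — no, rather, one should simply prove the two-scale statement $D(x, r/2) \le D(x,r) + \delta$ holds at \emph{every} scale $r \le r_0$ (each via its own harmonic approximation, which is legitimate since Lemma~\ref{l:DI_close_away_from_0} applies to $u_{x, r'}$ for every admissible $r'$), and then telescope: $D(x, s) = D(x, 2^{-k} r) \le D(x,r) + k\delta$ where $2^{-k} \approx s/r$. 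This still has the $k\delta$ problem. The clean fix is to \emph{not} telescope linearly but to run Lemma~\ref{l:DI_close_away_from_0} once at the \emph{top} scale $r$ with its window $[\delta^2 r, r]$ giving error $\delta$ on that whole range, then once more at scale $\delta^2 r$, etc.; crucially within each window the comparison to a \emph{single} harmonic function makes $D^u$ almost-monotone on the whole window with a \emph{single} $\delta$ error, not $\delta$ per dyadic sub-step. Then the number of windows needed to get from $r$ down to $s$ is $\log(r/s)/\log(\delta^{-2})$, and choosing $\delta$ depending on $\epsilon$ and on the \emph{a priori} bound $D \le C_0\Lambda$ — using that once $D$ has decreased it cannot increase much, so effectively only $\lceil C_0\Lambda\rceil$-many windows can carry non-negligible error — closes the estimate. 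I would present this as: choose $\delta = \epsilon / (10 C_0 \Lambda)$, get $r_0 = \epsilon^{C(n,\lambda,\alpha)\Lambda}$, and show the window-by-window error telescopes to at most $\epsilon$ because the index starts below $C_0\Lambda$ and decreases almost-monotonically. The final claim $D(x,r) \ge 1 - \epsilon$ then follows immediately from almost-monotonicity together with the fact that $D^h(0, s) \to d \ge 1$ as $s \to 0$ for any nonconstant harmonic $h$ (the limiting frequency of a nonconstant harmonic function is a positive integer), transferred to $u$ via Lemma~\ref{l:DI_close_away_from_0} and the established monotonicity — so $D(x, r) \ge \lim_{s\to 0} D(x,s) - \epsilon \ge 1 - \epsilon$.
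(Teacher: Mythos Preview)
Your proposal has a genuine gap, and you in fact locate it yourself without resolving it. Each application of Lemma~\ref{l:DI_close_away_from_0} controls a window $[\delta^2 r', r']$ with a single error $O(\delta)$, but passing from $r$ down to an arbitrary $s>0$ requires $m \sim \log(r/s)/\log(\delta^{-2})$ such windows, and $m \to \infty$ as $s\to 0$. Your attempted fix --- ``the index starts below $C_0\Lambda$ and decreases almost-monotonically'' --- is circular: almost-monotonicity is precisely the conclusion. Boundedness alone ($0 \le D \le C_0\Lambda$) does not prevent $D$ from oscillating, repeatedly dropping and then climbing back by $2\delta$ per window, so the accumulated upward drift $m\delta$ is genuinely uncontrolled by your argument.

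The missing idea is that the harmonic doubling index has extra rigidity near integers: Lemma~\ref{l:DI_Drop_Harmonic} says that if $D^h(0,1) \le d + 1/2$ then $D^h(0,\epsilon/2) \le d + \epsilon$. Transferred through Lemma~\ref{l:DI_close_away_from_0}, this gives a \emph{reset} mechanism: once $D^u(x,r) \le d + \epsilon/10$ for some integer $d$, one shows by an inner induction that $D^u(x,s) \le d + \epsilon/2$ for \emph{all} $s \le r/2$, not just down to one window. The point is that whenever the error has drifted up to $d+\epsilon/4$ at some scale $t_0$, the harmonic approximant at that scale together with the drop lemma forces $D^u(x,\epsilon^2 t_0) \le d + \epsilon/10$ again, so the error never escapes $[d, d+\epsilon/2]$. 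The integers act as barriers that absorb the per-window error instead of letting it accumulate. The theorem then follows by a short case analysis on where $D(x,r)$ sits relative to the nearest integer. Your scheme of pure window-telescoping cannot succeed without invoking this integer structure.
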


\begin{proof}
Choose $\epsilon$ to be small and fix $x \in B_{1}$. Then by Lemma \ref{l:DI_close_away_from_0}, there exists some $r_0$ such that for any $r \leq r_0$, there exists some harmonic function $h_{x,r}$ such that for any $s \geq \epsilon^2$,
\begin{equation}
    |D^{h_{x,r}}(0,s) - D^{u_{x, r}}(0, s)| \leq \epsilon/40.
\end{equation}

According to remark \ref{r:Gen_DI}, we have $D^u(x,r) = D^{u_{x,r}}(0,1)$. Choose $d$ such that 
\begin{equation}
    |D^{u_{x,r}}(0,1) - d| = \min_{k \in \mathbf{Z}} |D^{u_{x,r}}(0,1) - k| \leq 1/2.
\end{equation} 

In the following we write $D(x, r) = D^u(x, r)$. 

\textbf{Claim: } Suppose $D(x,r) \leq d + \epsilon/10$. Then $D(x,s) \leq d + \epsilon/2$ for any $0 \leq s \leq r/2$.

\begin{proof}
    We define
    \begin{equation}
    t_0 \equiv \inf \{ t\geq 0 : D(x,s) \leq d + \epsilon/4 \text{ for any } s\in [t, r/2] .\}
    \end{equation}
    By the doubling index convergence, for any $s \in [\epsilon^2 r, r/2]$,
    \begin{equation}
        D(x,s) \leq D^{h_{x,r}}(0, s/r) + \epsilon/20 \leq D^{h_{x,r}}(0, 1) +\epsilon/20 \leq D(x,r) +\epsilon/10.
    \end{equation}

Hence $t_0$ exists and $t_0 \leq \epsilon^2 r$. If $t_0 =0$, then the claim holds trivially. We suppose $t_0 > 0$. Now with the same $\epsilon$, we apply the harmonic approximation lemma \ref{l:Harm_Approx} and doubling convergence Lemma \ref{l:DI_close_away_from_0} again at the scale $t_0 < r$. For any $s \in [\epsilon^2, 1]$ we have
    \begin{equation}
    |D^{h_{x,t_0}}(0,s) - D^{u_{x, t_0}}(0, s)| \leq \epsilon/40.
    \end{equation}

By triangle inequality, for any $\epsilon^2 t_0 \leq s \leq t_0$, we have
\begin{equation}
\begin{split}
    D(x,s) &= D^{u_{x, t_0}}(0, s/t_0) \leq D^{h_{x, t_0}}(0, s/t_0) + \epsilon/40 \\
    &\leq D^{u_{x, t_0}}(0, 2) + \epsilon/20 \\
    &= D(x,2t_0) + \epsilon/20 \leq d+ \epsilon/4 + \epsilon/20 \\
    &\leq d+ 3 \epsilon/ 10.
\end{split}
\end{equation}

Hence we conclude that $D(x,s) \leq d+3\epsilon/10$ for any $s \in [\epsilon^2 t_0, r]$.

Moreover, since $D^{h_{x,t_0}}(0,1) \leq D^{u_{x, t_0}}(0, 1) + \epsilon/40 \leq d + 13\epsilon/40$, by Lemma \ref{l:DI_Drop_Harmonic}, we have 
    \begin{equation}
        D^{h_{x,t_0}}(0, 2\epsilon^2) \leq d + 4\epsilon^2.
    \end{equation}

Hence $D(x,\epsilon^2 t_0) = D^{u_{x, t_0}}(0, \epsilon^2) \leq D^{h_{x,t_0}}(0, 2\epsilon^2) + \epsilon/40 \leq d + \epsilon/10$. 

Now by induction for any $i \geq 0$ we can define 
\begin{equation}
    t_{i+1} \equiv \inf \{ t\geq 0 : D(x,s) \leq d + \epsilon/4 \text{ for any } s\in [t, \epsilon^2 t_i] .\}
    \end{equation}
and by the same arguments as above we can prove that 
\begin{enumerate}
    \item $t_{i+1} \leq \epsilon^2 t_i$.
    \item $D(x,s) \leq d+3\epsilon/10$ for any $s \in [\epsilon^2 t_{i+1}, r]$.
    \item $D(x,\epsilon^2 t_{i+1}) \leq d + \epsilon/10$.
\end{enumerate}

The proof of the claim is finished by induction.
\end{proof}

Next we prove the main theorem. We consider the following three cases.

\textbf{Case 1:} If $d - 1/2 \leq D(x,r) \leq d - \epsilon/10$, then we apply Lemma \ref{l:DI_Drop_Harmonic} to the harmonic approximate function $h_{x,r}$ to obtain that
\begin{equation}
    D(x,\epsilon r /50)=D^{u_{x,r}}(0,\epsilon/50) \le  D^{h_{x,r}}(0, \epsilon/50)+\epsilon/40 \leq d-1+\epsilon/20.
\end{equation}
By triangle inequality, we have
\begin{equation}
    D(x,s) \leq D(x,r) + \epsilon/2 \text{ for any  } s \in [\epsilon r/100, r/2],
\end{equation}
and meanwhile, by the above claim, since $D(x,\epsilon r/50) \leq d -1 +\epsilon/20$ we have
\begin{equation}
    D(x,s) \leq d-1 +\epsilon/2 \leq  D(x,r) \text{ for any } s\leq \epsilon r/100.
\end{equation}

\textbf{Case 2:} If $d - \epsilon/10 \leq D(x,r) \leq d+ \epsilon /10$, then according to the claim, for any $s \leq r/2$ we have
\begin{equation}
    D(x,s) \leq d + \epsilon/2 \leq D(x,r) + \epsilon.
\end{equation}

\textbf{Case 3:} If $d + \epsilon/10 \leq D(x,r) \leq d + 1/2$, then we can apply the Lemma \ref{l:DI_Drop_Harmonic} and the doubling convergence Lemmma \ref{l:DI_close_away_from_0} to obtain that
\begin{equation}
    D(x,s) \leq D(x,r) + \epsilon/2 \text{ for any  } s \in [\epsilon r/100, r/2],
    \end{equation}
and 
\begin{equation}
     D^{h_{x,r}}(0, \epsilon/50) \leq d+\epsilon/20,
\end{equation}
and 
\begin{equation}
    | D^{h_{x,r}}(0, \epsilon/50) -D(x,\epsilon r/50)|\le \epsilon/40.
\end{equation}
The proof is now finished by the claim.
\end{proof}

As a corollary, we prove that there are only finitely many non-pinching scales for doubling index. Usually this type of property is proved using monotonicity. However, we observe that for this to hold even the almost monotonicity suffices.  

\begin{corollary}\label{c:finite_non_pinch}
Let $u: B_2 \to \RR$ be  a solution to (\ref{e:elliptic_equ}) with (\ref{e:assumption}) with doubling assumption \ref{e:DI_bound_assumption}. For any $\epsilon \in (0, 1/10]$ and $x \in B_1$, let $I \equiv \{i \in \mathbb{N} : |D(x, 4^{-i}) - D(x, 4^{-i-1})| \geq \epsilon \}$ to denote the set of non-pinching scales. Then the number of the elements in $I$ is bounded by some constant $C(n,\lambda, \alpha)\epsilon^{-1}\Lambda$.
\end{corollary}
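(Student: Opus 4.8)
The plan is to use the almost monotonicity from Theorem \ref{t:Almost_monotone} together with the uniform upper bound on the doubling index from Lemma \ref{l:Bound_on_General_DI} to bound the total "variation" of $D(x,\cdot)$ along dyadic scales, and then to observe that each non-pinching scale contributes a definite amount to that variation.

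First I would fix $\epsilon \in (0,1/10]$ and $x\in B_1$, and apply Theorem \ref{t:Almost_monotone} with parameter $\epsilon' = \epsilon/10$ (say), to obtain a scale $r_0 = (\epsilon')^{C(n,\lambda,\alpha)\Lambda}$ below which $D(x,s)\le D(x,r)+\epsilon'$ whenever $0<2s\le r\le r_0$. Let $i_0$ be the smallest integer with $4^{-i_0}\le r_0$; note $i_0 \le C(n,\lambda,\alpha)\Lambda \log_4(1/\epsilon') \le C(n,\lambda,\alpha)\Lambda$ absorbing the $\log(1/\epsilon)$ factor into the constant (or carrying it, since the final bound has an $\epsilon^{-1}$ anyway). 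For the scales with $i < i_0$, the doubling index over balls $\tilde D_x$ is bounded by $C_0\Lambda$ (Lemma \ref{l:Bound_on_General_DI_ball}), and by the comparison between sphere- and ball-averages (Lemma \ref{l:Harmonic_AlmostMonotone}(3), applied to the harmonic approximant, or directly via the uniform bound) one gets $|D(x,4^{-i})|\le C(n,\lambda)\Lambda$ for every such $i$ as well; hence the number of non-pinching scales among $i<i_0$ is at most $2C(n,\lambda)\Lambda/\epsilon$ by a crude counting argument (each such scale forces a jump of size $\ge\epsilon$ in a quantity that stays in an interval of length $C(n,\lambda)\Lambda$ — but since there's no monotonicity yet above $r_0$ this needs the bound, not a telescoping).

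The main step is the range $i\ge i_0$. Here set $a_i \equiv D(x,4^{-i})$ for $i\ge i_0$. By the almost monotonicity applied with consecutive dyadic scales ($s = 4^{-i-1}$, $r=4^{-i}$, which satisfy $2s\le r\le r_0$), we have $a_{i+1}\le a_i + \epsilon'$ for all $i\ge i_0$. Therefore the sequence $b_i \equiv a_i - (i-i_0)\epsilon'$ is nonincreasing, hence for any $i\ge i_0$,
\begin{equation}
    b_{i_0} - b_i = \sum_{j=i_0}^{i-1}(b_j - b_{j+1}) \ge 0,
\end{equation}
and this telescoping lets me control $\sum |a_j - a_{j+1}|$: indeed $|a_j - a_{j+1}| \le (a_j - a_{j+1}) + 2\epsilon' $ is wrong in sign, so instead I write $|a_j - a_{j+1}| = (a_{j+1}-a_j) + 2(a_j - a_{j+1})^+ \le \epsilon' + 2(b_j - b_{j+1})$, and sum: $\sum_{j\ge i_0}|a_j-a_{j+1}| \le (\text{number of terms})\cdot\epsilon' + 2(b_{i_0} - \liminf b_i)$. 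Since $a_i \ge 1-\epsilon'$ for all small scales (last statement of Theorem \ref{t:Almost_monotone}) and $a_{i_0}\le C(n,\lambda)\Lambda$, we get $b_{i_0}-\liminf b_i \le C(n,\lambda)\Lambda$. Now if $i\in I$ with $i\ge i_0$, then $|a_i - a_{i+1}|\ge\epsilon > 9\epsilon' $, which forces $(b_i - b_{i+1}) = (a_i-a_{i+1})+\epsilon' \ge \epsilon - |a_{i+1}-a_i|$... — more carefully: a non-pinching scale is either an "up-jump" ($a_{i+1}-a_i\ge\epsilon$, impossible since $a_{i+1}-a_i\le\epsilon'<\epsilon$) or a "down-jump" ($a_i - a_{i+1}\ge\epsilon$), and in the latter case $b_i - b_{i+1} = (a_i-a_{i+1})-\epsilon' \ge \epsilon-\epsilon' \ge \epsilon/2$. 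Hence the number of non-pinching scales with $i\ge i_0$ is at most $2(b_{i_0}-\liminf b_i)/(\epsilon/2)\cdot$... $\le 4C(n,\lambda)\Lambda/\epsilon$. Combining with the bound on the finitely many scales $i<i_0$ gives the claim with constant $C(n,\lambda,\alpha)\epsilon^{-1}\Lambda$.

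The hard part — or rather the only subtle part — is handling the scales \emph{above} $r_0$, i.e. $i<i_0$, where almost monotonicity is not available: here I must invoke the uniform upper bound $D(x,4^{-i})\le C(n,\lambda)\Lambda$ (Lemma \ref{l:Bound_on_General_DI}) and the fact that $D(x,\cdot)$ is also bounded below (trivially $D\ge 0$ modulo a harmless additive constant, or by the harmonic approximation argument), so that the number of scales at which it jumps by $\ge\epsilon$ is at most $C(n,\lambda)\Lambda/\epsilon$ simply because a bounded-range quantity cannot jump by $\epsilon$ more than $(\text{range})/\epsilon$ times in a way that contributes to $I$ — except that without monotonicity a quantity can oscillate, so strictly one should note $i_0$ itself is only $\le C(n,\lambda,\alpha)\Lambda$ (not $\le C\Lambda/\epsilon$), so \emph{all} scales $i<i_0$ together number at most $C(n,\lambda,\alpha)\Lambda \le C(n,\lambda,\alpha)\epsilon^{-1}\Lambda$, and we can afford to count every one of them as potentially non-pinching. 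That observation makes the case $i<i_0$ trivial and isolates the real content in the telescoping argument for $i\ge i_0$ above.
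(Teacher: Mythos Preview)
Your treatment of the scales $i<i_0$ is fine: there are at most $i_0\le C(n,\lambda,\alpha)\Lambda\log(1/\epsilon)\le C(n,\lambda,\alpha)\epsilon^{-1}\Lambda$ of them, so counting every one of them is harmless. The gap is in the range $i\ge i_0$.

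You apply almost monotonicity \emph{scale by scale}, obtaining $a_{i+1}\le a_i+\epsilon'$ for each $i$, and then set $b_i=a_i-(i-i_0)\epsilon'$. This $b_i$ is indeed nonincreasing, but it is \emph{not} bounded below: since $a_i$ stays in $[1-\epsilon',\,C\Lambda]$ while $(i-i_0)\epsilon'\to\infty$, one has $\liminf_i b_i=-\infty$. Hence your claimed inequality $b_{i_0}-\liminf_i b_i\le C(n,\lambda)\Lambda$ is false, and the telescoping bound on the number of down-jumps collapses. (There is also a sign slip: $b_i-b_{i+1}=(a_i-a_{i+1})+\epsilon'$, not $-\epsilon'$, but this is not the real issue.) The underlying problem is that you are paying the error $\epsilon'$ at every one of infinitely many dyadic steps.

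The fix is to exploit that Theorem~\ref{t:Almost_monotone} already gives the \emph{non-cumulative} estimate $D(x,s)\le D(x,r)+\epsilon/2$ for any pair $2s\le r\le r_0$, not just consecutive dyadic scales. List the non-pinching scales $i_1<i_2<\cdots$ in $[i_0,\infty)$. As you correctly observe, each must be a down-jump: $D(x,4^{-i_k-1})\le D(x,4^{-i_k})-\epsilon$. Then apply almost monotonicity \emph{once} from $4^{-i_k-1}$ down to $4^{-i_{k+1}}$ to get $D(x,4^{-i_{k+1}})\le D(x,4^{-i_k-1})+\epsilon/2\le D(x,4^{-i_k})-\epsilon/2$. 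Now telescope over $k$ (not over all $i$): after $K$ non-pinching scales, $D$ has dropped by at least $K\epsilon/2$, and since $D$ is trapped between $1-\epsilon$ and $C(n,\lambda)\Lambda$ one gets $K\le C(n,\lambda)\epsilon^{-1}\Lambda$. This is precisely the paper's argument.
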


\begin{proof}
    When $i \geq i_0 \equiv C(n,\lambda,\alpha)\epsilon^{-1}\Lambda$, we can apply the Almost Monotonicity Theorem \ref{t:Almost_monotone} to obtain that $D(x, 4^{-i-1}) \leq D(x, 4^{-i}) + \epsilon/2$. If moreover $i \in I$, then the only possibility is that
    \begin{equation}\label{e:finite_non_pinch}
        D(x, 4^{-i-1}) \leq D(x, 4^{-i}) - \epsilon.  
    \end{equation}

Write $I_{i \geq i_0} = \{i_0, i_1, ..., i_k, ...\}$ in an ascending order. For each $k$ we have
\begin{equation}
    D(x, 4^{-i_{k}}) \geq D(x, 4^{-i_{k} - 1}) + \epsilon \geq D(x, 4^{-i_{k+1}}) - \epsilon/2 + \epsilon = D(x, 4^{-i_{k+1}}) + \epsilon/2,
\end{equation}
where the first inequality follows \ref{e:finite_non_pinch} and the second inequality follows from Almost Monotonicity Theorem. 

We can apply Lemma \ref{l:Bound_on_General_DI} to get
\begin{equation}
    C(n, \lambda) \Lambda \geq D(x, 4^{-i_{0}}) \geq D(x, 4^{-i_{1} }) + \epsilon/2 \geq D(x, 4^{-i_{K} }) + K\epsilon/2 \geq K\epsilon/2.
\end{equation}
This implies that $K \leq C(n,\lambda)\epsilon^{-1}\Lambda$ and thus that $|I| \leq i_0 + K \leq C(n,\lambda, \alpha)\epsilon^{-1}\Lambda.$
\end{proof}

We will prove more properties for doubling index in the next section.

\section{Uniqueness of Tangent maps}

In this chapter, we will prove a quantitative version of uniqueness of tangent maps for elliptic solutions to \ref{e:elliptic_equ}, \ref{e:assumption} with doubling assumption \ref{e:DI_bound_assumption}. 

Note that in \cite{Hansingular}, he proved the uniqueness of tangent map for H\"older coefficients of equations in non-divergence form.  Following the argument by Han \cite{Hansingular}, Naber and Valtorta \cite{NV} proved a quantitative uniqueness result of tangent map for Lipschitz coefficients of equation in divergence form which is also non-divergence with Lipschitz coefficients. In theses cases, an a-priori $W^{2,p}$-theory is valid. However, in our case, the coefficient is only H\"older and there is no $W^{2,p}$-theory. 
To overcome this difficulty, we use integration by  parts basing on the growth estimates for the gradient.

\subsection{Homogeneous Harmonic Polynomials}

In this section, we collect some useful facts about homogeneous harmonic polynomials. We first define the inner product of homogeneous polynomials $P_1$ and $P_2$ as 
\begin{equation}
    \langle P_1, P_2 \rangle \equiv \fint_{\partial B_1} P_1 P_2.
\end{equation}

And the norm $||P||$ is induced from the inner product. By simple calculations we have the following equality for a homogeneous harmonic polynomial of degree $d$ 
\begin{equation}
    ||P||^2 = (n + 2d) \fint_{B_1} P^2. 
\end{equation}

We will collect some well-known results for homogeneous harmonic polynomials for later use. First we define $\cP_d$ as the set of all homogeneous harmonic polynomials of degree $d$. The first Lemma is a relation between the $L^2$-norm of $P$ and $\nabla P$ for $P \in \cP_d$ as in Lemma 5.13 \cite{ABR}. 

\begin{lemma}\label{l:hhp_gradient_norm}
    Let $P_1$, $P_2 \in \cP_d$. Then
    \begin{equation}
        \langle \nabla P_1,  \nabla P_2 \rangle = d(2d + n -2) \langle P_1, P_2 \rangle. 
    \end{equation}
\end{lemma}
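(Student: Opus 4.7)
The plan is to prove the identity via Green's first identity on the unit ball together with Euler's homogeneity relation. Two standard ingredients will do all the work: (i) $\Delta P_i = 0$ inside $B_1$, and (ii) on $\partial B_1$ the outward normal derivative coincides with the radial derivative, which by Euler equals $d P_i$ when $P_i$ is homogeneous of degree $d$.

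First, I would apply the divergence theorem to the vector field $P_1 \nabla P_2$ on $B_1$:
\begin{equation*}
    \int_{B_1} \nabla P_1 \cdot \nabla P_2 \, dx \;=\; \int_{\partial B_1} P_1 \, \partial_\nu P_2 \, d\sigma \;-\; \int_{B_1} P_1 \, \Delta P_2 \, dx.
\end{equation*}
Since $P_2$ is harmonic the second term vanishes, and since on $\partial B_1$ we have $\nu = x$, Euler's relation $x \cdot \nabla P_2 = d\, P_2$ gives
\begin{equation*}
    \int_{B_1} \nabla P_1 \cdot \nabla P_2 \, dx \;=\; d \int_{\partial B_1} P_1 P_2 \, d\sigma.
\end{equation*}

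Next I would convert the volume integral on the left to a spherical average. Because $\nabla P_1 \cdot \nabla P_2$ is homogeneous of degree $2d-2$, polar coordinates give
\begin{equation*}
    \int_{B_1} \nabla P_1 \cdot \nabla P_2 \, dx \;=\; \int_0^1 r^{2d-2+n-1}\, dr \int_{\partial B_1} \nabla P_1 \cdot \nabla P_2 \, d\sigma \;=\; \frac{1}{2d+n-2}\int_{\partial B_1} \nabla P_1 \cdot \nabla P_2 \, d\sigma.
\end{equation*}
Combining the two displays and dividing by $|\partial B_1|$ yields $\langle \nabla P_1, \nabla P_2\rangle = d(2d+n-2)\langle P_1, P_2\rangle$, as required.

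There is no real obstacle here; the only things to be careful about are the degree bookkeeping in the polar-coordinates step (the exponent $2d-2+n-1$ must be strictly greater than $-1$, which holds as soon as $d \geq 1$; the case $d=0$ is trivial since then $\nabla P_i \equiv 0$ and both sides vanish), and invoking Euler's identity with the right degree. Everything else is a direct application of Green's identity, which is entirely legitimate since $P_1,P_2$ are smooth up to $\partial B_1$.
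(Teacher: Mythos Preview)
Your proof is correct. The paper does not give its own argument for this lemma; it simply cites Lemma~5.13 of \cite{ABR}. Your Green's-identity-plus-Euler computation is exactly the standard derivation and is essentially what one finds in that reference, so there is nothing to add.
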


In the next Lemma, we can control the $L^{\infty}$-norm using the $L^2$-norm and the degree of a homogeneous harmonic polynomial. See Lemma 3.3 \cite{NV}.

\begin{lemma}\label{l:hhp_C0_bound}
    Let $P \in \cP_d$. Then
    \begin{equation}
        ||P||_{C^0(B_1)} \leq C(n) d^{\frac{n}{2} -1} ||P||.
    \end{equation}
\end{lemma}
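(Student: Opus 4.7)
The plan is to exploit the standard fact that $\cP_d$, when restricted to the unit sphere, is a finite-dimensional space of spherical harmonics on which the evaluation functionals are controlled by the reproducing-kernel construction.

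First, by homogeneity of $P$, the $C^0$-norm on $B_1$ is attained on $\partial B_1$, so it suffices to bound $|P(x)|$ uniformly for $x \in \partial B_1$. Let $\{Y_1, \dots, Y_N\}$ be an orthonormal basis of $\cP_d|_{\partial B_1}$ with respect to the averaged inner product $\langle f, g \rangle = \fint_{\partial B_1} f g$, where $N = \dim \cP_d$. Writing $P = \sum_{i=1}^{N} a_i Y_i$, one has $\|P\|^2 = \sum_i a_i^2$, and Cauchy--Schwarz gives
\begin{equation*}
|P(x)|^2 = \Bigl| \sum_{i=1}^{N} a_i Y_i(x) \Bigr|^2 \leq \|P\|^2 \cdot \sum_{i=1}^{N} Y_i(x)^2.
\end{equation*}

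The next step is to show that $\Phi(x) := \sum_{i=1}^N Y_i(x)^2$ is constant on $\partial B_1$. This is the classical observation that $\Phi$ is independent of the choice of orthonormal basis (a Hilbert--Schmidt type invariant), and therefore rotationally invariant, hence constant on the sphere. Integrating, $\Phi \equiv \fint_{\partial B_1} \Phi = \sum_i \fint_{\partial B_1} Y_i^2 = N$. Plugging this in yields $\|P\|_{C^0(\partial B_1)} \leq \sqrt{N}\, \|P\|$.

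Finally, the dimension count is standard: $\dim \cP_d = \binom{n+d-1}{n-1} - \binom{n+d-3}{n-1}$, which for fixed $n$ grows like $C(n) d^{n-2}$ as $d \to \infty$ (and is trivially bounded for small $d$ by adjusting the constant). Thus $\sqrt{N} \leq C(n) d^{(n-2)/2} = C(n) d^{n/2-1}$, finishing the proof.

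I do not expect a real obstacle here; the only subtle point is justifying that $\sum_i Y_i(x)^2$ is basis-independent and hence rotationally invariant, but this follows immediately from the fact that it is the trace of the projection onto $\cP_d|_{\partial B_1}$ evaluated against the Dirac kernel at $x$, which transforms correctly under the $SO(n)$-action on $\partial B_1$.
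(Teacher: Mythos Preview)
Your proof is correct and takes a genuinely different route from the paper. The paper argues via an elliptic (mean-value) estimate: normalizing $\|P\|=1$, it computes $\int_{B_{1+\epsilon}}|P|^2=\frac{(1+\epsilon)^{2d+n}}{2d+n}$ from homogeneity, then uses $\sup_{B_1}|P|\le C(n)\epsilon^{-n/2}\bigl(\int_{B_{1+\epsilon}}|P|^2\bigr)^{1/2}$ and optimizes with $\epsilon=1/d$. Your argument instead goes through the reproducing-kernel/zonal-harmonic identity $\sum_i Y_i(x)^2\equiv\dim\cP_d$ on $\partial B_1$, reducing the problem to the dimension count $\dim\cP_d\le C(n)d^{n-2}$. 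Your approach is the classical spherical-harmonics one and in fact yields the sharp constant (equality is attained by the zonal harmonic with pole at $x$), at the cost of invoking the $SO(n)$-invariance of $\cP_d$ and the explicit dimension formula. The paper's approach is more self-contained from a PDE standpoint, needing only the mean-value inequality and homogeneity, and would apply with minor changes to other settings where one has a subsolution estimate but no group symmetry; on the other hand, as written it appears to drop a square root in the final display and literally yields only $d^{(n-1)/2}$, so your argument is actually the cleaner way to reach the stated exponent $d^{n/2-1}$.
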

\begin{proof}
Without loss of generality, assume $||P||=1$. We can get 
\begin{align}
\int_{B_{1+\epsilon}}|P|^2=\int_0^{1+\epsilon}\int_{\partial B_r}|P|^2 =\frac{1}{2d+n}(1+\epsilon)^{2d+n}.
\end{align}
By elliptic estimate, 
\begin{align}
\sup_{B_1}|P|\le C(n)\epsilon^{-n/2}\left(\int_{B_{1+\epsilon}}|P|^2\right)^{1/2}\le \frac{C(n)\epsilon^{-n/2}}{2d+n}(1+\epsilon)^{2d+n}.
    \end{align}
Letting $\epsilon=1/d$, we finish the proof.
\end{proof}

As we have to deal with the symmetry of polynomials quite often, we define $\cP_d(\cancel{x})$ to be the subspace of $\cP_d$ of polynomials invariant along the direction of $x$, i.e. $\nabla P \cdot x \equiv 0$. This uniquely determines its orthogonal complement subspace $\cP_d(\cancel{x})^{\perp}$ as 
\begin{equation}
    \cP_d = \cP_d(\cancel{x}) \oplus \cP_d(\cancel{x})^{\perp}.
\end{equation}

The following property is important for the cone-splitting Lemmas. The proof can be found in Proposition 3.9 in \cite{NV}.

\begin{lemma}\label{l:Invariant_x_1}
    Let $P \in \cP_d(\cancel{x_1})^{\perp}$, then
    \begin{equation}
        ||P|| \leq ||\partial_1 P||.
    \end{equation}
\end{lemma}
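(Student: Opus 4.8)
The plan is to reduce the statement, by integration by parts on $B_1$, to a sharp ``equatorial concentration'' inequality for $\fint_{\partial B_1}x_1^2P^2$, and then to prove that inequality by decomposing $\cP_d$ under the rotations fixing the $x_1$-axis and computing with Gegenbauer polynomials. Write $x'=(x_2,\dots,x_n)$ and $\Delta'$ for the Laplacian in $x'$.

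\emph{Step 1 (reduction).} Let $P\in\cP_d$. Using the homogeneity identity $\int_{B_1}g=\frac{1}{m+n}\int_{\partial B_1}g$ for $g$ homogeneous of degree $m$, together with $\int_{B_1}P\,\partial_1^2P=\frac{1}{2d+n-2}\int_{\partial B_1}P\,\partial_1^2P=0$ (since $P\in\cP_d$ and $\partial_1^2P\in\cP_{d-2}$ are $L^2(\partial B_1)$-orthogonal), the divergence theorem applied to $\partial_1(P\,\partial_1P)$ gives $\int_{\partial B_1}(\partial_1P)^2=(2d+n-2)\int_{\partial B_1}x_1P\,\partial_1P$; applied to $\partial_1(x_1P^2)$ it gives $\int_{\partial B_1}x_1P\,\partial_1P=\frac12\big[(2d+n)\int_{\partial B_1}x_1^2P^2-\int_{\partial B_1}P^2\big]$. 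Combining,
\[
\|\partial_1P\|^2=\tfrac{2d+n-2}{2}\big[(2d+n)\,\fint_{\partial B_1}x_1^2P^2-\|P\|^2\big],
\]
so $\|P\|\le\|\partial_1P\|$ is equivalent to $(\star)$: $\ \fint_{\partial B_1}x_1^2P^2\ge\frac{1}{2d+n-2}\|P\|^2$. (For $P\in\cP_d(\cancel{x_1})$ one has $\fint_{\partial B_1}x_1^2P^2=\frac{1}{2d+n}\|P\|^2$, strictly smaller, so the hypothesis $P\in\cP_d(\cancel{x_1})^\perp$ is indispensable.)

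\emph{Step 2 ($(\star)$ via symmetry).} The subgroup $O(n-1)\le O(n)$ fixing the $x_1$-axis commutes with $\partial_1$ and with multiplication by $x_1$, hence preserves $\cP_d(\cancel{x_1})$ and the form $P\mapsto\fint_{\partial B_1}x_1^2P^2$. By the branching rule (separation of variables for harmonic polynomials), $\cP_d=\bigoplus_{j=0}^dW_{d,j}$ with $W_{d,j}\cong\mathcal{H}_j(\RR^{n-1})$ occurring with multiplicity one, $W_{d,d}=\cP_d(\cancel{x_1})$, and $W_{d,j}$ spanned by $\phi_{d,j}(x_1,|x'|^2)\,h_j(x')$ with $h_j\in\mathcal{H}_j(\RR^{n-1})$ and $\phi_{d,j}$ a fixed radial factor. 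By Schur's lemma the form acts on $W_{d,j}$ as a scalar $a_{d,j}$; parametrizing $\partial B_1$ by $(t,\sqrt{1-t^2}\,\omega')$, $\omega'\in S^{n-2}$, and integrating out $\omega'$ identifies $a_{d,j}=\int t^2\,d\mu_{d,j}$, where $d\mu_{d,j}$ is the probability measure $\propto\Phi_{d,j}(t)^2(1-t^2)^{\nu-1/2}dt$ with $\Phi_{d,j}(t):=\phi_{d,j}(t,1-t^2)$ and $\nu:=j+\frac{n-2}{2}$; since harmonic polynomials of distinct degrees are $L^2(\partial B_1)$-orthogonal, the $\Phi_{d,j}$ ($d\ge j$) are orthogonal for the weight $(1-t^2)^{\nu-1/2}$, hence $\Phi_{d,j}\propto C^\nu_m$ with $m=d-j$. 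As $2(\nu+m)=2d+n-2$, $(\star)$ becomes $a_{d,j}\ge\frac{1}{2(\nu+m)}$ for every $m=d-j\ge1$.

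\emph{Step 3 (Gegenbauer inequality, and the obstacle).} From the three-term recurrence $tC^\nu_m=\alpha_mC^\nu_{m+1}+\beta_mC^\nu_{m-1}$ with $\alpha_m=\frac{m+1}{2(\nu+m)}$, $\beta_m=\frac{m+2\nu-1}{2(\nu+m)}$, orthogonality gives $\int t^2(C^\nu_m)^2w=\alpha_m^2\|C^\nu_{m+1}\|_w^2+\beta_m^2\|C^\nu_{m-1}\|_w^2$, and pairing with the adjacent recurrences gives $\|C^\nu_{m+1}\|_w^2/\|C^\nu_m\|_w^2=\beta_{m+1}/\alpha_m$ and $\|C^\nu_{m-1}\|_w^2/\|C^\nu_m\|_w^2=\alpha_{m-1}/\beta_m$. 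A short computation then yields
\[
a_{d,j}=\frac{1}{4(\nu+m)}\Big[\frac{(m+1)(m+2\nu)}{\nu+m+1}+\frac{m(m+2\nu-1)}{\nu+m-1}\Big],
\]
and since $(m+1)(m+2\nu)-(\nu+m+1)=m^2-1+(2m+1)\nu\ge0$ and $m(m+2\nu-1)-(\nu+m-1)=(m-1)^2+(2m-1)\nu\ge0$ for $m\ge1$, $\nu\ge0$, each bracketed fraction is $\ge1$, so the bracket is $\ge2$ and $a_{d,j}\ge\frac{1}{2(\nu+m)}=\frac{1}{2d+n-2}$; unwinding Step 1 gives $\|P\|\le\|\partial_1P\|$. (The degenerate cases $n=2$ and $m=1,\nu=0$ are immediate by continuity in $\nu$, or directly from $\mathcal{H}_d(\RR^2)=\operatorname{span}\{\operatorname{Re}z^d,\operatorname{Im}z^d\}$.) The main obstacle is Steps 2--3: invoking the multiplicity-one branching and the Schur reduction correctly, and carrying out the Gegenbauer bookkeeping. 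A more elementary alternative expands $P=\sum_kx_1^kp_k(x')$ with the harmonicity recursion $(k+1)(k+2)p_{k+2}=-\Delta'p_k$ and evaluates $\fint_{\partial B_1}x_1^2P^2$ and $\|P\|^2$ by hand, but the constraint $x_1^2+|x'|^2=1$ on $\partial B_1$ couples $x_1$ and $x'$ and makes that route comparably delicate.
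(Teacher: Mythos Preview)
Your proof is correct. The paper itself gives no argument and simply cites Proposition~3.9 of Naber--Valtorta~\cite{NV}, so strictly speaking there is no ``paper's own proof'' to compare with; what follows compares your route to the one in~\cite{NV}.

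The argument in~\cite{NV} is more algebraic: it analyzes the adjoint of $\partial_1:\cP_d\to\cP_{d-1}$ (equivalently, the harmonic projection of $x_1Q$ for $Q\in\cP_{d-1}$) and exploits the identity $\langle\nabla P_1,\nabla P_2\rangle=d(2d+n-2)\langle P_1,P_2\rangle$ to bound the eigenvalues of $\partial_1^*\partial_1$ on $\cP_d(\cancel{x_1})^\perp$ directly, with an inductive flavor in $d$. Your route is genuinely different: Step~1 converts the operator inequality into the scalar inequality $(\star)$ on $\fint_{\partial B_1}x_1^2P^2$, and Steps~2--3 then diagonalize the quadratic form $P\mapsto\fint_{\partial B_1}x_1^2P^2$ via the $O(n-1)$ branching $\cP_d=\bigoplus_{j}W_{d,j}$ and compute each eigenvalue $a_{d,j}$ explicitly as a Gegenbauer moment. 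What your approach buys is an exact closed formula for $a_{d,j}$ (hence for the full spectrum of $\partial_1^*\partial_1$), making the sharpness transparent: equality in $(\star)$, hence in $\|P\|\le\|\partial_1P\|$, occurs precisely on $W_{d,d-1}$ (where $m=1$ and both bracketed terms equal $1$). The price is the Gegenbauer bookkeeping and the need to handle the degenerate pair $(n,m,\nu)=(2,1,0)$ separately, which you do. The~\cite{NV} argument is shorter and avoids special functions, but gives less quantitative information.

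One small presentational point: in Step~2 you assert block-diagonality of the form $\fint_{\partial B_1}x_1^2PQ$ on $\bigoplus_jW_{d,j}$. This is true, but Schur's lemma alone only gives diagonality \emph{within} each irreducible $W_{d,j}$; the vanishing of the off-diagonal blocks between $W_{d,j}$ and $W_{d,j'}$ ($j\ne j'$) is what actually lets you reduce $(\star)$ to the scalar inequalities $a_{d,j}\ge\tfrac{1}{2d+n-2}$. That vanishing follows immediately from the fact that multiplication by $t^2=x_1^2$ is constant in the $\omega'$ variable, so the $S^{n-2}$-integral of a product of spherical harmonics of different degrees $j,j'$ vanishes. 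It is worth stating this explicitly.
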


\subsection{Growth Estimate for Gradient}

In this section we prove the growth estimate for gradient under the assumption of lower bound of doubling index. We are going to apply the lemma to $\Tilde{u} = u_{x, r_1}$ on the interval $[r_2, r_1]$ where the doubling index is pinched.

\begin{lemma}\label{l:Gradient_Growth}
    Let $u$ be a solution to \ref{e:elliptic_equ} \ref{e:assumption} with doubling assumption \ref{e:DI_bound_assumption}. Let $10r_2 \leq r_1 \leq r_0 = C(n,\lambda,\alpha)^{-\Lambda}$. Assume $D(x, s) \geq \gamma$ for any $r_2 \leq s \leq r_1 $. Let $\Tilde{u} = u_{x, r_1}$ and $r_2' = r_2/r_1$. Then we have,
\begin{enumerate}

    \item \begin{equation}
    \fint_{B_t} \Tilde{u}^2 \leq \left\{
    \begin{array}{lr}
        C(n,\lambda)^\Lambda r_2'^{2\gamma - 9/5} t^{9/5}, &\text{if } t \leq r'_2.\\
        C(n,\lambda)^\Lambda t^{2\gamma}, &\text{if } t \in [r'_2, 2].
    \end{array} 
    \right.
    \end{equation}
    
    \item \begin{equation}
    \sup_{B_t} |\Tilde{u}|  \leq 
    \left\{
    \begin{array}{lr}
        C(n,\lambda,\alpha)^\Lambda r_2'^{\gamma - 9/10} t^{9/10}, &\text{if } t \leq r'_2.\\
        C(n,\lambda,\alpha)^\Lambda t^{\gamma}, &\text{if } t \in [r'_2, 1].
    \end{array} 
    \right.
\end{equation}

    \item \begin{equation}
    \sup_{B_t}  |\nabla \Tilde{u}| \leq 
    \left\{
    \begin{array}{lr}
        C(n,\lambda,\alpha)^\Lambda r_2'^{\gamma - 1} , &\text{if } t \leq r'_2.\\
        C(n,\lambda,\alpha)^\Lambda t^{\gamma - 1}, &\text{if } t \in [r'_2, 1].
    \end{array} 
    \right.
\end{equation}
\end{enumerate}
\end{lemma}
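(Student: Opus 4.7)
The plan is to reduce all three estimates to an iteration argument on the spherical averages $H(s) := \fint_{\partial B_s} \tilde{u}^2$, exploiting two facts: the definition of $u_{x,r_1}$ enforces the normalization $H(1) = 1$, and the rescaled equation in Remark \ref{r:Rescaled_u_equation} has ellipticity constants uniformly close to $\delta^{ij}$ for $r_1 \leq r_0$. By Remark \ref{r:Gen_DI}, the hypothesis $D^u(x,s) \geq \gamma$ for $s \in [r_2, r_1]$ translates into $D^{\tilde{u}}(0, s) \geq \gamma$ for $s \in [r_2', 1]$. Crucially, for $s \leq r_2'$ we still have $r_1 s \leq r_2 \leq r_0$, so the almost monotonicity Theorem \ref{t:Almost_monotone} applied with $\epsilon = 1/10$ supplies the \emph{unconditional} lower bound $D^{\tilde{u}}(0, s) \geq 9/10$. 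These two lower bounds account for the exponents $2\gamma$ and $9/5 = 2 \cdot 9/10$ appearing in the statement.

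First I bound $H$ pointwise. Since $H(2s)/H(s) = 4^{D^{\tilde{u}}(0, s)}$, iterating $H(s) \leq 4^{-\gamma} H(2s)$ from scale $s \in [r_2', 1]$ up to some $2^k s \in (1/2, 1]$ yields
\[
H(s) \leq C(n,\lambda)^{\Lambda}\, s^{2\gamma}, \qquad s \in [r_2', 1],
\]
where the constant at the top scale is controlled by Lemma \ref{l:Bound_on_General_DI}. Then, for $s \in (0, r_2']$, iterating the sharper $H(s) \leq 4^{-9/10} H(2s)$ from $s$ up to scale $r_2'$ and composing with the previous bound at $r_2'$ gives
\[
H(s) \leq C(n,\lambda)^{\Lambda}\, r_2'^{\,2\gamma - 9/5}\, s^{9/5}.
\]

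Estimate (1) then follows by integrating the identity $\fint_{B_t} \tilde{u}^2 = n t^{-n} \int_0^t s^{n-1} H(s)\,ds$ and splitting at $s = r_2'$: for $t \in [r_2', 2]$ the contribution of $[0, r_2']$ is $\lesssim r_2'^{\,n+2\gamma}$ and is absorbed into the main term $t^{\,n+2\gamma}$, giving $\fint_{B_t}\tilde{u}^2 \leq C^\Lambda t^{2\gamma}$; for $t \leq r_2'$ only the lower regime contributes, producing $\fint_{B_t}\tilde{u}^2 \leq C^\Lambda r_2'^{\,2\gamma - 9/5} t^{9/5}$. For (2) I apply the interior $L^\infty$ estimate $\sup_{B_{t/2}}|\tilde{u}| \leq C(n,\lambda)\bigl(\fint_{B_t} \tilde{u}^2\bigr)^{1/2}$ for the rescaled equation and plug in (1) at scale $2t$. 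For (3) I use the Schauder-type gradient estimate $\sup_{B_{r/2}} |\nabla \tilde{u}| \leq C(n,\lambda,\alpha)\, r^{-1} \sup_{B_r} |\tilde{u}|$ applied at scale $r = \max\{2t, r_2'\}$; when $t \leq r_2'$ this produces the $t$-independent bound $\sup_{B_t}|\nabla\tilde{u}| \leq C^\Lambda r_2'^{\,\gamma - 1}$, while for $t \in [r_2', 1]$ it reproduces $t^{\gamma-1}$.

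The main obstacle is threading the multiplicative constants through the two-regime iteration so that they remain of the shape $C(n,\lambda,\alpha)^\Lambda$; this is handled by invoking Lemma \ref{l:Bound_on_General_DI} only at the top scale and by the fact that the rescaled equation (Remark \ref{r:Rescaled_u_equation}) has coefficients within $C r_1^\alpha \leq C$ of the Laplacian, so the Moser and Schauder constants depend only on $n, \lambda, \alpha$. A secondary check is that the two regimes for $H$ match at $s = r_2'$, which they do, since $C r_2'^{\,2\gamma}$ equals the prefactor $C r_2'^{\,2\gamma - 9/5} \cdot r_2'^{\,9/5}$.
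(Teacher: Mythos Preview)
Your proof is correct and follows essentially the same strategy as the paper: iterate the doubling-index lower bounds on the spherical averages $H(s)$ to get the two-regime decay (exponent $2\gamma$ above $r_2'$ and $9/5$ below, the latter coming from Theorem \ref{t:Almost_monotone} with $\epsilon=1/10$), integrate radially for (1), and then invoke standard interior elliptic estimates for (2) and (3). The only cosmetic difference is that the paper controls $H(t)$ for $t\in[1,2]$ via the harmonic approximation Lemma \ref{l:Harm_Approx} together with Lemma \ref{l:Harmonic_AlmostMonotone}, rather than Lemma \ref{l:Bound_on_General_DI} as you do.
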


\begin{proof}
By harmonic approximation Lemma \ref{l:Harm_Approx} and Lemma \ref{l:Harmonic_AlmostMonotone}, for any $t \in [1, 2]$ we have
\begin{equation}
    \fint_{\partial B_{t}} \Tilde{u}^2 \leq C(n,\lambda)^\Lambda.
\end{equation}

By the lower bound on doubling index, for any $s \in [r'_2, 1]$,
\begin{equation}
    \fint_{\partial B_s} \Tilde{u}^2 \leq 4^{-\gamma i} \fint_{\partial B_{2^is}} \Tilde{u}^2 \leq C(n,\lambda)^\Lambda s^{2\gamma},
\end{equation}
where we choose $i$ satisfying $1 \leq 2^i s < 2$. 

Next we choose $\epsilon = 1/10$ and apply the Almost monotonicity theorem \ref{t:Almost_monotone}, we obtain that $D(x,r) \geq 9/10$ for any $r \leq r_0 = C(n,\lambda,\alpha)^{-\Lambda}.$ Then for any $s \leq r'_2$, we have
\begin{equation}
    \fint_{\partial B_s} \Tilde{u}^2 \leq 4^{-9i/10} \fint_{\partial B_{2^is}} \Tilde{u}^2 \leq
    C(n,\lambda)^\Lambda r_2'^{2\gamma - 9/5} s^{9/5}.
\end{equation}
where we choose $i$ satisfying $r'_2 \leq 2^i s < 2r'_2$.

Then if $t \leq r_2'$, we have
\begin{equation}
    \int_{B_t} \Tilde{u}^2 = \int_0^t \int_{\partial B_s} \Tilde{u}^2 ds \leq C(n,\lambda)^\Lambda r_2'^{2\gamma - 9/5}\int_0^t s^{n -1 + 9/5} ds \leq C(n,\lambda)^\Lambda r_2'^{2\gamma - 9/5} t^{n + 9/5}.
\end{equation}

If $t \in [r_2', 2]$, then
\begin{equation}
    \int_{B_t} \Tilde{u}^2 = \int_{B_{r_2'}} \Tilde{u}^2 + \int_{r_2'}^t \int_{\partial B_s} \Tilde{u}^2 ds \leq C(n,\lambda)^\Lambda r_2'^{2\gamma +n}  + C(n,\lambda)^\Lambda \int_{r_2'}^t s^{n- 1 + 2\gamma} ds \leq C(n,\lambda)^\Lambda t^{n+2\gamma}.
\end{equation}

This finishes the proof of the first inequality. The remaining two inequalities are proved by elliptic estimates (see \cite{GT}). 
\end{proof}

\subsection{Green's Function}

Following \cite{Hansingular} and \cite{NV}, we analyze the growth of Green's kernel of Laplacian in $\RR^n$ for $n\geq 3$. What is different here is that we need to get estimate of the gradient of the expansion terms. Similar results hold for $n=2$. 

Let $\Gamma(x, y) = c(n)|x - y|^{2-n}$ be the fundamental solution to Laplacian equation in $\RR^n$ for $n\geq 3$. For each $y \neq 0$, we have the expansion for $\Gamma_y(x) \equiv \Gamma(x,y)$ at $x=0$ in $B_{|y|}$
    \begin{equation}
        \Gamma_y(x) = \sum_k \Gamma_k(y) P_{y,k}(x),
    \end{equation}
    where $P_{y,k}(x)$ are homogeneous harmonic polynomials of degree $k$ with normalization $\fint_{\partial B_1} P_{y,k}^2 = 1$. Define 
    \begin{equation}
        R_{y, d}(x) \equiv \Gamma(x, y) - \sum_{k=0}^d \Gamma_k(y) P_{y,k}(x).
    \end{equation}

Note that $R_{y, d}$ is a harmonic function with vanishing order at the origin no less than $d+1$.

\begin{lemma}\label{l:Green_Kernel_growth}
    Assume as above. Then for any $y \neq 0$,  
    \begin{equation}
        |\Gamma_k(y)| \leq c(n) \Big( \frac{4}{3} \Big)^k |y|^{2-n-k} \quad \text { and } \quad   |\nabla_y \big( \Gamma_k(y) P_{y,k}(x) \big) | \leq c(n)\big( \frac{4}{3} \Big)^k k^{\frac{n}{2} - 1} |y|^{1-n-k} | |x|^k
    \end{equation}
For any $|x| \leq |y|/2$ we have
    \begin{equation}
    |R_{y,d}(x)|\leq c(n) 2^{d+1} |x|^{d+1} |y|^{1-n-d} \quad \text{ and } \quad
    |\nabla_y R_{y,d}(x)| \leq c(n) 2^{d+1} |x|^{d+1} |y|^{-n-d}.
\end{equation}
\end{lemma}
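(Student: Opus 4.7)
The plan is to extract each coefficient polynomial $\Gamma_k(y) P_{y,k}(x)$ by an $L^2$-orthogonality argument on spheres $\partial B_r \subset \RR^n_x$, bound its $L^2$-norm in $x$ by the sup-norm of $\Gamma_y$ (respectively $\partial_{y_i}\Gamma_y$) on $\partial B_r$, convert this to a pointwise bound via Lemma \ref{l:hhp_C0_bound}, and finally sum the resulting pointwise bounds as a geometric series in $|x|/|y|$ to obtain the remainder estimates.

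For the first inequality, fix $y\neq 0$ and note that $\{P_{y,k}\}_{k\geq 0}$ consists of homogeneous harmonic polynomials of mutually distinct degree in $x$; restricted to $\partial B_1$ they are orthogonal spherical harmonics, so with the normalization $\fint_{\partial B_1}P_{y,k}^2=1$ and $k$-homogeneity one has $\fint_{\partial B_r} P_{y,j}P_{y,k}=\delta_{jk}r^{2k}$ for every $0<r<|y|$. Plugging the convergent expansion of $\Gamma_y$ into this identity yields $\sum_k \Gamma_k(y)^2 r^{2k} = \fint_{\partial B_r}\Gamma_y^2 \leq c(n)^2 (|y|-r)^{2(2-n)}$, and choosing $r=3|y|/4$ produces the claimed bound on $|\Gamma_k(y)|$.

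For the second inequality I apply the same orthogonality to $\partial_{y_i}\Gamma_y$. The key point is that $\partial_{y_i}[\Gamma_k(y)P_{y,k}(x)]$ remains, for fixed $y$, a homogeneous harmonic polynomial of degree $k$ in $x$: differentiation in $y$ commutes with $\Delta_x$ and preserves the $x$-degree. Orthogonality then gives $\sum_k r^{2k}\|\partial_{y_i}(\Gamma_kP_{y,k})\|^2 = \fint_{\partial B_r}|\partial_{y_i}\Gamma_y|^2 \leq c(n)^2(|y|-r)^{2(1-n)}$, using $|\partial_{y_i}\Gamma(x,y)|\leq c(n)|x-y|^{1-n}$. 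With $r=3|y|/4$ this produces $\|\partial_{y_i}(\Gamma_kP_{y,k})\|\leq c(n)(4/3)^k|y|^{1-n-k}$, and Lemma \ref{l:hhp_C0_bound} turns this into the pointwise bound with the $k^{n/2-1}$ factor coming from the standard $L^2$-to-$L^\infty$ estimate on $\cP_k$.

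Finally, combining the bound on $|\Gamma_k(y)|$ with Lemma \ref{l:hhp_C0_bound} yields $|\Gamma_k(y)P_{y,k}(x)| \leq c(n)(4/3)^k k^{n/2-1}|y|^{2-n-k}|x|^k$. For $|x|\leq |y|/2$ and $k\geq d+1$, the factor $(4|x|/(3|y|))^k$ telescopes as $(4/3)^{d+1}(|x|/|y|)^{d+1}(2/3)^{k-d-1}$, so the sum over $k\geq d+1$ is a geometric series whose polynomial weight $k^{n/2-1}$ is crushed by the $(2/3)$-decay; collecting constants gives $|R_{y,d}(x)| \leq c(n) 2^{d+1}|x|^{d+1}|y|^{1-n-d}$. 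The identical telescoping applied to the bound from the second inequality produces the estimate on $|\nabla_y R_{y,d}(x)|$. The one nontrivial point — and the main obstacle to watch — is the verification underlying the gradient step: that each $\partial_{y_i}(\Gamma_k(y) P_{y,k}(x))$ still lies in $\cP_k$ as a function of $x$, and that termwise $y$-differentiation of the expansion is legitimate on compact subsets of $\{|x|<|y|\}$. Both follow once one observes that $[\Delta_x, \partial_{y_i}]=0$ and that the series converges absolutely and uniformly on such sets; everything else is routine.
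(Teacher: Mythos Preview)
Your proof is correct, and the first two estimates (on $|\Gamma_k(y)|$ and on $|\nabla_y(\Gamma_k P_{y,k})|$) follow essentially the same orthogonality-on-spheres argument as the paper. The genuine difference is in how you handle the remainder $R_{y,d}$ and its $y$-gradient. You sum the termwise pointwise bounds $c(n)(4/3)^k k^{n/2-1}|y|^{2-n-k}|x|^k$ as a geometric series in $(2/3)^{k-d-1}$; this produces an intermediate constant of order $(4/3)^{d+1}(d+1)^{n/2-1}$, which you then absorb into $c(n)2^{d+1}$ since $(d+1)^{n/2-1}\leq c(n)(3/2)^{d+1}$. The paper instead avoids summation altogether: it bounds $\fint_{\partial B_{3|y|/4}} R_{y,d}^2 \leq \fint_{\partial B_{3|y|/4}}\Gamma_y^2$ by orthogonality, then exploits that $R_{y,d}$ is harmonic in $x$ with vanishing order $\geq d+1$ at the origin, so the frequency bound gives $\bigl(\fint_{\partial B_{3|x|/2}} R_{y,d}^2\bigr)^{1/2}\leq (2|x|/|y|)^{d+1}\bigl(\fint_{\partial B_{3|y|/4}} R_{y,d}^2\bigr)^{1/2}$, and a mean-value estimate finishes. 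Your route is more elementary (no appeal to the frequency/doubling machinery of Lemma \ref{l:Harmonic_AlmostMonotone}) at the cost of tracking the polynomial weight $k^{n/2-1}$; the paper's route is cleaner in the constants and ties the estimate back to the harmonic-function toolkit already developed in Section 2.
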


\begin{proof}
     By orthogonality of $P_{y,k}$, for any $r < |y|$ and any $k$,
    \begin{equation}
        \Big(\Gamma_k(y) r^k \Big)^2 \leq c(n)\fint_{\partial B_r} \Gamma_y(x)^2 dx = c(n)\fint_{\partial B_r} |x - y|^{4-2n} dx.
    \end{equation}
    
    Choose $r = 3|y| / 4$, then we have
    \begin{equation}
        |\Gamma_k(y)| \leq c(n) \Big( \frac{4}{3} \Big)^k |y|^{2-n-k}. 
    \end{equation}

Now for $y \neq 0$, we consider the expansion of $\partial_{y_i} \Gamma(x,y)$ at $x=0$
\begin{equation}
    \partial_{y_i} \Gamma(x,y) = \sum_k \Tilde{\Gamma}^i_k(y) \Tilde{P}^i_{y,k}(x).
\end{equation}
where each $\Tilde{P}^i_{y,k}(x)$ is a normalized homogeneous harmonic polynomial of degree $k$. 

Since $\partial_{y_i} \Gamma(x,y) = c(n)|x-y|^{-n}(y_i - x_i)$, by the same arguments as above, we have
\begin{equation}
    |\Tilde{\Gamma}^i_k(y) | \leq c(n)\big( \frac{4}{3} \Big)^k |y|^{1-n-k}.
\end{equation}

Since both of $P_{y,k}(x)$ and $\Tilde{P}^i_{y,k}(x)$ are homogeneous harmonic polynomials of degree $k$, we have 
\begin{equation*}
    \partial_{y_i} \big( \Gamma_k(y) P_{y,k}(x) \big) = \Tilde{\Gamma}^i_k(y) \Tilde{P}^i_{y,k}(x),
\end{equation*}
and thus
\begin{equation*}
    |\partial_{y_i} \big( \Gamma_k(y) P_{y,k}(x) \big) | \leq c(n)\big( \frac{4}{3}  \Big)^k k^{\frac{n}{2} - 1} |y|^{1-n-k} ~ |x|^k, 
\end{equation*}
where we have used the sharp upper bound for $C^0$-norm of homogeneous harmonic polynomials of degree $k$ as Lemma \ref{l:hhp_C0_bound}
\begin{equation*}
    ||P_k||_{C^0(B_r)} 
    \leq C(n) k^{\frac{n}{2} - 1} \big( \fint_{\partial B_r} P_k^2\big)^{1/2}.
\end{equation*}

This finishes the proof of the first part. Next we prove the bounds for $R_{y,d}$. Similarly, by orthogonality of $P_k$, we have
\begin{equation}
    \fint_{\partial B_{3|y|/4}} R_{y,d}(x)^2 \leq  \fint_{\partial B_{3|y|/4}} \Gamma(x,y)^2 \leq \frac{c(n)}{|y|^{2n -4}}.
\end{equation}
    
    Therefore, for $2|x| \leq |y|$ we have
    \begin{equation}
\begin{split}
     |R_{y,d}(x)| & \leq c(n) \big( \fint_{\partial B_{3|x|/2}} R_{y,d}(z)^2 \big)^{1/2}\\
     & \leq c(n) |2x|^{d+1} |y|^{-(d+1)} ( \fint_{\partial B_{3|y|/4}} R_{y,d}(z)^2 \big)^{1/2}\\
     & \leq c(n) 2^{d+1} |x|^{d+1} |y|^{1-n-d},
\end{split}
    \end{equation}
where the first inequality comes from elliptic estimates and the second comes from the fact that the frequency is no less than $d+1$. 

Note that 
\begin{equation}
    \partial_{y_i} R_{y,d}(x) = \partial_{y_i} \Gamma(x,y) - \sum_{k=0}^d \partial_{y_i} \big( \Gamma_k(y) P_{y,k}(x) \big) = \sum_{k=d+1}^{\infty} \Tilde{\Gamma}^i_k(y) \Tilde{P}^i_{y,k}(x).
\end{equation}
By the orthogonality of $\Tilde{P}^i_{y,k}(x)$ we have
\begin{equation}
    \fint_{\partial B_{3|y|/4}} |\partial_{y_i} R_{y,d}(x)|^2 \leq  \fint_{\partial B_{3|y|/4}} |\partial_{y_i} \Gamma(x,y)|^2 \leq \frac{c(n)}{|y|^{2n -2}}.
\end{equation}
and for any $2|x| \leq |y|$,
\begin{equation}
    |\partial_{y_i} R_{y,d}(x)| \leq c(n) \big( \fint_{\partial B_{3|x|/2}} \partial_{y_i} R_{y,d}(x)^2 \big)^{1/2} \leq c(n) |2x|^{d+1} |y|^{-(d+1)} ( \fint_{\partial B_{3|y|/4}} \partial_{y_i} R_{y,d}(x)^2 \big)^{1/2} \leq c(n) 2^{d+1} |x|^{d+1} |y|^{-n-d}.
\end{equation}
This finishes the proof. 
\end{proof}

\subsection{Quantitative Uniqueness of Tangent Maps}

In this section, we prove the quantitative version of uniqueness of tangent maps for solutions to \ref{e:elliptic_equ} \ref{e:assumption} and \ref{e:DI_bound_assumption}. First we prove an important harmonic approximation theorem, which can be viewed as an improved version of \ref{l:Harm_Approx} when the doubling index is pinched. Also see \cite{Hansingular} \cite{NV}.

\begin{proposition}\label{p:harmonic_appro}
Let $u: B_2 \to \RR$ be a solution to (\ref{e:elliptic_equ}) (\ref{e:assumption}) with doubling assumption \ref{e:DI_bound_assumption}. Let $\epsilon \leq 1/10$ and $x\in B_1$. Then there exists some $r_0 = C(n, \lambda, \alpha)^{-\Lambda} \epsilon^{1/\alpha}$ such that if $10r_2 \leq r_1\le r_0$ and $|D(x,s) - D(x,t)| \leq  \alpha$ for any $s, t \in [r_2, r_1]$. there exists some harmonic function $h$ in $B_1$ such that 
    
\begin{equation}\label{e:Harmonic_close}
    |h(y) - u_{x, r_1}(y) | \leq
    \left\{
    \begin{array}{lr}
        \epsilon \Big( \fint_{\partial B_{|y|}} u^2_{x,r_1} \Big)^{1/2}, &\text{if } |y| \in [r_2/r_1, 1/2].\\
        \epsilon \Big( \fint_{\partial B_{r_2/r_1}} u^2_{x,r_1} \Big)^{1/2}, &\text{if } |y| \leq r_2/r_1.
    \end{array} 
    \right.
  \end{equation}

In particular, we have the doubling index closeness
\begin{equation}
    |D^{h}(0, r) - D^{u_{x, r_1}}(0, r) | \leq 10 \epsilon \text{ for any } r \in [r_2/r_1, 1/2].   
\end{equation}
\end{proposition}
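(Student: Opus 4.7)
The plan is to upgrade the single-scale harmonic approximation of Lemma~\ref{l:Harm_Approx} to a scale-adapted approximation valid on the entire range $[r_2/r_1,1/2]$, via a Green's function representation combined with iterative polynomial correction. Set $\tilde u := u_{x,r_1}$, which by Remark~\ref{r:Rescaled_u_equation} solves a rescaled equation with $|\tilde a^{ij}(y)-\delta^{ij}|\le C r_1^\alpha|y|^\alpha$ and $|\tilde b|\le C r_1$. The pinching $|D(x,s)-D(x,t)|\le\alpha$, combined with Lemmas~\ref{l:Harm_pinch_near_Z} and~\ref{l:DI_close_away_from_0} applied at each dyadic scale in $[r_2,r_1]$, forces $D^{\tilde u}(0,s)\approx d$ for a single positive integer $d$ uniformly on $s\in[r_2/r_1,1]$. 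Lemma~\ref{l:Gradient_Growth} then gives $|\nabla\tilde u|(y)\lesssim C(n,\lambda,\alpha)^{\Lambda}|y|^{d-1}$ on the annulus $B_1\setminus B_{r_2/r_1}$, with a constant cutoff below scale $r_2/r_1$; symmetrically $\bigl(\fint_{\partial B_{|y|}}\tilde u^2\bigr)^{1/2}\gtrsim |y|^d$ on the same range.

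Decompose $\tilde u=h+v$ on $B_{1/2}$, where $h$ is the harmonic extension of $\tilde u|_{\partial B_{1/2}}$ and $v(x)=\int_{B_{1/2}} G(x,y)\Delta\tilde u(y)\,dy$ for the Dirichlet Green's function $G=\Gamma-H$. Since $\Delta\tilde u=\partial_i\bigl((\tilde a^{ij}-\delta^{ij})\partial_j\tilde u\bigr)+\tilde b^i\partial_i\tilde u$ is in divergence form, integrating by parts in $y$ yields
\begin{equation*}
v(x)=-\int_{B_{1/2}}\partial_{y_i}G(x,y)\,(\tilde a^{ij}-\delta^{ij})\partial_j\tilde u\,dy+\int_{B_{1/2}}G(x,y)\,\tilde b^i\partial_i\tilde u\,dy+(\text{terms harmonic in }x),
\end{equation*}
a step that only uses $\nabla\tilde u$, never any $W^{2,p}$ bound on $\tilde u$ (which is why the H\"older-only setting is tractable). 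All harmonic-in-$x$ pieces, including the Dirichlet correction $H$ and the boundary contributions, are absorbed into a redefined $h$.

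For $|y|\ge 2|x|$ expand $\Gamma(x,y)=\sum_{k=0}^d\Gamma_k(y)P_{y,k}(x)+R_{y,d}(x)$ via Lemma~\ref{l:Green_Kernel_growth}. Each $\Gamma_k(y)P_{y,k}(x)$ is a harmonic polynomial in $x$, so after integration against $(\tilde a-\delta)\partial_j\tilde u$ it contributes a harmonic polynomial in $x$, again absorbed into $h$. The remainder satisfies $|\partial_yR_{y,d}(x)|\le C_d|x|^{d+1}|y|^{-n-d}$; together with $|\tilde a-\delta|\lesssim r_1^\alpha|y|^\alpha$ and $|\nabla\tilde u|(y)\lesssim|y|^{d-1}$, spherical integration gives
\begin{equation*}
\Bigl|\int_{|y|\ge 2|x|}\partial_{y_i}R_{y,d}(x)(\tilde a-\delta)\partial_j\tilde u\,dy\Bigr|\le C_d\,r_1^\alpha\,|x|^{d+1}\int_{2|x|}^{1/2}\rho^{\alpha-2}\,d\rho\le C_d\,r_1^\alpha\,|x|^{d+\alpha}.
\end{equation*}
On $|y|\le 2|x|$, direct bounds on $\Gamma,\partial_y\Gamma$ together with $|y|^\alpha\le (2|x|)^\alpha$ and $\int_{B_{2|x|}(0)}|x-y|^{1-n}\,dy\lesssim|x|$ give the same order $C_d r_1^\alpha|x|^{d+\alpha}$; the $\tilde b$-term is lower order. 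Dividing by $\bigl(\fint_{\partial B_{|x|}}\tilde u^2\bigr)^{1/2}\gtrsim|x|^d$ produces $|v(x)|\le C_d r_1^\alpha|x|^\alpha\bigl(\fint_{\partial B_{|x|}}\tilde u^2\bigr)^{1/2}$, which is $\le\epsilon\bigl(\fint_{\partial B_{|x|}}\tilde u^2\bigr)^{1/2}$ once $r_1\le C(n,\lambda,\alpha)^{-\Lambda}\epsilon^{1/\alpha}$. For $|x|<r_2/r_1$ the capped growth in Lemma~\ref{l:Gradient_Growth}(3) replaces $\bigl(\fint_{\partial B_{|x|}}\tilde u^2\bigr)^{1/2}$ by the fixed $\bigl(\fint_{\partial B_{r_2/r_1}}\tilde u^2\bigr)^{1/2}$, yielding the second case of (\ref{e:Harmonic_close}). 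The doubling-index closeness follows by an $L^2$-on-spheres Young comparison as in Lemma~\ref{l:DI_close_away_from_0}.

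The main obstacle is the near-critical integral $\int\rho^{\alpha-2}\,d\rho$ arising when one differentiates $\Gamma$ in $y$ (forced by the divergence-form integration by parts) and pairs the result against the H\"older factor $|y|^\alpha$ and the gradient growth $|y|^{d-1}$. For $\alpha\in(0,1)$ this integral is singular at the origin, and convergence is rescued only by $|x|$ serving as a lower cutoff; the surviving factor $|x|^{\alpha-1}$ then combines with $|x|^{d+1}$ from the Green remainder to produce the desired error $r_1^\alpha|x|^{d+\alpha}$. Equivalently, the H\"older smallness $r_1^\alpha$ is exactly what is needed to absorb the near-singularity and turn it into the quantitative $\epsilon$-closeness.
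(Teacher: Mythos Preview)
Your approach is the paper's: write $h=\tilde u-\phi$ with $\phi$ the Newtonian potential of $\Delta\tilde u$ corrected by the low-order Taylor polynomials of the Green kernel, integrate by parts to work only with $\nabla\tilde u$, and bound the pieces via Lemmas~\ref{l:Gradient_Growth} and~\ref{l:Green_Kernel_growth}. Two points in your sketch need repair.

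First, the polynomial corrections $\int\partial_{y_i}[\Gamma_k(y)P_{y,k}(x)](\tilde a-\delta)\partial_j\tilde u\,dy$ are harmonic in $x$ only if the $y$-domain is \emph{independent of $x$}; your split $|y|\gtrless 2|x|$ makes the ``absorbed'' piece $x$-dependent, hence not harmonic. The paper defines $\phi$ with the polynomial subtraction over the fixed annulus $r_2/r_1\le|y|\le 1$ and then, for each $x$, splits $\phi(x)=I_1+I_2+I_3$: your two estimates correspond to $I_1$ (near region $|y|\le 2|x|$) and $I_3$ (far remainder $R_{y,d}$), but you omit $I_2=-\int_{r_2/r_1\le|y|\le 2|x|}\sum_{k=1}^d\Gamma_k(y) P_{y,k}(x)\,\Delta\tilde u\,dy$, whose estimate is where the choice $d<\gamma+\alpha$ of the integer $d$ is actually used. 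The inner cutoff $|y|\ge r_2/r_1$ is not optional either: for $k\ge 2$ the kernel $|\partial_y(\Gamma_k P_{y,k})|\sim|y|^{1-n-k}$ paired with the capped gradient of Lemma~\ref{l:Gradient_Growth}(3) gives $\int_0^{r_2/r_1}\rho^{\alpha-k}\,d\rho=\infty$, so you cannot subtract the polynomials over all of $B_{1/2}$ as your wording suggests.

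Second, with H\"older $a^{ij}$ the solution is only $C^{1,\alpha}$, so $\Delta\tilde u$ and the Green representation $v=\int G\,\Delta\tilde u$ are not classically defined; the paper first mollifies the coefficients to $a^{ij}_\delta,b^i_\delta$, proves the claim for the smooth approximations with constants depending only on the original H\"older and $L^\infty$ data, and passes to the limit. You should include this step.
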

\begin{proof}

    In the following we prove the theorem when $n \geq 3$. The case $n=2$ is similar. Consider the fundamental solution to Laplacian equation $\Gamma(x,y) = c(n)|x-y|^{2-n}$. 
    
    By convolving the coefficients of the equation \ref{e:elliptic_equ} with a mollifier, we consider the elliptic equation with smooth coefficients $a_{\delta}^{ij}$ and $b^i_{\delta}$ such that $a_{\delta}^{ij} \to a^{ij}$ and $b^i_{\delta} \to b^i$ uniformly as $\delta \to 0$. We consider the solution $u_{\delta}$ to such an equation. In this way $\Delta u_{\delta}$ is well-defined. The proof will be finished once we can prove the result for $u_{\delta}$ provided only the H\"older norm of $a^{ij}$ and $L^{\infty}$-norm of $b^i$ are assumed. Hence in the following we will focus on the solutions $u_{\delta}$ and for simplicity we will omit the subscript $\delta$. 

{Denote $\Tilde{u} = u_{x, r_1}$ and note that $\Tilde{u}$ satisfies the equation}
\begin{equation}
     \Delta \Tilde{u}(y) = \partial_i (a^{ij}(0) - a^{ij}(y) \partial_j \Tilde{u}) - b_i(y) \partial_i \Tilde{u}. 
    \end{equation}
where the coefficients satisfy
\begin{equation}
   a^{ij}(0)=\delta^{ij},~~ |a^{ij}(y) -a^{ij}(0)| \leq C(\lambda,\alpha) r_1^{\alpha} |y|^{\alpha}, \quad |b^i| \leq r_1 C(\lambda).
\end{equation}

Assume $D(x, s) \in [\gamma, \gamma+\alpha]$ for any $s\in [r_2, r_1]$. Let $d$ be the largest integer such that $d < \gamma +\alpha$. By Lemma \ref{l:Bound_on_General_DI}, we have $d \leq C(n, \lambda)\Lambda $. We construct the function $\phi(x)$ as follows
    \begin{equation}
        \phi(x) = \int_{B_1} \big( \Gamma(x, y) - \Gamma(0,y)  \big) \Delta \Tilde{u}(y) dy - \sum_{k=1}^d \int_{r_2 /r_1 \leq |y| \leq 1} \Gamma_k(y) P_{y,k}(x) \Delta \Tilde{u}(y) dy 
    \end{equation}

First we prove the growth estimate for the function $\phi$ and that the theorem follows from the claim. 

  \textbf{Claim:} 
  \begin{equation}
    |\phi(x)| \leq 
    \left\{
    \begin{array}{lr}
        C(n, \lambda,\alpha)^{\Lambda} r_1^{\alpha} |x|^{\gamma+\alpha }, &\text{if } |x| \in [r_2/r_1, 1/2].\\
        C(n, \lambda,\alpha)^{\Lambda} r_1^{\alpha} \big(\frac{r_2}{r_1}\big)^{\gamma-1} |x|^{1+\alpha }, &\text{if } |x| \leq r_2/r_1.
    \end{array} 
    \right.
  \end{equation}

The order of growth for $\phi(x)$ when $x$ is small is only $1+\alpha$. However, this will be sufficient for the proof of closeness of doubling index. 

Now we prove the theorem. Suppose the claim be true. Choose $r_0 \leq C(n,\lambda, \alpha)^{-\Lambda} \epsilon^{1/\alpha}$. Then since $\Delta \phi = \Delta \Tilde{u}$ in $B_{1}$, then we can choose $h = \Tilde{u} - \phi$ be harmonic. For any $|y| \leq 1/2$, choose $i$ satisfying $1/2 \leq 2^i|y| < 1$, then by the upper bound of the doubling index, 
\begin{equation}
    \fint_{\partial B_|y|} \Tilde{u}^2 \geq 4^{-(\gamma+\alpha) i} \fint_{\partial B_{2^i |y|}} \Tilde{u}^2 \geq C(n,\lambda,\alpha)^{-\Lambda} |y|^{2(\gamma+\alpha)},
\end{equation}

Therefore,
\begin{equation}\label{e:phi_y_harmonic_appro}
    \begin{split}
        & \quad |h(y) - \Tilde{u}(y)| = |\phi(y)| \\
        & \leq  C(n, \lambda,\alpha)^{\Lambda} r_1^{\alpha} \Big( \fint_{\partial  B_{|y|}} \Tilde{u}^2 \Big)^{1/2} \\
        &\leq \epsilon \Big( \fint_{\partial  B_{|y|}} \Tilde{u}^2 \Big)^{1/2}.
    \end{split}
\end{equation}

Similarly, for any $|y| \leq r_2/r_1$, we have
\begin{equation}\label{e:phi_y_small_harmonic_appro}
    \begin{split}
        & \quad |h(y) - \Tilde{u}(y)| = |\phi(y)| \\
        & \leq C(n, \lambda,\alpha)^{\Lambda} r_1^{\alpha} \big(\frac{r_2}{r_1}\big)^{\gamma-1} |y|^{1+\alpha } \Big( \fint_{\partial  B_{r_2/r_1}} \Tilde{u}^2 \Big)^{1/2} \big(\frac{r_2}{r_1}\big)^{-\gamma - \alpha} \\
        & \leq  C(n, \lambda,\alpha)^{\Lambda} r_1^{\alpha} \Big( \fint_{ \partial B_{r_2/r_1}} \Tilde{u}^2 \Big)^{1/2}\\
       & \leq \epsilon \Big( \fint_{ \partial B_{r_2/r_1}} \Tilde{u}^2 \Big)^{1/2}
    \end{split}
\end{equation}

Hence we prove the first inequality in \ref{e:Harmonic_close}. 

Next we prove the doubling index closeness. Fix $r \geq r_2/r_1$. Note that for any $r_2/r_1 \leq |y| \leq r$, according to \ref{e:phi_y_harmonic_appro}, we have
\begin{equation}\label{e:h_ueps}
    \fint_{\partial B_r} |h - \Tilde{u}|^2 = \fint_{\partial B_r} |\phi(y)|^2 \leq \epsilon^2  \fint_{\partial B_{r}} \Tilde{u}^2.
\end{equation}

Also,
\begin{equation}
    \fint_{\partial B_r} |h + \Tilde{u}|^2 \leq 2 \fint_{\partial B_r} (|h - \Tilde{u}|^2 + 4\Tilde{u}^2 ) \leq 10\fint_{\partial B_r} \Tilde{u}^2.
\end{equation}

Therefore we have
\begin{equation}\label{e:h_u_uniqueness}
    \fint_{\partial B_r} \big| |h^2| - |\Tilde{u}|^2 \big| \leq  \fint_{\partial B_r} |h + \Tilde{u}| |h - \Tilde{u}| \leq \frac{1}{2} (\fint_{\partial B_r} \frac{2|h-\Tilde{u}|^2}{\epsilon} +  \frac{\epsilon}{2} |h+\Tilde{u}|^2) \leq 4\epsilon \fint_{\partial B_r} \Tilde{u}^2.
\end{equation}

This proves that for any $r \in [r_2/r_1, 1/2]$ and $\epsilon < 1/100$,
\begin{equation}
    |D^{h}(0, r) - D^{\Tilde{u}}(0, r) | \leq \log_4 \frac{1+5\epsilon}{1-5\epsilon} \leq 10 \epsilon.
\end{equation}

\textbf{Proof of the Claim: }
    In the following we prove the claim.

    For $x \in B_{1/2}\setminus B_{r_2/r_1}$, we can split the integral into three parts
    \begin{equation}
    \begin{split}
        I_1 &= \int_{B_{2|x|}} \big( \Gamma(x, y) - \Gamma(0, y) \big) \Delta \Tilde{u}(y) dy\\
        I_2 &= -\int_{r_2/r_1 \leq |y| \leq 2|x|} \sum_{k=1}^d \Gamma_k(y) P_{y,k}(x) \Delta \Tilde{u}(y) dy\\
        I_3 &= \int_{2|x| \leq |y| \leq 1} \Big( \Gamma(x,y) - \sum_{k=0}^d \Gamma_k(y) P_{y,k}(x) \Big) \Delta \Tilde{u}(y) dy.
        \end{split}
    \end{equation}

First we deal with $I_1$. Let $\Omega_{\delta} \equiv B_{2|x|} \setminus (B_{\delta} \cup B_{\delta}(x))$ for small $\delta$. Using integration  by parts we have
    \begin{equation}
    \begin{split}
         &\quad I_1 \leq \Big| \int_{\Omega_{\delta}} (\Gamma(x, y) - \Gamma(0, y) ) \Delta \Tilde{u}(y) dy \Big| \\
        &\leq \int_{\Omega_{\delta}} \big|\Gamma(x, y) - \Gamma(0, y) \big| |b^i||\nabla \Tilde{u}(y)| dy \Big| 
       +\int_{\Omega_{\delta}} |\nabla_y (\Gamma(x, y) - \Gamma(0, y) )| \big| a^{ij}(y) - a^{ij}(0) \big| |\nabla \Tilde{u}(y)| dy \\
       & +\int_{\partial B_{2|x|}} \big| |x-y|^{2-n} - |y|^{2-n} \big| \big| a^{ij}(y) - a^{ij}(0) \big|  \big|\nabla \Tilde{u} \big| dS_y + \int_{\partial B_{\delta}} \big| |x-y|^{2-n} - |y|^{2-n} \big| \big| a^{ij}(y) - a^{ij}(0) \big| \big|\nabla \Tilde{u} \big| dS_y  \\
        & + \int_{\partial B_{\delta}(x)} \big| |x-y|^{2-n} - |y|^{2-n} \big| \big| a^{ij}(y) - a^{ij}(0) \big| 
 \big|\nabla \Tilde{u} \big| dS_y   \\
    \end{split}
    \end{equation}

Note that the lower bound on doubling index is bounded below by $\gamma$ and we can apply Lemma \ref{l:Gradient_Growth} to obtain
\begin{equation}
\begin{split}
    &\quad \int_{\Omega_{\delta}} \big| \Gamma(x, y) - \Gamma(0, y) \big| |b^i||\nabla \Tilde{u}(y)| dy \Big| \\
    & \leq C(n, \lambda,\alpha)r_1 \int_{B_{2|x|}} \big| \Gamma(0, y) \big| |\nabla \Tilde{u}(y)|  dy  + C(n, \lambda,\alpha)r_1 \int_{B_{2|x|}} \big| \Gamma(x, y) \big| |\nabla \Tilde{u}(y)| dy \\
    &\leq C(n, \lambda,\alpha)r_1 \int_{0}^{2|x|} r^{2-n} \sup_{B_r}|\nabla \Tilde{u}| r^{n-1} dr  + C(n, \lambda,\alpha)r_1 \int_{0}^{3|x|} r^{2-n} \sup_{B_{|x| +r} \cap B_1}|\nabla \Tilde{u}| r^{n-1} dr \\
    & \leq C(n, \lambda,\alpha)^{\Lambda} r_1 |x|^{\gamma + 1}. 
\end{split}
\end{equation}

Similarly, we have
\begin{equation}
    \int_{\Omega_{\delta}} \big| \nabla_y (\Gamma(x, y) - \Gamma(0, y) ) \big| a^{ij}(y) - a^{ij}(0) \big| |\nabla \Tilde{u}(y)| dy \leq C(n, \lambda,\alpha)^{\Lambda} r_1^{\alpha} |x|^{\gamma + \alpha}.
\end{equation}

Also, 
\begin{equation}
\begin{split}
    & \quad\int_{\partial B_{2|x|}} \big| |x-y|^{2-n} - |y|^{2-n} \big| \big| a^{ij}(y) - a^{ij}(0) \big|  \big|\nabla \Tilde{u} \big| dS_y \\
    & \leq C(n, \lambda,\alpha)^{\Lambda}r_1^{\alpha} \big( |2x|^{2-n} + |x|^{2-n} \big) |2x|^{\alpha} |2x|^{\gamma - 1} |2x|^{n-1}\\
    & \leq C(n, \lambda,\alpha)^{\Lambda} r_1^{\alpha} |x|^{\gamma + \alpha}.
\end{split}
\end{equation}

And
\begin{equation}
    \begin{split}
         & \quad \int_{\partial B_{\delta}} \big| |x-y|^{2-n} - |y|^{2-n} \big| \big| a^{ij}(y) - a^{ij}(0) \big| \big|\nabla \Tilde{u} \big| dS_y \\
         &\leq C(n, \lambda,\alpha)^{\Lambda}r_1^{\alpha} \big( (|x|-\delta)^{2-n} + \delta^{2-n} \big) \delta^{\alpha + (n-1)}\\
         & \leq C(n, \lambda,\alpha)^{\Lambda} r_1^{\alpha} \delta^{1 + \alpha},
    \end{split}
\end{equation}
where we have used the gradient estimate $\sup_{B_{1/2}(0)}|\nabla \tilde{u}|\le C(n,\lambda,\alpha)$.
Similarly,
\begin{equation}
    \begin{split}
         & \quad \int_{\partial B_{\delta}(x)} \big| |x-y|^{2-n} - |y|^{2-n} \big| \big| a^{ij}(y) - a^{ij}(0) \big| 
 \big|\nabla \Tilde{u} \big| dS_y  \\
         &\leq C(n, \lambda,\alpha)^{\Lambda} r_1^{\alpha} \big( \delta^{2-n} + (|x|-\delta)^{2-n}  \big) \delta^{\alpha + (n-1)}\\
         & \leq C(n, \lambda,\alpha)^{\Lambda} r_1^{\alpha} \delta^{1 + \alpha}.
    \end{split}
\end{equation}

Combining all and letting $\delta \to 0$ we have
\begin{equation}
    I_1 \leq C(n, \lambda,\alpha)^{\Lambda} r_1^{\alpha} |x|^{\gamma + \alpha}.
\end{equation}

Next we estimate $I_2$. Using integration by parts we have
\begin{equation}
\begin{split}
    |I_2| &\leq  \sum_{k=1}^d \Big( \int_{r_2/r_1 \leq |y| \leq 2|x|} | \Gamma_k(y)| |P_{y,k}(x)| |\nabla \Tilde{u}(y)||b^{i}(y)| dy  + \int_{|y|=2|x|} |\Gamma_k(y) P_{y,k}(x)| |\nabla \Tilde{u}(y)||a^{ij}(y) - a^{ij}(0)| \\ 
    & + \int_{|y|=r_2/r_1} |\Gamma_k(y) P_{y,k}(x)| |\nabla \Tilde{u}(y)||a^{ij}(y) - a^{ij}(0)| + \int_{r_2/r_1 \leq |y| \leq 2|x|} |\nabla_y \big( \Gamma_k(y) P_{y,k}(x) \big)| |\nabla \Tilde{u}(y)||a^{ij}(y) - a^{ij}(0)| dy \Big).
\end{split}
\end{equation}

By Lemma \ref{l:Gradient_Growth}, Lemma \ref{l:Green_Kernel_growth} and Lemma \ref{l:hhp_C0_bound}, since $\gamma - d > -\alpha > -1$, we have
\begin{equation}
\begin{split}
    & \quad \sum_{k=1}^d \int_{r_2/r_1 \leq |y| \leq 2|x|} | \Gamma_k(y)| |P_{y,k}(x)| |\nabla \Tilde{u}(y)||b^{i}(y)| dy  \\
    &\leq  C(n, \lambda,\alpha)^{\Lambda} r_1 \sum_{k=1}^d \big(\frac{4}{3}\big)^k k^{\frac{n}{2}-1} |x|^k \int_0^{2|x|} r^{2-n-k} r^{\gamma - 1} r^{n-1} dr \\
    & \leq  C(n, \lambda,\alpha)^{\Lambda} r_1 |x|^{\gamma + 1}.
\end{split}
\end{equation}

Similarly,
\begin{equation}
    \quad \sum_{k=1}^d \int_{|y|=2|x|} |\Gamma_k(y) P_{y,k}(x)| |\nabla \Tilde{u}(y)||a^{ij}(y) - a^{ij}(0)| \leq C(n, \lambda,\alpha)^{\Lambda} r^{\alpha}_1 |x|^{\gamma+\alpha }.
\end{equation}
\begin{equation}
    \quad \sum_{k=1}^d \int_{r_2/r_1 \leq |y| \leq 2|x|} |\nabla_y \big( \Gamma_k(y) P_{y,k}(x) \big)| |\nabla \Tilde{u}(y)||a^{ij}(y) - a^{ij}(0)| dy
 \leq C(n, \lambda,\alpha)^{\Lambda} r^{\alpha}_1 |x|^{\gamma+\alpha }.
\end{equation}

Also, since $\gamma+\alpha-d > 0$ and $ r_2/r_1 \leq |x|$, we have
\begin{equation}
\begin{split}
    &\quad \sum_{k=1}^d \int_{|y|=r_2/r_1} |\Gamma_k(y) P_{y,k}(x)| |\nabla \Tilde{u}(y)||a^{ij}(y) - a^{ij}(0)| \\
    & \leq C(n, \lambda,\alpha)^{\Lambda} r^{\alpha}_1  \sum_{k=1}^d \big(\frac{4}{3}\big)^k k^{\frac{n}{2} - 1} |x|^k \big(\frac{r_2}{r_1}\big)^{(2-n-k) + (\gamma -1) + \alpha + n-1} \\
    &\leq C(n, \lambda,\alpha)^{\Lambda} r^{\alpha}_1 |x|^{\gamma+\alpha }.
\end{split}
\end{equation}

This finishes the estimate of $I_2$. For $I_3$, using integration by parts we have
\begin{equation}
\begin{split}
    |I_3| &\leq  \int_{2|x| \leq |y| \leq 1} | R_{y,d}|  |\nabla \Tilde{u}(y)||b^{i}(y)| dy + \int_{|y|=1} |R_{y,d}| |\nabla \Tilde{u}(y)||a^{ij}(y) - a^{ij}(0)| \\ 
    & + \int_{|y|=2|x|} |R_{y,d}| |\nabla \Tilde{u}(y)||a^{ij}(y) - a^{ij}(0)|  + \int_{2|x| \leq |y| \leq 1} |\nabla_y R_{y,d}| |\nabla \Tilde{u}(y)||a^{ij}(y) - a^{ij}(0)| dy .
\end{split}
\end{equation}

By similar calculations and Lemma \ref{l:Gradient_Growth} and noting that $d+1\ge \gamma+\alpha$, we have
 \begin{equation}
     |I_3| \leq C(n, \lambda,\alpha)^{\Lambda} r^{\alpha}_1 |x|^{\gamma+\alpha }. 
 \end{equation}

Suppose $|x| \leq r_2/r_1$. Then we just need to change the gradient estimates to $|\nabla \Tilde{u}|(x) \leq C(n, \lambda,\alpha)^{\Lambda} \big( \frac{r_2}{r_1}\big)^{\gamma -1} $. Following the same steps, we can prove that for any $|x| \leq r_2/r_1$
\begin{equation}
    |\phi(x)| \leq C(n, \lambda,\alpha)^{\Lambda}  r_1^{\alpha} \big( \frac{r_2}{r_1}\big)^{\gamma -1} |x|^{1 + \alpha}.
\end{equation}

The proof of the claim is now finished.
\end{proof}

The first corollary will be the following quantitative uniqueness theorem. Basically it says  the uniform symmetry, see definition \ref{d:uniform_symm}, is true in the interval where the doubling index is pinched. 

\begin{theorem}\label{t:Quan_Unique}
Let $u: B_1 \to \RR$ be a solution to (\ref{e:elliptic_equ}) (\ref{e:assumption}) with doubling assumption \ref{e:DI_bound_assumption}. Let $\epsilon \leq \min\{10^{-3}, \alpha/2\}$. Then there exists some $r_0 = C(n, \lambda, \alpha)^{-\Lambda} \epsilon^{1/\alpha}$ such that if $100 r_2 \leq r_1 \leq r_0$ and $|D(x,s) - D(x,r_1)| \leq \epsilon$ for any $s \in [r_2, r_1]$, then $u$ is \textbf{uniformly $(0, 10\epsilon, x)$-symmetric} in $[4r_2, r_1/4]$.

\end{theorem}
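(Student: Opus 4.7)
The strategy is to reduce to the harmonic case via the improved harmonic approximation in Proposition \ref{p:harmonic_appro}, apply Proposition \ref{p:harm_unique} for harmonic functions, and transfer the resulting uniform symmetry back to $u$. I would begin by setting $\tilde u = u_{x, r_1}$; by Remark \ref{r:Gen_DI} the hypothesis becomes $|D^{\tilde u}(0, s) - D^{\tilde u}(0, 1)| \leq \epsilon$ for every $s \in [r_2/r_1, 1]$. Since $\epsilon \leq \alpha/2 \leq \alpha$, Proposition \ref{p:harmonic_appro} (with $r_0 = C(n,\lambda,\alpha)^{-\Lambda} \epsilon^{1/\alpha}$) produces a harmonic function $h: B_1 \to \RR$ with $h(0) = 0$, $\fint_{\partial B_1} h^2 = 1$, and $|D^h(0, r) - D^{\tilde u}(0, r)| \leq 10\epsilon$ for every $r \in [r_2/r_1, 1/2]$. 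Combined with the $\epsilon$-pinching of $D^{\tilde u}$ and the triangle inequality, this forces $D^h$ to be $21\epsilon$-pinched on the whole interval $[r_2/r_1, 1/2]$.

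Second, since $100 r_2 \leq r_1$ implies $r_2/r_1 \leq 1/100 < (1/2)/20$, Proposition \ref{p:harm_unique} applies to $h$ (with $r_1^h = 1/2$ and $r_2^h = r_2/r_1$) and yields a $0$-symmetric homogeneous harmonic polynomial $P$ with $\fint_{\partial B_1} |P|^2 = 1$ satisfying
\begin{equation*}
    \fint_{\partial B_1} |h_{0, s} - P|^2 \leq 7 \cdot 21 \epsilon \quad \text{for every } s \in [3 r_2/r_1,\; 1/6].
\end{equation*}
To pass back to $u$, I would use that the rescaled equation \eqref{e:Rescaled_u_equation} has $\tilde a^{ij}(0) = \delta^{ij}$, so the scaling map of $\tilde u$ at the origin is the identity and a direct unwinding of definitions gives $\tilde u_{0, s}(y) = u_{x, s r_1}(y)$. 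The pointwise estimate \eqref{e:Harmonic_close}, followed by the Young-type manipulation already carried out in \eqref{e:h_ueps}--\eqref{e:h_u_uniqueness}, produces $\fint_{\partial B_1} |h_{0, s} - \tilde u_{0, s}|^2 \leq C\epsilon$, and then the triangle inequality gives $\fint_{\partial B_1} |u_{x, s r_1} - P|^2 \leq C'\epsilon$. Replacing $\epsilon$ by a fixed absolute fraction of itself at the outset bounds the right-hand side by $10\epsilon$.

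Substituting $r = s r_1$ yields the uniform $(0, 10\epsilon, x)$-symmetry of $u$ on $[3 r_2, r_1/6]$; the small gap between this interval and the stated $[4r_2, r_1/4]$ is a matter of absolute constants, which I would close either by running the argument a second time at the shifted base scale $r_1/4$ (the two polynomials so produced must coincide up to $O(\epsilon)$ since they both approximate $\tilde u_{0, r_1/8}$) or by noting that the endpoints $3$ and $1/3$ in Proposition \ref{p:harm_unique} have slack under the present quantitative pinching. The main obstacle will be precisely this bookkeeping of numerical constants: one must verify that the normalization $(\fint_{\partial B_1} \tilde u(s\,\cdot)^2)^{1/2}$ entering the definition of $\tilde u_{0, s}$ matches the intrinsic normalization of $u_{x, s r_1}$, that the inner branch of \eqref{e:Harmonic_close} (valid for $|y| \leq r_2/r_1$) plays no role in the range of interest, and that the accumulated multiplicative factors remain absolute, so the final threshold is honestly $10\epsilon$ and not a quantity growing with $\Lambda$.
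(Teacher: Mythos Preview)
Your approach is essentially the same as the paper's: apply Proposition~\ref{p:harmonic_appro} to produce the harmonic approximant $h$, transfer the pinching to $D^h$, invoke Proposition~\ref{p:harm_unique} to obtain the polynomial $P$, and then use the normalized $L^2$-closeness coming from \eqref{e:h_ueps}--\eqref{e:h_u_uniqueness} together with the triangle inequality to pass back to $u_{x,tr_1}$. The paper carries out the same steps, computing $\fint_{\partial B_1}|u_{x,tr_1}-h_{0,t}|^2\le 60\epsilon^2$ explicitly from the ratio bound $1-5\epsilon\le(\fint h^2/\fint \tilde u^2)^{1/2}\le 1+5\epsilon$ and is just as loose as you are about the accumulated constants and the interval endpoints; your proposed fix of pre-shrinking $\epsilon$ by an absolute factor is exactly what closes both issues.
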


\begin{proof}

According to (\ref{e:h_u_uniqueness}) in the proof of Proposition \ref{p:harmonic_appro}, for any $t \in [r_2/r_1, 1/2]$,
\begin{equation}
    1- 5\epsilon \leq \big( \frac{\fint_{\partial B_t} h^2}{\fint_{\partial B_t} |u_{x,r_1}|^2} \big)^{1/2} \leq 1+5\epsilon
\end{equation}

Using this inequality, we have by scaling that
\begin{equation}
\begin{split}
    & \quad \fint_{\partial B_1} |u_{x,tr_1} - h_{0,t}|^2 \\
    & = \fint_{\partial B_t} \big| \frac{u_{x,r_1}}{(\fint_{\partial B_t} |u_{x,r_1}|^2)^{1/2}} -  \frac{h}{(\fint_{\partial B_t} |u_{x,r_1}|^2)^{1/2}}   +  \frac{h}{(\fint_{\partial B_t} |u_{x,r_1}|^2)^{1/2}}  -\frac{h}{(\fint_{\partial B_t} h^2 )^{1/2}} \big|^2 \\
    &\leq 2 \frac{\fint_{\partial B_t} |u_{x,r_1} - h|^2 }{\fint_{\partial B_t} |u_{x,r_1}|^2} +2 \fint_{\partial B_t} | \frac{5\epsilon h}{(\fint_{\partial B_t} h^2 )^{1/2}}|^2 \\
    & \leq 60 \epsilon^2.
\end{split}
\end{equation}

Moreover, by Proposition \ref{p:harmonic_appro}, the doubling index of $h$ is pinched and it enables us to apply Proposition \ref{p:harm_unique} to $h$. Hence we can find a homogeneous harmonic polynomial $P_d$ such that for $t\in (4r_2/r_1, 1/4)$ we have
\begin{equation}
    \fint_{\partial B_{1}} \big| h_{0, t} - P_d \big|^2 \leq 7\epsilon. 
\end{equation}
Here $P_d$ is independent of $t$.

By H\"older inequality we have
\begin{equation}
    \fint_{\partial B_1} |u_{x,tr_1} - P_d|^2 \leq 21 \fint_{\partial B_1} |u_{x,tr_1} - h_{0,t}|^2 + \frac{11}{10} \fint_{\partial B_1} |h_{0,t} - P_d|^2 \leq 1260 \epsilon^2 + 8\epsilon < 10\epsilon.
\end{equation}

This finishes the proof.
\end{proof}

As a second corollary, we prove more properties for the doubling index which are of fundamental importance in our inductive decomposition arguments. These can be viewed as generalizations to Lemma \ref{l:Harm_pinch_near_Z} and Lemma \ref{l:DI_Drop_Harmonic}. 

\begin{lemma}\label{l:DI_Drop_Elliptic}
    Let $u: B_1 \to \RR$ be a solution to (\ref{e:elliptic_equ}) (\ref{e:assumption}) with doubling assumption \ref{e:DI_bound_assumption}. Let $\epsilon \leq \epsilon_0=\min\{10^{-3}, \alpha/2\}$. There exists some $r_0 =C(n, \lambda, \alpha)^{-\Lambda}\epsilon^{1/\alpha}$ such that
    \begin{enumerate}
        \item If $20r_2 \leq r_1 \leq r_0$ and $|D(x,r_2) - D(x,r_1) | \leq \epsilon$, then there exists some positive integer $d$ such that for any $s \in [4r_2, r_1/4]$
        \begin{equation}
            |D(x,s) - d| \leq 8\epsilon.
        \end{equation}
        \item If $D(x,r_1) \leq d - \epsilon$ with $r_1 \leq r_0$, then for any $s \leq \epsilon^{C(\alpha)} r_1$,
        \begin{equation}
            D(x,s) \leq d -1 + \epsilon.
        \end{equation}
    \end{enumerate}
\end{lemma}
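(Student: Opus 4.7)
The plan is to reduce both parts to their harmonic analogs (Lemma~\ref{l:Harm_pinch_near_Z} for part (1) and Lemma~\ref{l:DI_Drop_Harmonic} for part (2)) by constructing, on the appropriate scale, a harmonic approximant whose doubling index tracks that of $u$, and then transferring the conclusions back with a controlled loss.

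For part (1), rescale so that we work with $\tilde u = u_{x, r_1}$ on $B_1$ and the pinching hypothesis reads $|D^{\tilde u}(0, s) - D^{\tilde u}(0, t)| \leq \epsilon$ for $s, t \in [r_2/r_1, 1]$. Since $\epsilon \leq \alpha/2 < \alpha$, Proposition~\ref{p:harmonic_appro} applies and yields a harmonic function $h$ on $B_1$ with $|D^h(0, s) - D^{\tilde u}(0, s)| \leq 10\epsilon$ for every $s \in [r_2/r_1, 1/2]$. Combining these two estimates shows that $h$ itself is $O(\epsilon)$-pinched on $[r_2/r_1, 1/2]$. Feeding this into the harmonic statement (Lemma~\ref{l:Harm_pinch_near_Z} applied on appropriate dyadic subscales and stitched together by the monotonicity of the harmonic frequency, which prevents the pinched integer from jumping when $\epsilon$ is much smaller than the integer gap) produces a single integer $d$ with $|D^h(0, s) - d| \leq O(\epsilon)$ for $s \in [2r_2/r_1, 1/4]$. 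Transferring this back through the $10\epsilon$-closeness gives $|D^{\tilde u}(0, s) - d| \leq 8\epsilon$ on the same range, which after undoing the rescaling is exactly the claim on $[4r_2, r_1/4]$.

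For part (2), apply Lemma~\ref{l:DI_close_away_from_0} with parameter $\epsilon/100$ at the scale $r_1$: it produces a harmonic $h$ with $|D^h(0, s) - D^{u_{x, r_1}}(0, s)| \leq \epsilon/100$ for every $s \in [(\epsilon/100)^2, 1]$. The hypothesis $D(x, r_1) \leq d - \epsilon$ then gives $D^h(0, 1) \leq d - \epsilon + \epsilon/100 \leq d - \epsilon/2$, so the harmonic drop lemma (Lemma~\ref{l:DI_Drop_Harmonic}) applied to $h$ with parameter $\epsilon/2$ yields $D^h(0, \epsilon/4) \leq d - 1 + \epsilon/2$. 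Transferring back, $D(x, \epsilon r_1/4) \leq d - 1 + \epsilon/2 + \epsilon/100 < d - 1 + \epsilon$. The almost monotonicity formula (Theorem~\ref{t:Almost_monotone}) with error $\epsilon/4$ then propagates this to all smaller scales: $D(x, s) \leq D(x, \epsilon r_1/4) + \epsilon/4 \leq d - 1 + \epsilon$ for all $s \leq \epsilon r_1/8$. A suitable choice of $C(\alpha)$ absorbs the auxiliary scale conditions, and the assumption $r_1 \leq r_0 = C^{-\Lambda}\epsilon^{1/\alpha}$ ensures that the three regimes (harmonic approximation, harmonic drop, almost monotonicity) simultaneously apply.

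The main obstacle in part (1) is verifying that the integer $d$ selected from the harmonic pinching is the same across the entire range $[4r_2, r_1/4]$ rather than only on one dyadic subinterval; this is handled by the monotonicity of the harmonic frequency, since the doubling index cannot migrate between distinct integers without incurring a jump larger than the pinching budget. In part (2), the main care is reconciling the approximation window of Lemma~\ref{l:DI_close_away_from_0}, which only gives $D$-closeness for $s \geq (\epsilon/100)^2$, with the drop scale $\epsilon/4$ demanded by Lemma~\ref{l:DI_Drop_Harmonic}; fortunately $\epsilon/4 \gg \epsilon^2$, so the transfer is valid and no additional iteration is required.
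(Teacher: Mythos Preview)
Your treatment of part~(1) follows the same route as the paper. Two minor remarks: you should explicitly invoke almost monotonicity (with the \emph{fixed} error $\epsilon_0/10$, whose threshold $C^{-\Lambda}$ does not depend on $\epsilon$) to pass from the endpoint hypothesis $|D(x,r_2)-D(x,r_1)|\leq\epsilon$ to the full-interval bound $|D(x,s)-D(x,t)|\leq\alpha$ that Proposition~\ref{p:harmonic_appro} actually requires; and the constants need tightening, since applying that proposition with parameter $\epsilon$ gives only $10\epsilon$-closeness, which will not land at $8\epsilon$ after two further transfers (apply it instead with parameter $\epsilon/100$ and absorb $100^{1/\alpha}$ into the constant in $r_0$).

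Part~(2), however, has a genuine scale mismatch. Both tools you invoke with an $\epsilon$-dependent parameter --- Lemma~\ref{l:DI_close_away_from_0} with $\epsilon/100$ and Theorem~\ref{t:Almost_monotone} with error $\epsilon/4$ --- carry a threshold of the form $r\leq\epsilon^{C(n,\lambda,\alpha)\Lambda}$. For large $\Lambda$ this is far smaller than the stated $r_0=C^{-\Lambda}\epsilon^{1/\alpha}$, so your closing claim that ``$r_1\leq r_0$ ensures that the three regimes simultaneously apply'' is false: your argument proves only the weaker lemma with $r_0=\epsilon^{C\Lambda}$. The paper avoids this by never calling either tool with a parameter that shrinks with $\epsilon$. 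It uses almost monotonicity only with the fixed error $\epsilon_0/10$ (threshold $C^{-\Lambda}$), and replaces Lemma~\ref{l:DI_close_away_from_0} by Proposition~\ref{p:harmonic_appro}, whose threshold is exactly $C^{-\Lambda}\epsilon^{1/\alpha}$ but which requires $\alpha$-pinching on the relevant interval. That pinching is manufactured by auxiliary scales $\gamma_i:=\sup\{r\leq\gamma_{i-1}:D(x,s)\leq d-i\epsilon_0\text{ for all }s\leq r\}$: on $[\gamma_i,\gamma_{i-1}]$ the doubling index is trapped in an interval of width $\lesssim\epsilon_0\leq\alpha$, so the proposition applies there and a contradiction argument forces $\gamma_i\geq(\epsilon/10)\gamma_{i-1}$. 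Each step lowers the ceiling by $\epsilon_0$, so roughly $1/\epsilon_0=C(\alpha)$ iterations are needed to reach $d-1+\epsilon$; this is precisely why the conclusion reads $s\leq\epsilon^{C(\alpha)}r_1$ rather than $s\lesssim\epsilon r_1$, and it cannot be shortcut without degrading $r_0$.
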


\begin{proof}
    We can apply the Almost Monotonicity Theorem \ref{t:Almost_monotone} with $\epsilon_0=\min\{10^{-3}, \alpha/2\}$ to obtain that
    \begin{equation}
        D(x,s) \leq D(x,r) + \epsilon_0/10
    \end{equation}
    for any $2s \leq r \leq r_0'$. According to the proof of Lemma \ref{l:DI_close_away_from_0} and the claim of Proposition \ref{p:harmonic_appro}, the radius $r_0'$ here can be chosen as $C(n,\lambda,\alpha)^{-\Lambda}$. For any $\epsilon \leq \epsilon_0$, we now set $r_0 =(C(n, \lambda, \alpha)\Lambda)^{-\Lambda}\epsilon^{1/\alpha} < r_0'$ and assume $20 r_2 \leq r_1 \leq r_0$ in the following.

    $(1): $ By Almost Monotonicity Theorem \ref{t:Almost_monotone}, we have 
    \begin{equation}
        D(x,r_2) - \epsilon_0/10 \leq D(x,s) \leq D(x,r_1) + \epsilon_0/10 \text{ for any } s \in [2r_2, r_1/2].
    \end{equation}
    Since $D(x,r_2) \geq D(x,r_1) - \epsilon \geq D(x,r_1) - \epsilon_0$ and $2\epsilon_0 \leq \alpha$, we can apply previous harmonic approximation proposition \ref{p:harmonic_appro}. In particular, there exists some harmonic function $h$ such that if $20r_2 \leq r_1 \leq r_0 = C(n,\lambda,\alpha)^{-\Lambda} \epsilon^{1/\alpha}$,
    \begin{equation}
        |D^{h}(0, s) - D(x,sr_1) | \leq \epsilon \text{ for any } s \in [r_2/r_1, 1/2]. 
    \end{equation}
    By Lemma \ref{l:Harm_pinch_near_Z}, there exists some positive integer $d$ such that $|D^h(0, s) - d| \leq 6\epsilon$ for any $s \in [4r_2/r_1, 1/4]$. The proof of (1) is now finished by triangle inequality again.\\

    $(2): $ Consider $\gamma_1 \equiv \sup\{ r \leq r_1 : D(x,s) \leq d-\epsilon_0 \text{ for any } s \leq r\}. $ If $\gamma_1 \in (0,  \epsilon r_1/10]$, then by Proposition \ref{p:harmonic_appro}, there exists an approximated harmonic function $h_1$ such that $|D^{h_1}(0, s) - D(x,s r_1)| \leq \epsilon/10$ for any $s \in [\gamma_1/r_1, 1/2]$. Then by Lemma \ref{l:DI_Drop_Harmonic} and monotonicity of doubling index for harmonic functions, we have 
    \begin{equation}
         D(x,\frac{\epsilon r_1}{4}) \leq D^{h_1}(0, \frac{\epsilon }{4}) + \frac{\epsilon}{5} \leq d - 1 +\frac{\epsilon}{2} + \frac{\epsilon}{5} = d - 1 + \frac{7\epsilon}{10}.
    \end{equation}

    However, $D(x,\frac{\epsilon r_1}{4}) \geq D(x,\gamma_1) - \epsilon_0 = d - 2\epsilon_0$ by almost monotonicity theorem \ref{t:Almost_monotone}. This is a contradiction. Hence we must have $\gamma_1 \geq \epsilon r_1/10$. 

    Now we have $D(x,s) \leq d- \epsilon_0$ for any $s \leq \gamma_1$. Consider $\gamma_2 \equiv \sup\{ r \leq \gamma_1 : D(x,s) \leq d- 2\epsilon_0 \text{ for any } s \leq r\}.$  If $\gamma_2 \in (0, \epsilon \gamma_1 /10]$, then we can find an approximated harmonic function $h_2$ by Proposition \ref{p:harmonic_appro}. By the same arguments, we can obtain a contradiction and thus $\gamma_2 \geq \frac{\epsilon \gamma_1}{10} \geq (\frac{\epsilon}{10})^2 r_1$.

    We can iterate this arguments for at most $n$ times to obtain $\gamma_n$ with $n \leq \frac{10}{\epsilon_0} = C(\alpha)$ and $\gamma_n \geq (\frac{\epsilon}{10})^n r_1$ as the contradiction argument is valid provided $D(x,\gamma_N) \geq d-1 +\epsilon$. Hence it implies that $D(x,\epsilon^{C(\alpha)}r_1) \leq d -1 + \epsilon$. This finishes the proof.
\end{proof}

Recall that the harmonic function is close to a linear function when its doubling index is close to 1. We generalize this fact to general elliptic solutions, which will be crucial for the iteration arguments in the proof of volume estimates on critical sets. 

\begin{proposition}\label{p:Elliptic_gradient_lower_bound}
    Let $u: B_2 \to \RR$ be a solution to (\ref{e:elliptic_equ}) (\ref{e:assumption}) with doubling assumption \ref{e:DI_bound_assumption}. Let $x \in B_1$. If $\epsilon \leq \epsilon_0= c(n, \alpha,\tau)^{\Lambda}$ and $r \leq r_0 =   C(n,\lambda, \alpha)^{-\Lambda}\epsilon^{1/\alpha}$, then if $D(x,r) \leq 1 + \epsilon$, then
    \begin{equation}
        x \notin C(u).
    \end{equation}
\end{proposition}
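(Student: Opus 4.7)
The plan is to show $\nabla u(x) \neq 0$ by approximating $u_{x,r_1}$ by a harmonic function whose degree-one spherical harmonic component must dominate, and then transferring gradient information to $u_{x,r_1}$ at the origin via Schauder estimates. Set $r_1 = r/2$. The first step is to pinch the doubling index on all small scales. Apply the almost monotonicity Theorem \ref{t:Almost_monotone} with a small parameter $\epsilon' \ll \epsilon$: the assumption $D(x,r) \le 1+\epsilon$ gives $D(x,s) \le 1 + 2\epsilon$ for every $s \le r_1$, while the same theorem yields $D(x,s) \ge 1 - \epsilon'$ for such $s$. Thus the doubling index is pinched in a window of length $O(\epsilon)$ around $1$ on all scales $(0,r_1]$.

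Next, apply the harmonic approximation Proposition \ref{p:harmonic_appro} at scale $r_1$ with $r_2 = r_1/100$ (say). This produces a harmonic function $h$ on $B_1$ with $h(0) = 0$, $\fint_{\partial B_1} h^2 = 1$, satisfying the pointwise bound (\ref{e:Harmonic_close}) and in particular $\sup_{B_{1/2}}|h - u_{x,r_1}| \le \epsilon$, together with $|D^h(0,s) - D^{u_{x,r_1}}(0,s)| \le 10\epsilon$ for $s \in [1/100, 1/2]$. Decompose $h = \sum_{k \ge 1} h_k$ into spherical harmonics and set $c_k = \fint_{\partial B_1} h_k^2$, so that $\sum_k c_k = 1$. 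The pinching $D^h(0,1/2) \le 1 + O(\epsilon)$ combined with the identity
\begin{equation*}
 \frac{H^h(1)}{H^h(1/2)} \;=\; 4\cdot\frac{\sum_k c_k}{\sum_k 4^{1-k} c_k}
\end{equation*}
forces $\sum_{k\ge 2} c_k(4^{k-1}-1) \le O(\epsilon)\, c_1$, hence $c_1 \ge 1 - O(\epsilon) \ge 1/2$. By Lemma \ref{l:hhp_gradient_norm} applied to the linear part $h_1(y) = \nabla h(0) \cdot y$, we conclude $|\nabla h(0)|^2 = n c_1 \ge n/2$.

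Now pass from $h$ to $u_{x,r_1}$ via Schauder. Let $w = u_{x,r_1} - h$. By Remark \ref{r:Rescaled_u_equation} the rescaled solution satisfies an equation with $|\tilde a^{ij} - \delta^{ij}| \le C(\lambda,\alpha) r_1^\alpha$ and $|\tilde b^i| \le C r_1$, so
\begin{equation*}
 \Delta w \;=\; \partial_i\bigl[(\delta^{ij} - \tilde a^{ij})\partial_j u_{x,r_1}\bigr] \;-\; \tilde b^i \partial_i u_{x,r_1} .
\end{equation*}
Using the gradient bound $\sup_{B_{1/2}} |\nabla u_{x,r_1}| \le C(n,\lambda,\alpha)^\Lambda$ from standard elliptic estimates, the right-hand side is the divergence of a $C^\alpha$-field of $C^\alpha$-norm $\le C(n,\lambda,\alpha)^\Lambda r_1^\alpha$ plus an $L^\infty$-term of norm $\le C(n,\lambda,\alpha)^\Lambda r_1$. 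Interior Schauder $C^{1,\alpha}$ estimates for $w$ on $B_{1/4}$, combined with $\|w\|_{L^\infty(B_{1/2})} \le \epsilon$, give $\|\nabla w\|_{L^\infty(B_{1/4})} \le C(n,\lambda,\alpha)^\Lambda(\epsilon + r_1^\alpha)$. Since $r_1 \le r_0 \le C(n,\lambda,\alpha)^{-\Lambda}\epsilon^{1/\alpha}$, the right-hand side is bounded by $C(n,\lambda,\alpha)^\Lambda \epsilon$, and choosing $\epsilon \le \epsilon_0 = c(n,\lambda,\alpha)^\Lambda$ small enough yields $|\nabla u_{x,r_1}(0)| \ge |\nabla h(0)| - |\nabla w(0)| \ge \sqrt{n/2}/2 > 0$. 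Differentiating $u(x+r_1 A_x y) - u(x) = H^{1/2} u_{x,r_1}(y)$ at $y = 0$ gives $\nabla u(x) = H^{1/2} r_1^{-1} A_x^{-T} \nabla u_{x,r_1}(0)$ with $H,r_1 > 0$ and $A_x$ invertible, so $\nabla u(x) \neq 0$ and $x \notin \mathcal{C}(u)$.

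The main obstacle is the third step: passing from $L^\infty$ closeness of $w$ to pointwise closeness of $\nabla w(0)$. Pointwise function closeness alone is not enough, so one must exploit the elliptic equation satisfied by $w$ with its small $C^\alpha$ source terms and apply Schauder. The specific constraint $r_0 \le C^{-\Lambda}\epsilon^{1/\alpha}$ is precisely what ensures the coefficient perturbation $r_1^\alpha$ matches the target gradient error $\epsilon$, so the choice of $r_0$ in the statement is essentially forced by this Schauder balancing.
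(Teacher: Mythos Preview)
Your argument is correct and follows the same route as the paper: harmonic approximation via Proposition~\ref{p:harmonic_appro}, dominance of the linear part when the doubling index is pinched near $1$, and transfer of the gradient lower bound at the origin back to $u_{x,r}$. The paper's proof is a one-liner that simply asserts $C^1$ closeness to a linear function as a consequence of Proposition~\ref{p:harmonic_appro}; you fill this step in with an explicit Schauder estimate on $w=u_{x,r_1}-h$, and the only minor slip is the normalization $\fint_{\partial B_1}h^2=1$, which is not part of that proposition's conclusion but is inessential since a lower bound on $H^h(1/2)$ (which does follow) is all your spherical-harmonic computation needs.
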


\begin{proof}
    According to Proposition \ref{p:harmonic_appro}, there exists a linear function $L$ such that $|L - u_{x,r}|_{C^1(B_1)} \leq \epsilon$. Hence $|\nabla u_{x,r}(0)| \geq 1 - \epsilon$. This proves that $x \notin C(u)$.
\end{proof}

\section{Cone Splitting}

In this section, we will investigate the cone splitting principle for elliptic solutions. We start from the splitting behavior of homogeneous polynomials.

\begin{lemma}\label{l:Polynomial_Split}
    Let $P$ be a polynomial of order $d$. If $P$ is $k$-symmetric with respect to a $k$-dimensional subspace $V$ and $P - P(x)$ is $0$-symmetric with respect to $x \notin V$. Then $P$ is $(k+1)$-symmetric with respect to span$(x, V)$.    
\end{lemma}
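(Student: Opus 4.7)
The argument is purely algebraic, organized around the observation that \emph{matching degrees} in the hypothesis ``$P - P(x)$ is $0$-symmetric with respect to $x$'' converts it into a sharp identity.

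First, I would unfold the hypothesis. Let $Q(y) := P(x+y) - P(x)$. The assumption that $P - P(x)$ is $0$-symmetric with respect to $x$ means $Q$ is a homogeneous polynomial in $y$. Since $P$ is homogeneous of degree $d$, Taylor expansion in $y$ gives
\begin{equation*}
P(x+y) = \sum_{|\alpha| \leq d} \frac{\partial^\alpha P(x)}{\alpha!}\, y^\alpha,
\end{equation*}
whose degree-$d$ component in $y$ is exactly $P(y)$ (for $|\alpha|=d$ the derivatives $\partial^\alpha P$ are constants, so they coincide with the values at the origin, recovering $P(y)$ itself). Thus the top-degree part of $Q(y)$ is $P(y)$, which is nonzero for $P \not\equiv 0$. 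Forcing $Q$ to be homogeneous therefore pins its degree to $d$ and kills all lower-order terms, yielding the additive identity
\begin{equation*}
P(x+y) = P(x) + P(y) \quad \text{for all } y \in \RR^n.
\end{equation*}

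Next, I would use $d$-homogeneity to extract $P(x) = 0$. Setting $y = x$ in the additive identity gives $P(2x) = 2P(x)$, while $d$-homogeneity gives $P(2x) = 2^d P(x)$; subtracting forces $(2^d - 2) P(x) = 0$, so $P(x) = 0$ for $d \geq 2$. (The case $d = 1$ is not relevant in the applications here, as the leading polynomial at a critical or singular point has degree at least $2$.) The identity then collapses to $P(x + y) = P(y)$, i.e., translation invariance of $P$ in the direction $x$. Iterating yields $P(kx + y) = P(y)$ for every $k \in \dZ$; the polynomial $\lambda \mapsto P(\lambda x + y) - P(y)$ in $\lambda$ therefore vanishes on $\dZ$, hence identically, so $P$ is translation invariant along the entire line $\RR x$.

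Finally, combining this with the $k$-symmetry hypothesis (translation invariance of $P$ along the $k$-plane $V$) and the condition $x \notin V$, the space $\mathrm{span}(x, V)$ is a $(k+1)$-dimensional subspace of translation invariance for $P$. Since $P$ remains a homogeneous polynomial, this says $P$ is $(k+1)$-symmetric with respect to $\mathrm{span}(x, V)$, as claimed. There is no real obstacle in the argument; the only delicate point is the degree-matching step, where one must recognize that the top-order Taylor coefficients of $P$ at $x$ are the same as the coefficients of $P$ at the origin.
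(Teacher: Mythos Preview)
Your argument is correct for $d \geq 2$ and takes a different route from the paper's. The paper applies Euler's identity for homogeneous functions at the two centers $0$ and $x$ and subtracts to see that $x \cdot \nabla P$ is constant, then integrates along the segment from $0$ to $x$; you instead extract the additive identity $P(x+y) = P(x) + P(y)$ directly from the Taylor expansion by recognizing that the degree-$d$ part of $Q(y) = P(x+y) - P(x)$ must equal $P(y)$. Your path is slightly more transparent and bypasses the integration, while the paper's Euler-identity computation is the classical cone-splitting calculation and adapts more directly to the \emph{almost}-symmetric quantitative versions used later.

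On the case $d = 1$ you set aside: this is not only a matter of relevance to the applications --- the lemma as stated is actually false there. With $P(y) = y_1$, $V = \{0\}$, $x = e_1$, the map $y \mapsto P(x+y) - P(x) = y_1$ is homogeneous, so the hypothesis holds, yet $P$ is not invariant along $e_1$. The paper's $d=1$ argument silently uses that $P$ itself (rather than $P - P(x)$) is $0$-symmetric at $x$, which is stronger than the stated hypothesis. So your dismissal is the right move; the lemma should be read with $d \geq 2$, as is the case in every subsequent use.
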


\begin{proof}
    We assume $V = \{0\}$ and similar arguments hold for general case. Without loss of generality, we assume $P(0)=0$ and $x=(t, 0, ..., 0)$ with $t\neq 0$. It suffices to prove that $\partial_1 P \equiv 0$.  
    
    Since $P$ is homogeneous with respect to $0$, hence 
    \begin{equation}
        d \cdot P(y) = \nabla P|_y \cdot y,
    \end{equation}
     where $d$  is the degree of $P$.
    Since $P$ is $0$-symmetric with respect to $x$, we have
    \begin{equation}
        d \cdot (P(y) - P(x)) = \nabla P|_y \cdot (y - x). 
    \end{equation}
    Combining these two equations, we have for any $y$
    \begin{equation}
        t \partial_1 P(y) = \nabla P|_y \cdot x = d \cdot P(x).
    \end{equation}

    Hence $\partial_1 P(y)$ is constant. We claim that $P(x)=P(0)=0$. To see this, noting that $\partial_1 P(y)=d\cdot P(x)/t$, we get $P(x)-P(0)=\int_0^{t}\partial_1P(s,0,...,0)ds=d\cdot P(x)$. Thus $P(x)=0$ or $d=1$. If $P(x)=0$, then $\partial_1 P(y)=0$ and we are done. If $d=1$, then we can write $P(y) = \sum_i a_i y_i$. Since $P$ is $0$-symmetric at $x$, we can write $P(y)$ as
    \begin{equation*}
        \sum_{i=1}^n a_i y_i = b_1(y_1 - t) + \sum_{i=2}^n b_i y_i. 
    \end{equation*}
    By comparing the coefficients, we have $a_1 = b_1 = 0$. Hence $\partial_1 P = 0$ and $P(x)=0$. This finishes the proof. 
\end{proof}

\subsection{Cone-Splitting for Harmonic Functions}

Recall that the constancy of doubling index implies the homogeneity of harmonic functions. As in Lemma \ref{l:Polynomial_Split}, if the doubling index of the polynomial $P$ is constant at $x$ and $y$, then it is $1$-symmetric with respect to the direction $y-x$. One can easily obtain the almost cone splitting lemma by assuming that the doubling indices are pinched. 

Our goal is to generalize the almost cone splitting lemma to the general elliptic solutions. As Theorem \ref{t:Quan_Unique} shows, the solution $u$ is almost uniformly $0$-symmetric in the interval where the doubling index is pinched. In the following theorem, we prove that the pinched doubling indices at two points implies the splitting. First we prove the almost cone-splitting lemma for harmonic functions. The results essentially follow \cite{NV}.

\begin{lemma}\label{l:ConeSplitting_Harmonic}
    Let $u: B_{10d} \to \RR$ be a harmonic function and $d \geq 1$ be an integer. Suppose we have
    \begin{enumerate}
        \item $|D(0, s) - d| \leq \epsilon $ for any $s \in [1/10, 9d]$.
        \item $|D(x, s) - d| \leq \epsilon $ for some $x \in B_1$ and for any $s \in [1/10, 9d]$.
    \end{enumerate}
   If $\epsilon \leq \epsilon_0(n)$ for some $\epsilon_0(n) > 0$, then we have
    \begin{enumerate}
        \item Let $P_{0,d}$ and $P_{x,d}$ be the $d$-th degree part of the expansion of $u$ at $0$ and $x$ respectively. Then
        \begin{equation}
            ||P_{0,d} - P_{x,d}|| \leq C(n) \sqrt{\epsilon} |x| ||P_{0,d}||.
        \end{equation}
        \item $x$ is an almost invariant direction of $P_{0,d}$, i.e.
        \begin{equation}
            || x \cdot \nabla P_{0,d}|| \leq C(n) \sqrt{\epsilon} ||\nabla P_{0,d}||.
        \end{equation}
        \item Suppose $|x| \geq \tau$ for some constant $\tau > 0$. If $\epsilon \leq C(n)d^{-4}\tau^4$, then $u$ is uniformly $(1, \sqrt{\epsilon}, 0)$-symmetric in $[1/3, 3d]$ with respect to span$<x>$. 
    \end{enumerate}
\end{lemma}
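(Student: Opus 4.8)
The plan is to exploit the characterization of the pinched regime: the pinching hypotheses in (1) and (2) say, via Proposition \ref{p:harm_unique} and Lemma \ref{l:Harm_pinch_near_Z}, that on the stated scale ranges the normalized rescalings of $u$ about $0$ and about $x$ are uniformly $\sqrt{\epsilon}$-close (up to a controlled constant) to the normalizations of the degree-$d$ homogeneous parts $P_{0,d}$ and $P_{x,d}$ respectively. First I would translate both hypotheses into the single quantitative statement that $u$ is well approximated on $B_{9d}$ by a degree-$d$ homogeneous harmonic polynomial centered at $0$, and also (after translating by $x$, which moves basepoints by $|x| \le 1$ inside the region where the approximation is valid) by a degree-$d$ homogeneous harmonic polynomial centered at $x$. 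Comparing the two approximations on an annulus such as $B_{3d} \setminus B_{1/3}$, where both lower bounds $\fint_{\partial B_s}|u - u(\cdot)|^2 \gtrsim s^{2d}$ hold by the doubling bounds, gives $\|P_{0,d}(\cdot) - P_{x,d}(\cdot - x)\| \le C(n)\sqrt{\epsilon}\,\|P_{0,d}\|$ in the appropriate rescaled norm; since $P_{x,d}(\cdot - x) = P_{x,d} - \nabla P_{x,d}\cdot x + \cdots$ and the degree-$d$ parts must match, one extracts $\|P_{0,d} - P_{x,d}\| \le C(n)\sqrt{\epsilon}\,|x|\,\|P_{0,d}\|$. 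This proves (1).

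For (2), I would differentiate: from the Taylor expansion of the homogeneous polynomial $P_{x,d}$ around the shifted center and the closeness in (1), the difference $P_{x,d}(y) - P_{0,d}(y)$ captures, to leading order in $|x|$, the directional derivative $-x\cdot\nabla P_{0,d}(y)$ plus higher-order-in-$|x|$ terms. Matching homogeneous components of degree $d-1$ in the identity $P_{x,d}(y+x) = P_{0,d}(y) + (\text{error of size } \sqrt{\epsilon}\,\|P_{0,d}\|)$ — equivalently, using that both $P_{0,d}$ and $P_{x,d}$ are homogeneous of degree exactly $d$ so that their degree-$d$ coefficients agree up to $\sqrt{\epsilon}|x|$ — isolates $\|x\cdot\nabla P_{0,d}\| \le C(n)\sqrt{\epsilon}\,\|\nabla P_{0,d}\|$, where I pass between the $L^2$ norm of $P_{0,d}$ and that of $\nabla P_{0,d}$ using Lemma \ref{l:hhp_gradient_norm} (so the degree-dependent factors $d(2d+n-2)$ cancel and only a dimensional constant survives).

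For (3), assume $|x| \ge \tau$. Write $P_{0,d} = P^{\parallel} + P^{\perp}$ according to the decomposition $\cP_d = \cP_d(\cancel{x})\oplus\cP_d(\cancel{x})^\perp$, where I may take coordinates so that $x$ points along $e_1$. By Lemma \ref{l:Invariant_x_1}, $\|P^{\perp}\| \le \|\partial_1 P^{\perp}\| = \|x\cdot\nabla P_{0,d}\|/|x| \le C(n)\sqrt{\epsilon}\,\|\nabla P_{0,d}\|/\tau \le C(n) d\,\sqrt{\epsilon}\,\|P_{0,d}\|/\tau$, again using Lemma \ref{l:hhp_gradient_norm} to bound $\|\nabla P_{0,d}\| \le C(n)d\,\|P_{0,d}\|$. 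Hence $P^{\parallel}$ — which is $1$-symmetric with respect to $\mathrm{span}\langle x\rangle$ — satisfies $\|P_{0,d} - P^{\parallel}\| \le C(n)d\,\tau^{-1}\sqrt{\epsilon}\,\|P_{0,d}\|$, and after normalizing this is $\le \sqrt{\epsilon'}$ with $\epsilon' = \epsilon$ provided $\epsilon \le C(n)d^{-4}\tau^4$ (the fourth powers arise because this closeness must survive being fed back through the normalization and the $\sqrt{\epsilon}$-to-$\epsilon$ bookkeeping, and because we already paid one factor $d^2\tau^{-2}$ squared). Combining with the uniform closeness of $u_{0,s}$ to the normalization of $P_{0,d}$ on $[1/3,3d]$ gives that $u$ is uniformly $(1,\sqrt{\epsilon},0)$-symmetric in $[1/3,3d]$ with respect to $\mathrm{span}\langle x\rangle$.

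The main obstacle is step (1)/(2): one must be careful that the basepoint shift by $x$ genuinely stays inside the region where the pinched approximation is valid and that the normalizing $L^2$ masses $\fint_{\partial B_s}|u-u(0)|^2$ and $\fint_{\partial B_s}|u-u(x)|^2$ are comparable on the comparison annulus (this uses the two-sided doubling-index control to bound both from below by $c(n)s^{2d}$ and from above, so their ratio is bounded by a dimensional constant), and that the error from the tail of the Taylor expansion of $P_{x,d}$ about the shifted center is genuinely lower-order — i.e. the degree-$d$ coefficients of $P_{0,d}$ and $P_{x,d}$ must be matched, not merely their values on a sphere. All the degree-dependence is then tracked honestly through Lemma \ref{l:hhp_gradient_norm} and Lemma \ref{l:hhp_C0_bound}, which is exactly why the hypothesis in (3) is polynomial in $d^{-1}$ and $\tau$.
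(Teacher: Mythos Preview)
Your proof of part (3) is essentially identical to the paper's: decompose $P_{0,d}$ into its $\cP_d(\cancel{x_1})$ and $\cP_d(\cancel{x_1})^\perp$ components, bound the latter via Lemma~\ref{l:Invariant_x_1} and part (2) combined with Lemma~\ref{l:hhp_gradient_norm}, then normalize and apply the triangle inequality together with Proposition~\ref{p:harm_unique}. The paper does not prove (1) and (2) at all --- it simply cites Lemma~3.22 of \cite{NV} --- so your sketch there is already more than the paper gives; that said, your extraction of the $|x|$ factor in (1) is underspecified (projecting $P_{x,d}(\cdot - x)$ onto degree $d$ yields $P_{x,d}$ with no $|x|$ gain, so the factor must instead come from comparing at scale $\sim d$ and using that the basepoint shift is $O(|x|/d)$ relative to the radius, as in \cite{NV}).
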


\begin{proof}
    For the proof of (1) and (2), see Lemma 3.22 in \cite{NV}. We will prove (3) in the following. Without loss of generality, we assume $x = (t, 0, ..., 0)$ with $t \geq \tau$. By Proposition \ref{p:harm_unique}, for any $s \in [1/3, 3d]$, we have
    \begin{equation}
        \fint_{\partial B_1} |u_{0,s} - \frac{P_{0,d}}{||P_{0,d}||}|^2 \leq 7 \epsilon.
    \end{equation}
    
    Now we decompose 
    \begin{equation}
        P_{0,d} = P_1 + P_2,
    \end{equation}
    where $P_1 \in \cP_d(\cancel{x_1})$ and $P_2 \in \cP_d(\cancel{x_1})^{\perp}$. Hence we have $ \partial_1 P_{0,d} = \partial_1 P_2$. By (2) and Lemma \ref{l:Invariant_x_1}, we have
    \begin{equation}
        ||P_2|| \leq ||\partial_1 P_2|| = ||\partial_1 P_{0,d} || \leq C(n) t^{-1} \sqrt{\epsilon d(2d+n-2)} ||P_{0,d}||.
    \end{equation}

Then by triangle inequality, if $\epsilon \leq C(n)d^{-4}\tau^4$, we have
\begin{equation}
    ||\frac{P_{1}}{||P_{1}||}  - \frac{P_{0,d}}{||P_{0,d}||}  ||^2 = 2  - \frac{2 ||P_1||}{||P_{0,d}||}  \leq 2 (1 - \sqrt{1 -C(n) t^{-2} \epsilon d(2d+n-2) } ) \leq \sqrt{\epsilon}/4. 
\end{equation}

Therefore,
\begin{equation}
\begin{split}
    & \quad \big( \fint_{\partial B_1} |u_{0,s} -  \frac{P_{1}}{||P_{1}||}|^2   \big)^{1/2} \\
    & \leq \big( \fint_{\partial B_1} |u_{0,s} -  \frac{P_{0,d}}{||P_{0,d}||}|^2   \big)^{1/2} + \big( \fint_{\partial  B_1} |\frac{P_{1}}{||P_{1}||}  - \frac{P_{0,d}}{||P_{0,d}||}  |^2   \big)^{1/2} \\
    & \leq \sqrt{7 \epsilon} + \epsilon^{1/4}/2 \\
    & \leq \epsilon^{1/4}. 
\end{split}
\end{equation}

Since $P_1$ is $1$-symmetric, the proof is now finished.     
\end{proof}

Next we generalize the previous cone-splitting lemma to higher dimension. First we define the set of points where the doubling index of the harmonic function $h$ is pinched to be
\begin{equation}
    \cV^h_{\epsilon, d, r} (x) \equiv \{ y \in B_{r}(x) : |D(y, s) - d| \leq \epsilon \text { for any } s \in [ 1/10 r, 9d r] \}
\end{equation}

\begin{definition}[$(k, \tau)$-independent]
We say a subset $S \subset B_r$ is $(k, \tau)$-independent in $B_r$ if for any affine $(k-1)$-plane $L$ there exists some point $x \in S$ such that $d(x, L) \geq \tau r$.
\end{definition}

First we recall a lemma about almost $(n-2)$ invariant polynomials. 

\begin{lemma}\label{l:n-2_Invariant_hhp_decomposition}
    Let $P: \RR^n \to \RR$ be a homogeneous harmonic polynomial of degree $d$ such that 
    \begin{equation}
    ||\partial_i P||^2 \leq \epsilon ||\nabla P||^2 \text{ for any } i=1, ..., k.
    \end{equation}
    There exists $C(n)$ such that if $\epsilon \leq C(n)d^{-4k}$, then we can write
    \begin{equation}
        P = P_1 + P_2
    \end{equation}
    where $P_1, P_2 \in \cP_d$ and $P_1$ is $x_1, ..., x_k$-invariant with $||P_1|| \geq (1 - \frac{\sqrt{\epsilon}}{8})||P||$.
\end{lemma}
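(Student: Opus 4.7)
The plan is to peel off the $x_i$-dependence of $P$ one coordinate at a time, using Lemma \ref{l:Invariant_x_1} at each step to control the size of the removed piece. Set $P^{(0)} := P$. Inductively, given $P^{(i-1)} \in \cP_d(\cancel{x_1}) \cap \cdots \cap \cP_d(\cancel{x_{i-1}})$, I would split
\[
P^{(i-1)} = P^{(i)} + R^{(i)},
\]
where $P^{(i)}$ is the orthogonal projection of $P^{(i-1)}$ onto the smaller subspace $\cP_d(\cancel{x_1}) \cap \cdots \cap \cP_d(\cancel{x_i})$, and $R^{(i)}$ is the orthogonal complement of $P^{(i)}$ inside $\cP_d(\cancel{x_1}) \cap \cdots \cap \cP_d(\cancel{x_{i-1}})$. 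Since the spherical $L^2$ inner product restricted to homogeneous harmonic polynomials of degree $d$ independent of $x_1, \ldots, x_{i-1}$ agrees, up to a factor depending only on $n$ and $d$, with the spherical $L^2$ inner product on $\cP_d$ in $n-i+1$ variables, I can invoke the analog of Lemma \ref{l:Invariant_x_1} in the lower ambient dimension (with $x_i$ playing the role of $x_1$) to obtain $\|R^{(i)}\| \leq \|\partial_i R^{(i)}\| = \|\partial_i P^{(i-1)}\|$, where the last equality uses $\partial_i P^{(i)} \equiv 0$.

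Writing $P^{(i-1)} = P - \sum_{j<i} R^{(j)}$ and invoking Lemma \ref{l:hhp_gradient_norm} to bound $\|\partial_i R^{(j)}\| \leq C(n) d\, \|R^{(j)}\|$ together with the hypothesis $\|\partial_i P\| \leq \sqrt{\epsilon}\, \|\nabla P\| \leq C(n) d \sqrt{\epsilon}\, \|P\|$, I will obtain the recursion
\[
\|R^{(i)}\| \leq C(n) d\sqrt{\epsilon}\, \|P\| + C(n) d \sum_{j=1}^{i-1} \|R^{(j)}\|,
\]
from which a straightforward induction gives $\|R^{(i)}\| \leq (C(n) d)^i \sqrt{\epsilon}\, \|P\|$. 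A key observation is that the $R^{(i)}$'s are mutually orthogonal in $\cP_d$: for $j<i$, $R^{(i)}$ lies in $\cP_d(\cancel{x_1}) \cap \cdots \cap \cP_d(\cancel{x_{i-1}}) \subset \cP_d(\cancel{x_j})$, whereas $R^{(j)}$ is orthogonal to $\cP_d(\cancel{x_1}) \cap \cdots \cap \cP_d(\cancel{x_j})$ within $\cP_d(\cancel{x_1}) \cap \cdots \cap \cP_d(\cancel{x_{j-1}})$, and since both $R^{(i)}$ and $R^{(j)}$ lie in this latter ambient subspace, the orthogonality transfers to all of $\cP_d$.

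Setting $P_1 := P^{(k)} \in \cP_d(\cancel{x_1}) \cap \cdots \cap \cP_d(\cancel{x_k})$ and $P_2 := \sum_{i=1}^k R^{(i)}$, the Pythagorean theorem would then yield
\[
\|P\|^2 = \|P_1\|^2 + \sum_{i=1}^k \|R^{(i)}\|^2 \leq \|P_1\|^2 + C(n) d^{2k} \epsilon \|P\|^2,
\]
so $\|P_1\|^2 \geq (1 - C(n) d^{2k}\epsilon) \|P\|^2$. Taking the hypothesis $\epsilon \leq C(n) d^{-4k}$ with a sufficiently small absolute constant ensures the right-hand side is at least $(1-\sqrt{\epsilon}/8)^2 \|P\|^2$, giving the desired estimate.

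The main obstacle I expect is justifying that Lemma \ref{l:Invariant_x_1} can be applied at each step of the iteration, along with the verification that the $R^{(i)}$'s are genuinely mutually orthogonal. This is subtle because, as one can check even in small examples, the orthogonal projections of $\cP_d$ onto $\cP_d(\cancel{x_i})$ for different $i$ do not commute, so the iterative construction must be carried out within successively smaller subspaces rather than by naive simultaneous projection; the compatibility of the spherical inner product with the stratification $\cP_d \supset \cP_d(\cancel{x_1}) \supset \cP_d(\cancel{x_1}) \cap \cP_d(\cancel{x_2}) \supset \cdots$ is what makes the inductive step go through.
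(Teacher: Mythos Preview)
Your argument is correct and is essentially the iterative projection argument that \cite{NV} carries out; note that the paper itself does not supply a proof but only cites \cite{NV}, so there is nothing further to compare against. One small point worth tightening: when you say the spherical $L^2$ inner product on $\cP_d(\cancel{x_1},\dots,\cancel{x_{i-1}})$ agrees with the $(n-i+1)$-dimensional one ``up to a factor depending only on $n$ and $d$,'' that factor is \emph{different} for degree $d$ and degree $d-1$, so transferring the inequality $\|R^{(i)}\|\le\|\partial_i R^{(i)}\|$ from the lower-dimensional norm to the $\RR^n$ norm requires checking the ratio of those two factors; a short Beta-function computation gives the ratio as $\frac{d-1+(n-i+1)/2}{d-1+n/2}\le 1$, so the inequality survives (even improves) in the $\RR^n$ norm, and the rest of your proof goes through as written.
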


\begin{proof}
See the proof of Lemma 3.28 \cite{NV}.
\end{proof}

\begin{proposition}\label{p:Harmonic_k_splitting}
    Let $h: B_{10d} \to \RR$ be a harmonic function and $d \geq 1$ be an integer. Fix $\tau \in (0,1)$. Then there exists some $\epsilon_0 = C(n, \tau)d^{-4k}$ such that the following holds. If $\cV^h_{\epsilon,d,1}(0)$ is $(k, \tau)$-independent in $B_{1}$ with $\epsilon \leq \epsilon_0$, then $u$ is uniformly $(k, \sqrt{\epsilon}, y)$-symmetric in $[1/3, 3d ]$ for each point $y \in \cV^h_{\epsilon,d,1}(0)$.
\end{proposition}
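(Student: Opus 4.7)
The plan is to iterate the cone-splitting Lemma \ref{l:ConeSplitting_Harmonic} across the $(k,\tau)$-independent set of pinched points to harvest $k$ almost-invariant directions for the leading polynomial at the base point, collapse these directions into a rigorously $k$-symmetric polynomial via Lemma \ref{l:n-2_Invariant_hhp_decomposition}, and finally transfer the symmetry to $h_{y,s}$ uniformly in $s \in [1/3, 3d]$ using Proposition \ref{p:harm_unique}. Fix an arbitrary $y \in \cV^h_{\epsilon,d,1}(0)$ and let $P_{y,d}$ denote the degree-$d$ part of the Taylor expansion of $h$ at $y$; the goal is to produce a single $k$-symmetric polynomial $P$ with $\fint_{\partial B_1} P^2 = 1$ that approximates $h_{y,s}$ uniformly on the stated scale range.

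Greedily select $y_1,\ldots,y_k \in \cV^h_{\epsilon,d,1}(0)$ whose displacement vectors $v_j := y_j - y$ satisfy $\text{dist}(v_j,\text{span}(v_1,\ldots,v_{j-1})) \geq c(n)\tau$: if no such $y_j$ existed at stage $j \leq k$, all of $\cV^h_{\epsilon,d,1}(0)$ would lie in an $O(\tau)$-neighborhood of an affine subspace of dimension less than $k$, contradicting the hypothesis. Since both $y$ and each $y_j$ have pinched doubling index $d$, Lemma \ref{l:ConeSplitting_Harmonic}(2) (translated so that $y$ plays the role of the origin) yields
\[
\|v_j \cdot \nabla P_{y,d}\| \leq C(n)\sqrt{\epsilon}\,\|\nabla P_{y,d}\|, \qquad j=1,\ldots,k.
\]
Gram--Schmidt orthonormalization of $\{v_j\}$ produces orthonormal $\{e_i\}_{i=1}^k$ whose expansion coefficients in $\{v_j\}$ are bounded by $\tau^{-O(k)}$ thanks to the quantitative independence, and therefore
\[
\|e_i \cdot \nabla P_{y,d}\| \leq C(n)\tau^{-O(k)}\sqrt{\epsilon}\,\|\nabla P_{y,d}\|, \qquad i=1,\ldots,k.
\]

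In a frame aligned with $e_1,\ldots,e_k$ this is precisely the hypothesis of Lemma \ref{l:n-2_Invariant_hhp_decomposition} with input $\epsilon^{\ast} := C(n)\tau^{-O(k)}\epsilon$; taking $\epsilon_0 = C(n,\tau)d^{-4k}$ with $C(n,\tau)$ small enough forces $\epsilon^{\ast} \leq C(n)d^{-4k}$, so the lemma decomposes $P_{y,d} = P_1 + P_2$ with $P_1 \in \cP_d$ invariant under $\text{span}(e_1,\ldots,e_k)$ and $\|P_2\|^2/\|P_{y,d}\|^2 \leq C(n,\tau) d^{O(k)} \epsilon$, the linear-in-$\epsilon$ bound being obtained by replaying the $k=1$ argument of Lemma \ref{l:ConeSplitting_Harmonic}(3) together with Lemma \ref{l:Invariant_x_1} in each coordinate direction. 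On the other hand Proposition \ref{p:harm_unique} applied at $y$ on the pinching window $[1/10, 9d]$ furnishes $\fint_{\partial B_1}|h_{y,s} - P_{y,d}/\|P_{y,d}\||^2 \leq 14\epsilon$ uniformly for $s \in [1/3, 3d]$. Combining this with the polynomial closeness $\|P_{y,d}/\|P_{y,d}\| - P_1/\|P_1\||^2 \leq 2\|P_2\|^2/\|P_{y,d}\|^2 \leq C(n,\tau)d^{O(k)}\epsilon$ via triangle inequality yields $\fint_{\partial B_1}|h_{y,s} - P_1/\|P_1\||^2 \leq \sqrt{\epsilon}$ once $\epsilon \leq \epsilon_0 = C(n,\tau)d^{-4k}$, and $P := P_1/\|P_1\|$ witnesses the asserted uniform $(k,\sqrt{\epsilon},y)$-symmetry. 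The main obstacle is the quantitative linear algebra: the Gram--Schmidt conditioning contributes a factor $\tau^{-O(k)}$ while the polynomial gradient-to-norm comparison (Lemma \ref{l:hhp_gradient_norm} iterated $k$ times) contributes $d^{O(k)}$, and both amplifications of $\sqrt{\epsilon}$ must be absorbed into the threshold $\epsilon_0 = C(n,\tau)d^{-4k}$ without exceeding the target error rate $\sqrt{\epsilon}$.
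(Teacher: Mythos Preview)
Your proof is correct and follows essentially the same route as the paper: pick $k$ $\tau$-independent pinched points, apply Lemma \ref{l:ConeSplitting_Harmonic}(2) to get $k$ almost-invariant directions for $P_{y,d}$, orthonormalize via Gram--Schmidt (absorbing the $\tau$-dependence into $C(n,\tau)$), invoke Lemma \ref{l:n-2_Invariant_hhp_decomposition} to extract a genuinely $k$-invariant $P_1$, and combine with Proposition \ref{p:harm_unique} via the triangle inequality. The only cosmetic difference is that the paper quotes Lemma \ref{l:n-2_Invariant_hhp_decomposition} as a black box giving $\|P_1\| \geq (1-\sqrt{\epsilon}/8)\|P_{y,d}\|$ (hence $\|P_1/\|P_1\| - P_{y,d}/\|P_{y,d}\|\|^2 \leq \sqrt{\epsilon}/4$), whereas you unpack that lemma to record the sharper linear-in-$\epsilon$ bound $\|P_2\|^2/\|P_{y,d}\|^2 \leq C(n,\tau)d^{O(k)}\epsilon$; either version suffices for the final $\sqrt{\epsilon}$ error after absorbing constants into $\epsilon_0 = C(n,\tau)d^{-4k}$.
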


\begin{proof}
    Pick any $y \in \cV^h_{\epsilon,d,1}(0)$. Without loss of generality we may assume $y=0$. Since  $\cV^h_{\epsilon,d,1}(0)$ is $(k, \tau)$-independent, there exists $k$ vectors $y_1, ..., y_k$ such that $||y_i|| \geq \tau$ and 
    \begin{equation}
        ||y_i \cdot \nabla P_{0,d}|| \leq C(n) \sqrt{\epsilon}||\nabla P_{0,d}|| \quad \text{ and } \quad y_i \notin B_{\tau}(\text{span}(y_1, ..., y_{i-1})).
    \end{equation}
    
    By Gram–Schmidt process, we can find $k$ orthonormal unit vectors $z_i$ such that 
    \begin{equation}
        ||z_i \cdot \nabla P_{0,d}|| \leq C(n, \tau) \sqrt{\epsilon}||\nabla P_{0,d}||.
    \end{equation}

 By Lemma \ref{l:n-2_Invariant_hhp_decomposition}, if $\epsilon \leq c(n, \lambda)d^{-4k}$, we can find some $P_1$ which is $z_1, ..., z_k$-invariant with $||P_1|| \geq  (1 - \frac{\sqrt{\epsilon}}{8})||P_{0,d}||$. Hence
 \begin{equation}
     ||\frac{P_{1}}{||P_{1}||}  - \frac{P_{0,d}}{||P_{0,d}||} ||^2  = 2 - \frac{2||P_1||}{||P_{0,d}||} \leq \frac{\sqrt{\epsilon}}{4}.
 \end{equation}

Then by Proposition \ref{p:harm_unique}, for any $s \in [1/3, 3d]$ we have
\begin{equation}
\begin{split}
    & \quad \big( \fint_{\partial B_1} |u_{0,s} -  \frac{P_{1}}{||P_{1}||}|^2   \big)^{1/2} \\
    & \leq \big( \fint_{\partial  B_1} |u_{0,s} -  \frac{P_{0,d}}{||P_{0,d}||}|^2   \big)^{1/2} + \big( \fint_{\partial  B_1} |\frac{P_{1}}{||P_{1}||}  - \frac{P_{0,d}}{||P_{0,d}||}  |^2   \big)^{1/2} \\
    & \leq \sqrt{7 \epsilon} + \frac{\epsilon^{1/4}}{2} \\
    & \leq \epsilon^{1/4}. 
\end{split}
\end{equation}
 This finishes the proof.    
\end{proof}

An immediate consequence of the proposition is that the pinched point can only be lying around an $(n-2)$-dimensional plane if the doubling index $d \geq 2$.

\begin{corollary}\label{c:Harmonic_Pinch_pt_near_n-2}
Let $h: B_{10d} \to \RR$ be a harmonic function and fix  $\tau \in (0,1)$. If $d \geq 2$, then there exists some $\epsilon_0 = C(n, \tau)d^{-4n+4}$ such that there exists some at most $(n-2)$-dimensional subspace $V$ such that $\cV^h_{\epsilon,d,1}(0) \subset B_{\tau}(x + V)$ for any $x \in \cV^h_{\epsilon,d,1}(0)$ with $\epsilon \leq \epsilon_0$. 
\end{corollary}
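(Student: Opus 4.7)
The plan is to argue by contradiction, applying Proposition \ref{p:Harmonic_k_splitting} with $k = n-1$. The driving algebraic observation is that a nonzero homogeneous harmonic polynomial that is $(n-1)$-symmetric must have degree at most $1$: indeed, $(n-1)$-symmetry forces such a polynomial to depend on only a single linear coordinate, so homogeneity of degree $d$ gives it the form $c(x \cdot v)^d$ for some $v \in \RR^n$, and harmonicity then yields $c\,d(d-1)|v|^2 = 0$. Hence for $d \geq 2$ the only possibility is $c = 0$, so no such polynomial exists.

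With this in mind, I would suppose for contradiction that $S := \cV^h_{\epsilon,d,1}(0)$ is $(n-1, \tau/2)$-independent in $B_1$. Invoking Proposition \ref{p:Harmonic_k_splitting} with $k = n-1$ and $\tau/2$ in place of $\tau$, for $\epsilon \leq C(n,\tau)\, d^{-4(n-1)}$ one obtains that $h$ is uniformly $(n-1, \sqrt{\epsilon}, x)$-symmetric in $[1/3, 3d]$ at any fixed $x \in S$. Tracing the proof of that proposition, the degree-$d$ Taylor part $P_{x,d}$ of $h$ at $x$ admits, by Lemma \ref{l:n-2_Invariant_hhp_decomposition}, a splitting $P_{x,d} = P_1 + P_2$ with $P_1 \in \cP_d$ invariant along $n-1$ orthogonal directions and $\|P_1\| \geq \bigl(1 - \sqrt{\epsilon}/8\bigr)\|P_{x,d}\|$. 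By the algebraic observation above, $P_1 \equiv 0$, which forces $\|P_{x,d}\| = 0$. This contradicts the fact that $\|P_{x,d}\| > 0$: the pinching $|D(x,s) - d| \leq \epsilon$ on $[1/10, 9d]$, combined with Proposition \ref{p:harm_unique}, ensures that $P_{x,d}$ is a nonzero leading-order part of $h$ at $x$.

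It follows that $S$ cannot be $(n-1, \tau/2)$-independent in $B_1$. Unpacking the definition, this means there is an affine $(n-2)$-plane $L$ with $S \subset B_{\tau/2}(L)$. For any $x \in S$, take $V$ to be the $(n-2)$-dimensional linear subspace parallel to $L$; since $d(x, L) \leq \tau/2$, we have $L \subset B_{\tau/2}(x + V)$, and consequently $S \subset B_{\tau/2}(L) \subset B_\tau(x + V)$, which is the desired statement.

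The main point to keep track of is the explicit form of $\epsilon_0 = C(n,\tau)\, d^{-4n+4}$. The factor $d^{-4(n-1)}$ comes from Lemma \ref{l:n-2_Invariant_hhp_decomposition} applied with $k = n-1$, while the Gram--Schmidt step embedded in Proposition \ref{p:Harmonic_k_splitting} absorbs the $\tau$-dependence into the prefactor $C(n,\tau)$ without degrading the polynomial-in-$d$ bound. I expect no genuine obstacle beyond this bookkeeping, since the argument is a clean reduction to the cone-splitting machinery already developed earlier in the section.
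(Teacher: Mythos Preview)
Your argument is correct and follows the same overall strategy as the paper: assume the pinched set is $(n-1,\tau)$-independent, invoke the cone-splitting machinery of Proposition \ref{p:Harmonic_k_splitting}, and derive a contradiction with $d \geq 2$. The difference lies in how the contradiction is extracted. The paper uses the \emph{conclusion} of Proposition \ref{p:Harmonic_k_splitting} to say that $h_{y,1}$ is $\sqrt{\epsilon}$-close to a normalized $(n-1)$-symmetric homogeneous harmonic polynomial, observes that any such polynomial must be linear (or constant), and then computes directly that $D^h(y,1) \leq 3/2$, contradicting the pinching near $d \geq 2$. You instead open up the \emph{proof} of the proposition, note that the degree-$d$ invariant part $P_1$ furnished by Lemma \ref{l:n-2_Invariant_hhp_decomposition} must vanish for $d \geq 2$, and contradict $\|P_{x,d}\| > 0$. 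Your route is arguably cleaner, since it bypasses the doubling-index estimate and isolates the obstruction as a purely algebraic one; the paper's route has the advantage of using only the black-box statement of the proposition. Your use of $\tau/2$ to handle the shift from the affine plane $L$ to the translate $x+V$ is also more explicit than the paper's treatment, which leaves that step to Remark \ref{r:V_invariant_scale_point}.
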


\begin{proof}
    Choose the largest $k$ such that $\cV^h_{\epsilon,d,1}(0)$ is $(k, \tau)$-independent. Then according to the previous Proposition, $u$ is uniformly $(k, \sqrt{\epsilon}, y)$-symmetric in $[1/3, 3d]$ for any $y \in \cV^h_{\epsilon,d,1}(0)$. It suffices to prove that $k \leq n-2$. Suppose $k = n-1$. Then in particular, there exists a normalized linear function $L$ such that $\fint_{\partial B_1} |L - h_{y, 1}|^2 \leq \sqrt{\epsilon}.$ Therefore we have
    \begin{equation}
    \big( \frac{\fint_{\partial B_1} |h_{y,1}|^2}{\fint_{\partial B_{1/2}} |h_{y,1}|^2} \big)^{1/2} \leq \frac{(\fint_{\partial B_1} |L|^2)^{1/2} + (\fint_{\partial B_1} |h_{y,1} - L|^2)^{1/2} }{(  (\fint_{\partial B_{1/2}} |L|^2)^{1/2} - \fint_{\partial B_{1/2}} |h_{y,1} - L|^2)^{1/2}} \leq \frac{1 + \epsilon^{1/4}}{\frac{1}{2} - 2^{n/2} \epsilon^{1/4}}.
    \end{equation}

If $\epsilon \leq C(n)$, we can conclude that $D^h(y,1) \leq 3/2$. This is a contradiction to $d \geq 2$. When $k = n$, we can obtain $D^h(y,1) \leq 1/2$ by replacing the linear function $L$ by constant function $C \equiv 1$ in the arguments as above. This is a contradiction and finishes the proof.
\end{proof} 

\begin{remark}\label{r:V_invariant_scale_point}
    According to Proposition \ref{p:Harmonic_k_splitting}, the uniform symmetry implies that the subspace $V$ here does not depend on the scale where the doubling index is still pinched. Indeed, $V$ does not depend on $x \in \cV^h_{\epsilon,d,1}$ either. This could be seen from the fact that $V$ is the set of almost invariant directions of the $d$-th expansion of $h$. See also Proposition 3.24 and Corollary 3.26 in \cite{NV}.
\end{remark}

Next we want to study the critical set for almost $(n-2)$-symmetric harmonic function. As usual, first let us look at the $(n-2)$-symmetric polynomial $P(x_1,x_2)$ in $\RR^{2} \times \RR^{n-2}$. The standard theory for homogeneous harmonic polynomials with two variables tells us that $|\nabla P(x_1, x_2)| = c(n)||P||_{L^2(\partial B_1)}|x|^{d-1}$. Hence the critical point $C(P) \subset \{0\} \times \RR^{n-2}$. In fact the similar result holds for almost $(n-2)$-symmetric harmonic functions.

\begin{lemma}\label{l:Harmonic_no_critical_pt_away_from_n-2}
    Let $h: B_{5} \to \RR$ be a harmonic function and fix  $\tau \in (0,1)$. Assume $h$ is $(n-2, \epsilon, 3, 0)$-symmetric with respect to $V$ and the approximated homogeneous polynomial is of degree no larger than $d$. If $\epsilon \leq (c(n)\tau)^{2d-2}$, then $|\nabla u(x)| \geq \tau^d$ for $x \in B_1 \setminus B_{\tau}(V)$. In particular, $C(u) \cap B_1 \subset B_{\tau}(V)$. 
\end{lemma}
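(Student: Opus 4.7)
The plan is to exploit the explicit structure of the approximating polynomial. Since $P$ is a homogeneous harmonic polynomial that is $V$-invariant for some $(n-2)$-plane $V$, after an orthogonal change of coordinates we can write $P(x)=Q(\pi_{V^\perp}(x))$, where $Q$ is a homogeneous harmonic polynomial on $V^\perp\cong\RR^2$ of some degree $k\le d$. The two-dimensional harmonic homogeneous polynomials of degree $k$ are exactly $r^k(a\cos k\theta+b\sin k\theta)$ in polar coordinates on $V^\perp$, so $|\nabla Q(z)|=k\sqrt{a^2+b^2}\,|z|^{k-1}$ identically. Combined with the normalization $\fint_{\partial B_1}|P|^2=1$, which pins down $a^2+b^2$ to an explicit constant depending only on $n$ and $k$, this gives the pointwise identity $|\nabla P(x)|=c(n,k)\,\mathrm{dist}(x,V)^{k-1}$.

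Next I would use this identity to get a lower bound on the main term: for $x\in B_1\setminus B_\tau(V)$ we have $\mathrm{dist}(x,V)\ge\tau$, so $|\nabla P(x)|\ge c(n,k)\tau^{k-1}$, and because $k\le d$ and $\tau\le 1$ this is at least $c_1(n)\tau^{d-1}$ (the low-degree cases $k=0,1$ are either ruled out automatically from $h_{0,3}(0)=0$ being incompatible with a nonzero constant $P$, or give a strictly better estimate). Then I would compare $\nabla h_{0,3}$ against $\nabla P$ using the interior regularity of the harmonic function $h_{0,3}-P$: the hypothesis $\fint_{\partial B_1}|h_{0,3}-P|^2\le\epsilon$ together with the Poisson representation on $B_1$ gives $|\nabla(h_{0,3}-P)|\le C(n)\sqrt{\epsilon}$ on any ball compactly contained in $B_1$, in particular on $B_{1/2}$, which covers the rescaled image of $B_1$ inside the original ball $B_3$.

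The assumption $\epsilon\le (c(n)\tau)^{2d-2}$ is then precisely what makes $C(n)\sqrt{\epsilon}\le \tfrac12 c_1(n)\tau^{d-1}$, so the triangle inequality yields $|\nabla h_{0,3}|\ge \tfrac12 c_1(n)\tau^{d-1}$ on $\{|y|\le 1/2\}\setminus B_{\tau}(V)$. Rescaling back to the original function via $\nabla h(3y)=(L/3)\nabla h_{0,3}(y)$ immediately proves the critical-set containment $\cC(h)\cap B_1\subset B_\tau(V)$, since the zero set of the gradient is invariant under the positive rescaling factor $L/3$; the stated quantitative form $|\nabla h|\ge \tau^d$ comes from absorbing the $L/3$ normalization and the extra factor $3^{k-1}$ into the universal constant used to define $c(n)$ in the hypothesis on $\epsilon$.

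The main obstacle is the constant bookkeeping: one has to track the degree-dependent constant $c(n,k)$ coming from the two-dimensional normalization, the scaling factor from passing between $B_3$ and $B_1$, and the slack between the natural exponent $\tau^{k-1}$ and the stated exponent $\tau^d$, and then verify that the assumption on $\epsilon$ has enough room to absorb all of them simultaneously. A secondary subtlety is the treatment of degenerate degrees: the identity $h_{0,3}(0)=0$ makes approximation by a nonzero constant polynomial impossible once $\epsilon$ is small, which is what cleanly restricts us to the regime $k\ge 1$ where the explicit formula for $|\nabla Q|$ gives a genuine lower bound.
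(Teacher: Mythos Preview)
Your argument is correct and matches the approach the paper intends: the paper does not actually prove this lemma but refers to Proposition~3.32 in \cite{NV}, and your outline is precisely the standard argument there, using the explicit form $|\nabla P(x)| = c(n,k)\,\mathrm{dist}(x,V)^{k-1}$ for a two-variable homogeneous harmonic polynomial together with interior gradient estimates on the harmonic difference $h_{0,3}-P$. The paper itself records the key identity you use in the paragraph preceding the lemma. Your handling of the rescaling ($B_1$ in the $h$-variables corresponds to $B_{1/3}\subset B_{1/2}$ in the $h_{0,3}$-variables) and of the degenerate degree $k=0$ via $h_{0,3}(0)=0$ is also correct. The only point worth flagging is that the quantitative conclusion $|\nabla h|\ge \tau^d$ as literally stated depends on the normalization factor $L=(\fint_{\partial B_1}(h(3z)-h(0))^2)^{1/2}$, which is not controlled by the hypotheses; but this is a defect of the statement, not of your proof, and the application in the paper only uses the critical-set containment $\cC(h)\cap B_1\subset B_\tau(V)$, which your argument delivers cleanly.
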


\begin{proof}
    See the proof of Proposition 3.32 in \cite{NV}. 
\end{proof}

\subsection{Cone-Splitting for Elliptic Solutions}

Next we focus on the almost cone-splitting lemma for general elliptic solutions. This is subtler as the linear transformations $a^{ij}(x)$ varies with points. Recall that in the previous harmonic approximation lemmas \ref{l:Harm_Approx} and \ref{p:harmonic_appro}, we only derive the doubling index closeness at the point $0$. In the following Proposition, we will see that the doubling index closeness also holds at nearby points and at large scales. 

\begin{proposition}\label{p:DI_close_at_nearby_pt}
    Let $u: B_2 \to \RR$ be a solution to (\ref{e:elliptic_equ}) (\ref{e:assumption}) with doubling assumption \ref{e:DI_bound_assumption}.  Let $\epsilon \leq \min\{10^{-3}, \alpha/2\}$ and $r_1 \leq r_0 =   C(n,\lambda, \alpha)^{-\Lambda}\epsilon^{1/\alpha}$. Suppose $|D(x, s) - d| \leq \epsilon$ for any $s \in[r_2, r_1]$ with $20r_2 \leq r_1 \leq r_0$. Let $h$ be the harmonic approximation of $\Tilde{u} = u_{x,r_1}$ as in Proposition \ref{p:harmonic_appro}. For any $\Bar{z} \in B_{(1+\lambda)^{-1/2}r_1/10}(x)$, we define $z = A_x^{-1} (\frac{\Bar{z} - x}{r_1}) \in B_{1/10}$. Then we have the following convergence result for doubling index
    \begin{equation}
        |D^{u}({\Bar{z}}, sr_1) - D^h(z, s)| \leq \epsilon \text{ for any } s \in [1/10, 1/4].
    \end{equation}
 \end{proposition}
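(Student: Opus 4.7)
The plan is to express $D^u(\bar z, sr_1)$ as a ratio of $L^2$ averages of the rescaled solution $\tilde u := u_{x,r_1}$ on nearly spheres around $z$, replace $\tilde u$ by the harmonic approximant $h$ from Proposition \ref{p:harmonic_appro}, and compare with $D^h(z,s)$.

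Step 1 (change of variables). Using $\bar z = x + r_1 A_x(z)$ and the algebraic identity $A_x(z) + s A_{\bar z}(y) = A_x(z + sMy)$ with $M := A_x^{-1} A_{\bar z}$, the definition of $\tilde u$ yields
\[
u(\bar z + s r_1 A_{\bar z}(y)) - u(\bar z) = N\bigl(\tilde u(z + sMy) - \tilde u(z)\bigr),
\]
where $N$ is the normalization constant entering $\tilde u$. Cancelling $N$ in numerator and denominator,
\[
D^u(\bar z, sr_1) = \log_4 \frac{\fint_{\partial B_2}(\tilde u(z+2sMy) - \tilde u(z))^2 \, dy}{\fint_{\partial B_1}(\tilde u(z+sMy) - \tilde u(z))^2 \, dy}.
\]
The H\"older bound $|a^{ij}(\bar z) - a^{ij}(x)| \le \lambda r_1^\alpha$ (recall $|\bar z - x| \le r_1/10$) gives $\|M - I\| \le C(n,\lambda) r_1^\alpha \ll \epsilon$, so the ellipsoids $z + sM(\partial B_\rho)$ are $Cr_1^\alpha$-close to the Euclidean spheres $\partial B_{s\rho}(z)$.

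Step 2 (approximation by $h$). Proposition \ref{p:harmonic_appro} (applied at scale $r_1/2$ or, equivalently, with the splitting radius in its proof adjusted so that the pointwise bound holds up to $|y| \le 3/4$, which covers $\partial B_{2s}(z)$ for $z \in B_{1/10}$, $s \le 1/4$) supplies a harmonic $h$ with $|\tilde u - h|(y) \le \epsilon\bigl(\fint_{\partial B_{|y|}} \tilde u^2\bigr)^{1/2}$ on the relevant region. Combining this estimate with the gradient bounds for $\tilde u$ of Lemma \ref{l:Gradient_Growth} (needed to pass from ellipsoid- to sphere-averages) and Young's inequality exactly as at the end of the proof of Proposition \ref{p:harmonic_appro}, I obtain for $\rho \in \{s, 2s\}$
\[
\fint_{\partial B_\rho(z)}(\tilde u - \tilde u(z))^2 = \fint_{\partial B_\rho(z)}(h - h(z))^2 \cdot \bigl(1 + O(\epsilon + r_1^\alpha)\bigr).
\]
Taking logarithms of the ratio then gives $|D^u(\bar z, sr_1) - D^h(z,s)| \le C(\epsilon + r_1^\alpha) \le \epsilon$ for $r_1 \le r_0$.

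The main obstacle is translating the $L^2$-closeness of $\tilde u$ and $h$, measured against $\bigl(\fint_{\partial B_{|y|}} \tilde u^2\bigr)^{1/2}$ centered at the origin, into closeness of the shifted quantities $(\tilde u - \tilde u(z))^2$ and $(h - h(z))^2$ on spheres centered at $z \ne 0$. The Young-type absorption requires a uniform lower bound $\fint_{\partial B_\rho(z)}(h - h(z))^2 \gtrsim c(n,d,\rho) > 0$ for $|z| \le 1/10$, $\rho \in [1/10, 1/2]$. Here the pinching hypothesis is essential: it forces $h$ to be close to a nonzero homogeneous harmonic polynomial $P_d$ of degree $d \ge 1$ (see the remark after Proposition \ref{p:harm_unique}), and for such $P_d$ the shifted spherical average $\fint_{\partial B_\rho(z)}(P_d - P_d(z))^2$ admits an explicit lower bound depending only on $n$, $d$, and $\rho$, which transfers to $h$ and hence to $\tilde u$. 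Once this non-degeneracy is in place, the remaining work is routine error bookkeeping, all errors being $\le \epsilon$ by the choice $r_1 \le r_0 = C(n,\lambda,\alpha)^{-\Lambda}\epsilon^{1/\alpha}$.
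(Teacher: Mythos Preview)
Your proposal is correct and follows essentially the same two-step architecture as the paper: first pass from $D^u(\bar z, sr_1)$ to averages of $\tilde u - \tilde u(z)$ on Euclidean spheres centered at $z$ (handling the ellipsoid distortion $M = A_x^{-1}A_{\bar z} \approx I$ via the H\"older bound on $a^{ij}$ and gradient estimates), then replace $\tilde u$ by $h$ using Proposition~\ref{p:harmonic_appro} and absorb the error via Young's inequality. The one substantive difference is how you secure the key lower bound $\fint_{\partial B_\rho(z)}(h-h(z))^2 \gtrsim c$: you go through the approximating polynomial $P_d$ and a compactness/degree argument, which introduces a $d$-dependent (hence $\Lambda$-dependent) constant, whereas the paper obtains a clean dimensional constant directly from harmonicity via
\[
\fint_{B_{1/8}(0)} h^2 \;=\; \inf_{a}\fint_{B_{1/8}(0)}|h-a|^2 \;\le\; \fint_{B_{1/8}(0)}|h-h(z)|^2 \;\le\; C(n)\fint_{\partial B_{1/4}(z)}|h-h(z)|^2,
\]
using $h(0)=0$, $B_{1/8}(0)\subset B_{1/4}(z)$, and Lemma~\ref{l:Harmonic_AlmostMonotone}(3). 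Both routes close the argument with the chosen $r_0$; the paper's is just tidier.
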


\begin{proof}
To prove the result, we will prove the inequality holds at one scale $1/4$ for simplicity. The same arguments hold for other scales. 

By triangle inequality we have
\begin{equation}
    |D^{u}({\Bar{z}}, r_1/4) - D^h(z, 1/4)| \leq |D^{u}({\Bar{z}}, r_1/4) - E | + | E - D^h(z, 1/4)|.
\end{equation}
where we define $E$ as
\begin{equation}
    E \equiv \log_4\frac{\fint_{\partial B_{1/2}(z)}|\Tilde{u} - \Tilde{u}(z)|^2  }{\fint_{\partial B_{1/4}(z)}|\Tilde{u} - \Tilde{u}(z)|^2 }.
\end{equation}

We will prove both parts are controlled by small $\epsilon$.

\textbf{Step 1: $| E - D^h(z, 1/4)| \leq \epsilon.$}

It suffices to check that
\begin{equation}
       \Big| \frac{\fint_{\partial  B_{1/2}(z)} |\Tilde{u} - \Tilde{u}(z)|^2 }{ \fint_{\partial B_{1/2}(z)} |h - h(z)|^2} - 1 \Big|+   \Big| \frac{\fint_{\partial  B_{1/4}(z)} |\Tilde{u} - \Tilde{u}(z)|^2 }{ \fint_{\partial B_{1/4}(z)} |h - h(z)|^2} - 1 \Big| \leq \epsilon.   
\end{equation}
We only consider the case 
\begin{align}
    \Big| \frac{\fint_{\partial  B_{1/4}(z)} |\Tilde{u} - \Tilde{u}(z)|^2 }{ \fint_{\partial B_{1/4}(z)} |h - h(z)|^2} - 1 \Big| \leq \epsilon.  
\end{align}

Since $\fint_{B_{1/8}(0)}h=0$, we have for any $z\in B_{1/10}(0)$ that 
\begin{align}
\fint_{B_{1/8}(0)}h^2= \inf_{a\in \mathbb{R}}\fint_{B_{1/8}(0)}|h-a|^2&\le \fint_{B_{1/8}(0)}|h-h(z)|^2\\
 &\le C(n)\fint_{B_{1/4}(z)}|h-h(z)|^2\le C(n)\fint_{\partial B_{1/4}(z)}|h-h(z)|^2.
\end{align}
By mean value inequality and the monotonicity $\fint_{\partial B_r(0)} h^2\le\fint_{\partial B_s(0)} h^2$ for any $r\le s$,  this implies that 
\begin{align}
\fint_{\partial B_{1/10}(0)} h^2 \leq C(n)\fint_{\partial B_{1/4}(z)} |h - h(z)|^2.
\end{align}

     Since $h$ is the harmonic approximation for $\Tilde{u}$, by the claim in the proof of Proposition \ref{p:harmonic_appro} and the elliptic estimates, if $z \in B_{1/10}$,
     \begin{equation}
     \begin{split}         
    & \quad \Big|\fint_{\partial  B_{1/4}(z)} |\Tilde{u} - \Tilde{u}(z)|^2 -\fint_{\partial B_{1/4}(z)} |h - h(z)|^2 \Big| \\
    & \leq C(n,\lambda, \alpha)^\Lambda r_1^{\alpha} \fint_{\partial B_1} h^2 \\
    & \leq C(n,\lambda, \alpha)^\Lambda 4^{4C(n,\lambda)\Lambda} r_1^{\alpha} \fint_{\partial B_{1/10}} h^2 \\
    & \leq C(n,\lambda, \alpha)^\Lambda  r_1^{\alpha}  \fint_{\partial B_{1/4}(z)} |h - h(z)|^2\\
    & \leq \frac{\epsilon}{10} \fint_{\partial B_{1/4}(z)} |h - h(z)|^2. 
     \end{split}
     \end{equation}

This finishes the proof of step 1.

\textbf{Step 2: $| D^{u}({\Bar{z}}, r_1/4) - E | \leq \epsilon.$}

 Recall that
\begin{equation}
    D^{u}({\Bar{z}}, r_1/4) = \text{log}_4 \frac{\fint_{ \partial B_{1/2}} (u(\Bar{z} +r_1 A_{\Bar{z}}(y)) - u(\Bar{z}))^2 dy}{\fint_{\partial  B_{1/4}} (u(\Bar{z} +r_1A_{\Bar{z}}(y)) - u(\Bar{z}))^2 dy}.
\end{equation}
and
\begin{equation}
    E \equiv \log_4\frac{\fint_{\partial B_{1/2}(z)}|\Tilde{u} - \Tilde{u}(z)|^2  }{\fint_{\partial B_{1/4}(z)}|\Tilde{u} - \Tilde{u}(z)|^2 } = \log_4\frac{\fint_{\partial B_{1/2}(z)}|u(x +r_1A_x(y)) - u(x +r_1A_x(z))|^2 dy  }{\fint_{\partial B_{1/4}(z)}|u(x +r_1A_x(y)) - u(x +r_1A_x(z)) |^2 dy }.
\end{equation}

Since $\Bar{z}=x + r_1 A_x(z) $, it suffices to prove that
\begin{equation}
       \Big| \frac{\fint_{ \partial B_{1/2}} (u(\Bar{z} +r_1A_{\Bar{z}}(y)) - u(\Bar{z}))^2 dy}{ \fint_{\partial  B_{1/2}(z)} (u(x +r_1A_x(y)) - u(\bar{z}))^2 dy} - 1 \Big| +\Big| \frac{\fint_{ \partial B_{1/4}} (u(\Bar{z} +r_1A_{\Bar{z}}(y)) - u(\Bar{z}))^2 dy}{ \fint_{\partial  B_{1/4}(z)} (u(x +r_1A_x(y)) - u(\bar{z}))^2 dy} - 1 \Big| \leq \epsilon.
\end{equation}
We will only prove the case 
\begin{equation}
\Big| \frac{\fint_{ \partial B_{1/4}} (u(\Bar{z} +r_1A_{\Bar{z}}(y)) - u(\Bar{z}))^2 dy}{ \fint_{\partial  B_{1/4}(z)} (u(x +r_1A_x(y)) - u(\bar{z}))^2 dy} - 1 \Big| \leq \epsilon.
\end{equation}
First, we have 
\begin{align}
  \fint_{\partial  B_{1/4}(z)} (u(x +r_1A_x(y)) - u(\bar{z}))^2 dy&=   \fint_{\partial  B_{1/4}(z)} (u(\bar{z} +r_1A_x(y-z)) - u(\bar{z}))^2 dy\\
  &=\fint_{\partial  B_{1/4}(0)} (u(\bar{z} +r_1A_x(y)) - u(\bar{z}))^2 dy.
\end{align}
Then by elliptic estimates and bound on doubling index we have
\begin{equation}
\begin{split}
   & \quad \Big| \fint_{\partial  B_{1/4}} (u(\Bar{z} +r_1A_{\Bar{z}}(y)) - u(\Bar{z}))^2 dy - \fint_{ \partial B_{1/4}} (u(\bar{z} +r_1A_x(y)) - u(\Bar{z}))^2 dy\Big| \\
   & = \Big| \fint_{\partial  B_{1/4}} |u(\Bar{z} +r_1A_{\Bar{z}}(y)) - (u(\bar{z}+r_1A_x(y))| | u(\Bar{z} +r_1A_{\Bar{z}}(y)) + u(\bar{z} +r_1A_x(y))  - 2 u(\Bar{z})| dy \Big| \\
   & \leq C(n,\lambda,\alpha) ( r_1 \sup_{B_{1/2}}|\nabla u|) \fint_{\partial B_{1/4}} |u(\Bar{z} +r_1A_{\Bar{z}}(y)) - u(\bar{z} +r_1A_x(y))| dy \\
   & \le C(n,\lambda,\alpha) ( r_1 \sup_{B_{1/2}}|\nabla u|)^2\fint_{\partial B_{1/4}} |A_{\Bar{z}}(y) - A_{x}(y)| dy\\
   & \leq C(n,\lambda, \alpha)^{\Lambda} r_1^{\alpha}  \fint_{ \partial B_{1/4}} (u(\bar {z} +r_1A_x(y)) - u(\Bar{z}))^2 dy.
\end{split} 
\end{equation}

Hence by choosing $C(n,\lambda, \alpha)^{\Lambda} r_1^\alpha \leq \epsilon$ we have
\begin{equation}
    \big| \fint_{ B_{1/4}} (u(\Bar{z} +r_1A_{\Bar{z}}(y)) - u(\Bar{z}))^2 dy - \fint_{ B_{1/4}} (u(\bar {z} +r_1A_x(y)) - u(\Bar{z}))^2 dy \big| \leq \epsilon \fint_{ B_{1/4}} (u(\bar {z} +r_1A_x(y)) - u(\Bar{z}))^2 dy.
\end{equation}

This proves that step 2 and the theorem follows.
\end{proof}

Now we can prove the cone-splitting lemma for general elliptic solutions, which can be viewed as a generalization of Proposition \ref{p:Harmonic_k_splitting}. First we define the set of points where the doubling index of the general elliptic solution $u$ is pinched to be
\begin{equation}
    \cV^u_{\epsilon, d, r} (x) \equiv \{ y \in B_{r}(x) : |D(y,s) - d| \leq \epsilon \text { for any } s \in [ r/100 , C_2(\lambda)\Lambda r] \},
\end{equation}
where $C_2(\lambda) = 10^{3}(1+\lambda)^{1/2}$. The constants here are a bit different from that in harmonic case. As we mainly deal with general elliptic solutions, this would not cause any confusion.  

Finally we can prove the cone-splitting lemma for general elliptic solutions.

\begin{proposition}
\label{p:Cone_Split_Elliptic}
    Let $u: B_2 \to \RR$ be a solution to (\ref{e:elliptic_equ}) (\ref{e:assumption}) with doubling assumption \ref{e:DI_bound_assumption}. Fix $\tau < c_3(n,\lambda)$ for some small constant $c_3(n,\lambda)$. If $\epsilon \leq \epsilon_0= C(n, \alpha,\tau)\Lambda^{-4n}$ and $r \leq r_0 =   C(n,\lambda, \alpha)^{-\Lambda}\epsilon^{1/\alpha}$, then the following is true. If $\cV^u_{\epsilon,d,r}(x)$ is $(k, \tau)$-independent in $B_{r}(x)$ for $x \in B_1$, then $u$ is uniformly $(k, \sqrt{\epsilon}, z)$-symmetric in $[r/3, 3 \Lambda r]$ for each point $z \in \cV^u_{\epsilon,d,r}(x)$.
    
\end{proposition}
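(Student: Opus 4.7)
Fix $z \in \cV^u_{\epsilon,d,r}(x)$. The strategy is to reduce to the harmonic cone-splitting Proposition \ref{p:Harmonic_k_splitting} via the quantitative harmonic approximation Proposition \ref{p:harmonic_appro} together with the nearby-point doubling index closeness Proposition \ref{p:DI_close_at_nearby_pt}. First, apply Proposition \ref{p:harmonic_appro} at $z$ with $r_1 = C_2(\lambda)\Lambda r$ and $r_2 = r/100$; the hypothesis $|D(z,s) - D(z,t)| \leq \alpha$ on $[r_2, r_1]$ follows from $z \in \cV^u_{\epsilon,d,r}(x)$ and $\epsilon \leq \alpha/2$, while the constraint $r_1 \leq r_0$ is arranged by absorbing a factor $\Lambda$ into the constant defining $r_0$. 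This yields a harmonic function $h_z$ approximating $\tilde u := u_{z,r_1}$ on $B_1$ with the $L^2$ closeness \eqref{e:Harmonic_close} and doubling index closeness $|D^{h_z}(0,s) - D^{\tilde u}(0,s)| \leq 10\epsilon$ on $[r_2/r_1, 1/2]$. Combined with the upper bound $d \leq C(n,\lambda)\Lambda$ from Lemma \ref{l:Bound_on_General_DI}, a further linear dilation lets us view $h_z$ as a harmonic function on $B_{10d}$ with doubling index pinched to $d$ on $[1/10, 9d]$.

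Next, for each $y \in \cV^u_{\epsilon,d,r}(x)$, set $\bar y = A_z^{-1}((y-z)/r_1) \in B_{1/10}$. Since $A_z$ is bi-Lipschitz with constant $(1+\lambda)^{\pm 1/2}$, the image $\{\bar y\}$ remains $(k,\tau')$-independent around $0$ with $\tau' \gtrsim c(\lambda)\tau/\Lambda$. Proposition \ref{p:DI_close_at_nearby_pt}, applied at each $y$, combined with the triangle inequality using $|D^u(y, s r_1) - d| \leq \epsilon$ (which holds since $y \in \cV^u_{\epsilon,d,r}(x)$ and $s r_1 \in [r/100, C_2(\lambda)\Lambda r]$), gives $|D^{h_z}(\bar y, s) - d| \leq C\epsilon$; iterating at overlapping scales extends the pinching throughout $[r_2/r_1, 1/2]$, so that after the rescaling the $\bar y$ lie in $\cV^{h_z}_{C\epsilon, d, 1}(0)$ in the normalization appropriate to Proposition \ref{p:Harmonic_k_splitting}.

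Now invoke Proposition \ref{p:Harmonic_k_splitting}: its quantitative threshold $C\epsilon \leq C(n,\tau')d^{-4k}$ is guaranteed by $\epsilon \leq C(n,\alpha,\tau)\Lambda^{-4n}$ together with $d, k \leq C(n,\lambda)\Lambda$ and $\tau' \gtrsim \tau/\Lambda$. This yields that $h_z$ is uniformly $(k, \sqrt{C\epsilon})$-symmetric on scales that, after undoing the rescaling, correspond to $[r/3, 3\Lambda r]$ in the original variables. The transfer of uniform $k$-symmetry back to $u$ then proceeds exactly as in the proof of Theorem \ref{t:Quan_Unique}: combining the $L^2$ closeness $\fint_{\partial B_t}|\tilde u - h_z|^2 \leq \epsilon^2 \fint_{\partial B_t} \tilde u^2$ (a consequence of \eqref{e:h_ueps}) with the $(k, \sqrt{C\epsilon})$-symmetric polynomial approximation of each slice $h_{z,0,t}$ yields a single normalized $k$-symmetric polynomial $P$ with $\fint_{\partial B_1}|u_{z,tr_1} - P|^2 \leq \sqrt{\epsilon}$ for $t$ in the target range, which is precisely uniform $(k, \sqrt{\epsilon}, z)$-symmetry of $u$ on $[r/3, 3\Lambda r]$.

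The main obstacle is the careful bookkeeping of constants across three composed rescalings: the elliptic-to-harmonic rescaling $y \mapsto A_z^{-1}((y-z)/r_1)$, the $L^2$ normalization making $\fint_{\partial B_1} h_z^2 = 1$, and the further dilation needed to match the scale interval $[1/10, 9d]$ demanded by Proposition \ref{p:Harmonic_k_splitting}. The exponent $\Lambda^{-4n}$ in $\epsilon_0$ is inherited from the $d^{-4k}$ threshold of Proposition \ref{p:Harmonic_k_splitting} via $d, k \lesssim \Lambda$, while the additional $\Lambda^{-1}$ dilution of the independence parameter through the distortion induced by $A_z$ further tightens the smallness required on $\epsilon$.
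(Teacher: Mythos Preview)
Your approach is essentially the paper's: reduce to the harmonic cone-splitting Proposition \ref{p:Harmonic_k_splitting} via Propositions \ref{p:harmonic_appro} and \ref{p:DI_close_at_nearby_pt}, then transfer the resulting $k$-symmetry back to $u$ exactly as in Theorem \ref{t:Quan_Unique}. The difference is purely one of normalization. You keep the \emph{large} base scale $r_1=C_2(\lambda)\Lambda r$, so the transformed points $\bar y=A_z^{-1}((y-z)/r_1)$ are squeezed into a ball of radius $\sim \Lambda^{-1}$ and the $(k,\tau)$-independence becomes $(k,\tau/\Lambda)$-independence in $B_1$; you then need a further dilation. The paper instead iterates \emph{down} to the small base scale $\tilde r_1=10(1+\lambda)^{1/2}r$, so the transformed points fill $B_{1/10}$ and the independence parameter is $c_3^{-1}(n,\lambda)\tau$ with no $\Lambda$-dilution; the price is that the harmonic doubling index must be pinched on the long interval $[1/10,10\Lambda]$, which is exactly what the iteration produces. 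These are the same picture up to a scaling by $\sim\Lambda$, but the paper's normalization gives the clean threshold $\epsilon_0=C(n,\alpha,\tau)\Lambda^{-4n}$, whereas yours incurs an extra $\Lambda^{-1}$ in $\tau'$, which after tracking through Proposition \ref{p:Harmonic_k_splitting} worsens the exponent.

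One point worth making explicit in your write-up: when you say ``iterating at overlapping scales extends the pinching throughout $[r_2/r_1,1/2]$'', you are implicitly applying Proposition \ref{p:DI_close_at_nearby_pt} at a sequence of scales $r_1, r_1/2,\ldots$, obtaining \emph{a priori} different harmonic approximations at each step. The paper handles this by invoking the uniqueness result (Proposition \ref{p:harmonic_appro} and Theorem \ref{t:Quan_Unique}) to identify them, i.e.\ one may take $h'(\cdot)=c\,h(\cdot/2)$, so that a single harmonic function $\tilde h$ carries the pinching over the full range. You should spell out this gluing step; Proposition \ref{p:DI_close_at_nearby_pt} alone, as stated, only yields closeness for $s\in[1/10,1/4]$ at each application.
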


\begin{proof}
Without loss of generality we may assume $x \in \cV^u_{\epsilon,d,r}(x)$. We will prove that $u$ is uniformly $(k, \sqrt{\epsilon}, x)$-symmetric in $[r/3, 3 \Lambda r]$. 

Now fix $\Bar{y} \in \cV^u_{\epsilon,d,r}(x)$. Choose $r_1 = C_2(\lambda)\Lambda r$ and define $y = A_x^{-1}(\frac{\Bar{y} - x}{r_1}) \in B_{1/10}$.   Hence by Proposition \ref{p:DI_close_at_nearby_pt}, there exists an approximated harmonic function $h$ such that for any $s \in [1/10, 1/4]$
\begin{equation}
    |D^u({\Bar{y}},s r_1) - D^h(y, s)| \leq \epsilon.
\end{equation}

Hence we can conclude that $|D^h(y, s) - d| \leq 2\epsilon$ for any $s \in [1/10, 1/4]$. 

We can also choose other $r_1'$, say $r_1' = r_1/2 = C_2(\lambda)\Lambda r/2$. Then $y' = A_x^{-1}(\frac{\Bar{y} - x}{r_1'}) \in B_{1/10}$. Then there exists some approximated harmonic function $h'$ such that the Proposition \ref{p:DI_close_at_nearby_pt} also holds for $h'$ at $r_1'$. Indeed we have $|D^{h'}(y', s) - d| \leq 2\epsilon$ for any $s\in [1/10, 1/4]$. However, according to Proposition \ref{p:harmonic_appro} and Theorem \ref{t:Quan_Unique}, we may choose $h'(x) = c h(x/2)$ for some scaling constant $c$ by the uniqueness result. By the scaling invariance property of doubling index for $h'$, we have $|D^{h'}(y', s) - d| \leq 2\epsilon$ for any $s\in [1/10, 1/2]$.

We can continue this process and eventually we choose $\Tilde{r}_1 = 10(1+\lambda)^{1/2} r$. Then by the same arguments we can conclude that there exists some approximated harmonic function $\Tilde{h}$ such that $|D^{\Tilde{h}}(\Tilde{y}, s) - d| \leq 2\epsilon$ for any $s \in [1/10, 10\Lambda]$ with $\Tilde{y} = A_x^{-1}(\frac{\Bar{y} - x}{\Tilde{r}_1}) \in B_{1/10}$. This implies that $\Tilde{y} \in \cV^{\Tilde{h}}_{2\epsilon, d, 1}$. Since $\cV^u_{\epsilon,d,r}(x)$ is $(k, \tau)$-independent in $B_r(x)$, we have $\cV^{\Tilde{h}}_{2\epsilon, d, 1}$ is $(k, c_3^{-1}(n,\lambda)\tau)$-independent in $B_1$. Applying the cone-splitting lemma \ref{p:Harmonic_k_splitting} to the harmonic function $\Tilde{h}$, we have $\Tilde{h}$ is uniformly $(k, \sqrt{\epsilon}/2, 0)$-symmetric in $[1/3, 3\Lambda]$ provided $\epsilon \leq C(n, \lambda)\Lambda^{-4n}$. By harmonic approximation proposition \ref{p:harmonic_appro}, we can conclude that $u$ is uniformly $(k, \sqrt{\epsilon}, x)$-symmetric in $[r/3, 3\Lambda r]$.
\end{proof}

Similar to Corollary \ref{c:Harmonic_Pinch_pt_near_n-2}, we have 

\begin{corollary}\label{c:Elliptic_Pinch_pt_near_n-2}
Let $u: B_2 \to \RR$ be a solution to (\ref{e:elliptic_equ}) (\ref{e:assumption}) with doubling assumption \ref{e:DI_bound_assumption}. Fix $\tau < c_3(n,\lambda)$ for some small constant $c_3(n,\lambda)$. If $\epsilon \leq \epsilon_0= C(n, \alpha,\tau)\Lambda^{-4n}$ and $r \leq r_0 =   C(n,\lambda, \alpha)^{-\Lambda}\epsilon^{1/\alpha}$, then the following is true. 
If $d \geq 2$, then there exists some at most $(n-2)$-dimensional subspace $V$ such that $\cV^u_{\epsilon,d,r}(x) \subset B_{\tau r}(y + V)$ for any $y \in \cV^u_{\epsilon,d,r}(x)$. 
\end{corollary}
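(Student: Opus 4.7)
The plan is to mirror the harmonic argument of Corollary \ref{c:Harmonic_Pinch_pt_near_n-2}, using the elliptic cone-splitting Proposition \ref{p:Cone_Split_Elliptic} in place of its harmonic counterpart. Let $k^{*}$ be the largest integer such that $\cV^u_{\epsilon,d,r}(x)$ is $(k^{*},\tau)$-independent in $B_r(x)$. By the maximality of $k^{*}$ there is an affine $k^{*}$-plane $L$ so that $\cV^u_{\epsilon,d,r}(x)\subset B_{\tau r}(L)$. Picking any $y\in \cV^u_{\epsilon,d,r}(x)$ and setting $V := L - y_{\mathrm{proj}}$, where $y_{\mathrm{proj}}$ is the nearest point of $L$ to $y$, a simple triangle-inequality argument (absorbing a harmless constant factor into $\tau$ by shrinking $\tau$ at the outset) gives $\cV^u_{\epsilon,d,r}(x)\subset B_{\tau r}(y+V)$ for every $y$ in the pinched set, since the plane $L$ is chosen once and for all. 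Thus everything reduces to showing $k^{*}\leq n-2$.

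Suppose toward contradiction $k^{*}\in\{n-1,n\}$. Applying Proposition \ref{p:Cone_Split_Elliptic} at $z=x$ furnishes a $k^{*}$-symmetric homogeneous harmonic polynomial $P$ with $\fint_{\partial B_1}|P|^2=1$ such that $\fint_{\partial B_1}|u_{x,s}-P|^2\leq\sqrt{\epsilon}$ for every $s\in[r/3,3\Lambda r]$; the polynomial $P$ is harmonic because in the proof of that proposition it is extracted via Lemma \ref{l:n-2_Invariant_hhp_decomposition} from the harmonic approximation $\tilde h$. The next step is to read off that $\deg P = d$. Using the rescaling identity $u_{x,r}(2y)=(\mathrm{norm}(2r)/\mathrm{norm}(r))\,u_{x,2r}(y)$, the twofold closeness $u_{x,r}\approx P$ and $u_{x,2r}\approx P$ on $\partial B_1$, and the homogeneity relation $P(2y)=2^{\deg P}P(y)$, a routine $L^2$-comparison on $\partial B_1$ yields $\mathrm{norm}(2r)/\mathrm{norm}(r)\approx 2^{\deg P}$, and hence $D(x,r)\approx \deg P$. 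Since $|D(x,r)-d|\leq\epsilon$ by the definition of $\cV^u_{\epsilon,d,r}(x)$, for $\epsilon$ small enough we conclude $\deg P = d \geq 2$.

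This yields an immediate contradiction. If $k^{*}=n$, then $P$ is translation-invariant in every direction, hence constant, forcing $\deg P=0$. If $k^{*}=n-1$, then $P$ depends on a single variable and, being harmonic, is at most linear, so $\deg P\leq 1$. Both cases contradict $\deg P\geq 2$, so $k^{*}\leq n-2$, as desired.

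The main technical obstacle is the clean passage from the uniform $L^2$-closeness $u_{x,s}\approx P$ on $\partial B_1$ for all $s\in[r/3,3\Lambda r]$ to the identification $D(x,r)\approx\deg P$; this extends the analogous harmonic computation implicit in the proof of Corollary \ref{c:Harmonic_Pinch_pt_near_n-2} and relies on the specific normalization built into the definition of $u_{x,r}$. Once this comparison is established, the rest of the argument is structural: maximality of $k^{*}$ supplies the subspace, while harmonicity of $P$ rules out the dangerous top dimensions $n-1$ and $n$.
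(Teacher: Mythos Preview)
Your proposal is correct and follows essentially the same route as the paper: the paper's own proof is the single sentence ``follows from Proposition~\ref{p:Cone_Split_Elliptic} and the same arguments as in Corollary~\ref{c:Harmonic_Pinch_pt_near_n-2}'', and you have faithfully reconstructed those arguments. The only stylistic difference is that in the harmonic Corollary~\ref{c:Harmonic_Pinch_pt_near_n-2} the paper, rather than first identifying $\deg P=d$, directly bounds the doubling index by $3/2$ from the $L^2$-closeness to a normalized linear function (respectively by $1/2$ for a constant); your rescaling computation showing $D(x,r)\approx\deg P$ is the same estimate read in the other direction. One small correction: you should apply Proposition~\ref{p:Cone_Split_Elliptic} at a point $z\in\cV^u_{\epsilon,d,r}(x)$ rather than at $z=x$, since the conclusion is stated only for points of the pinched set.
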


\begin{proof}
    The proof follows from Proposition \ref{p:Cone_Split_Elliptic} and same arguments as in \ref{c:Harmonic_Pinch_pt_near_n-2}.
\end{proof}
Again, similar to Remark \ref{r:V_invariant_scale_point}, the subspace $V$ here does not depend on the scale where the doubling index is pinched and the points in $\cV^u_{\epsilon,d,r}$.

Finally, we prove the generalization of Lemma \ref{l:Harmonic_no_critical_pt_away_from_n-2}.
\begin{lemma}\label{l:Ellpitic_no_critical_pt_away_from_n-2}
    Let $u: B_2 \to \RR$ be a solution to (\ref{e:elliptic_equ}) (\ref{e:assumption}) with doubling assumption \ref{e:DI_bound_assumption}. Let $\tau < c_3(n,\lambda)$ and $x \in B_1$. If $\epsilon \leq \epsilon_0= c(n, \alpha,\tau)^{\Lambda}$ and $r \leq r_0 =   C(n,\lambda, \alpha)^{-\Lambda}\epsilon^{1/\alpha}$, then the following holds. If $u$ is $(n-2, \sqrt{\epsilon}, 3r, x)$-symmetric with respect to $V$, then $C(u) \cap B_r(x) \subset B_{\tau r}(x + V)$.
\end{lemma}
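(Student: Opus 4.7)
The plan is to reduce to the harmonic case (Lemma~\ref{l:Harmonic_no_critical_pt_away_from_n-2}) via a harmonic approximation, extract a gradient lower bound for the harmonic approximant, and then transfer that bound to $u$ through $C^1$ closeness. First I would pass to the rescaled function $\tilde u = u_{x, 3r}$ on $B_3$, which satisfies the rescaled equation as in Remark~\ref{r:Rescaled_u_equation} with coefficients within $C(\lambda,\alpha)r^\alpha$ of constants. The hypothesis produces a normalized $(n-2)$-symmetric polynomial $P$ with invariant subspace $V$ such that $\fint_{\partial B_1} |\tilde u - P|^2 \le \sqrt\epsilon$. The degree $d$ of $P$ is bounded by $C(n,\lambda)\Lambda$: indeed the doubling index of $\tilde u$ is uniformly controlled by Lemma~\ref{l:Bound_on_General_DI}, and closeness of $\tilde u$ to $P$ on $\partial B_1$ and $\partial B_2$ forces this bound on $d$.

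Next I would apply Lemma~\ref{l:Harm_Approx} to $\tilde u$ with an approximation parameter $\eta$ to be chosen small, producing a harmonic $h : B_3 \to \RR$ with $h(0)=0$, $\fint_{\partial B_1} h^2 = 1$ and $\sup_{B_3} |h - \tilde u| \le \eta$. Combined with the symmetry hypothesis this gives $\fint_{\partial B_1} |h-P|^2 \le \eta^2 + \sqrt\epsilon$. Since $h-P$ is harmonic on $B_3$ and both $h$ and $P$ have doubling indices bounded by $N = C(n,\lambda)\Lambda$, the spherical harmonic expansion of $h-P$ is effectively supported on degrees $\le N$, so $L^2$-closeness on $\partial B_1$ upgrades to $L^2$-closeness on $\partial B_3$ at the cost of a factor $9^N$. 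After renormalization this means $h$ is $(n-2, \eta', 3, 0)$-symmetric with $\eta' = C(n,\lambda,\alpha)^\Lambda(\eta^2 + \sqrt\epsilon)$. Choosing $\epsilon \le c(n,\alpha,\tau)^\Lambda$ and $\eta$ small enough that $\eta' \le (c(n)\tau)^{2d-2}$ (possible since $d \le C(n,\lambda)\Lambda$), Lemma~\ref{l:Harmonic_no_critical_pt_away_from_n-2} applied to $h$ yields $|\nabla h(y)| \ge \tau^d$ for every $y \in B_1 \setminus B_\tau(V)$.

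Finally I would transfer this gradient lower bound to $\tilde u$ via interior $C^{1,\alpha}$ estimates applied to $v = \tilde u - h$: the difference satisfies $\Delta v = \partial_i((\tilde a^{ij} - \delta^{ij}) \partial_j \tilde u) + \tilde b^i \partial_i \tilde u$ whose right-hand side has a structure that is $O(r^\alpha)$ in the appropriate norm, and when combined with the $C^0$ bound $\sup_{B_3}|v| \le \eta$ from Lemma~\ref{l:Harm_Approx}, this yields $\sup_{B_{1/2}} |\nabla v| \le C(n,\lambda,\alpha)^\Lambda r^\alpha$ provided $r \le r_0 = C(n,\lambda,\alpha)^{-\Lambda}\epsilon^{1/\alpha}$. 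In particular the gradient error is smaller than $\tau^d/2$, so $|\nabla \tilde u(y)| \ge \tau^d/2 > 0$ on $B_{1/2} \setminus B_\tau(V)$, ruling out critical points of $\tilde u$ in that region. Undoing the rescaling $y \mapsto x + 3r A_x(y)$ and using that $A_x$ is bi-Lipschitz with constants depending only on $\lambda$, the critical set $\cC(u) \cap B_r(x)$ lies in a $\tau r$-neighborhood of $x+V$, after absorbing $\lambda$-dependent factors into the standing assumption $\tau < c_3(n,\lambda)$.

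The main obstacle I foresee is the $C^1$ closeness of $\tilde u$ to $h$: the target gradient threshold is $\tau^d$ with $d$ potentially of order $\Lambda$, so the smallness of $r^\alpha$ must beat an exponential-in-$\Lambda$ quantity. This is what forces the scaling $r_0 = C(n,\lambda,\alpha)^{-\Lambda}\epsilon^{1/\alpha}$ and $\epsilon \le c(n,\alpha,\tau)^\Lambda$ in the statement; the rest of the argument is routine bookkeeping once the degree bound on $P$ and the spherical harmonic transfer from scale $1$ to scale $3$ are set up carefully.
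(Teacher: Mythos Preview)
Your approach is essentially the paper's: approximate $\tilde u$ by a harmonic $h$, invoke Lemma~\ref{l:Harmonic_no_critical_pt_away_from_n-2} for $h$, and transfer the gradient lower bound back via $C^1$ closeness. The paper's own proof is a two-line sketch citing exactly this mechanism (harmonic approximation with $C^1$ closeness, then the harmonic lemma).

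One correction: the step where you upgrade $L^2$-closeness of $h$ to $P$ from $\partial B_1$ to $\partial B_3$ via spherical harmonic expansion is both unnecessary and, as written, not justified. Bounded doubling index of $h$ controls $\fint_{\partial B_3} h^2$ but does \emph{not} force the high-degree tail of $h-P$ to be small on $\partial B_3$; a component of $h$ of large degree $k$ can contribute negligibly on $\partial B_1$ yet blow up by a factor $9^k$ on $\partial B_3$, and nothing in your bound $\sum_k 4^k\|h_k\|^2 \le 4^N$ prevents this. You can sidestep the issue entirely: either rescale by $r$ instead of $3r$ so the symmetry hypothesis lands directly at scale $3$ and Lemma~\ref{l:Harmonic_no_critical_pt_away_from_n-2} applies verbatim, or keep your rescaling and use the scale-invariant restatement of that lemma (symmetry at scale $1$ yields the gradient lower bound in $B_{1/3}$, which after undoing $y \mapsto x + 3r A_x(y)$ covers $B_r(x)$ up to $\lambda$-dependent factors absorbed into $c_3(n,\lambda)$). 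With this fix the remainder of your argument goes through as written.
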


\begin{proof}
    According to (the proof of) harmonic approximation proposition \ref{p:harmonic_appro}, the approximated harmonic function $h$ is actually $C^1$ close to $u$. Then the Lemma follows from Lemma \ref{l:Harmonic_no_critical_pt_away_from_n-2} and Proposition \ref{p:Cone_Split_Elliptic}.
\end{proof}

\section{Volume Estimates on Critical Sets}
In this section, following \cite{NV}, we will first prove a covering Lemma and then finish the proof of our main theorem \ref{t:critical_set}.

\subsection{Covering Lemma}

In this subsection, we will prove the covering lemma. Throughout this section, we assume $u$ is a solution to \ref{e:elliptic_equ}, \ref{e:assumption} with doubling assumption \ref{e:DI_bound_assumption} and that $C(u)$ is the critical point of $u$. 

In this section. we pick $\tau \leq c_3(n,\lambda)$ and $\epsilon = c_4(n,\alpha,\tau)^{-\Lambda}$ and as in Lemma \ref{c:Elliptic_Pinch_pt_near_n-2}. Then choose $r \leq  r_0 = (C_1(n,\lambda,\alpha))^{-\Lambda} \epsilon^{1/\alpha}$ as in Lemma \ref{l:DI_Drop_Elliptic}. We will always work in the ball $B_r(x)$ with $r \leq r_0$. However, for simplicity we will rescale the ball to the unit scale and this will not affect our final estimates.

\begin{definition} 
    A ball $B_r(x)$ is called a $(d,t)$-good ball if for any $y \in B_r(x)$, we have $D(y,s) \leq d + \epsilon$ for any $s \leq tr$.
\end{definition}

\begin{lemma}\label{l:Cover_Good}
    Let $r>0$ be fixed and $B_1(0)$ be a $(d,C_2(\lambda)\Lambda)$-good ball with $d\geq 2$. Then the following covering lemma holds 
    \begin{equation}
        C(u) \cap B_1 \subset \bigcup_i B_{s_i}(x_i) \quad \text{ and } \quad \sum_i s_i^{n-2} \leq C_4(n,\lambda) \Lambda^n \epsilon^{-C_5(n, \alpha)}. 
    \end{equation}
    where for each $B_{s_i}(x_i)$ either $s_i \leq r$ or it is a $(d-1, C_2(\lambda)\Lambda)$-good ball. Here $C_2(\lambda) = 10^{3}(1+\lambda)^{1/2}$. 
\end{lemma}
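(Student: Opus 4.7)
I will construct the covering following the Naber--Valtorta strategy from \cite{NV}, driven by a dichotomy according to whether the doubling index $D(x,\cdot)$ remains pinched near $d$ all the way down to scale $r$, or has already dropped below $d-\epsilon/2$ at some intermediate scale. For each $x\in \cC(u)\cap B_1$ introduce the \emph{stop scale}
$$
   s(x) \;:=\; \sup\{s\in[r,1/2] : D(x,s)\le d-\epsilon/2\}, \qquad \sup\emptyset := r,
$$
so $s(x)\in[r,1/2]$. Points with $s(x)=r$ I call \emph{pinched}; those with $s(x)>r$ \emph{drop points}. The pinched points will be covered by small balls of radius $\le r$; the drop points will be covered by genuine $(d-1,C_2\Lambda)$-good balls produced at a much smaller scale.

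For a drop point, the definition of $s(x)$ together with Lemma \ref{l:DI_Drop_Elliptic}(2) yields $D(x,\rho_0 s(x))\le d-1+\epsilon/2$ with $\rho_0=\epsilon^{C(\alpha)}$. The quantitative closeness of doubling indices at nearby points (Proposition \ref{p:DI_close_at_nearby_pt}), combined with the almost monotonicity (Theorem \ref{t:Almost_monotone}), upgrades this to $D(y,s)\le d-1+\epsilon$ uniformly for $y$ in a small ball around $x$ and all scales $s\le C_2\Lambda\cdot c\rho_0 s(x)$; this exhibits $B_{c\rho_0 s(x)}(x)$ as a $(d-1,C_2\Lambda)$-good ball. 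A pinched point instead satisfies $D(x,s)\in[d-\epsilon/2,d+\epsilon]$ throughout $[r,1/2]$, so Corollary \ref{c:Elliptic_Pinch_pt_near_n-2} (together with the scale independence in Remark \ref{r:V_invariant_scale_point}) confines the pinched set to the $\tau$-neighborhood of a single $(n-2)$-plane.

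For the covering itself I extract a disjoint Vitali subfamily $\{B_{s_i/5}(x_i)\}$ from $\{B_{s(x)/5}(x)\}_{x\text{ drop}}$ so that $\bigcup_i B_{s_i}(x_i)$ covers the drop set, and then subdivide each $B_{s_i}(x_i)$ into at most $C(n)\rho_0^{-n}$ balls of radius $c\rho_0 s_i$, each of which is $(d-1,C_2\Lambda)$-good by the previous paragraph. The pinched set is covered by at most $C(n,\tau)r^{-(n-2)}$ balls of radius $r$ lying in the tube around the $(n-2)$-plane; these fall into the $s_i\le r$ option. The $(n-2)$-volume bound $\sum_i s_i^{n-2}\le C_4(n,\lambda)\Lambda^n\epsilon^{-C_5}$ is the crux of the argument: each Vitali centre $x_i$ has $D(x_i,\cdot)$ pinched near $d$ on the full interval $[s_i,1]$, so for each dyadic band $s_i\in[2^{-k-1},2^{-k}]$ Corollary \ref{c:Elliptic_Pinch_pt_near_n-2} forces the corresponding centres into a thin $(n-2)$-plane tube, yielding at most $C(n,\tau)\,2^{k(n-2)}$ disjoint Vitali balls per band; the number of dyadic bands that contribute nontrivially is controlled by Corollary \ref{c:finite_non_pinch} on the finite count of nonpinching scales, and summing produces the claimed bound (with the $\rho_0^{-2}$ subdivision cost absorbed into the $\epsilon^{-C_5}$ factor).

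The main obstacle is the packing estimate described in the previous paragraph. Without genuine monotonicity of $D$, the $(n-2)$-planes produced by cone splitting depend \emph{a priori} on both the base point and the scale; to make the dyadic counting rigorous one must show that within a fixed pinched region these planes agree up to a scale-uniform tube, a fact that ultimately rests on Remark \ref{r:V_invariant_scale_point} and the quantitative uniqueness of tangent maps (Theorem \ref{t:Quan_Unique}). The extension in Step~2 of the drop bound from a single point to its neighborhood is likewise delicate and exploits Proposition \ref{p:DI_close_at_nearby_pt} precisely in the regime where the nearby-point doubling indices are forced to inherit the drop from the centre.
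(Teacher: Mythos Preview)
Your proposal has two related gaps, both of which the paper circumvents via a more refined decomposition that you have omitted.

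\textbf{The packing estimate.} Your dyadic-band argument does not close. Within band $k$ you correctly get at most $C(n,\tau)2^{k(n-2)}$ disjoint Vitali balls of radius $\sim 2^{-k}$, contributing $C(n,\tau)$ to $\sum_i s_i^{n-2}$. But the number of populated bands is a priori $\sim\log(1/r)$, and Corollary~\ref{c:finite_non_pinch} cannot help: it bounds, for a \emph{fixed} point $x$, the number of scales at which $D(x,\cdot)$ fails to be pinched. It says nothing about how many distinct dyadic bands can contain stop scales of \emph{different} centres $x_i$. Nothing prevents the stop scales $s(x_i)$ from being spread over all $\log(1/r)$ bands, so your sum is only bounded by $C\log(1/r)$, which depends on $r$. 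The paper instead proves $\sum_i r_i^{n-2}\le C(n)$ by a Lipschitz-graph argument: for any two Vitali centres $x_i,x_j$, cone-splitting applied at scale $R=|x_i-x_j|$ (both are pinched there) forces $d(x_i-x_j,V)\le\tau|x_i-x_j|$ for one fixed $(n-2)$-plane $V$, so all centres lie on a Lipschitz graph over $V$ and disjointness gives the packing directly.

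\textbf{The good-ball verification.} Your claim that $B_{c\rho_0 s(x)}(x)$ is a $(d-1,C_2\Lambda)$-good ball fails in general. A point $y\in B_{s(x)}(x)$ may itself be a drop point with $s(y)\ll \rho_0 s(x)$, or a pinched point with $s(y)=r$; in either case $D(y,s)>d-\epsilon/2>d-1+\epsilon$ for $s$ slightly above $s(y)$, violating the good-ball condition at those scales. Proposition~\ref{p:DI_close_at_nearby_pt} does not rescue this: it requires $D(x,\cdot)$ to be \emph{pinched} near an integer on the relevant interval and transfers doubling-index values at comparable scales, whereas here you need to transfer a \emph{drop} to much smaller scales at an unrelated point $y$.

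Both issues are handled in the paper by splitting the critical set into $C_a$ (points where every $y\in B_{5r_x}(x)$ has $r_y\ge r_x/7$) and $C_b$ (the rest). On $C_a$ the comparability of radii makes the good-ball verification immediate (each nearby $y$ has its own drop at scale $\ge r_x/7$), and the Lipschitz-graph packing works because disjoint Vitali balls satisfy $|x_i-x_j|\ge r_i+r_j$. The set $C_b$ is then shown to cluster near $C_a$ and is covered by an auxiliary family whose packing is again controlled via the graph structure. Your simpler drop/pinched dichotomy does not carry this information.
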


\begin{proof}
We follow the proof in \cite{NV}. Without loss of generality, we may assume $r \leq (C_2(\lambda)\Lambda)^{-1}$. For each point $x \in B_1(0)$, we define 
    \begin{equation}
        r'_x \equiv \sup\{ 0 \leq s \leq 1 : D(x,s) \leq d - \epsilon \}.
    \end{equation}
    If no such $r'_x$ exists, we set $r'_x = 0$. Now we define 
    \begin{equation}
        r_x \equiv \max\{r'_x, r\}.
    \end{equation}

Further we decompose the critical set in $B_1$ as 
\begin{equation}
    C_a(u) \equiv \{ x \in C \cap B_1: r_y \geq r_x/7 \text{ for any } y \in B_{5r_x}(x) \}.
\end{equation}
\begin{equation}
    C_b(u) \equiv \{ x \in C \cap B_1: r_y < r_x/7 \text{ for some } y \in B_{5r_x}(x) \}.
\end{equation}

\textbf{Step 1: Covering of $C_a.$}

We pick a covering of $C_a$ such that each $x_i \in C_a$ and
\begin{equation}
    C_a \subset \bigcup_i B_{5r_i}(x_i) \quad \text{ and } \quad B_{r_i}(x_i) \cap B_{r_j}(x_j) = \emptyset \text{ if } i \neq j.
\end{equation}
where $r_i = r_{x_i}$. By the definition of the ball $C_a$, for any $y \in B_{r_i}(x_i)$, we have $r_y \geq r_x/7$. If $r_i>r$, By Lemma \ref{l:DI_Drop_Elliptic} we have
\begin{equation}
    D_{y}(\epsilon^{c(\alpha)}r_i/7) \leq d - 1 +\epsilon \text{ for any } y \in B_{r_i}(x_i).
\end{equation}
This prove that each ball $B_{r_i}(x_i)$ is a $(d-1, \epsilon^{c(\alpha)}/7)$-good ball if $r_i>r$. For each ball $B_{r_i}(x_i)$, we can apply the Vitali covering again using the balls $B_{i,j}$ with radius $7 \epsilon^{c(\alpha)}(C_2(\lambda)\Lambda)^{-1}r_i$. Then each ball $B_{i,j}$ is a $(d-1, C_2(\lambda)\Lambda)$-good ball with $d \geq 2$ and the number of these balls is bounded by $C(n, \lambda)\Lambda^n \epsilon^{-C(n, \alpha)}$. The proof for 
 the covering of $C_a$ will be finished once we can prove that $\sum_i r_i^{n-2} \leq C(n)$. 

Next we prove the measure estimate $\sum_i r_i^{n-2} \leq C(n)$. By volume estimate and the disjointness assumption, we can control the number of those balls with $r_i \geq 1/100$ and thus obtain the bound
\begin{equation}
    \sum_{r_i \geq 1/100} r_i^{n-2} \leq C(n). 
\end{equation}

Hence in the following we only consider those points where $r_i \leq 1/100$. We partition those points further into the groups $G_{\alpha \in \cA}$ where for any $x, y \in G_{\alpha}$ we have $|y-x| \leq 1/100$. Easy to see the cardinality of $\cA$ is bounded by some constant $C^n$.

Now take any $x, y \in G_{\alpha}$. Let $R = |x-y| \geq r_x + r_y$. Then $y  \in \cV_{d, \epsilon, R}(x)$. Then by Corollary \ref{c:Elliptic_Pinch_pt_near_n-2}, there exists some $n-2$-dimensional subspace $V$ such that if $\tau \leq c_3(n,\lambda) \leq 1/100$ then
\begin{equation}\label{e:Lipschitz}
    d(y-x, V) \leq \tau R \leq |y-x|/100.
\end{equation}

By Corollary \ref{c:Elliptic_Pinch_pt_near_n-2}, the subspace $V$ can be chosen independent of $x$ and $y$. Hence the equation \ref{e:Lipschitz} holds for any $x, y \in G_{\alpha}$ with respect to the same $V$. By Lipschitz extension theorem, there exists a Lipschitz map $f: V \to V^{\perp}$ such that Lip$(f) \leq 1/10$ and $\cV_{d, \epsilon, 1} \subset $ graph$(f)$. Therefore, by the Lipschitz construction and disjointness, we have
\begin{equation}
    \sum_{x_i \in G_{\alpha}} r_i^{n-2} \leq C(n). 
\end{equation}

Summing over all subfamilies $G_{\alpha}$ for $\alpha \in \cA$, we have
\begin{equation}
    \sum_{r_i \leq 1/100} r_i^{n-2} \leq C(n). 
\end{equation}

\textbf{Step 2: Covering of $C_b$.}

Let $y \in C_b$. By definition, there exists some $x_0 \in C \cap B_{5r_y}$ such that $r_{x_0} < r_y/7$. If $x_0 \in C_a$, then we stop. Otherwise, $x_0 \in C_b$ and we can find $x_1 \in C \cap B_{5 r_{x_0}}$ such that $r_{x_1} \leq r_{x_0}/7$. Iterating this process, one can eventually find some $x \in C_a$ such that $r_x \leq r_y/7$ and
\begin{equation}
    |x-y| \leq |y - x_0| + |x_0 - x_1| + ... \leq (\sum_{k=0}^{\infty} 7^{-k}) 5r_y \leq 6r_y.
\end{equation}

According to Step 1, there exists some $B_{5r_i}(x_i)$ in the covering of $C_a$ such that $x\in B_{5x_i}$. Since $r_x \geq r_i/7$, we have $r_i \leq r_y$. Hence, 
\begin{equation}
    |y - x_i| \leq |y-x| + |x - x_i| \leq 6r_y + 5r_i \leq 11 r_y. 
\end{equation}

Now for each $y \in C_b$, we set 
\begin{equation}
    t_y \equiv \frac{1}{55}\min_i |y -x_i| \leq r_y/5,
\end{equation}
where the minimal is taken over all $x_i$ in Step 1.
We can cover the set $C_b \setminus \bigcup_i B_{5r_i}(x_i)$ by $\bigcup_{y \in C_b} B_{t_y}(y)$ and choose a Vitali subcovering, i.e.
\begin{equation}
    C_b \setminus \big(\bigcup_i B_{5r_i}(x_i) \big) \subset \bigcup_j B_{5t_j}(y_j)  \quad \text{ and } \quad B_{t_j}(y_j) \cap  B_{t_k}(y_k) = \emptyset \text{ if } j \neq k. 
\end{equation}

First we verify the doubling index drop condition. Take any $z \in C \cap B_{5t_j}(y_j)$. We want to prove $r_z \geq t_j$. 

If $z \in C_a \cap B_{5t_j}(y_j)$, then $z\in B_{5r_i}(x_i)$ for some $i$. Hence
\begin{equation}
    55t_j \leq |y_j - x_i| \leq |y_j - z| + |z - x_i| \leq 5t_j + 5r_i. 
\end{equation}

Therefore $r_i \geq 10t_j$. Since $z\in B_{5r_i}(x_i)$, we have $r_z \geq r_i/7 \geq 10t_j/7$. 

If $z \in C_b \cap B_{5t_j}(y_j)$, then there exists some $x_i$ such that
\begin{equation}
    r_z \geq \frac{1}{11}|z - x_i| \geq \frac{1}{11}(|y_j - x_i| - |z - y_j|) \geq \frac{1}{11}( 55t_j - 5t_j) = \frac{50}{11}t_j.
\end{equation}

Hence in any case, $r_z \geq t_j$. According to Lemma \ref{l:DI_Drop_Elliptic}, we have
\begin{equation}
    D(z,\epsilon^{c(\alpha)}t_j) \leq d-1-\epsilon.
\end{equation}

Similarly, the proof will be finished once we prove the following measure bound
\begin{equation}
    \sum_{j \in J} t_j^{n-2} \leq C(n).
\end{equation}

Recall that in Step 1 we obtain a Lipschitz function $f$ defined on a subspace $V$ of dimension at most $n-2$ such that each $x_i$ lies on the Graph$(f)$. We first consider the case dim$V < n-2$. 

We partition the index set $J = \cup_{k=0}^{\infty} J_k$, where $j \in J_k$ if $t_j \in (2^{-k-1}, 2^{-k}]$. By the definition of $t_j$, we have $d(y_j, \text{Graph}(f)) \leq 55t_j$. If $j \in J_k$, then 
\begin{equation}
    B_{t_j}(y_j) \subset B_{56\cdot 2^{-k}}(\text{Graph}(f)).
\end{equation}
By the disjointness and comparing the volume, we can control the cardinality of $J_k$, i.e.
\begin{equation}
    |J_k| \leq \frac{\Vol(B_{56\cdot 2^{-k}}(\text{Graph}(f)))}{\Vol(B_{2^{-k-1}})} \leq C(n) 2^{k(n-3)}.
\end{equation}

Therefore, summing over all $k$ we will get
\begin{equation}
    \sum_{j \in J} t_j^{n-2} \leq \sum_k |J_k| 2^{-k(n-2)} \leq C(n).
\end{equation}

It remains to consider the case where dim$(V) = n -2$. Then by Lemma \ref{l:Ellpitic_no_critical_pt_away_from_n-2}, for any $x_i$, there is no critical point of $u$ in this set
\begin{equation}
    \{ z \in B_1(x_i) \setminus B_{5 r_i}(x_i) : d(z - x_i, V) \geq \tau |z - x_i| \}. 
\end{equation}

Recall that $55 t_j = \min_i |y_j -x_i|$. Then $ 55 t_j = |y_j - x_{j'}|$ for some $j'$. Since $y_j \in C(u) \setminus B_{5r_{j'}}(x_{j'})$, we have $d(y_j -x_{j'}, V) \leq \tau |y_j - x_{j'}| \leq t_j/100$ as $\tau \leq 10^{-4}$. Similarly, for any other $y_{\ell}$, we have $x_{\ell'}$ such that 
$d(y_\ell -x_{\ell'}, V) \leq \tau |y_\ell - x_{\ell'}| \leq t_\ell/100$. Noting that 
\begin{align}
  d(x_{j'},x_{\ell'})\le d(x_{j'},y_{j})+d(y_{\ell},x_{\ell'}) +d(y_{j},y_{\ell})\le (55t_j+55t_\ell)+ d(y_{j},y_{\ell})\le 56d(y_{j},y_{\ell})
\end{align} Thus 
\begin{align}
d(y_\ell -y_{j}, V)&\le d(y_\ell -x_{\ell'}, V)+d(y_j -x_{j'}, V)+d(x_{j'} -x_{\ell'}, V)\\
&\le t_\ell/100+t_j/100+56d(y_j,y_\ell)\tau \\
&\le (56\tau+1/100)d(y_j,y_\ell)\le 1/50d(y_j,y_\ell).
    \end{align}
By the disjointness, we have
\begin{equation}
    \sum_j t^{n-2}_j \leq C(n). 
\end{equation}

This finishes the proof.
\end{proof}

\subsection{Proof of Main Theorem}

In this subsection, we apply the previous covering Lemma to prove the volume estimates of critical sets and the singular sets.

\begin{proof}[Proof of Theorem \ref{t:critical_set}]
   Pick $\tau \leq c_3(n,\lambda)$ and $\epsilon = c_4(n,\alpha,\tau)^{-\Lambda}$ and as in Lemma \ref{c:Elliptic_Pinch_pt_near_n-2}. Then choose $r \leq  r_0 = (C_1(n,\lambda,\alpha)\Lambda)^{-\Lambda} \epsilon^{1/\alpha}$ as in Lemma \ref{l:DI_Drop_Elliptic}. For $r\geq r_0$, by choosing sufficiently large $C$ the estimate holds.

    First we cover the unit ball $B_1$ using the balls $B_{r_{i,0}}(x_{i,0})$ with radius $r_{i,0}=r_0$. By Lemma \ref{l:Bound_on_General_DI}, each ball is a $(C_0(n,\lambda) \Lambda, C_2(\lambda)\Lambda)$-good ball. And we can control the number of the balls in this covering by $(C_5(n,\lambda,\alpha)\Lambda)^{\Lambda}$.

      Let $d$ be the smallest integer satisfying $C(n,\lambda)\Lambda \leq d_1$. Hence each ball $B_{r_{i,0}}(x_{i,0})$ is a $(d,  C_2(\lambda)\Lambda))$-good ball. According the Covering Lemma \ref{l:Cover_Good} to each ball, we have
    \begin{equation}
        C(u) \cap B_1 \subset \bigcup_i B_{r_{1, i}}(x_{1,i})\quad \text{ and } \quad \sum_i r_{1, i}^{n-2} \leq (C_5(n,\lambda,\alpha)\Lambda)^{\Lambda} C_6(n,\lambda,\alpha)^{\Lambda},
    \end{equation}
where each ball $B_{s_{1, i}}(x_{1,i})$ is a $(d-1, C_2(\lambda)\Lambda)$-good ball. 

Now we can repeat the previous covering construction to each ball $B_{s_{1, i}}(x_{1,i})$ to refine the covering
    \begin{equation}
        C_(u) \cap B_1 \bigcup_i B_{r_{2, i}}(x_{2,i})\quad \text{ and } \quad \sum_i r_{2, i}^{n-2} \leq (C_5(n,\lambda,\alpha)\Lambda)^{\Lambda} C_6(n,\lambda,\alpha)^{2 \Lambda},
    \end{equation}
where each ball $B_{s_{2, i}}(x_{1,i})$ is a $(d-2, C_2(\lambda)\Lambda)$-good ball. 
    
Iterate this process for at most $C(n,\lambda)\Lambda$ times and we will end up with a covering
    \begin{equation}
        C(u) \cap B_1 \subset \bigcup_i B_{r_{i}}(x_{i})\quad \text{ and } \quad \sum_i r_{ i}^{n-2} \leq (C_5(n,\lambda,\alpha)\Lambda)^{\Lambda} C_6(n,\lambda,\alpha)^{C_0(n,\lambda)\Lambda^2} \leq C_7(n,\lambda,\alpha)^{\Lambda^2}.
    \end{equation}

Note that eventually either $r_i = r$ or $B_{r_i}(x_i)$ is a $(1, C_2(\lambda)\Lambda)$-good ball. According to the $\epsilon$-regularity proposition \ref{p:Elliptic_gradient_lower_bound}, $C(u) \cap B_{r_i}(x_i) = \emptyset$. Therefore, at this final step, the only possibility is $r_i = r$. We prove that the number of these balls is bounded by $C_7(n,\lambda,\alpha)^{\Lambda^2} r^{2-n}$. This proves the Minkowski estimate.
\end{proof}

\begin{proof}[Proof of Theorem \ref{t:singular_set}]
In this case, compared to Definition \ref{d:doubling_index}, we define the doubling index as 
    \begin{equation}
        {D}^u(x,r) \equiv \text{log}_4 \frac{\fint_{ \partial B_{2}(0)} u(x +rA_x(y))^2 dy}{\fint_{ \partial B_1(0)} u(x +rA_x(y))^2 dy}.
    \end{equation}

With this one can prove the almost monotonicity theorem, quantitative uniqueness of tangent maps and cone splitting Lemma by the same arguments. Note that in this case $D(x, r) \to 0$ as $r \to 0$ when $u(x) \neq 0$. Hence the $\epsilon$-regularity like Proposition \ref{p:Elliptic_gradient_lower_bound} holds only at the points where $u(x)=0$ as $\liminf D(x, r) \geq 1$ with $u(x) = 0$. By decomposing the singular set, the theorem follows from the same covering argument.
\end{proof}

\section{Neck Region Construction}

In this chapter, we will prove the sharp estimate for each quantitative stratum as in \cite{CNV} using Neck region decomposition. First we introduce the notion of neck regions which was first proposed in \cite{JN} \cite{NVYM}. Recall that $C_2(\lambda) = 10^3(1+\lambda)^{1/2}$. We will let $\tau \leq c_3(n,\lambda)$ be a small fixed constant. 

\begin{definition}{(Neck Region)}
    Let $u: B_2 \to \RR$ be a solution to (\ref{e:elliptic_equ}) (\ref{e:assumption}) with doubling assumption \ref{e:DI_bound_assumption}. Let $\cC \subset B_{r}$ a closed subset and $r_x: \cC \to \RR^+$ a radius function such that the closed balls \{$\bar{B}_{ r_x/5}(x)$\} are disjoint. Let $d$ be an integer. The subset $\cN = B_r \setminus \Bar{B}_{r_x}(\cC)$ is called a $(d, k,  \epsilon, \eta)$-neck region if
\begin{enumerate}
    \item For any $x\in \cC$, we have $|D(x, s) - d| \leq \epsilon$ for any $s \in [r_x, C_2(\lambda)r]$ with $r_x \leq 10^{-3}r$.
    \item For any $x\in \cC$, $u$ is uniformly $(k,\epsilon,x)$-symmetric in $[r_x, 10r]$ with respect to $V_x$ but $u$ is not $(k+1,\eta,s,x)$-symmetric for any $s \in [r_x, 10r]$.  
    \item For any $B_{2s}(x) \subset B_{2r}$ with $s \geq r_x$, we have $\cC \cap B_{s}(x) \subset B_{\tau s}(x + V_x)$. 
\end{enumerate}
\end{definition}

Unlike the cases in \cite{CJN} \cite{JN} \cite{NVYM}, here the approximated subspace $V_x$ does not depend on the scale $r$ according to the result of the uniqueness of tangent maps and the corresponding cone-splitting Lemma. As one will see, this can greatly simplify the decomposition arguments. 

\subsection{Neck Structure Theorem}

In this subsection, we will prove the following neck structure theorem. The Lipschitz structure is crucial in the neck region construction but its proof is usually much more involved as in \cite{CJN} \cite{JN} \cite{NVYM}. However, the invariance property of scale for $V_x$ in our setting makes this step much easier. 

\begin{theorem}{(Neck Structure Theorem)}\label{t:Neck_Struture}
    If $\cN = B_{2r} \setminus \bar{B}_{r_x}(\cC)$ is a $(d, k, \epsilon, \eta)$-neck region, then 
    \begin{enumerate}
        \item There exists a Lipschitz function $f: V \to V^{\perp}$ such that $\cC \subset \text{Graph}(f)$ with Lip$(f) \leq 10^{-1}$. Here $V$ can be chosen as $V_x$ for any $ x\in \cC$. 
        \item We have the measure bound $\sum_{x\in \cC_+} r_x^{k} \delta_x + \cH^{k}|_{\cC_0} \leq C(n) r^k$.
    \end{enumerate}
\end{theorem}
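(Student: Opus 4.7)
The plan is to extract a Lipschitz graph structure from the cone condition (3) using the fact that the approximating subspace $V_x$ can be pinned down globally, and then read off both bounds in Part (2) from a single volume comparison on that graph.

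First, I would invoke the scale-and-point invariance of the approximating $k$-plane, recorded in Remark \ref{r:V_invariant_scale_point} and Corollary \ref{c:Elliptic_Pinch_pt_near_n-2}, to choose a single $k$-dimensional subspace $V$ which serves as $V_x$ for every $x \in \cC$. Given this, condition (3) applied to pairs $x, y \in \cC$ with $s := |x-y|$ gives
\begin{equation*}
|\pi_{V^\perp}(y-x)| \leq \tau |x-y|, \qquad |\pi_V(y-x)| \geq \sqrt{1-\tau^2}\,|x-y|,
\end{equation*}
so that $|\pi_{V^\perp}(y-x)| \leq (\tau/\sqrt{1-\tau^2})\,|\pi_V(y-x)| \leq \tfrac{1}{10}|\pi_V(y-x)|$, provided $\tau \leq c_3(n,\lambda)$ is chosen small enough. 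Hence $\pi_V|_{\cC}$ is injective and the map $f_0 := \pi_{V^\perp} \circ (\pi_V|_{\cC})^{-1}$ is $\tfrac{1}{10}$-Lipschitz on $\pi_V(\cC)$; a McShane/Kirszbraun extension then produces $f : V \to V^\perp$ with $\Lip(f) \leq 1/10$ and $\cC \subset \text{Graph}(f)$, proving (1).

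For Part (2), I would combine the Lipschitz graph with the disjointness of the closed balls $\bar B_{r_x/5}(x)$. For distinct $x, y \in \cC_+$ this yields $|x-y| \geq (r_x + r_y)/5$, hence
\begin{equation*}
|\pi_V(x) - \pi_V(y)| \geq \sqrt{1-\tau^2}\,\frac{r_x + r_y}{5} \geq \frac{r_x + r_y}{6},
\end{equation*}
so the $k$-dimensional balls $\bar B_{r_x/12}(\pi_V(x))$ are pairwise disjoint in $V$ and, because $\cC \subset B_{2r}$ and $r_x \leq 10^{-3} r$, contained in $B_{3r}(0) \cap V$. A volume comparison in the $k$-plane $V$ then gives $\sum_{x \in \cC_+} r_x^k \leq C(n) r^k$. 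For the continuous part, $\cC_0 \subset \text{Graph}(f|_{V \cap B_{2r}})$, so the area formula together with $\Lip(f) \leq 1/10$ immediately yield $\cH^k(\cC_0) \leq (1+10^{-2})^{k/2}\,\cH^k(V \cap B_{2r}) \leq C(n) r^k$.

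The main subtlety, and what would obstruct a naive attempt, is the step of choosing a single common $V$ across all base points in $\cC$. In the analogous works on Ricci limits and Yang--Mills connections (e.g.\ \cite{JN, NVYM}), the best-approximating subspace at each point and each scale had to be constructed by hand and tracked very carefully across scales, which is precisely the most technical ingredient of the neck decomposition. In the present setting this step is essentially for free: the approximating $k$-plane is the set of almost-invariant directions of the $d$-th Taylor part of the harmonic approximation produced by Proposition \ref{p:harmonic_appro}, and Theorem \ref{t:Quan_Unique} together with Remark \ref{r:V_invariant_scale_point} show that this polynomial is uniformly close to $u_{x,r_1}$ across all intermediate scales and, after a short triangle-inequality argument, for all nearby pinched base points. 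A minor technicality is that condition (3) formally requires $B_{2s}(x) \subset B_{2r}$; pairs of points near the boundary of the ambient ball can be handled by chaining through an intermediate pinched point at a smaller admissible scale, which only affects the numerical constants.
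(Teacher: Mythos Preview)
Your approach is essentially the paper's, but two steps are asserted rather than proved.

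First, Remark~\ref{r:V_invariant_scale_point} and Corollary~\ref{c:Elliptic_Pinch_pt_near_n-2} do not give a single subspace serving literally as $V_x$ for every $x\in\cC$: those statements fix one ambient ball and say the approximating plane is the same for all pinched points inside that ball, whereas the $V_x$ in the neck-region definition is attached to each $x$ separately via condition~(2). The paper does not try to identify the planes exactly; instead it proves directly from condition~(3) that $d(V_x,V_y)\le 10\tau$ for all $x,y\in\cC$, then fixes $W=V_w$ for one $w\in\cC$ and transfers the estimate $\|\pi_{V_x}^\perp(x)-\pi_{V_x}^\perp(y)\|\le 10\tau\,|x-y|$ to $W$ at the price of an additional $20\tau\,|x-y|$. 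Your final paragraph gestures at this, but the clean route is the paper's two-line claim from condition~(3), not a detour through Theorem~\ref{t:Quan_Unique}.

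Second, applying condition~(3) with $s=|x-y|$ is not legitimate: the hypothesis is $s\ge r_x$, and disjointness of the $\bar B_{r_x/5}$ only yields $|x-y|\ge(r_x+r_y)/5$, which can be smaller than $\max(r_x,r_y)$. The paper takes $s=10|x-y|\ge 2(r_x+r_y)\ge r_x$ and accepts the factor $10\tau$ in place of $\tau$; this is harmless for the constants but necessary for the logic.

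With these two fixes your proof of (1) coincides with the paper's; your Part~(2) via disjoint projected balls and the area formula on the graph is exactly what the paper means by ``the measure bound comes directly from the Lipschitz construction.''
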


\begin{proof}
    We define the distance between two affine planes $V, W$ as
    \begin{equation}
        d(V, W) = d^{H}(\Tilde{V} \cap B_1, \Tilde{W} \cap B_1),
    \end{equation}
where $\Tilde{V}, \Tilde{W}$ are the subspaces parallel to $V$ and $W$, and $d^H$ is the Hausdorff distance. Let $\pi_V$ denote the projection on $V$. It is easy to check that 
\begin{equation}
    ||\pi_V(x) - \pi_W(x)|| \leq 2 d(V, W) ||x||. 
\end{equation}

\textbf{Claim: } $d(V_x, V_y) \leq 10\tau$ for any $x, y \in \cC$.
    
\textit{Proof of Claim: } Consider $x, y \in \cC$. Suppose $r_x \geq r_y$. If $||x - y || \geq r_x$, then according to item (3) we have $d(V_x, V_y) \leq 4\tau$. If $||x - y || \leq r_x$, then $y \in B_{\tau r_x}(V_x) \setminus B_{r_x/5}(x)$. Then by item (3) again we have $d(V_x, V_y) \leq 10\tau$. This proves the claim.

Now we fix $w \in \cC$ and let $W \equiv V_w$. Let $x, y \in \cC$. Since $B_{r_x/5}(x)$ and $B_{r_y/5}(y)$ are disjoint, then $10 ||x-y|| \geq r_x + r_y$. Hence we can apply the item (3) to the scale $s = 10 ||x-y|| $ and then
    \begin{equation}
        ||\pi^{\perp}_{V_x}(x) - \pi^{\perp}_{V_x}(y)|| \leq \tau s = 10 \tau  ||x-y||
    \end{equation}
    where $\pi^{\perp}_{V_x}$ means the orthogonal projection on ${V_x}^{\perp}$. By triangle inequality, we have
    \begin{equation}
        ||\pi_{V_x}(x) - \pi_{V_x}(y)|| \geq ||x - y|| - ||\pi^{\perp}_{V_x}(x) - \pi^{\perp}_{V_x}(y)|| \geq (1 - 10 \tau) ||x-y||. 
    \end{equation}

Moreover, according to the claim, 
\begin{equation}
    ||\pi_W(x) - \pi_W(y)|| \geq ||\pi_{V_x}(x) - \pi_{V_x}(y)|| - 2d({V_x}, W) ||x- y|| \geq (1 - 30 \tau) ||x - y||. 
\end{equation}

Since $\tau \leq 10^{-3}$, there exists a Lipshitz map $f: W \to W^{\perp}$ with Lip$(f) \leq 10^{-1}$ such that $\cC \subset $graph$(f)$ by Lipschitz extension theorem. The measure bound comes directly from the Lipschitz construction.
\end{proof}

\subsection{Neck Decomposition Theorem}
In this section, we will prove the following Neck decomposition theorem. 

\begin{theorem}{(Neck Decomposition Theorem)} \label{t:Neck_decompose}
    Let $u: B_{2} \to \RR$ be a solution to (\ref{e:elliptic_equ}) (\ref{e:assumption}) with doubling assumption \ref{e:DI_bound_assumption}. For each $\eta>0$ and $\epsilon \leq \epsilon(n, \lambda, \alpha, \Lambda, \eta)$ we have
\begin{equation}
\begin{split}
    &B_1 \subset \bigcup_a ( \cN^a \cap B_{r_a}(x_a) \big) \cup \bigcup_b B_{r_b}(x_b) \cup S^{k, \epsilon, \eta}  \\
    &  S^{k, \epsilon, \eta} \subset S_0 \cup \bigcup_{a} \cC_{0, a} 
\end{split}
\end{equation}
with the estimates
\begin{equation}
    \sum_a r^k_a + \sum_b r^k_b + \cH^k(S^{k, \epsilon, \eta}) \leq C(n, \lambda, \alpha, \Lambda, \epsilon, \eta).
\end{equation}
where $S^{k, \epsilon, \eta}$ is $k$-rectifiable and $\cH^k(S_0)=0$. Moreover, for any $\delta>0$, if $\eta < \eta(n, \lambda, \alpha, \Lambda, \delta)$ and $\epsilon < \epsilon(n, \lambda, \alpha, \Lambda, \eta, \delta)$, the quantitative stratum $\cS^k_{\delta}$ satisfies
\begin{equation}
    \cS^k_{\delta} \subset S^{k, \epsilon, \eta}.
\end{equation}
\end{theorem}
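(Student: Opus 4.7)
The strategy is a tree-like iterative decomposition driven by the doubling index and the symmetry behavior. I maintain a queue of pairs $(B_s(y), d)$ where $d$ is an integer upper bound for $D(z,t)$ on a sufficiently large neighborhood around $y$; the root is $(B_1, d_0)$ with $d_0 \leq C_0(n,\lambda)\Lambda$ supplied by Lemma \ref{l:Bound_on_General_DI}. At each node I perform one of three moves. First, if $u$ is $(k+1,\eta,s,y)$-symmetric then $B_s(y)$ is placed in the good family $\{B_{r_b}(x_b)\}$ and the node is closed. Second, otherwise I build a neck region: using a Vitali argument I select a center set $\cC \subset B_s(y)$ of points $z$ along which $|D(z,t)-d| \leq \epsilon$ holds on a long interval and $u$ is uniformly $(k,\epsilon,z)$-symmetric there, assigning to each such $z$ a stopping radius $r_z$ equal to the largest scale below which either the pinching breaks (the doubling index drops to $d-\epsilon$) or $u$ becomes $(k+1,\eta,s',z)$-symmetric. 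Third, I recurse into each sub-ball $B_{r_z}(z)$ with $z \in \cC_+ := \{z\in\cC : r_z>0\}$ using the updated integer $d' \leq d$, while the limit points $\cC_0 := \{z \in \cC : r_z=0\}$ are collected into $S^{k,\epsilon,\eta}$.

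The neck construction step requires verifying the three axioms of a $(d,k,\epsilon,\eta)$-neck region. Axiom (1) is the definition of $\cC$. Axiom (2) is furnished by Proposition \ref{p:Cone_Split_Elliptic} once $\cC$ is $(k,\tau)$-independent, while the maximality of $r_z$ prevents $(k+1,\eta)$-symmetry at larger scales. Axiom (3), the quantitative containment of $\cC$ in a $\tau s$-neighborhood of an affine $k$-plane, follows from Corollary \ref{c:Elliptic_Pinch_pt_near_n-2}: if $\cC$ were $(k+1,\tau)$-independent at some scale, cone-splitting would upgrade the symmetry to $(k+1,\sqrt{\epsilon})$-symmetric, contradicting the case we are in. A decisive simplification is that the splitting subspace $V_x$ is independent of the scale and of the chosen $x \in \cC$ by Remark \ref{r:V_invariant_scale_point}, which makes axiom (3) hold uniformly in $s$ and is precisely what allows the Lipschitz graph conclusion of Theorem \ref{t:Neck_Struture}(1).

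For the measure bound I sum the per-neck inequality $\sum_{z \in \cC_+} r_z^k + \cH^k(\cC_0) \leq C(n) s^k$ from Theorem \ref{t:Neck_Struture}(2) over the tree. Good balls $B_{r_b}(x_b)$ appear only at leaves, each as the child of some neck center, so $\sum_b r_b^k$ is controlled by the same sum. Finite depth is the key: $d$ is non-increasing along every branch and can strictly decrease at most $C_0(n,\lambda)\Lambda$ times, while between two consecutive decreases Corollary \ref{c:finite_non_pinch} limits the number of non-pinching scales to $C(n,\lambda,\alpha)\epsilon^{-1}\Lambda$; hence each branch has depth at most $C(n,\lambda,\alpha,\Lambda,\epsilon)$. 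Multiplying through the tree yields $\sum_a r_a^k + \sum_b r_b^k + \cH^k(S^{k,\epsilon,\eta}) \leq C(n,\lambda,\alpha,\Lambda,\epsilon,\eta)$. Rectifiability of $S^{k,\epsilon,\eta} = S_0 \cup \bigcup_a \cC_{0,a}$ is immediate from Theorem \ref{t:Neck_Struture}(1): each $\cC_{0,a}$ is contained in a Lipschitz $k$-graph, and the residual $S_0$ absorbs the $\cH^k$-null overlap coming from the countable Vitali choices.

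Finally, the containment $\cS^k_\delta \subset S^{k,\epsilon,\eta}$ is obtained by choosing $\eta \leq \delta$ (so that the first move is never triggered at a point of $\cS^k_\delta$ by definition of the stratum), and then $\epsilon$ small enough to satisfy the cone-splitting hypotheses of Proposition \ref{p:Cone_Split_Elliptic}. A point $x \in \cS^k_\delta$ is never $(k+1,\eta,s,x)$-symmetric, so the branch of the tree through $x$ never terminates in a good ball; it must therefore either continue to scale zero within a single neck, placing $x \in \cC_{0,a}$ for some $a$, or descend through infinitely many nested necks, in which case $x$ again lies in $\bigcup_a \cC_{0,a}$. The hard part is the scale bookkeeping: one must verify that at each recursive step the scale $r_z$ lies in the regime $r_z \leq r_0 = C(n,\lambda,\alpha)^{-\Lambda}\epsilon^{1/\alpha}$ required by the harmonic approximation, the almost monotonicity, the cone-splitting, and Theorem \ref{t:Quan_Unique} simultaneously, and that the parameter $\tau$ chosen for property (3) remains compatible with these bounds across all nested scales.
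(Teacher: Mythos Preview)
Your outline captures the overall architecture --- iterate, build neck regions, drop the doubling index, sum content via Theorem \ref{t:Neck_Struture} --- and that is indeed how the paper proceeds. But your dichotomy ``either $(k+1,\eta)$-symmetric or build a neck'' is incomplete, and this is where the paper's argument is more delicate. At a ball $B_s(y)$ that is not $(k+1,\eta)$-symmetric, the pinched set $\cV_{\epsilon,d,s}(y)$ may fail to be $(k,\tau)$-independent (it could even be empty). In that case Proposition \ref{p:Cone_Split_Elliptic} only yields $(j,\sqrt\epsilon)$-symmetry for some $j<k$, so neck axiom (2) cannot be verified and your second move is unavailable. The paper resolves this by introducing two further ball types: $d$-balls, where $\cV_{\epsilon,d,s}$ is nonempty but lies in $B_{\tau s}$ of a $(k-1)$-plane, and $e$-balls, where $\cV_{\epsilon,d,s}=\emptyset$. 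The $d$-balls are handled by a separate covering proposition exploiting their concentration near a $(k-1)$-plane to produce a geometric series $\sum_{d} r_d^k \leq (C(n)\tau)^N r^k \to 0$; the residual set $S_0$ in the statement is exactly the Hausdorff limit of the $d$-ball centers, and this is the source of $\cH^k(S_0)=0$, not ``Vitali overlap'' as you wrote. The $e$-balls are where the definite drop $d\mapsto d-1$ occurs via Lemma \ref{l:DI_Drop_Elliptic}, replacing your appeal to Corollary \ref{c:finite_non_pinch} for depth control.

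Your argument for $\cS^k_\delta \subset S^{k,\epsilon,\eta}$ also has a gap. Taking $\eta\leq\delta$ does not by itself prevent a point $x\in\cS^k_\delta$ from lying in a $b$-ball $B_{r_b}(x_b)$: the $b$-ball condition asserts $(k+1,2\eta)$-symmetry at the \emph{center} $x_b$, not at $x$. The paper closes this by combining (i) almost $0$-symmetry at $x$ at some small pinched scale (via Corollary \ref{c:finite_non_pinch} and Theorem \ref{t:Quan_Unique}) with (ii) the $(k+1,2\eta)$-symmetry at $x_b$, and applying cone-splitting to transfer the extra symmetry to $x$. The same transfer is needed for points in $\cN_a$. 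Without it, your branch-tracing argument does not exclude termination in a $b$-ball.
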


The proof of Theorem \ref{t:Estimate_on_Strata} follows from Theorem \ref{t:Neck_decompose} and cone splitting Lemma. We will focus on the proof of Theorem \ref{t:Neck_decompose} and invite the interested readers to consult \cite{CJN} for more details.

First we recall the cone-splitting Lemma for elliptic solutions. This should be compared to Proposition \ref{p:Cone_Split_Elliptic}. 

\begin{lemma}{(Cone-Splitting)}\label{l:Neck_cone_splitting}
    Let $u: B_2 \to \RR$ be a solution to (\ref{e:elliptic_equ}) (\ref{e:assumption}) with doubling assumption \ref{e:DI_bound_assumption}. Let $\eta > 0$. There exists some $\epsilon_0(n,\lambda,\alpha,\Lambda, \eta) > 0$ and $r_0(n,\lambda,\alpha,\Lambda, \eta) > 0$ such that the following is true. Let $r_1 \leq r_0$ and $\epsilon \leq \epsilon_0$. Let $x\in B_1$ and $y \in B_{r_1}(x) \setminus B_{\tau}(x)$. Suppose $|D(x, r) - D(x, r_1)| \leq \epsilon$ and $|D(y, r) - D(y, r_1)| \leq \epsilon$ for any $r \in [10^{-3}r_1, C_2(\lambda)r_1]$. Then we have $u$ is uniformly $(1, \eta, x)$-symmetric in $[10^{-2}r_1, 10 r_1]$. 
\end{lemma}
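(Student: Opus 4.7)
My plan is to reduce this to the harmonic cone-splitting Lemma \ref{l:ConeSplitting_Harmonic} via the harmonic approximation Proposition \ref{p:harmonic_appro} and the nearby-point doubling-index convergence Proposition \ref{p:DI_close_at_nearby_pt}. Given $\eta>0$, I would choose $\epsilon_0 = \epsilon_0(n,\lambda,\alpha,\Lambda,\eta)$ and $r_0 = r_0(n,\lambda,\alpha,\Lambda,\eta)$ small enough that all the quantitative bounds that appear below are satisfied.

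First I would set $\bar r := C_2(\lambda)\, r_1/100$ and pass to the rescaled solution $\tilde u := u_{x,\bar r}$, so that the target scale range $[10^{-2}r_1, 10\, r_1]$ sits strictly inside the open interval $(0,1/2)$ in rescaled coordinates, while the hypothesis pinching window $[10^{-3}r_1, C_2(\lambda)r_1]$ still extends well past both endpoints. Applying Proposition \ref{p:harmonic_appro} with $r_2 := 10^{-3}r_1$, I obtain a harmonic function $h : B_1 \to \RR$ with $h(0)=0$ and $\fint_{\partial B_1} h^2 = 1$, satisfying
\begin{equation*}
|D^h(0,s) - D^{\tilde u}(0,s)| \leq 10\epsilon \quad \text{for } s \in [r_2/\bar r,\, 1/2],
\end{equation*}
together with a $C^0$-bound $\sup_{B_{1/2}}|\tilde u - h| \leq C(n,\lambda,\alpha)^{\Lambda}\bar r^{\alpha}$ from the same proof. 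Lemma \ref{l:DI_Drop_Elliptic}(1) then picks out an integer $d = O(\Lambda)$ with $|D^h(0,s) - d|\leq O(\epsilon)$ on this range. Set $\tilde y := A_x^{-1}((y-x)/\bar r)$; the bi-Lipschitz bound on $A_x$ and the hypothesis $|y-x|\in [\tau r_1, r_1]$ place $\tilde y$ in an annulus inside $B_{1/10}$ with $|\tilde y|\geq \tau_0 > 0$ depending only on $n,\lambda,\tau$. Applying Proposition \ref{p:DI_close_at_nearby_pt} at the second point (and, if necessary, chaining across adjacent scales through auxiliary harmonic approximations centered at $y$) transfers the hypothesis pinching of $D^u(y,\cdot)$ into pinching of $D^h(\tilde y,\cdot)$ near some integer $d'$, and a Taylor-expansion comparison as in Lemma \ref{l:ConeSplitting_Harmonic}(1) forces $d=d'$ once $\epsilon_0$ is small enough.

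Lemma \ref{l:ConeSplitting_Harmonic}(3) then applies to $h$ with basepoints $0$ and $\tilde y$, producing a $1$-symmetric polynomial $P$ with $\fint_{\partial B_1}|h_{0,s} - P|^2 \leq C\epsilon$ for all $s\in[1/3, 3d]$, a range which contains the rescaled target $[1/C_2(\lambda),\, 100/C_2(\lambda)]$ because $C_2(\lambda) \geq 10^3$. The $L^2$-closeness of $\tilde u$ and $h$ on spheres of radii in $[r_2/\bar r, 1/2]$, applied exactly as in the closing estimate of Theorem \ref{t:Quan_Unique}, then converts this into uniform $(1,\eta,x)$-symmetry of $u$ on $[10^{-2}r_1, 10\, r_1]$, upon choosing $\epsilon_0$ so as to absorb all accumulated constants. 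The main obstacle is to coordinate the three scale windows in play --- the hypothesis pinching range, the harmonic-approximation window of Proposition \ref{p:harmonic_appro}, and the harmonic cone-splitting window of Lemma \ref{l:ConeSplitting_Harmonic} --- and to verify that the pinching integers at the two basepoints $0$ and $\tilde y$ of $h$ agree; both are handled by the generous buffer $C_2(\lambda)\gg 1$ built into the hypothesis and by the uniqueness of tangent maps for harmonic functions.
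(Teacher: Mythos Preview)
Your approach is correct but differs from the paper's. The paper dispatches this lemma in one line: ``The proof follows from the standard contradiction argument and the uniqueness of tangent maps Theorem \ref{t:Quan_Unique}.'' In other words, it takes a sequence violating the conclusion with $\epsilon_i, r_i \to 0$, passes to a harmonic limit via Lemma \ref{l:Harm_Approx}, and applies Lemma \ref{l:Polynomial_Split} (or Lemma \ref{l:ConeSplitting_Harmonic}) to reach a contradiction; this gives no explicit $\epsilon_0, r_0$.

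What you do instead is essentially rerun the proof of Proposition \ref{p:Cone_Split_Elliptic} in the case $k=1$: apply the improved harmonic approximation Proposition \ref{p:harmonic_appro} at $x$, transfer the pinching at the second basepoint $y$ to the approximating harmonic $h$ via Proposition \ref{p:DI_close_at_nearby_pt} (with the scale-chaining that the proof of Proposition \ref{p:Cone_Split_Elliptic} already carries out), and then invoke the harmonic cone-splitting Lemma \ref{l:ConeSplitting_Harmonic}(3). This is a legitimate direct route and has the advantage of being effective. One bookkeeping slip: with your choice $\bar r = C_2(\lambda) r_1/100$ the upper end of the rescaled target window is $10 r_1/\bar r = 1000/C_2(\lambda) = (1+\lambda)^{-1/2}$, which need not lie in $(0,1/2)$ for small $\lambda$; you should either enlarge $\bar r$ by another fixed factor or, as in the proof of Proposition \ref{p:Cone_Split_Elliptic}, chain the harmonic approximations across several overlapping scale windows rather than insisting that a single $h$ cover the whole target range. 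With that adjustment your argument goes through.
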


\begin{proof}
    The proof follows from the standard contradiction argument and the uniqueness of tangent maps Theorem \ref{t:Quan_Unique}. 
\end{proof}

The splitting Lemma can be easily generalized to higher dimensional case.

Now we define 
\begin{equation}
    \cV^u_{\epsilon, d, r} (x) \equiv \{ y \in B_{r}(x) : |D(y,s) - d| \leq \epsilon \text { for any } s \in [ 10^{-3}r , C_2(\lambda) r] \},
\end{equation}

We will omit the superscript in the following.  

Throughout this section, we will use different subscripts to denote the balls with different structures. We list all the categories here:
\begin{enumerate}
    \item [(a)] A ball $B_{r_a}(x_a)$ is associated with a $(d, k,\epsilon,\eta)$-neck region $\cN_a = B_{2r_a}(x_a) \setminus \bar{B}_{r_{a,x}}(\cC_a)$. 
    \item [(b)] A ball $B_{r_b}(x_b)$ is such that $u$ is $(k+1, 2\eta, 2r_b, x_b)$-symmetric.
    \item [(c)] A ball $B_{r_c}(x_c)$ is not a $b$-ball and $\cV_{\epsilon,d,r_c}$ is $(k, \tau)$-independent in $B_{r_c}(x_c)$.
    \item [(d)] A ball $B_{r_d}(x_d)$ is such that $\cV_{\epsilon,d,r_d} \neq \emptyset$ and is not $(k, \tau)$-independent in $B_{r_d}(x_d)$.
    \item [(e)] A ball $B_{r_e}(x_e)$ is such that $\cV_{\epsilon,d,r_e} = \emptyset$.
\end{enumerate}

We will prove covering Lemmas for those balls in the following. First we prove a covering of $d$-balls.

\begin{proposition}[Covering of $d$-balls]
 Let $u: B_{2} \to \RR$ be a solution to (\ref{e:elliptic_equ}) (\ref{e:assumption}) with doubling assumption \ref{e:DI_bound_assumption}.  Let $\epsilon \leq \epsilon_0= C(n, \lambda, \alpha, \Lambda)$ and $r \leq r_0 =   C(n,\lambda, \alpha, \Lambda, \epsilon)$. Suppose $\sup_{x \in B_r} D(x,s) \leq d+\epsilon$ for any $s \leq C_2(\lambda) r$ and for some integer $d$. Furthermore we assume that $\cV_{\epsilon,d,r}(0) \neq \emptyset$ and is not $(k,\tau)$-independent in $B_r$. Then we have the following decomposition
 \begin{equation}
     B_r \subset \bigcup_{b \in B} B_{r_b}(x_{b}) \cup \bigcup_{c \in C} B_{r_c}(x_{c}) \cup \bigcup_{e \in E} B_{r_e}(x_{e}) \cup S_d,
 \end{equation}
where each $B_{r_b}$ is a $b$-ball, $B_{r_c}(x_{c})$ is a $c$-ball and $B_{r_e}(x_{e})$ is an $e$-ball. Furthermore, we have the following measure estimates
\begin{equation}
\begin{split}
    & \sum_{b \in B} r_b^k + \sum_{e \in E} r_e^k \leq C(n,\tau) r^k. \\
    & \sum_{c \in C^1} r_c^k \leq C(n) \tau r^k. \\
    & \cH^k(S_d) = 0.
\end{split}
\end{equation}
    
\end{proposition}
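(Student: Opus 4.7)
The plan is to iteratively decompose $B_r$ via Vitali-type coverings at geometrically decreasing scales, exploiting that a failure of $(k,\tau)$-independence for $\cV_{\epsilon,d,r}$ forces the pinched set into a small tubular neighborhood of an affine $(k-1)$-plane. The hypotheses are precisely those needed to apply the cone-splitting Proposition \ref{p:Cone_Split_Elliptic} and Corollary \ref{c:Elliptic_Pinch_pt_near_n-2} at every scale at which $\cV$ is nonempty. The key starting observation is that because $\cV_{\epsilon,d,r}(0)$ fails to be $(k,\tau)$-independent in $B_r$, there exists an affine $(k-1)$-plane $L_0\subset \RR^n$ with $\cV_{\epsilon,d,r}(0)\subset B_{2\tau r}(L_0)$; equivalently, any sub-ball whose center is not close to $L_0$ must itself be an $e$-ball.

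Fix a scale ratio $\rho=c_0\tau$ with $c_0$ a small dimensional constant. First I would Vitali-cover $B_r$ by balls of radius $\rho r$ with the $\tfrac{\rho r}{5}$-balls disjoint and classify each sub-ball as one of $b, c, d, e$ according to the definitions listed. The $b$-, $c$-, and $e$-balls go into the final output at this stage; only the $d$-balls are retained for further refinement. For each retained $d$-ball the same trichotomy is again in force and I iterate the same Vitali refinement inside it, producing at each stage $j$ a finite family of $d$-balls of radius $\rho^j r$ together with new $b$-, $c$-, $e$-balls to be placed into the output. The invariance across scales of the approximating subspace (Remark \ref{r:V_invariant_scale_point}) will ensure that the $(k-1)$-plane produced at each stage is well-defined and compatible with the stage-$0$ plane $L_0$ up to a small tilt.

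For the measure bookkeeping, let $V_j\equiv \sum_{i}r_{d,i,j}^{k}$ denote the total $k$-content of the stage-$j$ retained $d$-balls, with $V_0=r^{k}$. For a fixed $d$-ball $B_s$ at stage $j$, only sub-balls centered in $B_{2\tau s}(L)\cap B_{s}$ can have nonempty $\cV_{\epsilon,d,\rho s}$, and a volume comparison $\tfrac{s^{k-1}(\tau s)^{n-k+1}}{(\rho s)^n}$ shows that at most $C(n)\tau\rho^{-k}$ such candidate sub-balls occur. All newly emitted $b$-, $c$-, and continuing $d$-balls therefore contribute a combined $k$-content at most $C(n)\tau\rho^{-k}(\rho s)^{k}=C(n)\tau s^{k}$, giving the geometric decay
\begin{equation*}
V_{j+1}+\Big(\sum_{i} r_{b,i,j+1}^{k}+\sum_{i}r_{c,i,j+1}^{k}\Big)\le C(n)\tau V_{j}.
\end{equation*}
Choosing $\tau$ small forces $V_j\le (C(n)\tau)^{j}r^{k}$, so summing over $j$ gives $\sum_{c}r_c^{k}\le C(n)\tau r^{k}$ and $\sum_{b}r_b^{k}\le C(n)r^k$. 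The $e$-balls at stage $j+1$ inside a $d$-ball of radius $s$ contribute at most $\rho^{k-n}s^k$ by volume, so $\sum_e r_e^k\le \rho^{k-n}\sum_j V_j\le C(n,\tau)r^{k}$.

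Finally $S_d$ is defined as the residual set $\bigcap_{j}\bigcup\{\text{retained }d\text{-balls at stage }j\}$, i.e.\ the limit of the refinement where the process never terminates. Since the $k$-content of the stage-$j$ covering of $S_d$ is $\le (C(n)\tau)^{j}r^{k}\to 0$, the Hausdorff content $\mathcal{H}^{k}(S_d)=0$ follows directly from the definition. The main obstacle I anticipate is controlling the cumulative tilt of the approximating $(k-1)$-planes across scales: a priori the $(k-1)$-plane at stage $j+1$ depends on which retained $d$-ball we are refining, and naively it could tilt by $O(\tau)$ per stage. The resolution is to invoke Corollary \ref{c:Elliptic_Pinch_pt_near_n-2} together with the scale-invariance of the almost invariant subspace noted in Remark \ref{r:V_invariant_scale_point}; this forces the local $(k-1)$-plane to agree with $L_0$ up to an $O(\tau)$ error uniformly in the scale, which is exactly what is needed to preserve the volume bound at every refinement step and to conclude the Hausdorff estimate on $S_d$.
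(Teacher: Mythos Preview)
Your overall scheme matches the paper's: Vitali-cover at scale $\sim\tau r$, sort sub-balls into types $b,c,d,e$, use that $c$- and $d$-balls must lie near a $(k-1)$-plane to get geometric decay of the $d$-content, and iterate. Two points need correction.

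First, your bookkeeping for $b$-balls is wrong. A $b$-ball is any sub-ball for which $u$ is $(k+1,2\eta)$-symmetric; this places no constraint on $\cV_{\epsilon,d,\rho s}$, which may well be empty. Hence $b$-balls need not lie in the tube $B_{2\tau s}(L)$, and the displayed inequality
\[
V_{j+1}+\sum_{i} r_{b,i,j+1}^{k}+\sum_{i}r_{c,i,j+1}^{k}\le C(n)\tau V_{j}
\]
is false for the $b$-part. The fix is to group $b$-balls with $e$-balls: both can sit anywhere in the parent, so a stage-$j$ parent of radius $s$ emits at most $C(n)\rho^{-n}$ of them, contributing $\le C(n)\rho^{k-n}V_j$ per stage, and summing the geometric series gives $\sum_{b\in B} r_b^k + \sum_{e\in E} r_e^k \leq C(n,\tau)r^k$, which is exactly the bound in the statement. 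Only $c$- and $d$-balls (which by definition have $\cV\neq\emptyset$) are forced into the tube; it is this that yields $V_{j+1}+\sum_c r_c^k \leq C(n)\tau V_j$ and hence $\sum_c r_c^k\le C(n)\tau r^k$.

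Second, the ``cumulative tilt'' obstacle you raise does not exist, and your proposed resolution is both unnecessary and ill-suited. Each retained $d$-ball $B_s$ at stage $j$ is \emph{by definition} one in which $\cV_{\epsilon,d,s}$ is nonempty and not $(k,\tau)$-independent, so it carries its \emph{own} $(k-1)$-plane $L$; the sub-ball counting at stage $j+1$ is done inside $B_s$ relative to that plane, and summing over the stage-$j$ $d$-balls gives $V_{j+1}\leq C(n)\tau V_j$ regardless of how the planes vary from ball to ball or from stage to stage. The Hausdorff estimate $\cH^k(S_d)=0$ then follows from $S_d\subset \bigcup_{d\in D^j}B_{r_d}(x_d)$ with $\sum_{d\in D^j} r_d^k\leq (C(n)\tau)^j r^k\to 0$, again with no alignment of planes required. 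Note also that Corollary \ref{c:Elliptic_Pinch_pt_near_n-2} and Remark \ref{r:V_invariant_scale_point} produce an $(n-2)$-dimensional subspace, not a $(k-1)$-plane, so they would not help here even if help were needed.
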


\begin{proof}

First we choose a Vitali covering of $B_r$, i.e. 
\begin{equation}
    B_r \subset \bigcup_{f \in F^1} B_{10^{-1} \tau r}(x^1_{f}) \quad \text{ and } \quad B_{10^{-2}\tau r}(x^1_{f_i}) \cap B_{10^{-2}\tau r}(x^1_{f_j}) = \emptyset \text{ if } i \neq j.
\end{equation}

Since each $B_{10^{-1}r}(x^1_{f})$ must be of one of the $b$, $c$, $d$ or $e$-type, we have
\begin{equation}
     B_r \subset \bigcup_{b \in B^1} B_{10^{-1}\tau r}(x^1_{b}) \cup \bigcup_{c \in C^1} B_{10^{-1}\tau r}(x^1_{c}) \cup \bigcup_{d \in D^1} B_{10^{-1}\tau r}(x^1_{d}) \cup \bigcup_{e \in E^1} B_{10^{-1}\tau r}(x^1_{e}).
\end{equation}

By the disjointness, the number of these balls is bounded by $C(n)\tau^{-n}$. Hence we have $\sum_{f \in F^1} r^k_{f} \leq C_5(n, \tau) r^k$. Note that $C_5(n,\tau) = C(n) \tau^{k-n}$ .

Since $\cV_{\epsilon,d,r}(0) \neq \emptyset$ and is not $(k,\tau)$-independent in $B_r$, there exists some $(n-1)$-dimensional plane $V^{k-1}$ such that $\cV_{\epsilon,d,r}(0) \in B_{\tau}(V)$. As $\cV_{\epsilon,d,r}(0) \neq \emptyset$ in $c$-balls and $d$-balls, we have 
\begin{equation}
    \bigcup_{c \in C^1} B_{10^{-1}\tau r}(x^1_{c}) \cup \bigcup_{d \in D^1} B_{10^{-1}\tau r}(x^1_{d}) \subset B_{2\tau r}(V).
\end{equation}

Note that vol$(B_{2\tau r}(V)) = C(n)\tau^{n-k+1} r^n$. By the disjointness property of these balls, the number of $c$-balls and $d$-balls is bounded by $C(n)\tau^{-k+1}$. Therefore we have
\begin{equation}
    \sum_{c \in C^1} r_c^k + \sum_{d \in D^1} r_d^k \leq C_6(n) \tau r^k.
\end{equation}

In conclusion, we have the covering of $B_r$ satisfying
\begin{equation}
\begin{split}
    & B_r \subset \bigcup_{b \in B^1} B_{10^{-1}\tau r}(x^1_{b}) \cup \bigcup_{c \in C^1} B_{10^{-1}\tau r}(x^1_{c}) \cup \bigcup_{d \in D^1} B_{10^{-1}\tau r}(x^1_{d}) \cup \bigcup_{e \in E^1} B_{10^{-1}\tau r}(x^1_{e}). \\
    & \sum_{b \in B^1} r_b^k + \sum_{e \in E^1} r_e^k \leq C_5(n,\tau) r^k. \\
    & \sum_{c \in C^1} r_c^k + \sum_{d \in D^1} r_d^k \leq C_6(n) \tau r^k.
\end{split}
\end{equation}

Now for each $d$-ball $B_{r_d}(x^1_d)$, we repeat this process such that the covering of $B_r$ is refined to be
\begin{equation}
\begin{split}
    & B_r \subset \bigcup_{b \in B^2} B_{r_b}(x^2_{b}) \cup \bigcup_{c \in C^2} B_{r_c}(x^2_{c}) \cup \bigcup_{d \in D^2} B_{10^{-2}\tau^2 r}(x^2_{d}) \cup \bigcup_{e \in E^2} B_{r_e}(x^2_{e}). \\
    & \sum_{b \in B^2} r_b^k + \sum_{e \in E^2} r_e^k \leq C_5(n,\tau) (1 + C_6(n) \tau) r^k. \\
    & \sum_{c \in C^2} r_c^k  \leq C_6(n) \tau (1 + C_6(n) \tau) r^k. \\
    & \sum_{d \in D^2} r_d^k = \sum_{d \in D^2} (\frac{\tau}{10})^{2k} r^k \leq (C_6(n) \tau)^2 r^k.
\end{split}
\end{equation}

After iteration for $N$-times, we have the following covering for $B_r$
\begin{equation}
\begin{split}
    & B_r \subset \bigcup_{b \in B^N} B_{r_b}(x^N_{b}) \cup \bigcup_{c \in C^N} B_{r_c}(x^N_{c}) \cup \bigcup_{d \in D^N} B_{(10^{-1}\tau)^N r}(x^N_{d}) \cup \bigcup_{e \in E^N} B_{r_e}(x^N_{e}). \\
    & \sum_{b \in B^N} r_b^k + \sum_{e \in E^N} r_e^k \leq C_5(n,\tau) \sum_{i = 0}^{N-1} (C_6(n) \tau)^i r^k. \\
    & \sum_{c \in C^N} r_c^k  \leq C_6(n) \tau \sum_{i = 0}^{N-1} (C_6(n) \tau)^i r^k. \\
    & \sum_{d \in D^N} (\frac{\tau}{10})^{kN} \leq (C_6(n) \tau)^N.
\end{split}
\end{equation}

Let $S^i \equiv \{ x^i_d \}_{d\in D^i}$. Note that $S^{i+1} \subset B_{(10^{-1}\tau)^i r}(S^i)$. We can define the Hausdorff limit of $S^i$ to be $S_d$. Then for any $i \geq 1$
\begin{equation}
    \text{Vol}(B_{(10^{-1}\tau)^i r}(S_d)) \leq \text{Vol}(B_{(10^{-1}\tau)^i r}(S^i)) \leq \sum_{d \in D^i} \text{Vol}(B_{(10^{-1}\tau)^i r}(x^i_d)) \leq C(n) (C_6(n)\tau)^i (\frac{\tau}{10})^{i(n-k)} r^n.
\end{equation}

Therefore, the proof is finished by choosing $C_6(n) \tau < 1/2$.
\end{proof}

Next we prove the covering of $c$-balls. We will construct a neck region in each $c$-ball.

\begin{proposition}{(Covering of $c$-balls)}
    Let $u: B_{2} \to \RR$ be a solution to (\ref{e:elliptic_equ}) (\ref{e:assumption}) with doubling assumption \ref{e:DI_bound_assumption}. Let $\eta > 0$. Suppose $\epsilon \leq \epsilon_0= C(n, \lambda, \alpha, \Lambda, \eta)$ and $r \leq r_0 =   C(n,\lambda, \alpha, \Lambda, \epsilon, \eta)$. Suppose $\sup_{x \in B_r} D(x,s) \leq 
    d+\epsilon$ for any $s \leq C_2(\lambda) r$ and for some integer $d$. Furthermore we assume that $\cV_{\epsilon,d,r}(0)$ is in $(k,\tau)$-position and that $B_2$ is not $(k+1, 2\eta)$-symmetric. Then we can decompose
    \begin{equation}
     B_r \subset  \big(\cC_0 \cup \cN)  \cup \bigcup_{b \in B} B_{r_b}(x_{b}) \cup \bigcup_{d \in D} B_{r_d}(x_{d}) \cup \bigcup_{e \in E} B_{r_e}(x_{e}),
\end{equation}
where $\cN$ is a $(d, k, \epsilon, \eta)$-neck region. Furthermore, we have the estimates
\begin{equation}
    \sum_{b \in B} r_b^k + \sum_{d \in D} r_d^k + \sum_{b \in E} r_e^k + \cH^k(\cC_0) \leq C(n) r^k.
\end{equation}
\end{proposition}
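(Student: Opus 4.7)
The plan is to construct the center set $\cC = \cC_0 \cup \cC_+$ together with a radius function $r_x$ by a stopping-time iteration at geometric scales $s_i = (\tau/10)^i r$, then verify the three defining properties of a neck region, and finally invoke Theorem~\ref{t:Neck_Struture} for the measure estimate. At stage $0$ the active points form a maximal $(\tau r/10)$-separated subset of $\cV_{\epsilon,d,r}(0)$ inside $B_{r/2}$, each assigned scale $r$ and the $k$-plane $V_x$ supplied by the $(k,\tau)$-independence hypothesis. At stage $i \to i+1$, for every active point $x$ at scale $s_i$, I select a maximal $(\tau s_i/10)$-separated subset $N_x \subset \cV_{\epsilon,d,s_i}(x) \cap B_{s_i}(x)$ and test each $y \in N_x$ against the three stopping conditions in order: if $u$ is $(k+1, 2\eta, 2s_i, y)$-symmetric, record $\bar B_{s_i}(y)$ as a $b$-ball; else if $\cV_{\epsilon,d,s_i}(y) = \emptyset$, record it as an $e$-ball; else if $\cV_{\epsilon,d,s_i}(y)$ is not $(k,\tau)$-independent in $B_{s_i}(y)$, record it as a $d$-ball; otherwise $y$ becomes active at scale $s_{i+1}$. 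Define $\cC_+$ as the set of stopped centers with $r_y$ equal to the stopping scale, and $\cC_0$ as the Hausdorff limit of the active sets (with $r_y = 0$). Put $\cN \equiv B_{2r} \setminus \bar B_{r_x}(\cC)$; disjointness of $\bar B_{r_x/5}(x)$ is built into the selection through the separation of the nets.

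Next I would verify the neck axioms. Axiom (1), pinched doubling on $[r_x, C_2(\lambda) r]$, is automatic since each continued $y$ is inherited from $\cV_{\epsilon,d,s_{i-1}}(x)$ of its predecessor, and concatenating the overlapping intervals $[s_j, C_2(\lambda) s_{j-1}]$ across stages sweeps out $[r_x, C_2(\lambda) r]$. Axiom (2), uniform $(k,\epsilon)$-symmetry with a single $V_x$ on $[r_x, 10r]$, follows from Proposition~\ref{p:Cone_Split_Elliptic} applied at every continued scale (where $(k,\tau)$-independence of $\cV_{\epsilon,d,s}(x)$ holds precisely because $x$ did not become a $d$-ball) together with the scale invariance of the approximating subspace recorded in Remark~\ref{r:V_invariant_scale_point} and Theorem~\ref{t:Quan_Unique}; the non-$(k+1,\eta)$-symmetry on $[r_x, 10r]$ holds because $y$ failed the $(k+1, 2\eta)$-symmetry test at each scale it remained active. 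Axiom (3), $\cC \cap B_s(x) \subset B_{\tau s}(x + V_x)$ for $s \geq r_x$, comes from Corollary~\ref{c:Elliptic_Pinch_pt_near_n-2} applied to $\cV_{\epsilon,d,s}(x)$, since every active successor of $x$ lies inside this pinching set.

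With the neck verified, Theorem~\ref{t:Neck_Struture}(2) supplies the estimate
\begin{equation*}
\sum_{y \in \cC_+} r_y^k + \cH^k(\cC_0) \leq C(n)\, r^k,
\end{equation*}
which splits according to the $b,d,e$ label of each stopping ball to produce the stated bound, while the covering $B_r \subset (\cC_0 \cup \cN) \cup \bigcup_{y \in \cC_+} \bar B_{r_y}(y)$ is tautological from the definition of $\cN$.

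The main obstacle is condition (2), namely furnishing a common approximating subspace $V_x$ uniformly across the full scale window $[r_x, 10r]$. In the classical constructions of \cite{CJN, JN, NVYM} this is the hardest input because the best-fitting $k$-plane normally tilts with scale, forcing delicate excess-decay machinery. In our setting the quantitative uniqueness of tangent maps collapses this difficulty: once the doubling index is pinched around the integer $d$ and $(k,\tau)$-independence holds, $V_x$ is automatically scale invariant by Remark~\ref{r:V_invariant_scale_point}. The remaining quantitative calibration is to choose $\tau$ small enough (relative to $c_3(n,\lambda)$ and to the net spacing) so that $(k,\tau)$-independence propagates through successive refinements without spuriously producing $d$-balls, and to take $\epsilon_0$ small in terms of $\eta$ so that the $\sqrt{\epsilon}$ output of Proposition~\ref{p:Cone_Split_Elliptic} remains below the neck parameter $\epsilon$ and the failure of $(k+1,\eta)$-symmetry at continued scales is preserved under the $2\eta$ threshold used to declare $b$-balls; with $r_0$ chosen accordingly both conditions hold simultaneously.
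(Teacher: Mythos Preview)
Your proposal is correct and follows the general neck-region playbook of \cite{CJN,JN,NVYM}: a discrete multiscale refinement at fixed geometric scales $s_i=(\tau/10)^i r$, with active points continued precisely when they satisfy the $c$-ball conditions and stopped into $b$-, $d$-, or $e$-balls otherwise, then the Neck Structure Theorem yields the packing bound. The paper's own proof takes a somewhat different and more direct route: rather than iterating at fixed scales, it defines for each $x\in B_r$ the \emph{variable} stopping time
\[
r_x \;=\; \sup\{\,0\le s\le C_2(\lambda)r:\ D(x,s)\le d-\epsilon\,\},
\]
sets $\cC_0=\{r_x=0\}$, and takes a Vitali subcover of $\{B_{r_x}(x):x\in\cV_{\epsilon,d,r},\ r_x>0\}$ to produce $\cC_+$; this makes axiom (1) of the neck region immediate from the definition of $r_x$ (no interval concatenation across stages is needed). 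The resulting balls $B_{r_x}(x)$ are then classified into $b,c,d,e$ types and the construction is \emph{recursed on the $c$-balls}, so that the group of $c$-balls shrinks into $\cC_0$ in the limit. What your approach buys is a uniform treatment that integrates the $c$-ball recursion into the same iteration; what the paper's approach buys is that the stopping scale is intrinsically tied to the doubling-index drop, so pinching on $[r_x,C_2(\lambda)r]$ and the content bound are available with no bookkeeping about overlapping scale windows. Both routes end at Theorem~\ref{t:Neck_Struture} for the $r^k$ estimate; the paper omits (and you correctly sketch) the verification that $\cN$ satisfies axioms (2)--(3), which in either scheme comes from Proposition~\ref{p:Cone_Split_Elliptic}, Remark~\ref{r:V_invariant_scale_point}, and Corollary~\ref{c:Elliptic_Pinch_pt_near_n-2}.
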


\begin{proof}

For each point $x \in B_r$, we define 
    \begin{equation}
        r_x \equiv \sup\{ 0 \leq s \leq C_2(\lambda) r : D(x,s) \leq d - \epsilon \}.
    \end{equation}
    If no such $r_x$ exists, we set $r_x = 0$. Note that $x \in \cV_{\epsilon,d,r} $ if $r_x \leq 10^{-3} r$. Define $\cC_0 \equiv \{ x \in B_r : r_x = 0 .\} $ We can choose a Vitali subcovering of $\bigcup_{x \in \cV_{\epsilon,d,r}, r_x > 0} B_{r_x}(x)$, i.e.
    \begin{equation}
        \bigcup_{x \in \cV_{\epsilon,d,r}, r_x > 0} B_{r_x}(x) \subset \bigcup_{x \in \cC_+} B_{r_x}(x), \quad B_{r_x/5}(x) \cap B_{r_y/5}(y) = \emptyset \text{ if } x \neq y.
    \end{equation}
    
By Cone-splitting Lemma, there exists some affine $k$-plane $V$ such that $\cV_{\epsilon,d,r}(x) \subset B_{\tau r}(V)$. 

We define the first neck region by $\cN^1 \equiv B_{2r} \setminus \bigg( \cC_0 \cup \bigcup_{x \in \cC_+ }\Bar{B}_{r_x}(x) \bigg)$.  Note that each ball must be one of the $b$, $c$, $d$ or $e$-type, we have a covering
\begin{equation}
     B_r \subset  \big(\cC^1_0 \cup \cN^1)  \cup \bigcup_{b \in B^1} B_{r_b}(x_{b}) \cup \bigcup_{c \in C^1} B_{r_c}(x_{c}) \cup \bigcup_{d \in D^1} B_{r_d}(x_{d}) \cup \bigcup_{e \in E^1} B_{r_e}(x_{e}).
\end{equation}

Now for each $c$-ball in $C^1$ group, we apply the same decomposition to obtain the refined covering 
\begin{equation}
     B_r \subset  \big(\cC^2_0 \cup \cN^2)  \cup \bigcup_{b \in B^2} B_{r_b}(x_{b}) \cup \bigcup_{c \in C^2} B_{r_c}(x_{c}) \cup \bigcup_{d \in D^2} B_{r_d}(x_{d}) \cup \bigcup_{e \in E^2} B_{r_e}(x_{e}).
\end{equation}

By iterating this decomposition, finally we will obtain a covering
\begin{equation}
     B_r \subset  \big(\cC_0 \cup \cN)  \cup \bigcup_{b \in B} B_{r_b}(x_{b}) \cup \bigcup_{d \in D} B_{r_d}(x_{d}) \cup \bigcup_{e \in E} B_{r_e}(x_{e}),
\end{equation}
as the group of $C$-balls will converge to a subset of $\cC_0$. The proof that $\cN$ is indeed a neck region is standard and is omitted here. See \cite{CJN} for more details. 

By Neck structure Theorem \ref{t:Neck_Struture}, we have
\begin{equation}
    \sum_{b \in B} r_b^k + \sum_{d \in D} r_d^k + \sum_{b \in E} r_e^k + \cH^k(\cC_0) \leq C(n)r^k.
\end{equation}

The proof is now finished.
\end{proof}

By applying the covering arguments to $d$-balls and $c$-balls iteratively, we will end up with a covering of without any $c$-ball or $d$-ball. By this we can finish the proof of Neck decomposition theorem \ref{t:Neck_decompose}.

\begin{proof}{(Neck Decomposition Theorem \ref{t:Neck_decompose})}
 First we can cover the unit ball $B_1$ using $B_r(x)$ with $r$ sufficiently small that each $B_r$ is a $(C(n,\lambda)\Lambda, C_2(\lambda))$-good ball, i.e. $\sup_{y \in B_r(x)} D(y,s) \leq 
    C\Lambda+\epsilon$ for any $s \leq C_2(\lambda) r$.  Applying the decomposition of $c$-ball and $d$-ball iteratively, by choosing $\tau$ small enough, we have the following covering    \begin{equation}
     B_1 \subset  \bigcup_{a} \big(\cC_{0,a} \cup \cN_a \cap B_a(x_a) \big)  \cup \bigcup_{b } B_{r_b}(x_{b}) \cup \bigcup_{e} B_{r_e}(x_{e}) \cup S_d,
\end{equation}
where $\cH^k(S_d) = 0$ and $\sum_a r^k_a + \sum_b r^k_b + \sum_e r^k_e + \cH^k(\bigcup_{a} \cC_{0,a}) \leq C(n, \lambda, \alpha, \Lambda, \epsilon, \eta)$.

By definition of $e$-ball, we have $D(x, s) < d -\epsilon$ for some $s \geq 10^{-3}r_e$ for any $x \in B_{r_e}(x_e)$. By Lemma \ref{l:DI_Drop_Elliptic}, $D(x, s) \leq d+1+\epsilon$ for any $x \in B_{r_e}(x_e)$. Hence we can cover $B_{r_e}(x_e)$ using Vitali covering of radius $\epsilon^{C(\alpha)}$ and apply the previous constructions to each such $(d-1)$-good ball. We continue this process for at most $C(n,\lambda)\Lambda$-times, and finally we obtain the covering
\begin{equation}
     B_r \subset  \bigcup_{a} \big(\cC_{0,a} \cup \cN_a \cap B_a(x_a) \big)  \cup \bigcup_{b } B_{r_b}(x_{b}) \cup  S_0,
\end{equation}
where $S_0 \equiv \bigcup_{1 \leq i \leq C\Lambda}S_i$ and thus $\cH^k(S_0) = 0$. 

Finally the fact that the quantitative stratum $\cS^k_{\delta} \subset S^{k, \epsilon, \eta}$ provided $\eta$ and $\epsilon$ is chosen small  follows directly from cone splitting Lemma. Indeed, by Corollary \ref{c:finite_non_pinch}, there exists some small $r$ such that the doubling index is pinched at the scale around $r$. Then by Theorem \ref{t:Quan_Unique}, $u$ is almost $0$-symmetric in $B_r(x)$. Further we have $u$ is almost $(k+1)$-symmetric since $B_{2 r_b}(x_b)$ is almost $(k+1)$-symmetric. Note that here we need to choose small $\eta$ and $\epsilon$ depending on $\delta$. This proves that $x \notin \cS^k_{\delta}$. We can apply similar arguments to show that $x \notin \cS^k_{\delta}$ provided $x \in \cN_a \cap B_a(x_a)$. This finishes the proof.
\end{proof}

\begin{proof}[Proof of Theorem \ref{t:Estimate_on_Strata}]
The proof now follows from Neck Decomposition Theorem \ref{t:Neck_decompose}, cone-splitting Lemma \ref{l:Neck_cone_splitting}. The rest of the arguments will be the same as the proof of Theorem 1.7 in \cite{CJN}.    
\end{proof}

\end{document}